\let\boldmathfont\mathbb
\let\mathbb\relax
\newcommand{\oeis}[1]{\href{https://oeis.org/#1}{#1}}
\definecolor{nicegray}{RGB}{80, 80, 130}
\theoremstyle:=definition,remark,plain\do{%
        \expandafter\g@addto@macro\csname th@\theoremstyle\endcsname{%
            \addtolength\thm@preskip\parskip
            }%
        }
\theoremstyle{definition}
\newtheorem{theorem}{Theorem}
\newtheorem{lemma}[theorem]{Lemma}
\newtheorem{definition}{Definition}
\newtheorem{example}{Example}
\newtheorem{nonexample}[example]{Non-example}
\numberwithin{theorem}{section}
\numberwithin{definition}{section}
\numberwithin{example}{section}
\numberwithin{remark}{section}
\newfont{\footsc}{cmcsc10 at 8truept}
\newfont{\footbf}{cmbx10 at 8truept}
\newfont{\footrm}{cmr10 at 10truept}
\newcommand*\patchAmsMathEnvironmentForLineno[1]{%
  \expandafter\let\csname old#1\expandafter\endcsname\csname #1\endcsname
  \expandafter\let\csname oldend#1\expandafter\endcsname\csname end#1\endcsname
  \renewenvironment{#1}%
     {\linenomath\csname old#1\endcsname}%
     {\csname oldend#1\endcsname\endlinenomath}}%
\newcommand*\patchBothAmsMathEnvironmentsForLineno[1]{%
  \patchAmsMathEnvironmentForLineno{#1}%
  \patchAmsMathEnvironmentForLineno{#1*}}%
\definecolor{todocolor}{RGB}{205,235,139}
\definecolor{todo-idea}{RGB}{120,180,255}
\definecolor{todo-error}{RGB}{208,31,60}
\definecolor{todo-question}{RGB}{255,255,136}
\newcommand{\Av}{\operatorname{Av}}
\newcommand{\Co}{\operatorname{Co}}
\newcommand{\Grid}{\operatorname{Grid}}
\newcommand{\rank}{\operatorname{rank}}
\newcommand{\INS}{\operatorname{ins}}
\renewcommand{\AA}{\mathcal{A}}
\newcommand{\BB}{\mathcal{B}}
\newcommand{\C}{\mathbb{C}}
\newcommand{\CC}{\mathcal{C}}
\newcommand{\DD}{\mathcal{D}}
\newcommand{\EE}{\mathcal{E}}
\newcommand{\FF}{\mathcal{F}}
\newcommand{\GG}{\mathcal{G}}
\newcommand{\HH}{\mathcal{H}}
\newcommand{\II}{\mathcal{I}}
\newcommand{\JJ}{\mathcal{J}}
\newcommand{\KK}{\mathcal{K}}
\newcommand{\MM}{\mathcal{M}}
\newcommand{\NN}{\mathcal{N}}
\newcommand{\N}{\mathbb{N}}
\newcommand{\OO}{\mathcal{O}}
\newcommand{\RR}{\mathcal{R}}
\newcommand{\SSS}{\mathscr{S}}
\newcommand{\TT}{\mathcal{T}}
\newcommand{\Z}{\mathbb{Z}}
\newcommand{\ZZZ}{\mathscr{Z}}
\newcommand{\x}{\times}
\newcommand{\twobyfour}{2$\times$4}
\newcommand{\DNA}{\texttt{DNA}}
\newcommand{\eqrel}{\mathscr{R}}
\newcommand{\inso}[2]{\INS_\OO({#1}, {#2})}
\newcommand{\insr}[2]{\INS_\RR({#1}, {#2})}
\newcommand{\wt}{\widetilde}
\newcommand{\ReqIns}{\texttt{ReqIns}}
\newcommand{\ObsDel}{\texttt{ObsDel}}
\newcommand{\ReqDel}{\texttt{ReqDel}}
\newcommand{\ReqListDel}{\texttt{ReqListDel}}
\newcommand{\PointPl}{\texttt{PointPl}}
\newcommand{\RowSep}{\texttt{RowSep}}
\newcommand{\ColSep}{\texttt{ColSep}}
\newcommand{\Factor}{\texttt{Factor}}
\newcommand{\ObsInf}{\texttt{ObsInf}}
\newcommand{\ds}{\displaystyle}
\newcommand{\bb}{\square}
\newcommand{\CSS}{\texttt{CombSpecSearcher}}
\newcommand{\Queue}{\texttt{CSSQueue}}
\newcommand{\ClassDB}{\texttt{ClassDB}}
\newcommand{\RuleDB}{\texttt{RuleDB}}
\newcommand{\EquivDB}{\texttt{EquivDB}}
\newcommand{\SpecSearcher}{\texttt{SpecFinder}}
\tikzset{invisible/.style={minimum width=0mm,inner sep=0mm,outer sep=0mm}}
\tikzset{tiling/.style={invisible, anchor=north west}}
\tikzset{ptnode/.style={rounded corners=15, black!40, thick, dashed}}
\newcommand\ptedge[4]{
  \draw ($#1 + (0.5, -1.3) + #2$)..%
  controls ($#1 + #2 + (0.5, -1.3) + (0, -1)$)%
  and ($#3 + #4 + (0, 1) + (0.5, -0.3)$)..%
  ($#3 + #4 + (0.5, -0.3)$);
}
\renewenvironment{abstract}{
	\begin{list}{}%
	{\setlength{\rightmargin}{1in}%
	\setlength{\leftmargin}{1in}}%
	\item[]\ignorespaces\begin{small}}%
	{\end{small}\unskip\end{list}%
}
\title{\sc Combinatorial Exploration:\\ An Algorithmic Framework For Enumeration}
\author{
	\begin{tabular}{m{2.5in}m{2.5in}}
		\makecell{
			Michael H. Albert\\
			\small Department of Computer Science\\
			\small University of Otago\\
			\small Dunedin, New Zealand\\
			\small \texttt{malbert@cs.otago.ac.nz}
		}
		&
		\makecell{
			Christian Bean\\
			\small Department of Computer Science\\
			\small Reykjavik University\\
			\small Reykjavik, Iceland\\
			\small \texttt{christianbean@ru.is}
		}\\&\\
		\makecell{
			Anders Claesson\\
			\small Division of Mathematics\\
			\small The Science Institute\\
			\small University of Iceland\\
			\small Reykjavik, Iceland\\
			\small \texttt{akc@hi.is}
		}
		&
		\makecell{
			\'Emile Nadeau\\
			\small Department of Computer Science\\
			\small Reykjavik University\\
			\small Reykjavik, Iceland\\
			\small \texttt{emile19@ru.is}
		}\\&\\
		\makecell{
			Jay Pantone\\
			\small Department of Mathematical\\
			\small and Statistical Sciences\\
			\small Marquette University\\
			\small Milwaukee, WI, USA\\
			\small \texttt{jay.pantone@marquette.edu}
		}
		&
		\makecell{
			Henning Ulfarsson\\
			\small Department of Computer Science\\
			\small Reykjavik University\\
			\small Reykjavik, Iceland\\
			\small \texttt{henningu@ru.is}
		}
	\end{tabular}
}
\titleformat{\section}{\large\sc}{\thesection.}{1em}{}
\date{}
\begin{document}
\maketitle

\begin{abstract}
	%!TEX root = combinatorial-exploration.tex

Combinatorial Exploration is a new domain-agnostic algorithmic framework
to automatically and rigorously study the structure of combinatorial
objects and derive their counting sequences and generating functions.
We describe how it works and provide an open-source Python
implementation. As a prerequisite, we build up a new theoretical
foundation for combinatorial decomposition strategies and combinatorial
specifications.

We then apply Combinatorial Exploration to the domain of permutation
patterns, to great effect. We rederive hundreds of results in the literature
in a uniform manner and prove many new ones. These results can be found
in a new public database, the Permutation Pattern Avoidance Library
(PermPAL) at \url{https://permpal.com}. Finally, we give three
additional proofs-of-concept, showing examples of how Combinatorial
Exploration can prove results in the domains of alternating sign
matrices, polyominoes, and set partitions.

\end{abstract}

%% ==== %% ==== %% ==== %% ==== %% ==== %% ==== %% ==== %% ==== %% ==== %% ==== %%
%% ==== %% ==== %% ==== %% ====    SECTION ONE     ==== %% ==== %% ==== %% ==== %%
%% ==== %% ==== %% ==== %% ==== %% ==== %% ==== %% ==== %% ==== %% ==== %% ==== %%

\section{Introduction}
\label{section:introduction}
%!TEX root = combinatorial-exploration.tex

Combinatorial structures are ubiquitous throughout mathematics. Graphs,
permutations, words, polyominoes and other families of combinatorial objects
often play a central role in many different fields. Enumerative
combinatorics is concerned with the elucidation of structural properties of
these families such as counting, classification, and limiting behavior.

A \emph{combinatorial set} is a set of combinatorial objects, each of which is
assigned a nonnegative integer size, with the property that the number of objects
of any given size is finite. The analysis of a given combinatorial set often relies
on domain-specific methods and ad-hoc derivations. In this work, we develop an
algorithmic framework capable of automatically deriving rigorous proofs
about the structure of combinatorial sets and computing their
counting sequences and generating functions.	

At its heart, \emph{Combinatorial Exploration} is the systematic application of
structural strategies that break down a set of combinatorial objects
repeatedly into simpler parts, until a full decomposition into completely
understood parts is obtained. The quickly growing fields of \emph{symbolic
combinatorics} and \emph{analytic combinatorics} revolve around techniques to
derive enumerative information when the structure of a combinatorial set is
already understood.
The literature already contains several effective methods to describe the structure of a combinatorial set and the most fundamental is the theory of combinatorial species, developed by Joyal~\cite{joyal:species-1981, joyal:species-1986} in the 1980s and put into book form by Bergeron, Labelle, and Leroux~\cite{bergeron:species} in the 1990s. More recently, Pivoteau, Salvy, and Soria~\cite{pivoteau:algos-well-founded} have studied recursively defined species from a computational perspective. The constructible classes of Flajolet and Sedgewick~\cite{flajolet:ac} can be seen as a more narrowly focused form of combinatorial species that avoids the language of category theory. Another device for describing the construction of a combinatorial set is the generating tree of Chung, Graham, Hoggatt, and Kleiman~\cite{chung:gen-trees}; it was further formalized in the ECO method of Barcucci, Lungo, Pergola, and Pinzani~\cite{ECO}. The starting point of all of these approaches is foreknowledge of the structure of a combinatorial set.
Combinatorial Exploration addresses the absence of a uniform method for
completing that first step: understanding the structure.

This article is accompanied by an implementation of Combinatorial Exploration
that can be used to apply the algorithms presented here to specific enumerative
problems. That implementation consists of around 7,300 lines of Python code
and is available on Github~\cite{comb-spec-searcher}.
Under our ``plug-and-play'' framework, designed specifically to enable researchers
to use Combinatorial Exploration on new domains, one needs only to provide an
implementation of their specific combinatorial sets together with whichever
domain-specific structural strategies they wish to use. In this way,
we believe that Combinatorial Exploration constitutes the first
component of a new toolkit for enumerative combinatorics. The Github
repository~\cite{comb-spec-searcher} provides a short tutorial to
implement Combinatorial Exploration for the easy domain of binary words
using only about 200 lines of code.

To prove the efficacy of this new framework, we have also applied
Combinatorial Exploration to the domain of permutation patterns. The result
is that we are able to automatically and rigorously prove the results of
dozens of existing papers as well as many new ones. That implementation is
also available on Github~\cite{tilings}, and consists
of around 24,000 lines of Python code. We have additionally created an
online database presenting the results of this work in the field of
permutation patterns. It is called the Permutation Pattern Avoidance
Library (PermPAL)~\cite{permpal-biblatex}, and can be found at
\url{https://permpal.com}. It is inspired by Tenner's Database of
Permutation Pattern Avoidance~\cite{tenner:pp-database-biblatex}, the
\url{graphclasses.org} website~\cite{graphclasses-biblatex}, and of
course the Online Encyclopedia of Integer Sequences~\cite{oeis}.

Section~\ref{section:pp-intro} gives a brief introduction to the field of
permutation patterns, which will be used as an example domain throughout the
paper. In particular, Subsection~\ref{subsection:pp-success} catalogs many of
the successful applications of Combinatorial Exploration in this domain.

Section~\ref{section:combinatorial-exploration} lays down a theoretical
framework for concepts that have been widely used in enumerative
combinatorics for some time. When Combinatorial Exploration is successful,
the result is a \emph{combinatorial specification} that fully describes
the combinatorial set and whose validity is rigorously verified.
By introducing a new type of decomposition function called a
\emph{combinatorial strategy}, we are able to create a more formalized
version of combinatorial specifications. This section then introduces
Combinatorial Exploration and its associated suite of combinatorial algorithms. 

Section~\ref{section:productivity} provides results that guarantee that a
combinatorial specification produced by Combinatorial Exploration
is \emph{productive}, that is, it contains sufficient information to
uniquely determine the counting sequences of the involved sets. We believe
that the formalizations created and explored in
Sections~\ref{section:combinatorial-exploration} and~\ref{section:productivity}
represent, independently of Combinatorial Exploration, important advances
in the understanding of combinatorial
specifications and their use to solve enumerative problems.

Section~\ref{section:transfer-tools} describes the enumerative and analytic
tools that derive various enumerative products from a combinatorial specification,
including polynomial-time counting algorithms, generating functions, and
random sampling procedures.

Sections~\ref{section:pp-results}, \ref{section:asm-results},
\ref{section:polyomino-results},
and~\ref{section:set-partitions-results} demonstrate the impressive efficacy
and wide applicability of
Combinatorial Exploration by applying it to four domains: permutation patterns,
alternating sign matrices, polyominoes, and set partitions, respectively. The
domain of permutation patterns is afforded the most attention and presented
in the most detail, and this is the domain that we have actually
implemented~\cite{tilings}. For each of the remaining three domains, we only
briefly describe one possible way to approach them with Combinatorial
Exploration, and derive by hand one or two combinatorial specifications for
interesting combinatorial sets. For alternating sign matrices, we find
a specification for the set of $132$-avoiding alternating sign matrices, which
has an algebraic generating function. For polyominoes,
we present a specification that enumerates Ferrers diagrams and briefly discuss
one for moon (L-convex) polyominoes, both of which have non-D-finite generating
functions. For set partitions, we derive specifications
for both the $1212$-avoiding and the $111$-avoiding set partitions; the former has
an algebraic generating function, while the latter has a D-finite generating function.
We expect that Combinatorial Exploration would be equally successful in many
other domains, such as inversion sequences, ascent sequences, and more.

Finally, Section~\ref{section:algorithmic} discusses some algorithmic
aspects of our implementation of Combinatorial Explorations, and
Section~\ref{section:final-notes} provides a few concluding remarks and
suggests avenues for continued development.

%% ==== %% ==== %% ==== %% ==== %% ==== %% ==== %% ==== %% ==== %% ==== %% ==== %%
%% ==== %% ==== %% ==== %% ====    SECTION TWO     ==== %% ==== %% ==== %% ==== %%
%% ==== %% ==== %% ==== %% ==== %% ==== %% ==== %% ==== %% ==== %% ==== %% ==== %%

\section{Permutation Patterns}
\label{section:pp-intro}
%!TEX root = combinatorial-exploration.tex

Although Combinatorial Exploration is inherently a domain-agnostic tool, it will
prove useful in the following sections to have an interesting domain at hand. To
that end, this section aims to be a crash course on the field of
\emph{permutation patterns}, an active area of enumerative combinatorics for
several decades that has meaningful connections to computer science, statistical
mechanics, and algebraic geometry. In this section we give a brief introduction
to the field and provide a summary of the many cases in which Combinatorial
Exploration rigorously proves both new and old results.

\subsection{Introduction to Permutation Patterns}
A \emph{permutation} of length $n$ is an ordering of the numbers
$1, 2, \ldots, n$. The \emph{standardization} of an ordering of $n$
distinct positive integers is the permutation of the same length whose
entries have the
same relative order. For example, the standardization of $6283$ is $3142$,
since $2$ is the smallest entry in $6238$ and so it becomes $1$, $3$ is the
second smallest entry and so it becomes $2$, and so on.

We say that a larger permutation $\pi = \pi(1)\pi(2)\cdots\pi(n)$ \emph{contains}
a smaller permutation \(\sigma = \sigma(1)\sigma(2)\cdots\sigma(k)\)
\emph{as a pattern}, and write $\sigma \leq \pi$, if $\pi$ has a (not
necessarily consecutive) subsequence $\pi(i_1)\pi(i_2)\cdots\pi(i_k)$ whose
standardization is equal to $\sigma$. In this context $\sigma$ is called a
\emph{pattern} and the subsequence $\pi(i_1)\pi(i_2)\cdots\pi(i_k)$ is an
\emph{occurrence} of the pattern in $\pi$. For instance, $312 \leq 41257368$
as exhibited by the subsequence $413$ (among others), but one can check that
$321 \not\leq 41257368$. In this case we say that $41257368$ \emph{avoids}
$321$.

A geometric viewpoint turns out to be useful. We can associate each
permutation $\pi$ of length $n$ with the plot of the points $\{(i, \pi(i)): 1
	\leq i \leq n\}$ in the Cartesian plane. Conversely, any finite set of points in
the plane such that no two points lie on the same horizontal or vertical line
can be continuously deformed to a plot of this kind and associated with the
resulting permutation.  The permutation containment relation has a tidy
geometric definition: $\pi$ contains $\sigma$ if some finite subset of the
points in the plot for $\pi$ can be deleted to leave (up to continuous
deformation) the plot of $\sigma$. Figure~\ref{figure:perm-example} shows the
plot of $41257368$, and demonstrates graphically that $41257368$ contains the
permutation $312$.

\begin{figure}
	\begin{center}
		\begin{tikzpicture}[scale=0.4, x=1cm,y=1cm]
			\draw [dotted, gray] (0,0) grid (9,9);
			\draw[ultra thick, gray] (0,0) rectangle (9,9);
			\foreach \Point in {(1,4),
					(2,1), (3,2), %
					(4,5), (5,7), (6,3), %
					(7,6), (8,8)}{
					\node at \Point {\point{2pt}};
				}
			\draw[line width=0.25mm, black] (1,4) circle (10pt);
			\draw[line width=0.25mm, black] (2,1) circle (10pt);
			\draw[line width=0.25mm, black] (6,3) circle (10pt);
		\end{tikzpicture}
	\end{center}
	\caption{The permutation $41257368$ with a circled occurrence of the pattern
		$312$.}
	\label{figure:perm-example}
\end{figure}
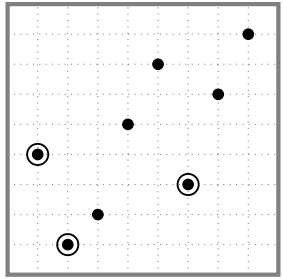

The notion of one permutation containing another is the engine that drives the
entire field of permutation patterns. A \emph{permutation class} (often
abbreviated as simply a \emph{class}) is a downward-closed set of permutations
under the containment order; that is, a set $\CC$ of permutations is a class if
it has the property that if $\pi \in \CC$ and $\sigma \leq \pi$, then $\sigma
	\in \CC$. Every class can be uniquely specified by the minimal set of
permutations that it avoids, called its \emph{basis}, and the class with basis
$B$ is denoted $\Av(B)$. Put plainly, $\Av(B)$ is the downward-closed set of
permutations that avoid all of the permutations in $B$.

In an exercise in Chapter 2 of his seminal work \emph{The Art of Computer
	Science}~\cite[Exercise 2.2.1.5]{knuth:taocp-1}, Knuth asks the reader to
prove that the set of permutations that can be sorted by a stack is
precisely $\Av(231)$\footnote{We omit the set braces in this notation,
	writing e.g., $\Av(123, 3412)$ instead of $\Av(\{123,3412\})$.} and to find
the number of permutations of each length in this set. This innocently
simple exercise set off a rapid and deep exploration of
permutation classes. We recommend Vatter's survey~\cite{vatter:perm-survey}
for a comprehensive introduction to the field.

\subsection{Sums and Skew Sums of Permutations}
\label{subsec:sums-and-skew-sums}

We will rely on two particular operations on permutations to give illustrative
examples in the following sections, and we introduce those now. Given
permutations $\sigma$ and $\tau$ of lengths $k$ and $\ell$ respectively, the
\emph{(direct) sum} $\sigma \oplus \tau$ is the permutation of length $k + \ell$
defined by
\[
	(\sigma \oplus \tau)(i) = \left\{\begin{array}{ll}
		\sigma(i),   & 1 \leq i \leq k        \\
		\tau(i-k)+k, & k+1 \leq i \leq k+\ell
	\end{array}\right..
\]
Geometrically, $\sigma \oplus \tau$ is formed by placing the entries of $\tau$
above and to the right of the entries of $\sigma$, as demonstrated in
Figure~\ref{figure:sum-and-skew-sum}.

\begin{figure}
	\begin{center}
		\begin{tikzpicture}[scale=0.4, x=1cm,y=1cm]
			\draw [dotted, gray] (0.5,0.5) grid (3.5,3.5);
			\draw[ultra thick, gray] (0.5,0.5)  rectangle (3.5,3.5);
			\foreach \Point in {(1,3),(2,1), (3,2)}{
					\node at \Point {\point{2pt}};
				}
			\draw [dotted, gray] (3.5,3.5) grid (5.5,5.5);
			\draw[ultra thick, gray] (3.5,3.5) rectangle (5.5,5.5);
			\foreach \Point in {(4,5),(5,4)}{
					\node at \Point {\point{2pt}};
				}
		\end{tikzpicture}
		\hspace{1in}
		\begin{tikzpicture}[scale=0.4, x=1cm,y=1cm]
			\draw [dotted, gray] (0.5,2.5) grid (3.5,5.5);
			\draw[ultra thick, gray] (0.5,2.5)  rectangle (3.5,5.5);
			\foreach \Point in {(1,5),(2,3), (3,4)}{
					\node at \Point {\point{2pt}};
				}
			\draw [dotted, gray] (3.5,0.5) grid (5.5,2.5);
			\draw[ultra thick, gray] (3.5,0.5) rectangle (5.5,2.5);
			\foreach \Point in {(4,2),(5,1)}{
					\node at \Point {\point{2pt}};
				}
		\end{tikzpicture}
	\end{center}
	\caption{On the left, the permutation $312 \oplus 21 = 31254$, and on the right, the permutation $312 \ominus 21 = 53421$.}
	\label{figure:sum-and-skew-sum}
\end{figure}
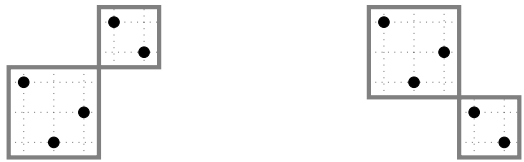

Symmetrically, the skew sum of $\sigma$ and $\tau$, denoted $\sigma \ominus
	\tau$ is formed geometrically but placing the entries of $\tau$ below and to
the right of the entries of $\sigma$. Formally,
\[
	(\sigma \ominus \tau)(i) = \left\{\begin{array}{ll}
		\sigma(i)+\ell, & 1 \leq i \leq k        \\
		\tau(i-k),   & k+1 \leq i \leq k+\ell
	\end{array}\right..
\]

These two operations can be extended to sets in a natural way. For any two sets
of permutations $\AA$ and $\BB$, we define
\[
	\AA \oplus \BB = \{\alpha \oplus \beta \;:\; \alpha \in \AA,\, \beta \in \BB\}
\]
and
\[
	\AA \ominus \BB = \{\alpha \ominus \beta \;:\; \alpha \in \AA,\, \beta \in \BB\}.
\]

\subsection{Enumeration of Permutation Classes}
\label{subsec:enum-perm-classes}

Letting $\CC_n$ denote the set of permutations of length $n$ in $\CC$, we define
the \emph{counting sequence} of a class to be the sequence $a_n = |\CC_n|$, with
the convention that $\CC_0$ contains the empty permutation of length $0$ and
thus has cardinality $1$. The \emph{generating function} of $\CC$ is the formal
power series for its counting sequence:
\[
	f_\CC(x) = \sum_{n \geq 0} |\CC_n|x^n.
\]

A \emph{principal class} is one whose basis contains only a single permutation.
The simplest non-trivial examples are $\Av(12)$ and $ \Av(21)$. The former
contains all decreasing permutations (e.g., $87654321$), while the latter
contains all increasing permutations. The counting sequences of these two
classes are clearly the same, with $a_n = 1$ for all $n$, and their generating
function $f_{\Av(12)} = f_{\Av(21)} = 1/(1-x)$ is rational.

\paragraph{Principal classes of length 3.}
The six principal classes avoiding a pattern of length $3$ are $\Av(123)$,
$\Av(132)$, $\Av(213)$, $\Av(231)$, $\Av(312)$, and $\Av(321)$. The outer two
are essentially the same\footnote{To be more precise, there are obvious
	bijections between them, derived from the symmetries of the square, that
	preserve both length and permutation containment.} when accounting for
symmetries of the permutation containment order, and so are the middle four.
Although these two groupings, called \emph{symmetry classes}, are structurally
very different\footnote{To mention just one difference, $\Av(123)$ contains
	infinite antichains under the containment order, while $\Av(132)$ does not.}, it
turns out that they are both counted by the famous Catalan numbers $a_n = {2n
	\choose n}/(n+1)$ and have the algebraic generating function $(1 -
	\sqrt{1-4x})/(2x)$. We will be making frequent references to the Online
Encyclopedia of Integer Sequences~\cite{oeis} (OEIS), which hosts integer sequences and
vast information about them, e.g., the Catalan numbers appear as sequence
\oeis{A000108} on the site.

\paragraph{Principal classes of length 4.}
We now arrive at the twenty four principal classes avoiding a pattern of length
$4$, and already we encounter the active edge of investigation. These twenty
four principal classes partition into seven symmetry classes. Two turn out to
have the same algebraic generating function (see B{\'o}na~\cite{bona:1342} and
Stankova~\cite{stankova:forbidden-subseqs}) and four turn out to have the same
D-finite generating function\footnote{A generating function is \emph{D-finite}
	if it is the solution of a nontrivial linear differential equation with
	polynomial coefficients.} (see Bousquet-M\'elou~\cite{bousquet-melou:1234-1243-1432-2143}, Gessel~\cite{gessel:symmetric-functions},
Stankova~\cite{Stankova:bijection-1234-4123}, and
West~\cite{West:bijection-1234-1243-2143}). The generating function for the last
symmetry class, containing $\Av(1324)$ and $\Av(4231)$, remains unknown. Through
about a dozen articles~\cite{bevan:1324-9.81, johansson:functional-1324,
	bona:new-upper-bound-1324, claesson:upper-bounds-1324, marinov:counting-1324,
	albert:sw-4231, bona:new-record-1324, conway:1324, conway:4231-50-terms,
	bevan:1324-staircase-conference, bevan:1324-staircase}, the first fifty terms
of the counting sequence of $\Av(1324)$ are known explicitly and rough bounds on
the exponential growth are known.

\paragraph{The $\bm{2\times4}$ classes.}
For the last $25$ years, a particular collection of permutation classes has been
perhaps the most important and influential in guiding the direction of research.
A \emph{\twobyfour{} class} is a permutation class whose basis consists of two
permutations of length four, e.g., $\Av(2413, 3142)$. Accounting for symmetries
of the pattern containment relation, there are essentially $56$ different
\twobyfour{} classes. The quest to understand these $56$ permutation classes has
led to the creation of a surprising number of sophisticated enumerative
techniques that we mention below.

\paragraph{Automatic methods in permutation patterns.}

Automatic and computational methods, both rigorous and empirical, have played a
prominent role in the study of permutation patterns for quite some time.
Figure~\ref{figure:comp-methods} shows many of these methods. An arrow from
method A to method B implies that method B is stronger, i.e.,
every permutation class that can be enumerated
automatically by method A can also be enumerated by method B. A dashed arrow
implies that we conjecture that method B is stronger than method A.
As the figure shows, several of these methods, e.g., Zeilberger's enumeration
schemes, Vatter's
implementation of the regular insertion encoding, and the substitution
decomposition applied to permutation classes with finitely many simples, are
subsumed by Combinatorial Exploration. 

\begin{figure}
	\begin{center}
		\begin{tikzpicture}[scale = 0.75, place/.style = {rectangle,draw = black!50,fill = gray!5,thick, minimum size=0.8cm, font=\scriptsize}, align=center, auto]
			\node [place] (poly) at (2,-1) {Polynomial\\classes~\cite{homberger:effective-automatic}};
			\node [place] (gentree) at (8,-1) {Finitely labeled\\generating tree~\cite{vatter:finitely-labeled}};
			\node [place] (temp) at (-1,0.7) {Templates~\cite{ZeilbergerTemplates}};
			\node [place] (struct) at (-1.5,2.5) {Struct-cover\\verified~\cite{Bean:method-struct}};
			\node [place] (rie) at (6,2.5) {Regular insertion\\encoding~\cite{vatter:regular-insertion-encoding}};
			\node [place] (schemeZ) at (10,0.7) {Zeilberger's finite\\enum. scheme~\cite{zeilberger:enumeration-schemes}};
			\node [place] (sea) at (15.5,0.7) {Scanning elements\\ algorithm~\cite{firro:scanning-elements}};
			\node [place] (simple)  at (2,2.5) {Finitely many simple\\ permutations~\cite{Bassino:method-fin-simp}};
			\node [place] (schemeV) at (10,2.5) {Vatter's finite\\enum. scheme~\cite{vatter:enum-scheme}};
			\node [place] (schemeF) at (10,4.5) {Flexible finite\\enum. scheme~\cite{beirs-ariel:flexible-schemes}};
			\node [place] (tile) at (4,4.5) {Combinatorial\\Exploration};

			\draw [->,semithick] (poly) to (temp);
			\draw [->,semithick] (temp) to (struct);
			\draw [->,semithick] (poly) to (rie);
			\draw [->,semithick] (poly) to (simple);
			\draw [->,semithick] (simple) to (tile);
			\draw [->,semithick] (rie) to (tile);
			\draw [->,semithick] (rie) to (schemeF);
			\draw [->,semithick] (gentree) to (rie);
			\draw [->,semithick] (gentree) to (schemeZ);
			\draw [->,semithick] (schemeZ) to (schemeV);
			\draw [->,semithick] (schemeV) to (schemeF);
			\draw [->,semithick, dashed] ($(schemeF.west)+(0,0)$) to (tile.east);
			\draw [<->,semithick, dashed] (schemeZ) to (sea);

		\end{tikzpicture}
		\caption{Comparison of algorithmic enumeration methods.}
		\label{figure:comp-methods}
	\end{center}
\end{figure}
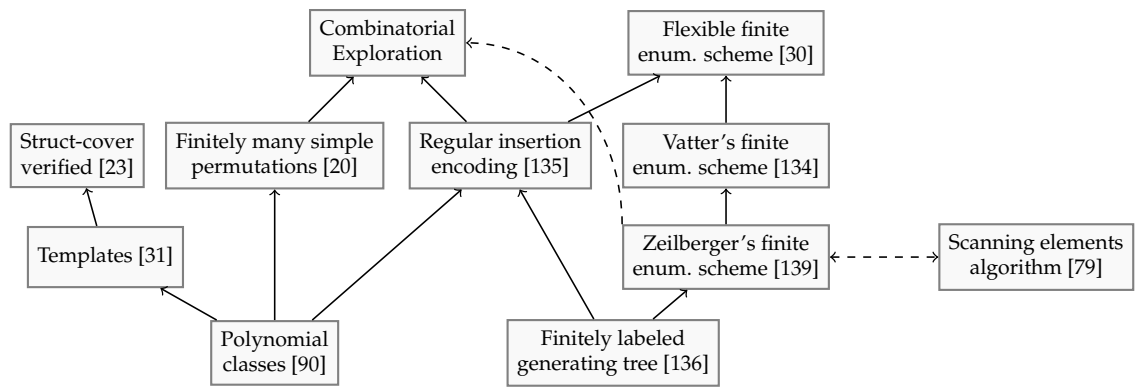

\subsection{Success of Combinatorial Exploration in Permutation Patterns}
\label{subsection:pp-success}

Before introducing Combinatorial Exploration over the next several sections, we
hope to first convince the reader that it is a powerful enumerative
tool by describing some of the new and old results that it can automatically and
rigorously prove. We strongly believe that no algorithmic effort can fully
substitute for the endless creativity of human mathematicians. There are,
however, many examples where the enumeration of a specific combinatorial set is
needed as a stepping stone on the way to a more important theorem, and for tasks
like this Combinatorial Exploration can be tremendously helpful.

As Section~\ref{section:combinatorial-exploration} describes, when the process
of Combinatorial Exploration is successful, the result is a rigorous description
of the permutation class called a \emph{combinatorial specification}, and as
Section~\ref{section:transfer-tools} discusses, this combinatorial specification
can be converted to a system of equations satisfied by the generating function
for the class.

In many cases, this is a univariate algebraic system---each equation expresses
one of the generating functions as the sum or product of others---and can be
solved explicitly or implicitly with an appropriate computer algebra
system\footnote{We find Maple particularly effective.}. In other cases, we
get a bivariate algebraic system with one catalytic variable (to use the
terminology of Zeilberger~\cite{zeilberger:umbral1}). These can be solved by
adapting the methods of Bousquet-M\'elou and
Jehanne~\cite{bousquet-melou:poly-eqs} from single equations to systems of
equations, although the symbolic computations required (in particular,
elimination via Gr\"obner bases or resultants) can be intense. These
systems generically have solutions that are algebraic.

Lastly, in some cases we obtain systems of equations with two or more catalytic
variables, and although we know of no way to solve these systems, they
nonetheless permit calculation of the terms of the counting sequences involved
in polynomial time. It is known that solutions of these systems can be
non-D-finite. In the remainder of this section, we use ``CV'' as an abbreviation
for ``catalytic variable''.

Let us finally say, most importantly, that the successes of Combinatorial
Exploration that are reproved results from the literature are not obtained by
simply taking the ideas from that literature and implementing them as code.
Instead, as described in Section~\ref{section:pp-results}, we have come up with
just a small handful of what we call \emph{combinatorial strategies} that are
sufficient to yield all of the results below in a surprisingly uniform manner,
automatically and rigorously.

\begin{itemize}[label=$\diamond$]
	\item We can find specifications automatically for six out of the seven
		  symmetry classes of permutations avoiding one pattern of length
		  $4$, all but $\Av(1324)$. Four of these six, $\Av(1234)$, 
		  $\Av(1243)$, $\Av(1432)$, and $\Av(2143)$, have the same counting
		  sequence, and we find for them specifications with two CVs,
		  which is consistent with their shared generating function
		  being known to be non-algebraic. Additionally, the first three of these
		  have specifications that are \emph{parallel}, which means they have a
	      similar enough structure that one can automatically produce
	      bijections between any pair of them. The remaining two of the six,
	      $\Av(1342)$ and $\Av(2413)$, also share a counting sequence, and
	      we find specifications with one CV that can
	      easily be solved to produce their algebraic generating function.
	      These are the first \emph{direct} enumerations of these two permutation
	      classes, as previous enumerations were via bijections to each other
	      and to other objects.
	      The final class, $\Av(1324)$, currently remains out of reach, but we are
	      optimistic that several not-yet-implemented strategies may lead to progress.

	\item Out of the $56$ symmetry classes of permutations avoiding two patterns
	      of length $4$---the classes that have been an important testbed
	      for new enumerative techniques over the years
	      \cite{kremer:finite, kremer:forbiddenps, vatter:regular-insertion-encoding, albert:enumeration-three-classes-grid, albert:enumeration-2143-4231, albert:enumeration-4321-1324, atkinson:skew-merged-enum, atkinson:3plus1-avoiding, miner:several-2x4s, le:wilf-classes-pairs, callan:enumeration-1243-2134, pantone:enumeration-3124-4312, albert:inflations-case-studies, bona:perm-class-smooth, bevan:two-2x4s, bevan:one-2x4, albert:c-machines, callan:4321-3241, bloom:two-vignettes, miner:completing-2x4s, kremer:forbidden}
	      ---we can find specifications for all $56$ of them. $53$ have
	      specifications with $0$ CVs or $1$ CV, and so their algebraic generating
	      functions can be determined. The remaining three were conjectured by
	      Albert, Homberger, Pantone, Shar, and Vatter~\cite{albert:c-machines} to
	      be non-D-finite, and correspondingly their specifications have $2$ or
	      more CVs. Figure~\ref{figure:heatmap} on page~\pageref{figure:heatmap}
	      shows heatmaps for the \twobyfour{} classes, derived by sampling
	      many long permutations uniformly at random from each class.

	\item Out of the $317$ symmetry classes of permutations avoiding three
	      patterns of length $4$, again we can find specifications for all of them.
	      One is conjectured to be non-D-finite, and for this class we find a
	      specification with $2$ CVs. For the remaining $316$ we find specifications
	      with $0$ CVs or $1$ CV. The original enumeration of these classes required
	      an enormous amount of work, largely by hand, and can be found
	      in~\cite{mansour:1243-2134-plus-one, mansour:1324-2143-plus-one,%
		  mansour:1324-and-two-other, mansour:1342-and-two-other,%
		  mansour:Wilf-triples-1, mansour:Wilf-triples-2, mansour:triple-all-done,%
		  albert:c-machines}.

	\item Similarly, we can find specifications for all symmetry classes
	      avoiding $n$ patterns of length $4$ for $4 \leq n \leq 24$, all with $0$ CVs
	      or $1$ CV, implying that we can compute all of their generating functions.
	      This work was originally done by Mansour with various
	      collaborators~\cite{mansour:4x4, mansour:4x4-prelim-1, mansour:4x4-prelim-2,%
		  toufik:5x4, mansour:6x4-and-7x4, mansour:eight-and-nine,
		  mansour:subsets-of-four}, requiring hundreds of pages of work, and even then
	      with a vast majority of the derivations left as exercises for the reader.

	\item As Figure~\ref{figure:comp-methods} explained, Combinatorial
	      Exploration can automatically enumerate any insertion encodable permutation
	      class. More significantly, it can do so much more quickly than Vatter's
	      original implementation, which requires a great deal of brute force
	      generation of permutations to determine which configurations are possible
	      and which are not. The particular data structure we use in Combinatorial
	      Exploration for permutation patterns (described in
	      Section~\ref{section:pp-results}) can automatically determine this
	      information without such generation. As a result, we can enumerate
	      insertion encodable classes that were previously out of reach due to
	      computational intensity.

	\item Bevan, Brignall, Elvey Price, and Pantone~\cite{bevan:1324-staircase}
	      found improved lower and upper bounds on the exponential growth rate of
	      $\Av(1324)$ by considering a set of gridded permutations that they called
	      ``domino permutations''. The enumeration of these was challenging,
	      requiring a bijection to a type of arch systems and several pages of
	      work to enumerate these arch systems. We can find a
	      specification and the algebraic generating function for the domino
	      permutations.

	\item Ikeda's thesis~\cite{ikeda:LCI-thesis} focuses on the enumeration of
	      $\Av(52341,\allowbreak 53241,\allowbreak 52431,\allowbreak
	      35142,\allowbreak 42513,\allowbreak 351624)$, which consist of permutations
	      representing the \emph{local complete intersection} Schubert varieties. These
	      Schubert varieties have singularities that are well-behaved, in the sense
	      that their local rings satisfy a certain algebraic condition, see e.g.,
	      Ulfarsson and Woo~\cite{ulfarsson-woo:lci}. The main
	      result in Ikeda's thesis, Theorem 6.9, is incorrect. We are able to find a specification with 1
	      CV and the correct algebraic generating function, which satisfies
	      the equation
	      \begin{align*}
	      	0 &= (13{x}^{9}-69{x}^{8}+162{x}^{7}-115{x}^{6}-496{x}^{5}+1401{x}^{4}-1415{x}^{3}+637{x}^{2}-97x+4)F(x)^{3}\\
	      		&\qquad + (50{x}^{8}-198{x}^{7}+376{x}^{6}+118{x}^{5}-1794{x}^{4}+2840{x}^{3}-1676{x}^{2}+281x-12)F(x)^{2}\\
	      		&\qquad +(60{x}^{7}-192{x}^{6}+248{x}^{5}+483{x}^{4}-1743{x}^{3}+1472{x}^{2}-272x+12)F(x)\\
	      		&\qquad + 24{x}^{6}-64{x}^{5}+40{x}^{4}+295{x}^{3}-432{x}^{2}+88x-4	
	      \end{align*}      
	% (13*x^9-69*x^8+162*x^7-115*x^6-496*x^5+1401*x^4-1415*x^3+637*x^2-97*x+4)*F(x)^3+(50*x^8-198*x^7+376*x^6+118*x^5-1794*x^4+2840*x^3-1676*x^2+281*x-12)*F(x)^2+(60*x^7-192*x^6+248*x^5+483*x^4-1743*x^3+1472*x^2-272*x+12)*F(x) +24*x^6-64*x^5+40*x^4+295*x^3-432*x^2+88*x-4

	\item Defant~\cite{defant:stack-sorting-preimages} studies the preimage of
	      various permutation classes under the West-stack-sorting operation,
	      derives that the preimage of $\Av(321)$ is $\Av(34251, 35241, 45231)$, and
	      gives rough bounds on its exponential growth rate, but is unable to
	      enumerate it. We find a specification with 2 CVs, allowing us to compute
	      $636$ terms in the counting sequence. We are unable to conjecture the
	      generating function from these terms, and thus we predict that it is
	      non-D-finite. Empirically, we estimate with quite a bit of confidence that
	      the growth rate is $6 + 2\sqrt{5}$.

	\item B\'ona and Pantone~\cite{bona:almost-monotone} used Combinatorial
	      Exploration to assist with the study of five classes avoiding four patterns
	      of length $5$, and one class avoiding five patterns of length $6$. All have
	      specifications with $2$ or more CVs, and they were able to predict that
	      several had D-finite generating functions, and give empirical estimates of
	      their asymptotic behavior.

	\item Dimitrov~\cite{dimitrov:box-classes} studies ``box classes''. An
	      incorrect theorem for the enumeration of  $\Av(1 \bb 3 \bb 2)$ is given~\cite[Theorem 4.4]{dimitrov:box-classes}. We
	      are able to find a specification with $1$ CV and compute the correct
	      algebraic generating function, which satisfies the equation
	      \begin{align*}
	      	0 &= {x}^{8}F(x)^6-{x}^{2}({x}^{6}+2{x}^{5}+5{x}^{4}+2{x}^{3}+4{x}^{2}-4x+5)F(x)^{4}\\
	      	& \qquad +(2{x}^{4}+2{x}^{3}+4{x}^{2}-2x+1)F(x)^{2} - 1
	      \end{align*}
	      
	      We can also enumerate many other box classes, including
	      $\Av(1 \bb 2 \bb 3)$, $\Av(1 \bb \bb\; 3 2)$, and $\Av(1 3 \bb \bb\; 2)$.
	% -1+F^6*x^8-x^2*(x^6+2*x^5+5*x^4+2*x^3+4*x^2-4*x+5)*F^4+(2*x^4+2*x^3+4*x^2-2*x+1)*F^2

	\item Egge~\cite{egge:unbalanced} conjectured that a group of permutation
	      classes defined by avoiding two patterns of length $4$ and one of length $6$
	      are all counted by the Schr\"oder numbers. Burstein and
	      Pantone~\cite{burstein:unbalanced} proved one of these conjectures, and then
	      Bloom and Burstein~\cite{bloom:egge} proved the remainder. We are able to
	      find specifications and generating functions for all of these classes.

	\item The skew-vexillary permutations are those in the class
		  $\Av(24153,\allowbreak 25143,\allowbreak 31524,\allowbreak
		   31542,\allowbreak 32514,\allowbreak 32541,\allowbreak 42153,\allowbreak
		  52143,\allowbreak 214365)$.
		  They were studied by Klein, Lewis, and
	      Morales~\cite{klein:skew-vexillary} who showed that their generating
	      function $S(x)$ can be written in terms of the generating function $V(x)$
	      for $\Av(2143)$ as
	      \[
		      S(x) = (1-x)V(x)^2 - V(x) - \frac{1}{1-x}.
	      \]
	      We are able to find a specification for the skew-vexillary
	      permutations expressed in terms of the subclass $\Av(2143)$.
	      This specification treats $\Av(2143)$ as a class whose generating
	      function is already independently known, and it
	      produces the same equation relating $S(x)$ and $V(x)$.
	      
	\item The juxtaposition of two classes $\CC$ and $\DD$ is the class of
	      permutations that can be formed by putting a permutation from $\CC$
	      side-by-side with a permutation from $\DD$ and vertically interleaving the
	      entries in some way. The juxtaposition of two finitely-based classes is
	      itself a finitely-based class. We can enumerate many juxtaposition classes,
	      including the interesting case $\CC = \Av(213, 231)$ and $\DD = \Av(132,
		  312)$, which is an example of two classes with rational generating functions
	      whose juxtaposition is algebraic (and non-rational).

	\item Staircase classes were considered by Albert, Pantone, and
	      Vatter~\cite{albert:growth-merges-staircases}. The $(\Av(21),\allowbreak
		  \Av(21))$-staircase is just $\Av(321)$, but the $(\Av(12),
		  \Av(21))$-staircase is much more interesting. It is predicted to be the class
	      $\Av(2341, 3421, 4231, 52143)$. We find a specification with $2$ CVs, and
	      compute $400$ terms of the counting sequence. We are unable to conjecture
	      its generating function, but we estimate the growth rate to be about
	      $4.1768325$. In the same paper, the class $\Av(4321, 321654, 421653, 431652, 521643, 531642)$ is given as a potential counterexample to a possible general
	      property---we find a $2$ CV specification that permits us to generate enough
	      terms of the counting sequence to determine that this class is unlikely
	      to be such a counterexample.

	\item Guo and Kitaev~\cite{gao:partially-ordered-patterns} explore the
	      notion of ``partially ordered permutations''.  We are able to find
	      specifications for many of the classes they consider.

	\item Elder~\cite{elder:stack-depth-2-and-infinite-series} determined the
	      basis of the class of permutations that can be sorted by a stack of depth $2$
	      and an infinite stack in series; it has basis elements of length at most $8$.
	      The algebraic generating function of this class was found later by
	      Elder, Lee, and Rechnitzer~\cite{elder:stack-depth-2-infinite-GF}. We find a
	      specification with $1$ CV, but the corresponding system of equations is too
	      large for us to be able to solve and produce the algebraic solution.

	\item The ``Widdershins spirals'' are a class of permutations whose
	      generating function was given without proof by Brignall, Engen, and
	      Vatter~\cite{brignall:lwqo-widdershins} and that plays an important role in
	      later work of Brignall and Vatter~\cite{brignall:lwqo}. We find a
	      specification with $0$ CVs and the corresponding rational generating
	      function.

	\item The Schubert varieties with a simple presentation for their
	      cohomology ring are studied by Gasharov and Reiner~\cite{gasharov-reiner:dbi}
	      and are called varieties \emph{defined by inclusion}. They showed that these
	      correspond to the permutation class $\Av(4231, 35142, 42513, 351624)$.
	      The class was enumerated in Albert and Brignall~\cite{albert-brignall:dbi}. We
	      find a specification with $0$ CVs and the corresponding algebraic generating
	      function.
	      
	\item Alland and Richmond~\cite{alland:grassmannian} show that for a 
		permutation $\pi$, the Schubert variety $X_\pi$ has a complete parabolic
		bundle structure if and only if $\pi \in \Av(3412, 52341, 635241)$. We
		are able to find a specification with 1 CV, guaranteeing that this class
		has an algebraic generating functions, but the system is too large for
		us to solve. We can, however, use the tree to generate the first $400$
		terms of the counting sequence and then use these terms to conjecture
		a value for the generating function; it appears to be algebraic with
		a minimal polynomial of degree $6$.
\end{itemize}

There are of course some permutation classes that appear in the literature
that we are not currently able to enumerate with Combinatorial Exploration,
although we are still in the process of creating new strategies that we
believe have potential to help. A few examples are listed below.

\begin{itemize}[label=$\diamond$]
	\item Burstein and Pantone~\cite{burstein:unbalanced} conjecture that
		$\Av(2143, 246135)$ is equinumerous to $\Av(2413)$. The generating
		function of the second class is already known, but we are unable to
		find a specification for the first class.
	\item Atkinson, Ru\v{s}kuc, and Smith~\cite{atkinson:sub-closure} prove
		that the substitution closure of the class is $\Av(123)$ is equal
		to the class $\Av(24153,\allowbreak 25314,\allowbreak 31524,\allowbreak 41352,\allowbreak 246135,\allowbreak 415263)$.
		We have not yet found a specification for this class.
	\item Fink, M\'esz\'aros, and St. Dizier~\cite{fink:zero-one} prove that 
		the Schubert polynomial $\mathfrak{S}_\pi$ is zero-one if and only if
		$\pi$ is in the class $\Av(12543,\allowbreak 13254,\allowbreak 13524, \allowbreak 13542,\allowbreak 21543,\allowbreak 125364,\allowbreak 125634,\allowbreak 215364,\allowbreak 215634,\allowbreak 315264,\allowbreak 315624,\allowbreak 315642)$. We have not found
		a specification for this class.
	\item The permutation class whose enumeration would be of the most interest
		is $\Av(1324)$, and we are unable to find a specification for it at this
		time.
\end{itemize}

\subsection{An Enumerative Example}
We end this subsection by demonstrating one way that the counting sequence and
generating function for $\Av(132)$ can be calculated. Our derivation closely
tracks the concept of proof trees that will be developed in
Section~\ref{section:combinatorial-exploration}.

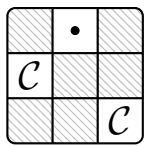
\begin{figure}
	\begin{center}
		\begin{tikzpicture}[scale=0.3, baseline=(current bounding box.center)]
			%		\input{132-node.tex}
			%		\specialnode
			\draw[pattern=north west lines, pattern color=lightgray]  (2,0) to[rounded corners=3pt] (0,0) to (0,2) to (2,2) to cycle;
			\draw[pattern=north west lines, pattern color=lightgray]  (2,4) to (0,4) to[rounded corners=3pt] (0,6) to (2,6) to cycle;
			\draw[pattern=north west lines, pattern color=lightgray] (2,0) rectangle (4,2);
			\draw[pattern=north west lines, pattern color=lightgray] (2,2) rectangle (4,4);
			\draw[pattern=north west lines, pattern color=lightgray] (4,2) rectangle (6,4);
			\draw[pattern=north west lines, pattern color=lightgray]  (6,4) to (4,4) to (4,6) to[rounded corners=3pt] (6,6) to cycle;

			\draw[thick, rounded corners=3pt] (0,0) rectangle (6,6);

			\draw[thick] (2,0) -- (2,6);
			\draw[thick] (4,0) -- (4,6);
			\draw[thick] (0,2) -- (6,2);
			\draw[thick] (0,4) -- (6,4);

			\node at (0,-0.5) {\color{white}.};

			\node at (1, 3) {\Large $\CC$};
			\node at (5, 1) {\Large $\CC$};
			\draw[fill] (3, 5) circle (5pt);
		\end{tikzpicture}
	\end{center}
	\caption{A graphical depiction of an arbitrary nonempty permutation in the class $\CC$ of $132$-avoiding permutations.}
	\label{figure:132-basic-proof}
\end{figure}

Let $\CC = \Av(132)$. Every permutation in $\CC$ is either empty or contains
a maximum entry. Let $\pi \in \CC$ be a nonempty permutation of length $n$
with maximum entry $\pi(k)$. Every entry with index less than $k$ must have
value larger than every entry with index greater than $k$, or else $\pi$
contains a $132$ pattern in which $\pi(k)$ acts as the $3$. Moreover, the
entries with index less than $k$ can together form any (non-standardized) permutation
that avoids $132$ and the entries with index greater than $k$ can also
together form any permutation that avoids $132$. More concretely, the
nonempty permutations in $\Av(132)$ are precisely those in the set
\[
	(\Av(132) \oplus \{1\}) \ominus \Av(132).
\]
This is shown pictorially in Figure~\ref{figure:132-basic-proof}.

This is a full structural description of $\Av(132)$ in the sense that the
set of $132$-avoiding permutations of length $n$ can be completely described
recursively in terms of shorter $132$-avoiding permutations. From this
description we can write down a functional equation satisfied by the
generating function $f(x)$ that counts $132$-avoiding permutations by
length. The statement ``Every $132$-avoiding permutation is either empty or
has a maximum entry with two non-interleaving $132$-avoiding permutations
(one on each side)'' translates directly to the equation
\[
	f(x) = 1 + xf(x)^2.
\]
Upon applying the quadratic equation and selecting the correct root, we find
$f(x) = (1 - \sqrt{1-4x})/(2x)$,	which is the generating function for the
Catalan numbers.

%% ==== %% ==== %% ==== %% ==== %% ==== %% ==== %% ==== %% ==== %% ==== %% ==== %%
%% ==== %% ==== %% ==== %% ====   SECTION THREE    ==== %% ==== %% ==== %% ==== %%
%% ==== %% ==== %% ==== %% ==== %% ==== %% ==== %% ==== %% ==== %% ==== %% ==== %%

\section{Combinatorial Exploration}
\label{section:combinatorial-exploration}
%!TEX root = combinatorial-exploration.tex

Combinatorial Exploration seeks to fully describe the structure of a
combinatorial set. In this context, a \emph{full description} is,
informally, one that accomplishes the following goal.

\begin{quote}
	The set of objects of size $n$ in a combinatorial set $\CC$ can be
	completely and uniquely constructed from the set of objects of size at most
	$n-1$ in $\CC$. That is to say, a set of rules is given to construct each
	object of size $n$ in $\CC$ exactly once.
\end{quote}

The combinatorial literature has often tried to capture this informal notion
using concepts like combinatorial specifications and admissible operators (see,
e.g.,~\cite{flajolet:ac}, which we discuss in more detail later in this
section). In order to give the most complete picture of
Combinatorial Exploration and justify the correctness of its products, we are
required to develop---for, to our knowledge, the first time---a full theoretical
framework that rigorously captures these oft-cited concepts.

Before we begin to build this framework we introduce, again informally,
\emph{proof trees}, a combinatorial structure that represents the output of
successful Combinatorial Exploration and represents the idea of a full description
mentioned above.

We rely on the domain of permutation patterns heavily in this section to explain
concepts and provide examples, but both Combinatorial Exploration and the theoretical
framework we develop can be applied to any type of combinatorial objects that
can be algorithmically constructed and manipulated.

\subsection{Proof Trees}

%Before explaining the algorithmic steps that constitute Combinatorial
%Exploration, we introduce the product it produces upon successful termination.
%A \emph{proof tree} is a tree-like structure that represents the notion of a
%``full description'' described above.

A \emph{proof tree} for a combinatorial set $\CC$ is a rooted tree in the
graph-theoretical sense. Each vertex $v$ represents a combinatorial set $\DD^{(v)}$, the
root representing $\CC$ itself. Each internal vertex $v$ with children
$u_1, \ldots, u_m$ implicitly (in a way we will make clear later) defines a structural
relationship between the parent set $\DD^{(v)}$ and the child sets
$\DD^{(u_1)}, \ldots, \DD^{(u_m)}$.

This relationship can take many forms. In the easiest of cases, it may be that $\DD^{(v)}$
is the disjoint union $\DD^{(u_1)} \sqcup \cdots \sqcup \DD^{(u_m)}$. More generally,
the parent-child relationship represents some way in which $\DD^{(v)}$ is decomposed
into simpler sets $\DD^{(u_1)}, \ldots, \DD^{(u_m)}$, or equivalently, how the child sets
$\DD^{(u_1)}, \ldots, \DD^{(u_m)}$ can be combined to reconstruct the parent $\DD^{(v)}$.

Before we bring these concepts into sharper focus, let us examine the procedure
by which we described and enumerated $\Av(132)$ in the previous section through
this informal lens. Figure~\ref{figure:132-proof-tree} shows a proof tree for
$\CC = \Av(132)$. This proof tree is essentially a pictorial representation of
the following structural description.

\begin{figure}[ht]
	\centering
	\begin{tikzpicture}
		\input{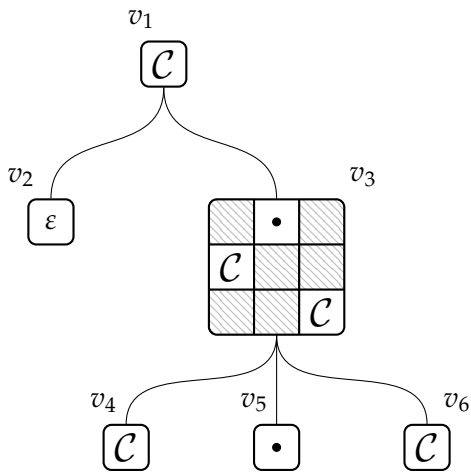}
	\end{tikzpicture}
	\caption{A proof tree for the class $\CC = \Av(132)$.}
	\label{figure:132-proof-tree}
\end{figure}

Every permutation in $\CC = \Av(132)$ is either the empty permutation of size $0$ (denoted $\varepsilon$), or
contains a topmost entry. This is represented in the proof tree by the root
$v_1$, with $\DD^{(v_1)} = \CC$, having a left child $v_2$ representing the set consisting of
only the empty permutation, and a right child $v_3$ representing the set of all
nonempty permutations in $\CC$. As described in the derivation of the previous
section, one may observe that every entry to the left of the maximum must have
greater value than every entry to the right of the maximum, and therefore all
nonempty permutations are represented by the given picture. The parent-children
relationship between these vertices is $\DD^{(v_1)} = \DD^{(v_2)} \sqcup \DD^{(v_3)}$.

The vertex $v_3$ has three children, $v_4$, $v_5$ and $v_6$, the first and third of
which represent the same
set as the root. The second represents the set containing the single permutation of
size $1$. This relationship is \emph{not} a disjoint union---rather, it represents
that any permutation in $\DD^{(v_3)}$ can be formed by selecting any two permutations
$\alpha,\beta \in \CC$, placing all entries in $\alpha$ above and to the left of
all entries in $\beta$, and placing a topmost entry between the two. Moreover,
every choice of $\alpha,\beta$ results in a distinct element of $\DD^{(v_3)}$.

We have introduced the concept of proof trees specifically because they enable
efficient enumeration of the objects represented by the root. We discuss this in
detail in future sections, for now only showing how the proof tree above
allows one to write down both a polynomial-time counting algorithm for $\CC$ and a
system of equations that can be solved to find its generating function.

Each parent-children relationship carries with it sufficient structural information
to determine the number of permutations of size $n$ in the parent from the number of
permutations of various sizes in the children. When the entire proof tree is taken
together, this permits the recursive computation of the number of permutations in the
root of any size.

Recall that if $\DD$ is a combinatorial set, then $\DD_n$ denotes the set of objects in
$\DD$ with size $n$. As we have observed that $\DD^{(v_1)} = \DD^{(v_2)} \sqcup \DD^{(v_3)}$ it
clearly follows that $|\DD^{(v_1)}_n| = |\DD^{(v_2)}_n| + |\DD^{(v_3)}_n|$. Furthermore,
the relationship between $v_3$ and its children implies the equality
\[
	|\DD^{(v_3)}_n| = \sum_{i=0}^{n-1} |\DD^{(v_4)}_i||\DD^{(v_6)}_{n-1-i}|.
\]
For legibility we use the fact that $|\DD^{(v_5)}_n| = 1$ for $n=1$, and $0$ otherwise, to avoid writing a double sum.

Noting that $\DD^{(v_2)}$ contains only the empty permutation of size $0$, we can derive from the proof tree for
$\Av(132)$ the system of recurrences
\begin{align*}
	|\DD^{(v_1)}_n| & = |\DD^{(v_2)}_n| + |\DD^{(v_3)}_n|                     \\
	|\DD^{(v_2)}_n| & = \left\{\begin{array}{ll} 1,&\text{ if $n=0$}\\ 0,&\text{ otherwise}\end{array}\right.               \\
	|\DD^{(v_3)}_n| & = \sum_{i=0}^{n-1} |\DD^{(v_4)}_i||\DD^{(v_6)}_{n-1-i}| \\
	|\DD^{(v_4)}_n| & = |\DD^{(v_1)}_n|                                       \\
	|\DD^{(v_6)}_n| & = |\DD^{(v_1)}_n|
\end{align*}

To derive the generating function of the class we associate to each vertex $v$
the generating function $F_v(x) = \displaystyle\sum_{n \geq 0}
	|\DD^{(v)}_n|x^n$. The structure described above allows one to write down the
system of equations
\begin{align*}
		F_{v_1}(x) &= F_{v_2}(x) + F_{v_3}(x)\\
		F_{v_2}(x) &= 1\\
		F_{v_3}(x) &= xF_{v_4}(x)F_{v_6}(x)\\
		F_{v_4}(x) &= F_{v_1}(x)\\
		F_{v_6}(x) &= F_{v_1}(x)\\
\end{align*}
which can be solved to find that the generating function for $\Av(132)$ is
$F_{v_1}(x) = (1-\sqrt{1-4x})/(2x)$, as expected. We choose to return now to the
viewpoint of recurrences, and we say nothing more about generating functions until
Section~\ref{section:transfer-tools}.

The reader who wishes to see a larger example of a proof tree right now can refer
to Figure~\ref{fig:av1243_1342_2143} on page~\pageref{fig:av1243_1342_2143}. The
combinatorial sets in this proof tree for $\Av(1243, 1342, 2143)$ are represented
by a structure we call \emph{tilings},
introduced in Section~\ref{section:pp-results}.

\subsection{Combinatorial Strategies}

The time has finally arrived to lay the foundation for the theoretical
framework upon which Combinatorial Exploration operates.

Each parent-children relationship in the proof tree for $\Av(132)$ in
Figure~\ref{figure:132-proof-tree} represents a structural decomposition
of the parent combinatorial set into the child combinatorial sets. In this
section we formally define this concept under the name \emph{combinatorial
	strategy}.

An $m$-ary \emph{combinatorial strategy} consists of three components, which we
now discuss in broad terms before stating the full formal definition. The first
and most important component of a strategy $S$ is a \emph{decomposition
	function} $d_S$, which takes as input a combinatorial set $\AA$. If strategy $S$
cannot be meaningfully applied to $\AA$ to decompose it into other sets, then
the output of $d_S$ is the symbol $\DNA$, short for ``does not apply''.
Otherwise, the output is an $m$-tuple of combinatorial sets $(\BB^{(1)}, \ldots,
	\BB^{(m)})$.

The other two components that comprise a combinatorial strategy capture the
requirement that if $d_S(\AA) = (\BB^{(1)}, \ldots, \BB^{(m)})$, then it must be
possible to calculate, in a manner uniform over all input sets $\AA$, the
enumeration of $\AA$ from the enumerations of the sets
$\BB^{(1)}, \ldots, \BB^{(m)}$. The \emph{reliance profile function}
$r_S : \N \to \Z^m$ for a
strategy $S$ encodes which terms in the enumeration of each $\BB^{(i)}$ are
necessary to calculate $|\AA_n|$. Given an input
$n \in \N$, $r_S(n) = (r^{(1)}(n), \ldots, r^{(m)}(n))$, where each
$r^{(i)}(n) \in \Z$ is an upper bound for the largest size of
elements in $\BB^{(i)}$ that are necessary in order to compute $|\AA_n|$. A
negative value indicates that no values $\BB^{(i)}$ are needed. For
instance, if $d_S(\AA) = (\BB^{(1)}, \BB^{(2)}, \BB^{(3)})$ and $r_S(10) = (2, 9, -1)$, this
means that $|\AA_{10}|$ can be computed from the quantities
$|\BB^{(1)}_0|, |\BB^{(1)}_1|, |\BB^{(1)}_2|$ and
$|\BB^{(2)}_0|, |\BB^{(2)}_1|, \ldots, |\BB^{(2)}_{9}|$; no values
$|\BB^{(3)}_i|$ are needed.

The third component of a strategy, the spectrum of \emph{counting functions},
governs precisely how the enumeration of $|\AA_n|$ is computed from the
quantities $|\BB^{(i)}_j|$. Each strategy $S$ possesses a counting function
$c_{S,(n)}$ for each $n \in \N$ such that when the quantities $|\BB^{(i)}_j|$
determined by the reliance profile function are given as input to $c_{(n),S}$,
then the output is $|\AA_n|$.

We want to emphasize at this point that the reliance profile function and the
counting functions of a strategy are fixed functions that do not vary with the
combinatorial set $\AA$ to which the strategy is applied. We now give the full
formal definition of a strategy.

\begin{definition} \label{def:strat}
	Let $\ZZZ$ be the collection of all combinatorial sets. An $m$-ary
	\emph{combinatorial strategy} $S$ consists of three components.
	\begin{enumerate}
		\item A \emph{decomposition function} $d_S : \ZZZ \to \ZZZ^m \cup \{\DNA\}$
		      whose input is a combinatorial set $\AA$ (the parent set), and whose output is
		      either an ordered $m$-tuple of combinatorial sets $(\BB^{(1)}, \ldots,
			      \BB^{(m)})$ (the child sets) or the symbol $\DNA$. When the output is
		      $d_S(\AA) = \DNA$, short for ``does not apply'', we say that $S$ \emph{cannot be
			      applied} to the combinatorial set $\AA$.
		\item A \emph{reliance profile function} $r_S : \N \to \Z^m$ whose input
		      is a natural number $n$ and whose output is an ordered
                      $m$-tuple of integers.
                      We use $r_S^{(i)}(n)$ to denote the $i$th component of $r_S(n)$,
		      i.e.,
		      \[
			      r_S(n) =  (r_S^{(1)}(n), \ldots, r_S^{(m)}(n)).
		      \]
		\item An infinite sequence of \emph{counting functions} $c_{S,(n)}$ indexed by $n \in \N$, each of whose input is $m$ tuples of integers $w^{(1)}, \ldots, w^{(m)}$
		      and whose output is a natural number. The counting functions must have the
		      property that if $d_S(\AA) = (\BB^{(1)}, \ldots, \BB^{(m)})$ and
		      $r_S(n) = (r_S^{(1)}(n), \ldots, r_S^{(m)}(n))$,
		      then for input tuples
		      \[
			      w^{(i)}(n) = \bigl(|\BB^{(i)}_0|, \ldots, |\BB^{(i)}_{r_S^{(i)}(n)}|\bigr)
		      \]
		      we have
		      \[
			      c_{S,(n)}(w^{(1)}(n), \ldots, w^{(m)}(n)) = |\AA_n|.
		      \]
		      To be overly explicit, the domain of $c_{S,(n)}$ is
		      $\N^{D_1} \times \cdots \times \N^{D_m}$, where
		      \[
				  D_k = \max(0,r_S^{(k)}(n)+1),
		      \]
			while the codomain is simply $\N$.
	\end{enumerate}
\end{definition}

The counting functions of a strategy $S$ describe, for any combinatorial set
$\AA$ to which $S$ can be applied and for each $n$ separately, the uniform
method of calculating $|\AA_n|$ from the quantities $|\BB^{(i)}_j|$ indicated by
the reliance profile function. As a result, an application of a strategy
$d_S(\AA) = (\BB^{(1)}, \ldots, \BB^{(m)})$ implies the statement:
\begin{quote}
	The number of objects in $\AA$ of size $n$ can be calculated from the number of
	objects in the $\BB^{(i)}$ of various sizes, without regard to actual nature of
	the objects,
\end{quote}
where the ``various sizes'' are dictated by the (fixed) reliance profile function $r_S$.

Many readers are no doubt familiar with the theory of symbolic combinatorics so
beautifully presented by Flajolet and Sedgewick~\cite{flajolet:ac}, to whom we
owe a substantial debt of gratitude for inspiring much of the framework
constructed here. In Subsection~\ref{subsection:comb-specs} we will say much more
about the similarities and differences between their framework and ours and will
discuss how what we are calling proof trees, for the moment, are simply an
alternative, more pictorial, formulation of a ``combinatorial specification''.

Meanwhile, it is instructive at this point to give several examples
and non-examples of strategies, some of which rely on notions not completely
defined until Section~\ref{section:pp-results}, but which we hope are clear
enough in context.

\begin{example}
	\label{example:size-0-or-not}
	Consider a binary strategy $Z$ that we will call ``size-$0$-or-not''
	constructed as follows.
	\begin{enumerate}
		\item The decomposition function $d_Z$ is defined by $d_Z(\AA) = (\BB, \CC)$
		      where $\BB$ contains precisely the elements of $\AA$ with size $0$ and $\CC$
		      contains all others. It follows that $\AA$ is the disjoint
		      union $\BB \sqcup \CC$. This strategy applies to all possible input sets, and
		      thus the output is never $\DNA$.
		\item The reliance profile function $r_Z$ is $n \mapsto (n, n)$.
%		\item The reliance profile function $r_Z$ is $n \mapsto (0, n)$.
%		\item The counting functions are defined by
%		      \[
%			      c_{Z,(n)}((b_0), (c_0, \ldots, c_n)) = \left\{
%			      	\begin{array}{l}
%			      		b_0, \quad n=0\\
%			      		c_n, \quad n > 0
%			      	\end{array}
%			      	\right..
%		      \]
%		      It is clear by the definition of $d_Z$ that the counting
%		      	functions satisfy
%		      \[
%			     |\AA_n| = c_{Z,(n)}((|\BB_0|), (|\CC_0|,  \ldots, |\CC_n|))
%		      \]
%		      for all $n \in \N$, and therefore they meet conditions of Definition~\ref{def:strat}.
		 \item The counting functions are defined by
		      \[
			      c_{Z,(n)}((b_0, \ldots, b_n), (c_0, \ldots, c_n)) = b_n + c_n
		      \]
		      and since $\AA$ is the disjoint union $\BB \sqcup \CC$, it is clear that the
		      counting functions satisfy
		      \[
			      c_{Z,(n)}((|\BB_0|, \ldots, |\BB_n|), (|\CC_0|,  \ldots, |\CC_n|)) = |\BB_n| + |\CC_n| = |\AA_n|
		      \]
		      for all $n \in \N$, and therefore they satisfy the condition in Definition~\ref{def:strat}.

	\end{enumerate}
	This is the strategy that is applied to the root of the proof tree for
	$\Av(132)$ in Figure~\ref{figure:132-proof-tree}; the output $(\BB,\CC)$ of the
	decomposition function gives the two children of the root.

	Note that since we are guaranteed that $\BB$ will never contain any
	objects of size greater than $0$ we could also have defined the
	reliance profile function to be $n \mapsto (0,n)$, with counting
	functions
	\[
		c_{Z,(n)}((b_0), (c_0, \ldots, c_n)) = \left\{
			\begin{array}{l}
	      		b_0, \quad n=0\\
	      		c_n, \quad n > 0
	      	\end{array}\right.,
	\]
	but we chose to highlight the disjoint union nature of this strategy.

\end{example}

\begin{example}
	\label{example:factor-around-max-entry}
	We now describe the other strategy used in the proof tree of $\Av(132)$,
	although we cannot give all of the details until
	Section~\ref{section:pp-results} (here we describe just a specific case of
	a much more general strategy). We have also slightly simplified it by not
	considering the point itself to be one of the children.

	Consider a binary strategy $F$ that we will call ``factor-around-max-entry''
	constructed as follows.
	\begin{enumerate}
		\item Let $\AA$ be a set of permutations. If there exist sets $\BB$ and $\CC$
		      such that $\AA = (\BB \oplus \{1\}) \ominus \CC$, then they are unique and we
		      define $d_F(\AA) = (\BB, \CC)$. Otherwise, $d_F(\AA) = \DNA$.
		\item Since the objects in $\AA$ of size $n$ are built from pairs $(\beta,
			      \gamma)$ where $|\beta|+|\gamma|=n-1$, the reliance profile function is $n
			      \mapsto (n-1, n-1)$. (This example demonstrates why the values in the reliance
			      profile function come from $\Z$ rather than just $\N$.)
		\item The counting functions are $c_{F,(0)} = 0$ and if $n > 0$
		      \[
			      c_{F,(n)}((b_0, \ldots, b_{n-1}), (c_0, \ldots, c_{n-1}) =
			      \sum_{j=0}^{n-1}b_jc_{n-1-j}.
		      \]
		      We must show that the counting function satisfies the equality
		      \[
			      |\AA_n| = c_{F,(n)}((|\BB_0|,\ldots, |\BB_{n-1}|), (|\CC_0|, \ldots, |\CC_{n-1}|));
		      \]
		      every permutation $\pi \in \AA_n$ can be written as $(\beta \oplus 1) \ominus
			      \gamma$ for $\beta \in \BB$ and $\gamma \in \CC$ in precisely one way, because
		      $\beta$ must involve the entries to the left of the maximum entry and $\gamma$
		      must involve the entries to the right of the maximum entry.
	\end{enumerate}
\end{example}

\begin{nonexample}
	\label{nonexample:strategy}
	In the previous example, the identification of the maximum entry of a
	permutation as a splitting point is necessary for there to be a uniform
	counting function. Suppose instead that we had attempted to define a
	decomposition function $d_F(\AA) = (\BB,\CC)$ if $\AA = \BB \oplus \CC$, with
	$d_F(\AA) = \DNA$ otherwise. This leads to two problems. First, the
	decomposition is not unique; for example $\{1\} \oplus \{12, 132\}$ and
	$\{12\} \oplus \{1, 21\}$ both equal $\{123, 1243\}$, so what should the value of
	$d_F(\{123, 1243\})$ be?

	Secondly and separately, it is not possible to define
	counting functions that are correct uniformly over all possible inputs $\AA$.
	The examples
	\[
		\{12,123,1234\} = \{1,12\} \oplus \{1,12\}
	\]
	and
	\[
		\{12,123,132,1243\} = \{1,12\} \oplus \{1,21\}
	\]
	demonstrate that there is no counting function that can compute the number of
	objects in $\AA$ of each size from the number of objects in $\BB$ and $\CC$ of
	each size, as the first example would require that
	\[
		c_{(F),3}((0,1,1),(0,1,1)) = 1
	\]
	while the second would require that
	\[
		c_{(F),3}((0,1,1),(0,1,1)) = 2,
	\]
	i.e., the two examples have different numbers of permutations of length $3$
	on the left hand side, despite both having the same number of permutations of
	each length on each component of the right-hand side.
\end{nonexample}

\begin{example}
	A two-colored permutation is a permutation in which each of the entries is
	assigned a color, red or blue. Given two sets of (uncolored) permutations $\BB$
	and $\CC$, we define $\BB \oasterisk \CC$ to be the set of two-colored
	permutations where the red entries form a permutation in $\BB$ and the blue
	entries form a permutation in $\CC$. For example, if $132 \in \BB$ and $21 \in
		\CC$, then $\overline{\color{red} 1}{\color{blue} 4}\overline{\color{red}
			5}\overline{\color{red} 3}{\color{blue} 2} \in \CC$.\footnote{For those who are
		reading this in black-and-white, the overlined entries are red while the
		remaining entries are blue.}

	We now define a strategy $M$ that we call ``colored-merge''.
	\begin{enumerate}
		\item The input is a set $\AA$ of two-colored permutations. If there exist sets of
		      (ordinary) permutations $\BB$ and $\CC$ such that $\BB \oasterisk \CC = \AA$,
		      then the choice of $\BB$ and $\CC$ is unique and we define $d_M(\AA) =
			      (\BB,\CC)$. Otherwise $d_M(\AA) = \DNA$.
		\item In order to form the elements of $\AA_n$, it is necessary to know the
		      permutations in $\BB$ and $\CC$ of sizes $0, \ldots, n$. Therefore, the
		      reliance profile function is $n \mapsto (n, n)$.
		\item Each element of $\AA_n$ is formed by selecting an element from $\BB$ of
		      size $j$, an element from $\CC$ of size $n-j$, and then interleaving the two
		      permutations by selecting which $j$ of the $n$ positions and which $j$ of the
		      $n$ values will be occupied by entries from the red permutation. Thus, the
		      counting functions are
		      \[
			      c_{M,(n)}((b_0, \ldots, b_n), (c_0, \ldots, c_n)) = \sum_{j=0}^n{n \choose j}^2b_jc_{n-j}.
		      \]
	\end{enumerate}
\end{example}

Finally, we provide an additional non-example to demonstrate that operations that do not preserve the ability to count are not strategies.

\begin{nonexample}
	For a permutation $\pi$ of length at least $1$, let $\pi^{-}$ be the
	permutation that remains after deleting the largest entry of $\pi$. Extending
	this to sets of permutations, define $\AA^{-} = \{\pi^{-} : \pi \in \AA\}$.

	Consider the potential decomposition function $d_D(\AA) = \AA^{-}$. The equalities
	\[
		d_D(\{123,132,312\}) = \{12\} = d_D(\{123\})
	\]
	demonstrate that it is not possible to uniformly compute the counting sequence
	of $\AA$ from the counting sequence of $\AA^{-}$. Therefore we cannot define a
	strategy with this decomposition function.
\end{nonexample}

When a strategy $S$ is applied to a combinatorial set $\AA$ the
output of the decomposition function is either $d_S(\AA) = \DNA$, in which case
no information has been learned, or $d_S(\AA) = (\BB^{(1)}, \ldots, \BB^{(m)})$,
in which case the strategy has produced what we call a \emph{combinatorial rule}
\[
	\AA \xleftarrow{S} (\BB^{(1)}, \ldots, \BB^{(m)}),
\]
which records the fact that $\AA$ can be decomposed into sets $\BB^{(1)},
\ldots, \BB^{(m)}$ under strategy $S$. Since $S$ must have a reliance
profile function and valid counting functions in order to be considered a
strategy, this implies that the counting sequence of $\AA$ is a function of
the counting sequences of $\BB^{(1)}, \ldots , \BB^{(m)}$.

\subsection{Combinatorial Specifications}
\label{subsection:comb-specs}

In the first chapters of Flajolet and Sedgewick~\cite{flajolet:ac}, the authors introduce
several operations whose input is a tuple of combinatorial sets and whose output
is a single combinatorial set. Among these are the disjoint union
$\AA = \BB \sqcup \CC$, the Cartesian product $\AA = \BB \times \CC$, the sequence operator
$\AA = \text{\scshape Seq}(\BB)$, and several others. Our decomposition
functions are in some sense the ``mirror image'' of their operators; for them,
$\BB$ and $\CC$ are the inputs to a function that outputs
$\AA = \BB \times \CC$, while for us, any set $\AA$ may be the input of a decomposition function
giving $\BB$ and $\CC$ as outputs \emph{if such a decomposition is possible}.
Although this is not a major deviation, it is this perspective that truly drives
the theory of Combinatorial Exploration---combinatorial sets are broken down as
much as possible via \emph{decomposition}, rather than built up from atoms, and
fingers are crossed that after some number of such decompositions a proof tree
can be formed. In the standard viewpoint of symbolic combinatorics, one must
figure out from scratch how to express a combinatorial set as the image of some operator;
Combinatorial Exploration provides a framework of systematic tools to aid
discovering when a combinatorial set can be decomposed into (hopefully) simpler
sets.

Flajolet and Sedgwick~\cite[Definition I.5]{flajolet:ac} additionally define an
operator to be \emph{admissible} if the counting sequence of the output set only
depends on the counting sequence of the input sets, and a \emph{combinatorial
specification} is defined to be a system of operators (more detailed information
is given below). They then define and prove the admissibility
of several useful operators that they call $+$, $\times$, \textsc{Seq},
\textsc{PSet}, \textsc{MSet}, and \textsc{Cyc}, and they call a combinatorial
set \emph{constructible} if it is a component of a combinatorial specification
made up entirely of these particular operators. By now some readers have surely
realized that our notion of a proof tree is similar to the concept of a
combinatorial specification, and this section makes that link more precise.

The situation considered by Flajolet and Sedgewick is that you have a given set of objects with known structure, or perhaps defined in terms of a combinatorial specification. Two important issues, whether the specification defines a unique set of objects and whether the resulting system of generating function equations uniquely defines a power series solution, can be confirmed by inspection, and therefore are typically not explicitly addressed. Our approach is to attempt to search for a specification inside a larger set of combinatorial rules, and thus, we need to develop a theory to prove that our approach does not run into these uniqueness issues.
Our framework, particularly the consideration of reliance
profile functions and counting functions as inherent to a strategy, permits us
to study such questions. In Section~\ref{section:productivity} we define a
strategy to be \emph{productive} if it satisfies certain criteria; we then prove
that a proof tree
composed entirely of rules created by productive strategies is guaranteed to contain sufficient information
to uniquely define the counting sequences of the sets involved.

Before describing in detail the correspondence between proof trees and combinatorial
specifications, we need to point out a particularly useful kind of strategy. As
defined in the previous subsection, a combinatorial rule is a tuple
$(\AA, (\BB^{(1)}, \ldots, \BB^{(m)}),S)$ where $\AA$ and $\BB^{(i)}$ are
combinatorial sets and $S$ is a $m$-ary strategy. Stylistically, we write
\[
	\AA \xleftarrow{S} (\BB^{(1)}, \ldots, \BB^{(m)}).
\]
Although we did not specifically mention this earlier, it is permissible for a
strategy to be $0$-ary, or nullary. We call such a strategy a \emph{verification
strategy} because it signifies that the counting sequence of the input set is
known independently of any structural decomposition. Recall that in the strict
framework we have created for strategies, for a given strategy $S$ the counting
functions $c_{S,(n)}$ are \emph{independent of the input set $\AA$}. As a
result, we can define for each combinatorial set $\AA$ whose counting sequence
is independently known a verification strategy $V_{\AA}$ with the following
properties:
\begin{itemize}[label=$\diamond$]
	\item $d_{V_{\AA}}(\AA) = ()$ (the $0$-tuple) and $d_{V_{\AA}}(\RR) = \DNA$ for all $\RR \neq \AA$,
	\item $r_{V_{\AA}}$ is the function $n \mapsto ()$ (again, the $0$-tuple), and
	\item for all $n$, $c_{V_{\AA},(n)}$ is a $0$-ary function with output $|\AA_n|$.
\end{itemize}

Of course, one does not want to indiscriminately allow the verification
strategies $V_{\AA}$ to be applied for all $\AA$, or else a trivial proof tree
would always be immediately found with no decomposition at all. It is up to the
user who is employing Combinatorial Exploration to select which verification
strategies to allow---in most cases it makes sense to employ any verification
strategy $V_{\AA}$ for which the counting sequence for $\AA$ is either already
known or can be independently calculated. A major benefit of this approach is
that once a proof tree for a combinatorial set $\RR$ is found, future
applications of Combinatorial Exploration can activate the verification strategy
$V_{\RR}$ to use this knowledge without the extra work of rediscovering the
proof tree for $\RR$.

In any case, most applications require the use of at least a few verification
strategies. In the proof tree for $\Av(132)$ in
Figure~\ref{figure:132-proof-tree}, like in nearly all of the proof trees for
pattern-avoiding permutation classes, we require the use of
$V_{\{\varepsilon\}}$, providing the combinatorial rule
\[
	\{\varepsilon\} \xleftarrow{V_{\{\varepsilon\}}} (),
\]
and in most other proof trees we also require the verification strategy $V_{\{1\}}$ which creates the rule
\[
	\{1\} \xleftarrow{V_{\{1\}}} ()
\]
verifying the set containing the single permutation of length $1$.

A \emph{combinatorial specification}, as defined by Flajolet and
Sedgewick~\cite[Definition I.7]{flajolet:ac},
is a set of combinatorial rules with the
property that every combinatorial set that appears on the right-hand side of a
rule appears as the left-hand side of a rule exactly once. The proof tree for
$\CC = \Av(132)$ in Figure~\ref{figure:132-proof-tree}, for instance, may be
written as the specification\label{132-specification}
\begin{align*}
	\CC             & \xleftarrow{Z} (\{\varepsilon\}, \DD) \\
	\DD             & \xleftarrow{F} (\CC, \CC)             \\
	\{\varepsilon\} & \xleftarrow{V_{\{\varepsilon\}}} ()
\end{align*}
Here, $Z$ is the strategy ``size-0-or-not'' from
Example~\ref{example:size-0-or-not}, while $F$ is the strategy
``factor-around-max-entry'' from Example~\ref{example:factor-around-max-entry}.

Proof trees and combinatorial specifications are two mostly equivalent structures used
to represent the same information. Each non-leaf vertex $v$ with children
$u_1, \ldots, u_m$ derived from strategy $S$ corresponds to a combinatorial rule
\[
	D^{(v)} \xleftarrow{S} (D^{(u_1)},\ldots,D^{(u_m)}),
\]
while each leaf $w$ corresponds to either a verification strategy $V$ and rule
\[
	D^{(w)} \xleftarrow{V} ()
\]
or the set $D^{(w)}$ represented by $w$ already appears as a non-leaf vertex in
the tree (equivalently, $D^{(w)}$ is already the left-hand side of some rule).
One difference between these two structures is that proof trees have a
combinatorial set designated as the root, while combinatorial specifications
do not (although in practice there is usually one particular set whose enumeration
is sought, and this acts as a root). Another difference is that the union
of two specifications whose combinatorial sets are disjoint would be
considered a specification, while the same is not true for proof trees. Neither
of these differences is problematic for the work done here.

The proof tree model gives a more intuitive picture of the structural
hierarchy of a combinatorial set as well as better graphical depictions, while
the combinatorial specification model is better suited for proving results and
describing algorithms. Going forward, we use the two terms
largely interchangeably, depending on the context.

As we mentioned in Subsection~\ref{subsection:pp-success} about our successes
in applying Combinatorial Exploration to the field of permutation patterns, we
employ strategies not discussed in this paper that work with counting functions
not just in the size variable $n$ but also in additional variables
$k_1, k_2, \ldots$, leading to catalytic variables in the corresponding system
of generating function equations. The framework thus requires a multivariate
generalization to combinatorial specifications, which we address in future work.

\subsection{Strategic Exploration}

In the proof tree for $\Av(132)$ shown in Figure~\ref{figure:132-proof-tree},
the root $\CC$ and its two children constitute the first step in a structural
description of $\Av(132)$: \emph{every permutation in $\Av(132)$ is either the
	empty permutation, or contains a maximum entry}. Why did we choose the maximum
entry rather than the minimum entry, or the leftmost or rightmost entry? We did
so solely because, with the benefit of hindsight, it is that choice that resulted
in the nice proof tree we have presented.

In order to discover proof trees like the one for $\Av(132)$, the process of
Combinatorial Exploration applies to any given combinatorial set many strategies
simultaneously.
\begin{figure}
	\begin{center}
		%!TEX root = combinatorial-exploration.tex
\begin{tikzpicture}[baseline=(current bounding box.north)]
	\node (root) at (-1, 0.7) {\specialnoderoot};
	\node (empty) at (-2, -2) {\specialnodeempty};
	\begin{scope}[scale=0.3, shift={(-4, -10.65)}]
		\draw[pattern=north west lines, pattern color=lightgray]  (2,0) to[rounded corners=3pt] (0,0) to (0,2) to (2,2) to cycle;
		\draw[pattern=north west lines, pattern color=lightgray]  (2,4) to (0,4) to[rounded corners=3pt] (0,6) to (2,6) to cycle;
		\draw[pattern=north west lines, pattern color=lightgray] (2,0) rectangle (4,2);
		\draw[pattern=north west lines, pattern color=lightgray] (2,2) rectangle (4,4);
		\draw[pattern=north west lines, pattern color=lightgray] (4,2) rectangle (6,4);
		\draw[pattern=north west lines, pattern color=lightgray]  (6,4) to (4,4) to (4,6) to[rounded corners=3pt] (6,6) to cycle;
	
		\draw[thick, rounded corners=3pt] (0,0) rectangle (6,6);
	
		\draw[thick] (2,0) -- (2,6);
		\draw[thick] (4,0) -- (4,6);
		\draw[thick] (0,2) -- (6,2);
		\draw[thick] (0,4) -- (6,4);
		
		\node at (1, 3) {\Large $\CC$};
		\node at (5, 1) {\Large $\CC$};
		\draw[fill] (3, 5) circle (5pt);
	\end{scope}
	
	\ptedge{(root)}{(-0.5,0.7)}{(empty)}{(-0.5,0.9)}
	\ptedge{(root)}{(-0.5,0.7)}{(-0.3,-2)}{(-0.5,0.9)}
\end{tikzpicture}
\qquad
\begin{tikzpicture}[baseline=(current bounding box.north)]
	\node (root) at (-1, 0.7) {\specialnoderoot};
	\node (empty) at (-2, -2) {\specialnodeempty};
	\begin{scope}[scale=0.3, shift={(-3, -10.65)}]
		\draw[pattern=north west lines, pattern color=lightgray]  (2,0) to[rounded corners=3pt] (0,0) to (0,2) to (2,2) to cycle;
		\draw[pattern=north west lines, pattern color=lightgray]  (2,4) to (0,4) to[rounded corners=3pt] (0,6) to (2,6) to cycle;
		\draw[pattern=north west lines, pattern color=lightgray] (2,2) rectangle (4,4);
	
		\draw[thick, rounded corners=3pt] (0,0) rectangle (4,6);
	
		\draw[thick] (2,0) -- (2,6);
		\draw[thick] (0,2) -- (4,2);
		\draw[thick] (0,4) -- (4,4);
		
		\node at (3, 1) {\Large $\CC$};
		\draw[fill] (1, 3) circle (5pt);
		\draw[thick] (2.3, 4.3) -- (3.7, 5.7);
	\end{scope}
	
	\ptedge{(root)}{(-0.5,0.7)}{(empty)}{(-0.5,0.9)}
	\ptedge{(root)}{(-0.5,0.7)}{(-0.3,-2)}{(-0.5,0.9)}
\end{tikzpicture}
\qquad
\begin{tikzpicture}[baseline=(current bounding box.north)]
	\node (root) at (-1, 0.7) {\specialnoderoot};
	\node (empty) at (-2, -2) {\specialnodeempty};
	\begin{scope}[scale=0.3, shift={(-4, -8.65)}]
		\draw[pattern=north west lines, pattern color=lightgray]  (2,0) to[rounded corners=3pt] (0,0) to (0,2) to (2,2) to cycle;
		\draw[pattern=north west lines, pattern color=lightgray]  (4,0) to[rounded corners=3pt] (6,0) to (6,2) to (4,2) to cycle;
		\draw[pattern=north west lines, pattern color=lightgray] (2,2) rectangle (4,4);
	
		\draw[thick, rounded corners=3pt] (0,0) rectangle (6,4);
	
		\draw[thick] (2,0) -- (2,4);
		\draw[thick] (4,0) -- (4,4);
		\draw[thick] (0,2) -- (6,2);
		
		\node at (1, 3) {\Large $\CC$};
		\draw[fill] (3, 1) circle (5pt);
		\draw[thick] (4.3, 2.3) -- (5.7, 3.7);
	\end{scope}
	
	\ptedge{(root)}{(-0.5,0.7)}{(empty)}{(-0.5,0.9)}
	\ptedge{(root)}{(-0.5,0.7)}{(-0.3,-2)}{(-0.5,0.9)}
\end{tikzpicture}
\qquad
\begin{tikzpicture}[baseline=(current bounding box.north)]
	\node (root) at (-1, 0.7) {\specialnoderoot};
	\node (empty) at (-2, -2) {\specialnodeempty};
	\begin{scope}[scale=0.3, shift={(-4, -10.65)}]
		\draw[pattern=north west lines, pattern color=lightgray]  (2,0) to[rounded corners=3pt] (0,0) to (0,2) to (2,2) to cycle;
		\draw[pattern=north west lines, pattern color=lightgray]  (4,2) to (4,0) to[rounded corners=3pt] (6,0) to (6,2) to cycle;
		\draw[pattern=north west lines, pattern color=lightgray] (2,4) rectangle (4,6);
		\draw[pattern=north west lines, pattern color=lightgray] (2,2) rectangle (4,4);
		\draw[pattern=north west lines, pattern color=lightgray] (0,2) rectangle (2,4);
		\draw[pattern=north west lines, pattern color=lightgray]  (6,4) to (4,4) to (4,6) to[rounded corners=3pt] (6,6) to cycle;
	
		\draw[thick, rounded corners=3pt] (0,0) rectangle (6,6);
	
		\draw[thick] (2,0) -- (2,6);
		\draw[thick] (4,0) -- (4,6);
		\draw[thick] (0,2) -- (6,2);
		\draw[thick] (0,4) -- (6,4);
		
		\node at (1, 5) {\Large $\CC$};
		\node at (3, 1) {\Large $\CC$};
		\draw[fill] (5, 3) circle (5pt);
	\end{scope}
	
	\ptedge{(root)}{(-0.5,0.7)}{(empty)}{(-0.5,0.9)}
	\ptedge{(root)}{(-0.5,0.7)}{(-0.3,-2)}{(-0.5,0.9)}
\end{tikzpicture}
	\end{center}
	\caption{Four different combinatorial rules that result from applying
		different strategies---in this case, isolating a point in the four
		different directions---to the same set $\CC$.}
	\label{figure:four-combinatorial-rules}
\end{figure}
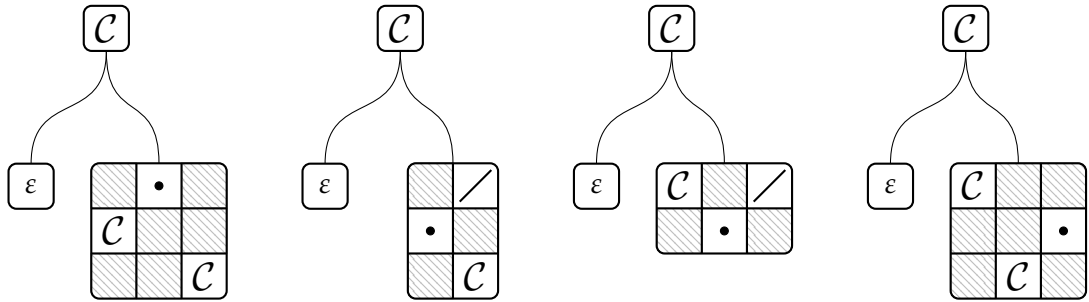

Figure~\ref{figure:four-combinatorial-rules} depicts graphically
the four combinatorial rules that result from splitting $\Av(132)$ into
$\{\varepsilon\}$ and the permutations of length at least 1, and choosing an
extreme direction to draw a point.%
\footnote{Note that in the middle two cases, the two non-point cells do
not separate into their own rows or columns. These pictures only give a
rough idea of the permutations involved, and should not be interpreted too
literally. Section~\ref{section:pp-results} introduces a representation
called a \emph{tiling} that makes these ideas much more concrete.}
Thus, applying these four strategies (isolating a point in each of the four
extreme directions) produces a combinatorial rule. Each rule has the same parent,
and each rule has the child $\{\varepsilon\}$, but the second child of each rule
is different.

Combinatorial Exploration works by repeatedly applying a collection of strategies,
first to the root (the combinatorial set you seek to understand), then to the
children of the rules that are produced by these applications, then to the children
of those rules, and so on. While it sounds at first like this will suffer from
a problem of combinatorial explosion, we shall see that it is surprisingly
effective.

We call the ever-growing collection of combinatorial rules that are discovered
the \emph{decomposition universe}, and we can depict them pictorially by pushing
the graph/tree model just slightly past its useful limit. In a proof tree, the
children of a vertex are just the combinatorial sets produced by one single
strategy applied to the parent, but in the decomposition universe each parent
vertex may have several groups of children, each group coming from the application
of a different strategy.

\begin{figure}
	\centering
	\tikzset{
    trinode/.style={
        draw,
        regular polygon,
        regular polygon sides=3,
        fill=black,
        minimum height=1em
    }
}
\begin{tikzpicture}
	\node[rectangle, draw, thick] (A) at (0, 20) {$\AA$};

	\node[trinode] (N1) at (-3, 18.5) {};
	\node[trinode] (N2) at (0, 18.5) {};
	\node[trinode] (N3) at (3, 18.5) {};
	
	\node[rectangle, draw, thick] (B) at (-3.5, 17) {$\BB$};
	\node[rectangle, draw, thick] (C) at (-2.5, 17) {$\CC$};
	
	\node[rectangle, draw, thick] (D) at (-0.75, 17) {$\DD$};
	\node[rectangle, draw, thick] (E) at (0, 17) {$\EE$};
	\node[rectangle, draw, thick] (F) at (0.75, 17) {$\FF$};
	
	\node[rectangle, draw, thick] (G) at (3, 17) {$\GG$};
	
	\node[rectangle, draw, thick] (H) at (-4, 14) {$\HH$};
	\node[rectangle, draw, thick] (I) at (-3, 14) {$\II$};
	
	\node[rectangle, draw, thick] (J) at (-2, 14) {$\JJ$};
	\node[rectangle, draw, thick] (K) at (-1, 14) {$\KK$};
	
	\node[rectangle, draw, thick] (M) at (0.25, 14) {$\MM$};
	\node[rectangle, draw, thick] (N) at (1.25, 14) {$\NN$};
	\node[rectangle, draw, thick] (O) at (2.25, 14) {$\OO$};
	
	\node[trinode] (N4) at (-3.5, 15.5) {};
	\node[trinode] (N5) at (-1.5, 15.5) {};
	\node[trinode] (N6) at (0, 15.5) {};
	\node[trinode] (N7) at (1.25, 15.5) {};	
	
	\draw[thick] (A.south) to[out=270, in=90] ($(N1.north) + (0, -0.1)$);
	\draw[thick] (A.south) to[out=270, in=90] ($(N2.north) + (0, -0.1)$);
	\draw[thick] (A.south) to[out=270, in=90] ($(N3.north) + (0, -0.1)$);
	
	\draw[thick] (N1.south) to[out=270, in=90] (B.north);
	\draw[thick] (N1.south) to[out=270, in=90] (C.north);
	
	\draw[thick] (N2.south) to[out=270, in=90] (D.north);
	\draw[thick] (N2.south) to[out=270, in=90] (E.north);
	\draw[thick] (N2.south) to[out=270, in=90] (F.north);
	
	\draw[thick] (N3.south) to[out=270, in=90] (G.north);
	
	\draw[thick] (B.south) to[out=270, in=90] ($(N4.north) + (0, -0.1)$);
	\draw[thick] (D.south) to[out=270, in=90] ($(N5.north) + (0, -0.1)$);
	\draw[thick] (E.south) to[out=270, in=90] ($(N6.north) + (0, -0.1)$);
	\draw[thick] (F.south) to[out=270, in=90] ($(N7.north) + (0, -0.1)$);
	
	\draw[thick] (N4.south) to[out=270, in=90] (H.north);
	\draw[thick] (N4.south) to[out=270, in=90] (I.north);
	
	\draw[thick] (N5.south) to[out=270, in=90] (J.north);
	\draw[thick] (N5.south) to[out=270, in=90] (K.north);
	
	\draw[thick] (N7.south) to[out=270, in=90] (M.north);
	\draw[thick] (N7.south) to[out=270, in=90] (N.north);
	\draw[thick] (N7.south) to[out=270, in=90] (O.north);
\end{tikzpicture}
	\caption{An abstract decomposition universe with seven combinatorial rules.}
	\label{figure:universe}
\end{figure}
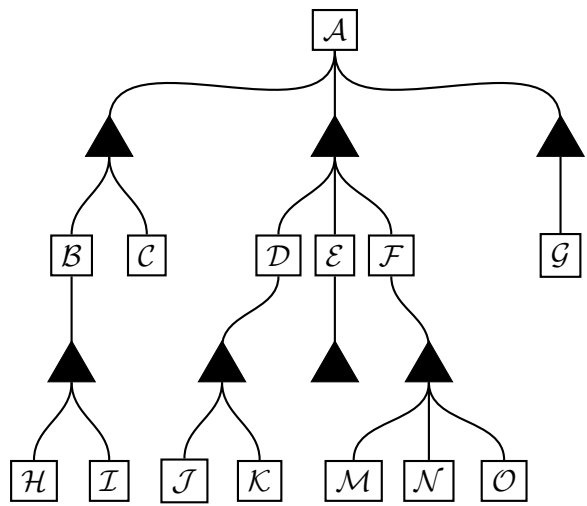

Figure~\ref{figure:universe} shows an abstract decomposition universe that starts
with the root combinatorial set $\AA$. Suppose that some strategy $S_1$ decomposes
$\AA$ into $\BB$ and $\CC$. In the figure, we use a triangular node to group $\BB$
and $\CC$ together. Suppose then that some other strategy $S_2$ decomposes $\AA$
into $\DD$, $\EE$, and $\FF$, and that some third strategy $S_3$ decomposes $\AA$ into
$\GG$. There may be other strategies that do not apply to $\AA$ at all, and it can
be possible that some of the child sets are actually the same set. After
attempting to apply all of one's strategies to $\AA$, the next step is to try
to apply them all to $\BB$, perhaps producing some more child groupings as shown
in the figure, then apply all of them to $\CC$, then $\DD$, and so on. It is possible
that there are sets to which no strategies apply. Moreover, there may be
verification strategies that apply to some of these sets, which would
correspond to a grouping of zero children, as can be seen with set $\EE$ in the
figure.

A simpler and less pictorial but more mathematically manageable way to think of the
decomposition universe is simply as a large set of combinatorial rules. The
universe shown in Figure~\ref{figure:universe} corresponds to the set of
rules (omitting the strategy names)
\begin{align*}
	\AA &\leftarrow (\BB,\CC) & \AA &\leftarrow (\DD, \EE, \FF) & \AA &\leftarrow (\GG) & \BB & \leftarrow (\HH,\II)\\
	\DD &\leftarrow (\JJ, \KK) & \EE &\leftarrow () & \FF &\leftarrow (\MM, \NN, \OO) .
\end{align*}

The goal, of course, is to find within the large decomposition universe a
proof tree with $\AA$ as its root, or equivalently, to find within the large
set of combinatorial rules a combinatorial specification (recall, this means
a set in which all of the symbols on a right-hand side appear exactly once
on a left-hand side) that involves $\AA$. It turns out that there is an efficient
algorithm to detect the presence of a combinatorial specification among a set of
combinatorial rules, as we discuss in the next subsection.

Once a combinatorial specification is detected, Combinatorial Exploration
is complete, and that specification can be used to determine whatever enumerative
information is desired; Section~\ref{section:productivity} explores why a
specification found by Combinatorial Exploration is guaranteed to contain
sufficient information to compute the counting sequences of the sets involved,
and Section~\ref{section:transfer-tools} explains how this is actually done.

Because the framework of Combinatorial Exploration is domain-agnostic, it turns
out to be an exceptionally powerful algorithmic approach.
Given any domain of combinatorial sets, e.g., permutation classes
(Section~\ref{section:pp-results}),
alternating sign matrices (Section~\ref{section:asm-results}),
polyominoes (Section~\ref{section:polyomino-results})
and set partitions (Section~\ref{section:set-partitions-results}),
in order to apply the
Combinatorial Exploration framework, one only needs to
\begin{itemize}[label=$\diamond$]
	\item decide how to effectively represent the combinatorial objects, and sets of
		these objects, as a data structure,
	\item implement structural decomposition strategies that discover
	      combinatorial rules,
	\item optionally tune internal parameters, e.g., how often various
	      strategies are applied, in which order, whether depth or breadth is
	      preferred.
\end{itemize}

Section~\ref{section:algorithmic} provides more detailed information
about the inner workings of our implementation of Combinatorial
Exploration.

\subsection{Detecting a Proof Tree In a Decomposition Universe}
\label{subsection:tree-searcher}

The process of Combinatorial Exploration produces a large set $U$ of
combinatorial rules that we call the decomposition universe and that we may
picture as in Figure~\ref{figure:universe}. While every rule in $U$ is a valid
application of a strategy, not every rule is useful. Among the rules in $U$
there may or may not be a subset $U'$ that is a combinatorial specification involving
the root of the universe (the combinatorial set of interest). If
so, and if we can find this subset, then Combinatorial Exploration has
successfully done its job, and every combinatorial set that is a part of $U'$
can be enumerated!\footnote{This is true as long as the combinatorial
	specification has a unique solution, an issue we address in great detail in
	Section~\ref{section:productivity}.}

This raises the question of how a large\footnote{In our applications, $U$ has
	occasionally contained over a hundred million rules.} set $U$ can be
efficiently searched for a subset $U'$ that is a combinatorial specification.
Viewing the decomposition universe simply as a set of rules, rather than as the
graph-like structure in Figure~\ref{figure:universe}, yields a fast and
simple algorithm to accomplish this.

The defining property of a combinatorial specification $U'$ is that every set
appearing on the right-hand side of a rule also appears on the left-hand side of
exactly one rule. It follows that any rule in $U$ that contains a set on the
right-hand side that does not appear on any left-hand side cannot be
contained in any combinatorial specification $U' \subseteq U$.
Algorithm~\ref{algorithm:specfinder} below works by repeatedly removing such
rules from $U$. We prove in Theorem~\ref{theorem:specfinder} that when the algorithm terminates,
the remaining set $V$ has the
property that every rule in $V$ is in a combinatorial specification within
$U$---that is, $V$ is the union of all combinatorial specifications contained
in $U$. Moreover, if the initial combinatorial set $\AA$ that we are trying
to enumerate is one of the left-hand sides in $V$, then there is at least one
subset of $U'$ of $V$ that is a combinatorial specification for $\AA$.
This combinatorial specification $U'$ is the successful output of the
Combinatorial Exploration process.

\begin{algorithm}[H]
	\caption{Combinatorial Specification Searcher}
	\label{algorithm:specfinder}
	\begin{algorithmic}[1]
		\State \textbf{Input:} A set of combinatorial rules $U$
		\State \textbf{Output:} The union of all combinatorial specifications
			contained in $U$
		\State
		\State $\textit{changed} \gets \textbf{True}$
		\While{$\textit{changed}$}
		\State $\textit{changed} \gets \textbf{False}$
		\For{$\AA \xleftarrow{S} (\BB^{(1)}, \ldots, \BB^{(m)}) \in U$}
		\If{any $\BB^{(j)}$ is not on the left-hand side of any rule in $U$}%not in some $\left(\BB^{(j)} \xleftarrow{S'} S\right) \in U$}
		\State $U \gets U \smallsetminus \{\AA \xleftarrow{S} (\BB^{(1)}, \ldots, \BB^{(m)})\}$
		\State $\textit{changed} \gets \textbf{True}$
		\EndIf
		\EndFor
		\EndWhile
		\State $V \gets U$
		\State \Return $V$
	\end{algorithmic}
\end{algorithm}

\begin{theorem}
\label{theorem:specfinder}
	For any set of combinatorial rules $U$, the set $V$ returned by
	Algorithm~\ref{algorithm:specfinder} is equal to the union of all
	combinatorial specifications that are contained in $U$.
\end{theorem}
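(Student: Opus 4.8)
The plan is to prove the two inclusions separately: that every rule in $V$ lies in some combinatorial specification contained in $U$, and conversely that every rule belonging to some combinatorial specification $U' \subseteq U$ survives the algorithm and hence lies in $V$. The second inclusion is the easier one. Suppose $U' \subseteq U$ is a combinatorial specification and consider a rule $r = \bigl(\AA \xleftarrow{S} (\BB^{(1)}, \ldots, \BB^{(m)})\bigr) \in U'$. By the defining property of a specification, each $\BB^{(j)}$ appears as the left-hand side of some (in fact exactly one) rule of $U'$, hence of some rule of $U$. Since the algorithm only ever removes rules, $U'$ remains a subset of the current rule set throughout the execution, so at no iteration can the condition on line~8 be triggered for $r$: each $\BB^{(j)}$ is always still the left-hand side of a rule (namely its rule in $U'$) in the current $U$. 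Therefore $r$ is never deleted and $r \in V$. This shows $\bigcup \{U' : U' \subseteq U \text{ a specification}\} \subseteq V$.

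For the first (harder) inclusion, I would first record a termination and invariant observation: the algorithm strictly decreases $|U|$ each time \textit{changed} is set to \textbf{True}, so it terminates, and the returned $V$ has the \emph{closure property} that for every rule $\AA \xleftarrow{S} (\BB^{(1)}, \ldots, \BB^{(m)})$ in $V$, each $\BB^{(j)}$ is the left-hand side of at least one rule in $V$ (otherwise the final \textbf{while} pass would have set \textit{changed} to \textbf{True} and removed it). Now fix a rule $r_0 = \bigl(\AA_0 \xleftarrow{S_0} (\BB^{(1)}, \ldots, \BB^{(m)})\bigr) \in V$; I must exhibit a specification $U' \subseteq U$ containing $r_0$. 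The natural construction is to build $U'$ greedily: start with $\{r_0\}$, and maintain a set $W$ of combinatorial sets that appear on some right-hand side of a rule in the partial specification but not yet on a left-hand side. While $W$ is nonempty, pick $\DD \in W$; by the closure property of $V$, there is a rule $r_\DD \in V$ with left-hand side $\DD$; add $r_\DD$ to the partial specification and update $W$ accordingly. Since $\AA_0$ is the left-hand side of $r_0$, it never needs to be added. The key point is that this process, although it may not terminate in finitely many steps, produces in the limit (take the union over the increasing chain of partial specifications) a set $U' \subseteq V \subseteq U$ in which every right-hand-side set is also a left-hand-side set. One must then also arrange that each such set is a left-hand side \emph{exactly once}: this can be enforced by fixing in advance, for each combinatorial set $\DD$ that occurs as a left-hand side in $V$, a single canonical rule $\rho(\DD) \in V$ with left-hand side $\DD$ (choosing $\rho(\DD) = r_0$ when $\DD = \AA_0$), and then letting $U'$ be the set of all rules reachable from $r_0$ by repeatedly following $\rho$ on right-hand-side sets. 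By construction every right-hand-side set of a rule in $U'$ is some $\DD$ with $\rho(\DD) \in U'$, and each such $\DD$ contributes exactly the one rule $\rho(\DD)$, so $U'$ is a genuine combinatorial specification, it is contained in $U$, and it contains $r_0$. Hence $r_0$ lies in a specification contained in $U$, completing the inclusion $V \subseteq \bigcup \{U'\}$.

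The main obstacle I anticipate is not any single hard lemma but rather being careful about the "exactly once" clause in the definition of a combinatorial specification, and about the fact that a specification may be infinite (the reachable set of rules need not be finite). The greedy/canonical-choice argument handles both: fixing the function $\rho$ up front guarantees uniqueness of left-hand sides, and closing under reachability — rather than trying to terminate a process — sidesteps finiteness concerns entirely. A secondary technical point to verify is that the closure property of $V$ genuinely holds at termination; this follows because the algorithm only exits the \textbf{while} loop when a full pass over the current rule set triggers no removal, which is exactly the statement that no rule has a right-hand-side set missing from the left-hand sides. With those two observations in hand, both inclusions are essentially bookkeeping, and together they give $V = \bigcup \{U' : U' \subseteq U \text{ a combinatorial specification}\}$, as claimed.
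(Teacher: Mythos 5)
Your proposal is correct and follows essentially the same two-inclusion argument as the paper: the closure property of $V$ at termination lets you build a specification inside $V$ containing any given rule (your canonical choice function $\rho$ plus reachability is just a bookkeeping variant of the paper's greedy extraction, with the bonus of not needing finiteness of $U$ for that half), and conversely a rule lying in a specification $U'$ can never be deleted because its right-hand-side sets always have their rules in $U'$ present. The one spot to tighten is the sentence ``Since the algorithm only ever removes rules, $U'$ remains a subset of the current rule set throughout the execution,'' which as written assumes the invariant it is meant to establish; phrase it instead as the paper does, by considering the first rule of $U'$ that would be removed and deriving a contradiction (equivalently, an induction over the removal steps), after which the argument is complete.
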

\begin{proof}
	Let $U$ be a set of combinatorial rules and let $T$ denote the union of all
	combinatorial specifications contained in $U$. Let $V$ be the set output by
	Algorithm~\ref{algorithm:specfinder} when $U$ is given as input. We will show
	that $V = T$.

	To see that $V \subseteq T$, choose a combinatorial rule $R_1 \in V$. We will
	show that there is a combinatorial
	specification $U' \subseteq V$ that contains $R_1$, thereby ensuring
	$R_1 \in T$.

	Start by defining $U' = \{R_1\}$ and repeat the following steps. Pick a
	combinatorial set $\BB$ that is on the right-hand side of some rule $R_2$ in $U'$
	but is not on any left-hand side. If no such set exists, then $U'$ is
	a combinatorial specification, and we're finished. Otherwise,
	we now search for a rule in $V \smallsetminus U'$ that has $\BB$ on its
	left-hand side. There must be some such rule, say $R_3$, because otherwise
	Algorithm~\ref{algorithm:specfinder} would have removed $R_2$ from $V$.
	Add $R_3$ to $U'$, and continue to repeat these steps. Since $U$ is finite,
	this process is guaranteed to finish in finitely many steps, and as mentioned above,
	the resulting $U'$ must be a combinatorial specification. Therefore
	$R_1 \in T$ and the inclusion $V \subseteq T$ is proved.

	To see that $T \subseteq V$, let $R \in T$ be a combinatorial rule. Suppose
	that $R \not\in V$. This implies that at some point in the execution of
	Algorithm~\ref{algorithm:specfinder}, $R$ was removed because there was a
	set on its right-hand side that was not on the left-hand side of any remaining
	rule. Since $R \in T$, there exists a combinatorial specification
	$U' \subseteq U$ that contains $R$, and since $R$ was removed, there must have
	some rule (possibly $R$ itself) that was the first rule in $U'$ removed
	by Algorithm~\ref{algorithm:specfinder}. This is a contradiction, as every
	set on the right-hand side of that first-removed rule must still have been
	on the left-hand side of some rule in $U'$ that had not yet been removed.
\end{proof}

The output of Algorithm~\ref{algorithm:specfinder} has now been proved to be the
union of all combinatorial specifications in the universe $U$ of rules.
Typically, one wants to obtain just a single combinatorial specification, and
Algorithm~\ref{algorithm:specgetter} below describes precisely how to quickly
extract from the union of all combinatorial specifications
a random specification $U'$ involving the combinatorial set of interest.
Algorithm~\ref{algorithm:specgetter} is essentially the procedure used to show
that $V \subseteq T$ in the proof of Theorem~\ref{theorem:specfinder}.
It is also possible to extract, e.g., the smallest specification involving a
particular set, but we do not know a fast way to do this.

\begin{algorithm}[H]
	\caption{Combinatorial Specification Extractor}
	\label{algorithm:specgetter}
	\begin{algorithmic}[1]
		\State \textbf{Input:} A set of combinatorial rules $V$ output from Algorithm~\ref{algorithm:specfinder} and a combinatorial set $\AA$ on the left-hand side of some rule in $V$
		\State \textbf{Output:} A combinatorial specification involving $\AA$
		\State
		\State $\textit{seen} \gets \emptyset$
		\State $\textit{spec} \gets \emptyset$
		\State initialize $queue$
		\State push $\AA$ to $queue$
		\While{$queue$}
		\State pop $\AA'$ from $queue$
		\State choose arbitrarily any one rule of the form $\AA' \xleftarrow{S} (\BB^{(1)}, \ldots, \BB^{(m)}) \in V$
		\State $spec \gets spec \cup \{\AA' \xleftarrow{S} (\BB^{(1)}, \ldots, \BB^{(m)})\}$
		\State $seen \gets seen \cup \{\AA'\}$
		\For{$\BB$ in $\BB^{(1)}, \ldots, \BB^{(m)}$}
		\If{$\BB$ not in $seen$}
		\State push $\BB$ to $queue$
		\EndIf
		\EndFor
		\EndWhile
		\State \Return $spec$
	\end{algorithmic}
\end{algorithm}

Our software to perform Combinatorial Exploration is available on GitHub. The main repository can be found at
\url{https://github.com/PermutaTriangle/comb_spec_searcher}~\cite{comb-spec-searcher}
and a second
repository that implements Combinatorial Exploration for the domain of
permutations (see Section~\ref{section:pp-results}) can be found at
\url{https://github.com/PermutaTriangle/Tilings}~\cite{tilings}.
Despite the simplicity
behind the idea of Combinatorial Exploration, we have had to develop a number of
novel algorithms to produce an efficient implementation.
Section~\ref{section:algorithmic} briefly describes just a few of the computational
challenges that needed to be addressed.

%% ==== %% ==== %% ==== %% ==== %% ==== %% ==== %% ==== %% ==== %% ==== %% ==== %%
%% ==== %% ==== %% ==== %% ====    SECTION FOUR    ==== %% ==== %% ==== %% ==== %%
%% ==== %% ==== %% ==== %% ==== %% ==== %% ==== %% ==== %% ==== %% ==== %% ==== %%

\section{Productive Proof Trees and Combinatorial Specifications}
\label{section:productivity}
%!TEX root = combinatorial-exploration.tex

Although combinatorial specifications have appeared often in the literature,
there does not seem to be any uniform treatment of the inconvenient fact that
some combinatorial specifications convey no enumerative information---they
cannot be used to count the number of objects of size $n$ in each combinatorial
set. We call such a combinatorial specification \emph{trivial}.

Consider, for example, the combinatorial specification
\begin{align*}
	\AA & \xleftarrow{S_1} (\BB,\CC)              \\
	\BB & \xleftarrow{S_2} (\{\varepsilon\}, \CC) \\
	\CC & \xleftarrow{S_3} (\{\}, \BB)
\end{align*}
where $\varepsilon$ is a combinatorial object of size $0$, $S_1$ and $S_3$ are any strategies identifying that $|\AA_n| = |\BB_n| + |\CC_n|$ and $|\CC_n| = |\{\}_n| + |\BB_n|$, and $S_2$ is a strategy  identifying that
\[
	|\BB_n| = \sum_{j=0}^n |\{\varepsilon\}_j||\CC_{n-j}|.
\]
Upon simplification, this system of recurrences becomes
\begin{align*}
	|\AA_n| & = |\BB_n| + |\CC_n| \\
	|\BB_n| & = |\CC_n|           \\
	|\CC_n| & = |\BB_n|,
\end{align*}
which of course cannot be used to calculate any terms in the counting sequences
of the combinatorial sets.

Although triviality is obvious in this simple example, avoiding triviality
becomes more delicate as strategies become more intricate.

One might worry that in order to prove that a combinatorial specification is
\emph{productive} (that is, non-trivial), it could be necessary to examine the
global structure of its corresponding proof tree. Surprisingly, this is not the
case. In this section, we present a set of local conditions that can be
placed on strategies, and we call a strategy satisfying these a \emph{productive
	strategy}. We then prove that when each combinatorial rule in a specification is
generated by a productive strategy, the result is a productive combinatorial
specification. We believe that this general result is, on its own, a significant
new contribution to the literature on combinatorial specifications.

% ================================== %
% ================================== %
% ================================== %

\subsection{Reliance Graphs}
\label{subsection:reliance-graph}

Suppose $P$ is a proof tree whose root is
the combinatorial set $\CC$. In order to compute the size of $\CC_{10}$, we can
imagine ``asking'' the root: ``How many elements do you contain of size $10$?''
Suppose that the children of $\CC$ are $\BB^{(1)}, \ldots, \BB^{(m)}$ derived by
the strategy $S$, i.e., that there is a combinatorial rule
\[
	\CC \xleftarrow{S} (\BB^{(1)}, \ldots, \BB^{(m)}).
\]
In order for the root $\CC$ to ``answer'' our question about the number of
elements of size $10$, the information contained in the strategy $S$ tells it to
ask its children $\BB^{(1)}, \ldots, \BB^{(m)}$ for the number of elements they
each contain of particular sizes (according to the reliance profile function),
and how to combine those numbers to get the answer (according to the counting
functions). To be more explicit, suppose $r_S(10) = (r_S^{(1)}(10), \ldots,
r_S^{(m)}(10))$. Then, our request for $|\CC_{10}|$ leads to requests for
$|\BB^{(1)}_j|$ for $j \in \{0, \ldots, r_S^{(1)}(10)\}$, $|\BB^{(2)}_j|$ for $j
\in \{0, \ldots, r_S^{(2)}(10)\}$, etc., and those quantities are passed as
input into $c_{S,(10)}$, whose output is then $|\CC_{10}|$ as desired.  Each
request for $|\BB^{(i)}_j|$ is handled recursively in the same manner---if
$|\BB^{(i)}_j|$ has already been calculated then the answer is known; if there
is a verification strategy $V_{\BB^{(i)}}$ and the corresponding rule $\BB^{(i)}
\xleftarrow{V_{\BB^{(i)}}} ()$, then $|\BB^{(i)}_j|$ is already known;
otherwise, $\BB^{(i)}$ is decomposed via a strategy into its children, and the
recursion continues.

If this process terminates, then the root $\CC$ is able to answer our question,
and we are told the number of elements in $\CC$ of size $10$. We aim to
characterize when this process terminates, and when it does not. In order to do
so, we extend the notion of reliance profile functions from strategies to 
specifications (and, equivalently, proof trees).
The \emph{reliance graph} $R$ of a combinatorial specification $C$
is an infinite directed
graph defined as follows. For each combinatorial set $\BB^{(i)}$, there
is an infinite family of vertices $\{\BB^{(i)}_n : n \in \N\}$. There is a
directed edge from $\BB^{(i_1)}_{j_1}$ to $\BB^{(i_2)}_{j_2}$ if $\BB^{(i_2)}$
is on the right-hand side of the rule whose left-hand side is $\BB^{(i_1)}$,
and the reliance profile
function $r_S$ dictates that $\BB^{(i_1)}_{j_1}$ relies on $\BB^{(i_2)}_{j_2}$,
i.e., if $\BB^{(i_2)}$ is the $\ell$th element of $d_S(\BB^{(i_1)})$, then $j_2
\leq r_S^{(\ell)}(j_1)$. Informally a directed edge from $\BB^{(i_1)}_{j_1}$ to
$\BB^{(i_2)}_{j_2}$ conveys that in order to find the count $|\BB^{(i)}_{j_1}|$
it is required to first calculate $|\BB^{(i_2)}_{j_2}|$.

\definecolor{superlightgray}{rgb}{0.85, 0.85, 0.85}

We take this opportunity to examine a couple of examples, first looking once
again at the specification for $\Av(132)$ on page~\pageref{132-specification}
using strategies $Z$, $F$, $V_{\{\varepsilon\}}$, and $V_{\{1\}}$ described in
Subsection~\ref{subsection:comb-specs}.

The strategy $Z$ applied to the root carries the reliance profile function $n
	\mapsto (n, n)$, the strategy $F$ carries the reliance profile function $n
	\mapsto(n-1, n-1)$, while the other two strategies are verification
	strategies and carry the reliance profile function $n \mapsto ()$. An
	abbreviated portion of the reliance graph for this specification is shown
	on the left in
	Figure~\ref{figure:reliance-graph-examples}. In this graph, each vertex
	$\CC_n$ is the source of a directed edge to each of $\{\varepsilon\}_i$ and
	$\DD_i$ for $i \leq n$, each vertex $\DD_n$ is the source of a directed edge
	to each of $\CC_i$ for $i \leq n-1$, and each vertex $\{\varepsilon\}_n$ is
	the source of no directed edges (being the parent of a verification
	strategy). In an attempt to make the graph more readable, we have drawn edges
	in gray whose reliance is not actually used in the counting formulas,
	e.g., although $|\DD_1|$ is technically an input into the counting formula
	for $|\CC_2|$, it is not used as part of the computation in that counting
	formula.

\begin{figure}
	\centering
	\ \hfill
	\begin{tikzpicture}

	\node[circle, fill, draw, label={[label distance=-0.2cm]265:$\{\varepsilon\}_2$}] (e2) at (0,0) {};
	\node[circle, fill, draw, label={[label distance=-0.2cm]265:$\{\varepsilon\}_1$}] (e1) at (1.5,0) {};
	\node[circle, fill, draw, label={[label distance=-0.2cm]265:$\{\varepsilon\}_0$}] (e0) at (3,0) {};
	
	\node[circle, fill, draw, label={[label distance=0cm]265:$\CC_2$}] (c2) at (0,1.5) {};
	\node[circle, fill, draw, label={[label distance=0cm]265:$\CC_1$}] (c1) at (1.5,1.5) {};
	\node[circle, fill, draw, label={[label distance=0cm]265:$\CC_0$}] (c0) at (3,1.5) {};
	
	\node[circle, fill, draw, label={[label distance=0cm]265:$\DD_2$}] (d2) at (0,3) {};
	\node[circle, fill, draw, label={[label distance=0cm]265:$\DD_1$}] (d1) at (1.5,3) {};
	\node[circle, fill, draw, label={[label distance=0cm]265:$\DD_0$}] (d0) at (3,3) {};
	
	\draw[-latex, ultra thick, shorten >=0.8, shorten <=0.8, superlightgray] (c2) to[out=-65, in=160] (e1);
	\draw[-latex, ultra thick, shorten >=0.8, shorten <=0.8, superlightgray] (c2) to[out=-55, in=165] (e0);
	\draw[-latex, ultra thick, shorten >=0.8, shorten <=0.8, superlightgray] (c1) to[out=-35, in=130] (e0);
	
	\draw[-latex, ultra thick, shorten >=0.8, shorten <=0.8, superlightgray] (c2) to[out=65, in=200] (d1);
	\draw[-latex, ultra thick, shorten >=0.8, shorten <=0.8, superlightgray] (c2) to[out=55, in=195] (d0);
	\draw[-latex, ultra thick, shorten >=0.8, shorten <=0.8, superlightgray] (c1) to[out=35, in=230] (d0);
	
	% Redraw so it's over the gray arrows
	\node[circle, fill, draw, label={[label distance=0cm]265:$\DD_1$}] (d1) at (1.5,3) {};
	\node[circle, fill, draw, label={[label distance=0cm]265:$\DD_0$}] (d0) at (3,3) {};
	% end redraw
	
	\draw[-latex, ultra thick, shorten >=0.8, shorten <=0.8] (c2) -- (d2);
	\draw[-latex, ultra thick, shorten >=0.8, shorten <=0.8] (c1) -- (d1);
	\draw[-latex, ultra thick, shorten >=0.8, shorten <=0.8] (c0) -- (d0);
	\draw[-latex, ultra thick, shorten >=0.8, shorten <=0.8] (c2) -- (e2);
	\draw[-latex, ultra thick, shorten >=0.8, shorten <=0.8] (c1) -- (e1);
	\draw[-latex, ultra thick, shorten >=0.8, shorten <=0.8] (c0) -- (e0);
	
	\draw[-latex, ultra thick, shorten >=0.8, shorten <=0.8] (d2) to[out=-65, in=160] (c1);
	\draw[-latex, ultra thick, shorten >=0.8, shorten <=0.8] (d2) to[out=-55, in=165] (c0);
	\draw[-latex, ultra thick, shorten >=0.8, shorten <=0.8] (d1) to[out=-35, in=130] (c0);

	\node at (-0.7, 0) {\large $\cdots$};
	\node at (-0.7, 1.5) {\large $\cdots$};
	\node at (-0.7, 3.0) {\large $\cdots$};
\end{tikzpicture}
	\hfill
	\begin{tikzpicture}
	\node[circle, fill, draw, label={[label distance=0cm]265:$\CC_2$}] (c2) at (0,0) {};
	\node[circle, fill, draw, label={[label distance=0cm]265:$\CC_1$}] (c1) at (1.5,0) {};
	\node[circle, fill, draw, label={[label distance=0cm]265:$\CC_0$}] (c0) at (3,0) {};
	
	\node[circle, fill, draw, label={[label distance=0cm]200:$\BB_2$}] (b2) at (0,1.5) {};
	\node[circle, fill, draw, label={[label distance=0cm]200:$\BB_1$}] (b1) at (1.5,1.5) {};
	\node[circle, fill, draw, label={[label distance=0cm]200:$\BB_0$}] (b0) at (3,1.5) {};
	
	\node[circle, fill, draw, label={[label distance=0cm]265:$\AA_2$}] (a2) at (0,3) {};
	\node[circle, fill, draw, label={[label distance=0cm]265:$\AA_1$}] (a1) at (1.5,3) {};
	\node[circle, fill, draw, label={[label distance=0cm]265:$\AA_0$}] (a0) at (3,3) {};

	\draw[-latex, ultra thick, shorten >=0.8, shorten <=0.8, superlightgray] (a2) to[out=-65, in=160] (b1);
	\draw[-latex, ultra thick, shorten >=0.8, shorten <=0.8, superlightgray] (a2) to[out=-55, in=165] (b0);
	\draw[-latex, ultra thick, shorten >=0.8, shorten <=0.8, superlightgray] (a1) to[out=-35, in=130] (b0);
	
	\draw[-latex, ultra thick, shorten >=0.8, shorten <=0.8, superlightgray] (b2) to[out=-65, in=160] (c1);
	\draw[-latex, ultra thick, shorten >=0.8, shorten <=0.8, superlightgray] (b2) to[out=-55, in=165] (c0);
	\draw[-latex, ultra thick, shorten >=0.8, shorten <=0.8, superlightgray] (b1) to[out=-35, in=130] (c0);
	
	\draw[-latex, ultra thick, shorten >=0.8, shorten <=0.8, superlightgray] (c2) to[out=65, in=200] (b1);
	\draw[-latex, ultra thick, shorten >=0.8, shorten <=0.8, superlightgray] (c2) to[out=55, in=195] (b0);
	\draw[-latex, ultra thick, shorten >=0.8, shorten <=0.8, superlightgray] (c1) to[out=35, in=230] (b0);
	
	\draw[-latex, ultra thick, shorten >=0.8, shorten <=0.8, superlightgray] (a2) -- (c1);
	\draw[-latex, ultra thick, shorten >=0.8, shorten <=0.8, superlightgray] (a1) -- (c0);
	\draw[-latex, ultra thick, shorten >=0.8, shorten <=0.8, superlightgray] (a2) to[out=-70, in=160] (c0);
	
	% redraw
	\node[circle, fill, draw, label={[label distance=0cm]200:$\BB_1$}] (b1) at (1.5,1.5) {};
	\node[circle, fill, draw, label={[label distance=0cm]200:$\BB_0$}] (b0) at (3,1.5) {};
	% end

	\draw[-latex, ultra thick, shorten >=0.8, shorten <=0.8] (a2) -- (b2);
	\draw[-latex, ultra thick, shorten >=0.8, shorten <=0.8] (a1) -- (b1);
	\draw[-latex, ultra thick, shorten >=0.8, shorten <=0.8] (a0) -- (b0);
	
	\draw[-latex, ultra thick, shorten >=0.8, shorten <=0.8] (c2) to[out=70, in=-70] (b2);
	\draw[-latex, ultra thick, shorten >=0.8, shorten <=0.8] (b2) to[out=-110, in=110] (c2);
	
	\draw[-latex, ultra thick, shorten >=0.8, shorten <=0.8] (c1) to[out=70, in=-70] (b1);
	\draw[-latex, ultra thick, shorten >=0.8, shorten <=0.8] (b1) to[out=-110, in=110] (c1);
	
	\draw[-latex, ultra thick, shorten >=0.8, shorten <=0.8] (c0) to[out=70, in=-70] (b0);
	\draw[-latex, ultra thick, shorten >=0.8, shorten <=0.8] (b0) to[out=-110, in=110] (c0);
	
	\draw[-latex, ultra thick, shorten >=0.8, shorten <=0.8] (a2) to[out=-65, in=45] (c2);
	\draw[-latex, ultra thick, shorten >=0.8, shorten <=0.8] (a1) to[out=-65, in=45] (c1);
	\draw[-latex, ultra thick, shorten >=0.8, shorten <=0.8] (a0) to[out=-65, in=45] (c0);
	
	\node at (-0.7, 0) {\large $\cdots$};
	\node at (-0.7, 1.5) {\large $\cdots$};
	\node at (-0.7, 3.0) {\large $\cdots$};
\end{tikzpicture}
	\hfill\ 
	\caption{Partial depictions of two reliance graphs.}
	\label{figure:reliance-graph-examples}
\end{figure}
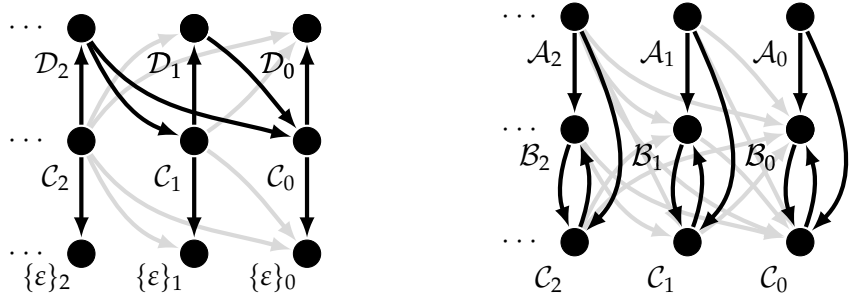

On the other hand, the trivial combinatorial
specification at the beginning of this section has the
reliance graph that is partially shown on the right
in Figure~\ref{figure:reliance-graph-examples}.
It is clear from this reliance graph that the recursive enumeration procedure
that we have described fails. When the vertex $\BB$ is asked for the number of its
elements of size $2$, it poses the same question to $\CC$, which in turns poses
the same question to $\BB$, \emph{ad infinitum}.

The characteristic of a reliance graph that determines whether our recursive
enumeration procedure is effective is the absence or presence of infinite
directed walks; the reliance graph on the right
in Figure~\ref{figure:reliance-graph-examples}
possesses infinite walks (e.g., $\AA_2 \to \BB_2 \to \CC_2 \to \BB_2 \to \CC_2
\to \cdots$), while the reliance graph on the left
does not. In this subsection, we prove
that if the reliance graph of a combinatorial specification has no infinite
walks, then the combinatorial specification uniquely determines the counting
sequences of all sets involved.

We start by discussing what it means for a combinatorial specification to
uniquely determine its counting sequences. For each combinatorial rule $\AA
\xleftarrow{S} (\BB^{(1)}, \ldots, \BB^{(m)})$ and each $n \in \N$ we can obtain
a symbolic equation relating $|\AA_n|$ with the various quantities
$|\BB^{(i)}_j|$ dictated by the reliance profile function of $S$. It is
convenient here to take the ``combinatorial specification'' perspective instead
of the ``proof tree'' perspective. Let $P$ be a combinatorial specification
involving $N$ combinatorial sets $\BB^{(1)}, \ldots, \BB^{(N)}$. This implies
that there are $N$ combinatorial rules:
\begin{align*}
	\BB^{(1)} & \xleftarrow{S_1} (\BB^{(i_{1,1})}, \BB^{(i_{1,2})}, \ldots, \BB^{(i_{1,m_1})})  \\
	\BB^{(2)} & \xleftarrow{S_2} (\BB^{(i_{2,1})}, \BB^{(i_{2,2})}, \ldots, \BB^{(i_{2,m_2})})  \\
	          & \; \vdots                                                                      \\
	\BB^{(N)} & \xleftarrow{S_N} (\BB^{(i_{N,1})}, \BB^{(i_{N,2})}, \ldots, \BB^{(i_{N,m_N})}).
\end{align*}
To each of these rules we associate an infinite sequence of equations, one for
each $n \in \N$, derived from the corresponding counting function:
\[
	b^{(j)}_n = c_{S_j,(n)}\left(\left(b^{(i_{j,1})}_0, b^{(i_{j,1})}_1, \ldots, b^{(i_{j,1})}_{r_{S_j}^{(1)}(n)}\right), \ldots, \left(b^{(i_{j,m_j})}_0, b^{(i_{j,m_j})}_1, \ldots, b^{(i_{j,m_j})}_{r_{S_j}^{(m_j)}(n)}\right)\right).
\]

For example, the strategy $Z$ from Example~\ref{example:size-0-or-not}, when
producing a combinatorial rule $\BB^{(1)} \xleftarrow{Z} (\BB^{(2)},\BB^{(3)})$
produces symbolic equations
\[
	b^{(1)}_0 = b^{(2)}_0 + b^{(3)}_0, \qquad b^{(1)}_1 = b^{(2)}_1 + b^{(3)}_1, \qquad b^{(1)}_2 = b^{(2)}_2 + b^{(3)}_2, \qquad \ldots.
\]
The strategy $F$ from Example~\ref{example:factor-around-max-entry}, when
producing a combinatorial rule $\BB^{(1)} \xleftarrow{F} (\BB^{(2)}, \BB^{(3)})$
produces symbolic equations
\[
	b^{(1)}_0 = 0, \qquad b^{(1)}_1 = b^{(2)}_0b^{(3)}_0, \qquad b^{(1)}_2 = b^{(2)}_0b^{(3)}_1 + b^{(2)}_1b^{(3)}_0, \qquad \ldots.
\]
For a verification strategy, the right-hand sides of the symbolic equations are
just constants. For instance, the verification strategy $V_{\varepsilon}$ used
in the combinatorial specification for $\Av(132)$ produces symbolic equations
\[
	|\{\varepsilon\}_0| = 1, \qquad |\{\varepsilon\}_1| = 0, \qquad |\{\varepsilon\}_2| = 0, \qquad \ldots.
\]

We are now ready to state an important definition that will be the focus of
the remainder of this section.

\begin{definition}
	A proof tree involving $N$ combinatorial sets and its corresponding
	combinatorial specification are called \emph{productive} if the infinite
	system of equations produced by the counting functions has a unique solution
	in $\left(\C^{\N}\right)^N$.
\end{definition}

Productivity is clearly the property that one wants a combinatorial
specification to possess, otherwise the specification may not be useful.
The following theorem shows that the productivity of a specification can be
checked by examining its reliance graph.

\begin{theorem}
	\label{theorem:unique-counting-series-solution}
	Let $P$ be a proof tree (or the corresponding specification) 	involving combinatorial sets $\BB^{(1)}, \BB^{(2)},
		\ldots, \BB^{(N)}$ and whose reliance graph contains no infinite
		directed walks. Let $\SSS(P)$ be the system of equations in the
		indeterminates $\{b^{(i)}_j \,:\, j \in \N, \; 1 \leq i \leq N\}$.
		There exists a unique solution to the system
	\[
		\left(\left(\wt{b}_0^{(1)},\wt{b}_1^{(1)},\ldots\right),\left(\wt{b}_0^{(2)},\wt{b}_1^{(2)},\ldots\right), \ldots, \left(\wt{b}_0^{(N)},\wt{b}_1^{(N)},\ldots\right)\right) \in \left( \C^\N \right)^N.
	\]
	In other words, $P$ is a productive proof tree. 
\end{theorem}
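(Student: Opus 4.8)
The plan is to exploit the fact that the reliance graph $R$ has finite out-degree and no infinite directed walk in order to equip it with a well-founded rank function, and then to establish existence and uniqueness of the solution to $\SSS(P)$ by a single induction along that rank. First I would record that $R$ has finite out-degree at every vertex: a vertex $\BB^{(i)}_j$ has outgoing edges only to vertices $\BB^{(\ell)}_{j'}$ where $\BB^{(\ell)}$ is one of the finitely many sets on the right-hand side of the unique rule with left-hand side $\BB^{(i)}$, and where $j'$ ranges over $\{0,\ldots,r_S^{(k)}(j)\}$ for the relevant coordinate $k$ of the reliance profile function; each such range is finite, so there are only finitely many out-neighbours. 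Combined with the hypothesis that $R$ has no infinite directed walk, a K\"onig's-lemma argument then shows that for every vertex $v$ the lengths of directed walks starting at $v$ are bounded: the walks issuing from $v$ form a tree (a walk $W'$ being a child of $W$ when $W'$ extends $W$ by one edge) which is finitely branching because $R$ has finite out-degree, so if $v$ admitted walks of unbounded length this tree would be infinite and hence, by K\"onig's lemma, would contain an infinite branch, i.e.\ an infinite directed walk from $v$. Define $\rho(v)$ to be the maximum length of a directed walk starting at $v$; this is a well-defined natural number, and prepending a single edge shows that $\rho(u) < \rho(v)$ whenever $(v,u)$ is an edge of $R$.

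Next I would isolate the local structure that makes the induction work. For each indeterminate $b^{(i)}_j$ the specification supplies exactly one equation, namely the $n=j$ instance of the equation attached to the unique rule whose left-hand side is $\BB^{(i)}$ — here the defining property of a combinatorial specification (each set is the left-hand side of exactly one rule) is precisely what guarantees the system is neither over- nor under-determined. Moreover, by the definition of the reliance graph, every indeterminate appearing on the right-hand side of that equation has the form $b^{(\ell)}_{j'}$ with $\BB^{(\ell)}_{j'}$ an out-neighbour of $\BB^{(i)}_j$, and therefore $\rho\bigl(\BB^{(\ell)}_{j'}\bigr) < \rho\bigl(\BB^{(i)}_j\bigr)$. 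In the base case $\rho\bigl(\BB^{(i)}_j\bigr) = 0$, the vertex has no out-neighbours, which forces every reliance value of the governing rule to be negative at size $j$, so the corresponding counting function is nullary and $b^{(i)}_j$ is pinned to a fixed constant; this in particular covers all instances coming from verification rules.

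The theorem then follows by strong induction on $\rho$. For existence, one defines $\widetilde{b}^{(i)}_j$ recursively along $\rho$ by evaluating the counting function $c_{S,(j)}$ on the already-defined values at strictly smaller rank; by the previous paragraph this is well-posed and the resulting family satisfies every equation of $\SSS(P)$. For uniqueness, any solution must, by induction on $\rho$, agree with this family, since each equation expresses $b^{(i)}_j$ as a single-valued function of coordinates of strictly smaller rank (nothing here depends on whether the counting functions are viewed as $\N$-valued or as $\C$-valued, only that they are functions). I expect the one genuinely delicate point to be making the K\"onig's-lemma step rigorous — the deduction of a finite, edge-decreasing rank function from the mere absence of infinite directed walks together with finite out-degree; once that is in place, the remainder is a routine well-founded induction.
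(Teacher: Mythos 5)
Your proposal is correct and takes essentially the same route as the paper: you rank each vertex of the reliance graph by the length of the longest directed walk out of it, prove this rank is finite via the finite-out-degree/K\"onig argument, and then run a well-founded induction along the rank (the paper phrases the uniqueness half as a minimal-rank differing pair, which is the same induction in contrapositive form). The only cosmetic differences are that the paper obtains existence for free by noting that the true counting sequences satisfy $\SSS(P)$ by the defining property of counting functions, while you reconstruct the solution recursively, and that your base case (rank-zero vertices are exactly those whose equation has a constant right-hand side, whether from a verification rule or from negative reliance values) is stated slightly more carefully than the paper's identification of leaves with verification rules.
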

\begin{proof}
	By construction, one solution to the system $\SSS(P)$ is the one which
	enumerates the combinatorial sets in the proof tree. We call this solution
	\[
		\wt{b} = \left(\left(\wt{b}_0^{(1)},\wt{b}_1^{(1)},\ldots\right),\left(\wt{b}_0^{(2)},\wt{b}_1^{(2)},\ldots\right), \ldots, \left(\wt{b}_0^{(N)},\wt{b}_1^{(N)},\ldots\right)\right).
	\]
	Suppose there were a different solution
	\[
		\wt{d} = \left(\left(\wt{d}_0^{(1)},\wt{d}_1^{(1)},\ldots\right),\left(\wt{d}_0^{(2)},\wt{d}_1^{(2)},\ldots\right), \ldots, \left(\wt{d}_0^{(N)},\wt{d}_1^{(N)},\ldots\right)\right).
	\]
	Let $R$ be the reliance graph of $P$. For any vertex $v$ in $R$,
	define $\rank(v)$ to be the length of
	the longest walk in $R$ that starts at $v$.  
	
	We claim that $\rank(v)$ exists and is finite. To justify this,
	we must rule out the possibility that there are infinitely many
	walks that start at $v$ with lengths that are arbitrarily large,
	but finite. The outdegree of any vertex in a reliance graph is
	finite, and so if there were infinitely many walks that started at
	$v_1$, then there would be at least one child of $v_1$, say $v_2$
	from which infinitely many walks started. Carrying on in this
	way, we can form a walk $v_1, v_2, v_3, \ldots$, which has infinite
	length, contradicting our assumption that $R$ has no infinite walks.
	Therefore, finitely many walks start from $v$, and thus $\rank(v)$
	is finite.
	
	Note that each indeterminate $b_j^{(i)}$ corresponds to one
	vertex in $R$, namely the one labeled $\BB_j^{(i)}$. The vertices
	that are leaves in the reliance graph are precisely those for which
	$\BB^{(i)}$ is on the
	left-hand side of a verification rule, and the corresponding
	equation in $\SSS(P)$ with $b_j^{(i)}$ on the left-hand side has
	as its right-hand side an explicit natural number.
	
	Now we consider the two distinct solutions $\wt{b}$ and $\wt{d}$.
	Among all pairs $\wt{b}_j^{(i)}, \wt{d}_j^{(i)}$ where the solutions 
	differ (that is, $\wt{b}_j^{(i)} \neq \wt{d}_j^{(i)}$), choose one
	pair $\wt{b}_J^{(I)}, \wt{d}_J^{(I)}$ that is minimal with respect
	to the rank of the corresponding vertex in $R$ (the one labeled
	$\BB_J^{(I)}$). That minimal rank is guaranteed to be at least
	one, because the rank zero vertices are leaves, and by the
	discussion above it is clear that the solutions $\wt{b}$ and
	$\wt{d}$ must agree at all indeterminates corresponding to leaves.
	
	There is a single equation in $\SSS(P)$ with $b_J^{(I)}$ on the 
	left-hand side. The right-hand side involves
	some nonempty set of variables of the form $b_i^{(j)}$
	(we will call these
	variables the \emph{children} of $b_I^{(J)}$ for the moment).
	The vertices in $R$ that are labeled by these children (the
	\emph{child vertices}) are precisely the ones at the other end of 
	all outgoing edges from the vertex labeled $\BB_J^{(I)}$, and
	therefore the rank of each child vertex must be strictly less than
	the rank of their parent vertex $\BB_J^{(I)}$.
	
	By our minimality assumption, it follows that the solutions
	$\wt{b}$ and $\wt{d}$ agree for each of the child variables. But
	this cannot be, since $\wt{b}_J^{(I)}$ and $\wt{d}_J^{(I)}$ are
	each computed from the values of these child indeterminates in the same
	manner, and thus this would imply $\wt{b}_J^{(I)} = \wt{d}_J^{(I)}$.
\end{proof}

% ================================== %
% ================================== %
% ================================== %

\subsection{Productive Strategies}

We have just proved that any specification whose reliance graph contains no
infinite directed walks is productive; that is, it contains sufficient
information to uniquely determine the counting sequences of all of its
combinatorial sets. It is natural to now ask how one can determine when the
reliance graph of a specification contains no infinite directed walks.

Given any particular concrete combinatorial specification, it can surely be
determined by hand (though perhaps with great effort for large specifications)
whether the reliance graph contains infinite directed walks.   However, in order
for Combinatorial Exploration to be effective, we want to be certain ahead of
time that any specification produced will be productive. This is not just to
avoid the need to check an output specification, but also because the
efficient method we described in Section~\ref{subsection:tree-searcher} is not
capable of simply ``ignoring'' an unproductive specification and continuing to
search for a productive one.

It is perhaps surprising that it is possible to place local restrictions on
individual strategies that then guarantee that any specification that employs
only those strategies is itself productive. To accomplish this, we introduce the
concept of a \emph{productive strategy}.

\begin{definition}
	\label{definition:productive-strategy}
	We call an $m$-ary strategy $S$ a \emph{productive strategy} if the following
	two conditions hold for all combinatorial sets $\AA$ with corresponding
	decomposition $d_S(\AA) = (\BB^{(1)}, \ldots, \BB^{(m)})$, and for all $i
	\in \{1,\ldots,m\}$.
	\begin{enumerate}
		\item For all $N \in \N$, if $\AA_N$ relies on $\BB_j^{(i)}$,
				then $j \leq N$.
		\item  If $\AA_N$ relies on $\BB_N^{(i)}$ for some $N \in \N$, then
		      \begin{enumerate}[(a)]
			      \item $|\AA_n| \geq |\BB_n^{(i)}|$ for all $n \in \N$, and
			      \item $|\AA_\ell| > |\BB_\ell^{(i)}|$ for some $\ell \in \N$.
		      \end{enumerate}
	\end{enumerate}
	As before, we use the phrase ``$\AA_n$ relies on $\BB^{(i)}_j$'' or the
	diagram $\AA_n \to \BB^{(i)}_j$ as a simplified way of stating the formal
	information contained in the reliance profile function of $S$, namely that
	$j \leq r_S^{(i)}(n)$.
\end{definition}

In other words, each reliance either moves from a larger object
size to a smaller object size ($N > j$), in which case $|\BB^{(i)}_j|$ can be
smaller, larger, or the same size as $|\AA_N|$, or stays at the same size of
object ($N=j$), in which case the child set cannot be larger than the parent
at \emph{any} size, and must actually be properly smaller for at least one
size.

Although these conditions may at first seem restrictive, it turns out that they
are satisfied by many natural strategies. We now prove that the reliance graph
of any proof tree that is composed entirely of productive strategies has no
infinite directed walks.

\begin{theorem}
	\label{theorem:productive-strats-implies-no-inf-walks}
	Let $P$ be a proof tree, or the equivalent combinatorial specification,
	composed entirely of rules derived from productive strategies. Then the reliance graph of
	$P$ has no infinite
	directed walks.
\end{theorem}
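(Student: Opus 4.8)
The plan is to argue by contradiction: suppose the reliance graph $R$ of $P$ contains an infinite directed walk
\[
	v_0 \to v_1 \to v_2 \to \cdots,
\]
where each $v_t$ is a vertex $\BB^{(i_t)}_{n_t}$ for some combinatorial set $\BB^{(i_t)}$ appearing in $P$ and some size $n_t \in \N$. The key quantity to track along the walk is the size index $n_t$. By Definition~\ref{definition:productive-strategy}, condition (1), every reliance edge satisfies $n_{t+1} \leq n_t$; hence the sequence $(n_t)_{t \geq 0}$ is a non-increasing sequence of natural numbers, so it is eventually constant. Let $T$ be such that $n_t = n_{t+1} = \cdots = n$ for all $t \geq T$. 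So from time $T$ onward, every edge of the walk is a ``same-size'' reliance $\BB^{(i_t)}_n \to \BB^{(i_{t+1})}_n$, and condition (2) of Definition~\ref{definition:productive-strategy} applies to each such edge.

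Next I would extract the consequences of condition (2) along this constant-size tail. For each $t \geq T$, the edge $\BB^{(i_t)}_n \to \BB^{(i_{t+1})}_n$ arises because $\BB^{(i_{t+1})}$ is a child of $\BB^{(i_t)}$ under the rule for $\BB^{(i_t)}$, and $(\BB^{(i_t)})_n$ relies on $(\BB^{(i_{t+1})})_n$. Condition (2)(a) gives $|(\BB^{(i_t)})_m| \geq |(\BB^{(i_{t+1})})_m|$ for all $m \in \N$. Therefore, telescoping along the tail, for every $m$ we have
\[
	|(\BB^{(i_T)})_m| \geq |(\BB^{(i_{T+1})})_m| \geq |(\BB^{(i_{T+2})})_m| \geq \cdots.
\]
Since $P$ involves only finitely many combinatorial sets ($N$ of them), the infinite tail $i_T, i_{T+1}, i_{T+2}, \ldots$ must repeat some index; choose $s < t$ with $i_s = i_t$ and $s, t \geq T$. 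Then the chain of inequalities above, read between positions $s$ and $t$, forces $|(\BB^{(i_s)})_m| = |(\BB^{(i_{s+1})})_m| = \cdots = |(\BB^{(i_t)})_m|$ for every $m \in \N$, because the first and last sets in this segment are identical and the counts are weakly decreasing throughout. In particular each edge $\BB^{(i_u)}_n \to \BB^{(i_{u+1})}_n$ for $s \leq u < t$ is an equality of whole counting sequences.

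Finally I would derive the contradiction from condition (2)(b). The walk must use at least one genuine edge between position $s$ and position $t$ (indeed all the edges there are same-size reliance edges), so pick any such edge, say $\BB^{(i_u)}_n \to \BB^{(i_{u+1})}_n$. Since $(\BB^{(i_u)})_n$ relies on $(\BB^{(i_{u+1})})_n$ with the same size index, condition (2)(b) asserts there exists some $\ell \in \N$ with $|(\BB^{(i_u)})_\ell| > |(\BB^{(i_{u+1})})_\ell|$. But we just showed $|(\BB^{(i_u)})_m| = |(\BB^{(i_{u+1})})_m|$ for all $m$, a contradiction. Hence no infinite directed walk exists in $R$. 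The main obstacle, and the step requiring the most care, is the bookkeeping in the middle paragraph: one must correctly invoke the pigeonhole principle on the finitely many sets together with the monotonicity from (2)(a) to force a ``cycle of equalities'' in the counting sequences, and then ensure that the repeated-index segment actually contains a reliance edge to which (2)(b) can be applied. Everything else — the monotonicity of $(n_t)$ and the final clash with (2)(b) — is immediate once that structure is set up.
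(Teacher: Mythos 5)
Your proof is correct and follows essentially the same route as the paper: assume an infinite walk, use finiteness together with Condition~1 to locate a repetition at a fixed size, then let Condition~2(a) force equality of the counting sequences around that repetition, contradicting Condition~2(b). The only difference is cosmetic ordering — the paper first extracts a cycle from the finitely many available vertices and then uses Condition~1 to equalize the sizes, whereas you first stabilize the size index and then apply pigeonhole to the finitely many sets — and your handling of the degenerate self-loop case is subsumed correctly by applying 2(b) to any edge of the repeated segment.
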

\begin{proof}
	Let $R$ be the reliance graph of $P$. Suppose toward a contradiction that
	there is an infinite walk $W$. The set of vertices involved in $W$ is
	finite---indeed, the only vertices that can be involved in a walk $W$ 
	that starts at vertex $\AA_\ell$ are those of
	the form $\DD_j$ where $\DD$ is any combinatorial set involved in $P$ and $j
	\leq \ell$. As $W$ is an infinite walk but involves only a finite set of
	vertices, $W$ must contain a cycle
	\[
		\AA_N \to \EE^{(1)}_{N_1} \to \EE^{(2)}_{N_2} \to \cdots \to \EE^{(m)}_{N_m} \to \AA_N.
	\]
	Condition 1 further implies that the sizes of objects involved in this cycle
	are all the same, i.e., $N = N_1 = N_2 = \cdots = N_m$. Hence,
	\[
		\AA_N \to \EE^{(1)}_N \to \EE^{(2)}_N \to \cdots \to \EE^{(m)}_N \to \AA_N.
	\]
	Since we cannot have $\AA_N \to \AA_N$ (this would fail Condition 2(b)), we
	must have $m \geq 1$.

	It now follows from Condition 2(a) that
	\[
		|\AA_n| \geq |\EE^{(1)}_n| \geq |\EE^{(2)}_n| \geq \cdots \geq |\EE^{(m)}_n| \geq |\AA_n|
	\]
	and thus
	\[
		|\AA_n| = |\EE^{(1)}_n| = |\EE^{(2)}_n| = \cdots = |\EE^{(m)}_n| = |\AA_n|
	\]
	for all $n \in \N$. As a result $\AA$ is equinumerous to $\EE^{(1)}$,
	contradicting Condition 2(b).
\end{proof}

Combining Theorems~\ref{theorem:unique-counting-series-solution}
and~\ref{theorem:productive-strats-implies-no-inf-walks} gives the following key
result.

\begin{theorem}
	\label{theorem:prod-strats-implies-prod-spec}
	Let $P$ be a proof tree, or the equivalent combinatorial specification,
	composed entirely of rules derived from productive strategies. Then $P$ is productive, i.e.,
	the infinite system of equations derived from its counting functions
	has a unique solution.
\end{theorem}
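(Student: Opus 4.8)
The plan is essentially to observe that this theorem is a corollary of the two results immediately preceding it, so the entire proof is a two-step chaining argument with no new content required.

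First I would invoke Theorem~\ref{theorem:productive-strats-implies-no-inf-walks}: since $P$ is assumed to be built entirely out of combinatorial rules whose strategies are productive, its reliance graph $R$ contains no infinite directed walks. This is the step that does the real work, and it has already been established — the key mechanism there is that Condition~1 of Definition~\ref{definition:productive-strategy} forces any cycle in the reliance graph to stay at a fixed object size $N$, and then Conditions~2(a) and~2(b) squeeze the cardinalities $|\AA_n|, |\EE^{(i)}_n|$ into a chain of equalities that contradicts the strict-inequality requirement 2(b). So nothing new needs to be proved here; I would simply cite it.

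Second, I would feed the conclusion ``$R$ has no infinite directed walks'' directly into Theorem~\ref{theorem:unique-counting-series-solution}, whose hypothesis is precisely that the reliance graph of the specification contains no infinite directed walks. That theorem then yields that the system $\SSS(P)$ of counting-function equations has a unique solution in $\left(\C^{\N}\right)^N$, which by definition means $P$ is productive. Combining the two gives the statement.

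Since there is no genuine obstacle — both ingredients are already in hand — the only thing to be careful about is bookkeeping: making sure the hypotheses line up (the ``composed entirely of productive strategies'' assumption is what licenses the first citation, and the ``no infinite directed walks'' conclusion is exactly the hypothesis of the second), and noting that ``productive proof tree'' and ``productive combinatorial specification'' have been defined to be the same property under the proof-tree/specification correspondence, so the conclusion transfers to either phrasing. A one-sentence proof of the form ``By Theorem~\ref{theorem:productive-strats-implies-no-inf-walks} the reliance graph of $P$ has no infinite directed walks, so by Theorem~\ref{theorem:unique-counting-series-solution} the system $\SSS(P)$ has a unique solution, i.e., $P$ is productive'' would suffice.
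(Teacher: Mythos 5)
Your proposal is correct and matches the paper exactly: the paper gives no separate proof for this theorem, stating only that it follows by combining Theorem~\ref{theorem:unique-counting-series-solution} and Theorem~\ref{theorem:productive-strats-implies-no-inf-walks}, which is precisely your two-step chaining argument. Your bookkeeping remarks about aligning hypotheses and the proof-tree/specification equivalence are accurate and add nothing problematic.
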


Furthermore, the discussion at the beginning of
Section~\ref{subsection:reliance-graph} demonstrates how the unique counting
sequences for a productive specification can actually be computed.

% ================================== %
% ================================== %
% ================================== %

\subsection{Equivalence Strategies}
\label{subsection:equivalencestrategies}

The formalization of productive strategies and proof trees has one significant
deficiency. An \emph{equivalence strategy} is any $1$-ary strategy that
identifies that two combinatorial sets are equinumerous. More precisely, $S$ is
an equivalence strategy if whenever $d_S(\AA) = \BB$, we have $|\AA_n| =
|\BB_n|$ for all $n$. The reliance profile function for any equivalence strategy
can be assumed to be $r_S(n) = (n)$.

The following sections demonstrate that equivalence strategies are
powerful tools for identifying structural decompositions of combinatorial sets, yet
according to our definition they are clearly not productive strategies---they
always fail Condition 2 of~\ref{definition:productive-strategy}.

It is easy to see why equivalence strategies lead to non-productive proof trees.
Suppose some equivalence strategy $S$ produces the rule $\AA \xleftarrow{S} \BB$
and some other equivalence strategy $T$ produces the rule $\BB \xleftarrow{T}
\AA$. These two rules together form a combinatorial specification (because every
set on a right-hand side appears exactly once on a left-hand side), although all
we can conclude is that $|\AA_n| = |\BB_n|$. There is not sufficient information
to compute the counting sequence for either $\AA$ or $\BB$.

One possible solution to this issue is to ensure that whenever a rule of the
form $\AA \xleftarrow{S} \BB$ for an equivalence strategy $S$ exists in a proof
tree, no rule $\BB \xleftarrow{} \AA$ is permitted. This is impractical for
several reasons. First, while Combinatorial Exploration is expanding the
universe of combinatorial rules, it is not clear which direction of a rule will
prove to be more useful when searching for a proof tree. More importantly,
significant power is lost by ignoring that the sets are equinumerous
and thus that their counting sequences can be substituted for each other as
needed.

There is another solution that is simpler and yet preserves all information
provided by the equivalence strategies. Up to this point, we have defined a
combinatorial rule in a specification (equivalently, a parent-children
relationship in a proof tree) to be of the form
\[
	\AA \xleftarrow{S} (\BB^{(1)}, \ldots, \BB^{(m)}),
\]
where $\AA$ and the $\BB^{(i)}$ are combinatorial sets. To generalize this, we
sort all combinatorial sets involved into equivalence classes in the following
way. First, we say that $\AA$ is related to $\BB$ if $\AA = \BB$ or if there is
an equivalence strategy $S$ that either produces the rule $\AA \xleftarrow{S}
\BB$ or the rule $\BB \xleftarrow{S} \AA$. Then we take the transitive closure
of this relation to form the equivalence relation $\eqrel$. Two combinatorial
sets in the same equivalence class of $\eqrel$ are guaranteed to be
equinumerous, but it is not necessarily true that any two equinumerous
combinatorial sets are in the same equivalence class.

To mitigate the productivity issue caused by equivalence strategies, we
henceforth consider the vertices of a proof tree to represent not combinatorial
sets, but equivalence classes of combinatorial sets from $\eqrel$.\footnote{An
important feature of the Combinatorial Exploration framework is that as
strategies are repeatedly applied, we may continue to add new sets to
equivalence classes or merge two existing equivalence classes together.}
Equivalently, the symbols on the left- and right-hand sides of combinatorial
rules do not represent combinatorial sets, but equivalence classes of
combinatorial sets. The combinatorial rules produced by equivalence strategies
themselves are not added to the ever-growing set of combinatorial rules, since
they are not productive.

Often the identification of two sets as equinumerous can lead to the discovery
of proof trees much smaller than might otherwise be found. Many specific
examples are seen in later sections, but we give a generic example here. Suppose
that while in the process of Combinatorial Exploration we have produced the
following rules (in which the symbols on each side are still single
combinatorial sets):
\begin{align*}
	\AA & \xleftarrow{S} (\BB,\CC) \\
	\BB & \xleftarrow{V_{\BB}} ()  \\
	\DD & \xleftarrow{E} (\CC)     \\
	\DD & \xleftarrow{T} (\AA,\BB)
\end{align*}
where $S$ and $T$ are productive strategies, $V_{\BB}$ is a verification
strategy, and $E$ is an equivalence strategy. This set of rules does not contain
a combinatorial specification, let alone a productive one.

However, when grouping the combinatorial sets into equivalence classes as
described above, we obtain the productive combinatorial specification
\begin{align*}
	\{\AA\}     & \xleftarrow{S} (\{\BB\}, \{\CC,\DD\}) \\
	\{\BB\}     & \xleftarrow{V_{\BB}} ()               \\
	\{\CC,\DD\} & \xleftarrow{T} (\{\AA\}, \{\BB\}).
\end{align*}

%The use of equivalence strategies in the domain of permutation patterns can be
%seen in Figures~\ref{fig:placed-point-from-large-tree},
%\ref{fig:placed-point-from-large-tree-12} and~\ref{fig:bigsep}, and how they
%contribute to a proof tree in Figure~\ref{fig:av1243_1342_2143}.

%% ==== %% ==== %% ==== %% ==== %% ==== %% ==== %% ==== %% ==== %% ==== %% ==== %%
%% ==== %% ==== %% ==== %% ====    SECTION FIVE    ==== %% ==== %% ==== %% ==== %%
%% ==== %% ==== %% ==== %% ==== %% ==== %% ==== %% ==== %% ==== %% ==== %% ==== %%

\section{Transfer Tools}
\label{section:transfer-tools}
%!TEX root = combinatorial-exploration.tex

Finding a combinatorial specification for a set is often an intermediate step in
the quest to fully understand the set. This brief section outlines several ways
in which a combinatorial specification can be transferred into other
products including polynomial-time counting algorithms, systems of
equations for the generating function, and uniform random sampling routines.

\subsection{Polynomial-Time Counting Algorithms}
\label{subsection:poly-time-counting}

A polynomial-time counting algorithm for a combinatorial set $\AA$ is a routine
that can calculate $|\AA_n|$ in polynomially-many (in $n$) arithmetic
operations. Which operations are included in this definition can vary according
to one's preferences or needs; here we choose addition, subtraction,
multiplication, and division. Brute force computation of $|\AA_n|$ takes
exponential time or more in many applications, but often possession of a
combinatorial specification enables fast computation of many terms in the
counting sequence.

A combinatorial strategy $S$ is called a \emph{polynomial-time strategy} if, for
all $n \in \N$, computing the output of the counting function $c_{S,(n)}$
requires only polynomially-many (in $n$) arithmetic operations. All strategies
discussed in this article are polynomial-time strategies. 

If $C$ is a combinatorial specification made up entirely of productive
polynomial-time strategies, then the
routine discussed in Section~\ref{subsection:reliance-graph} for computing terms
in the counting sequence of any set involved in $C$ takes only polynomially-many
arithmetic operations.

\subsection{Generating Functions}
\label{subsection:generating-functions}

Possession of a combinatorial specification also often allows one to write down
a system of equations satisfied by the generating functions of each of the
combinatorial sets involved. These systems can then sometimes be solved to yield
the generating function for the specific set of interest.

To produce such a system of equations, one must associate to each strategy $S$
used a function $g_S$ that describes how the generating functions of each input
set can be combined to produce the generating function of the output set. For
example, suppose $S$ is a strategy with decomposition function $d_S(\AA) =
(\BB^{(1)}, \BB^{(2)})$ and counting functions $a_n = b^{(1)}_n + b^{(2)}_n$,
then we can assign $g_S(B_1, B_2) = B_1 + B_2$. Now, if $A(x)$, $B^{(1)}(x)$,
and $B^{(2)}(x)$ are the generating functions of $\AA$, $\BB^{(1)}$, and
$\BB^{(2)}$, then we have the equality
\[
	A(x) = g_S(B^{(1)}(x), B^{(2)}(x)).
\]
Another common example is a binary strategy $S$ with counting function $a_n =
\ds\sum_{k=0}^n b^{(1)}_kb^{(2)}_{n-k}$, which corresponds to the equation
$g_S(B_1, B_2) = B_1B_2$.

If $V$ is a verification strategy, then we consider $g_V$ to simply have no
inputs, while the output is the generating function for any combinatorial set
that $V$ verifies (recall that for a fixed verification strategy $V$, all sets
to which it applies must, by definition, have the same counting sequence).

In this introductory work, many of the strategies we describe lead to generating
function equations that are either sums, products, or explicit generating
functions. The reader should not get the impression that this is a limitation of
the Combinatorial Exploration framework. It is simply a result of the fact that
we have chosen to defer the introduction of more complicated strategies, and a
multivariate generalization of the entire framework, to future work.

Still, we cannot resist showing a
morsel here. It is often desirable to consider combinatorial sets indexed
not just by size $n$, but by some additional statistic $k$. For example,
$\AA_{n,k}$ could be the set of binary words of length $n$ that contain exactly
$k$ occurrences of the symbol $1$. In many cases, these additional statistics are actually required in
order to find a specification at all. Accordingly, the whole theory of
strategies (particularly, their reliance profile functions and counting
functions), the concepts of reliance graphs and productivity, and the transfer
tools described in this section all require suitable generalization. The result
is that systems of equations now involve multiple variables in complicated ways,
with equations such as
\[
	A(x,y) = \frac{yB(x,y) - B(x,1)}{y-1}
\]
and
\[
	C(x,w,y,z) = D\left(x,\frac{wy}{z},w\right),
\]
leading to systems that either require advanced methods to solve (e.g.,
extensions of the ideas of Bousquet-M\'elou and
Jehanne~\cite{bousquet-melou:poly-eqs}) or that we do not
know how to solve at all!

\subsection{Random Sampling}
\label{subsection:random-sampling}

It is computationally difficult in general to sample an object of size $n$ from
a combinatorial set $\AA$ uniformly at random because the brute force method of
doing so requires computing $\AA_n$ explicitly. However, the reader will not be
surprised that having a combinatorial specification involving the set $\AA$
often leads to random sampling routines that can be performed in polynomial
time. Although our implementation of Combinatorial
Exploration~\cite{comb-spec-searcher} is already capable of performing
random sampling, we defer the details, including which properties that
strategies in a specification must have for this to be possible, to a
later article.

Figure~\ref{figure:heatmap} demonstrates this ability by showing heatmaps
for all classes up to symmetry defined by avoiding two patterns of length
4 (except for $\Av(1234, 4321)$, which contains a finite number of
permutations), and three of the seven symmetry classes defined by
avoiding one pattern of length $4$. Each heatmap is formed by sampling
one million permutations of length $300$ uniformly at random from the
class and forming a picture 300 pixels wide and tall such that the
darkness of the pixel at location $(i,j)$ corresponds to how many of the
one million sampled permutations have an entry at index $i$ and value $j$ 
(i.e.,\ $\pi(i) = j$). Lighter pixels indicated fewer such entries, and
darker, more. Higher resolution pictures can be viewed on the pages for
each corresponding class on the PermPAL website~\cite{permpal-biblatex}.

Less formally, these pictures can be thought of as taking
the one million sampled permutations, forming their plots as described in
Section~\ref{section:pp-intro}, making them mostly transparent, and
stacking them on top of each other. In this way, a heatmap conveys
information about what permutations in a particular class tend to
``look like''. This concept has been studied much more formally by
Miner and Pak~\cite{miner:shapes-with-pak}, Hoffman, Rizzolo, and
Slivken~\cite{hoffman:shapes}, and many others.

\begin{figure}
	\begin{center}
	\begin{tabular}{cccccc}
		\makecell{\frame{\includegraphics[width=0.6in]{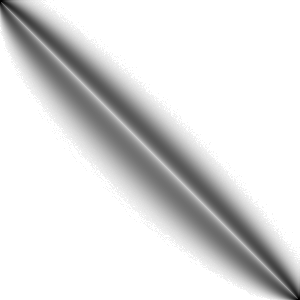}} \\[-4pt]\footnotesize$\Av(1234, 1243)$}
		&
		\makecell{\frame{\includegraphics[width=0.6in]{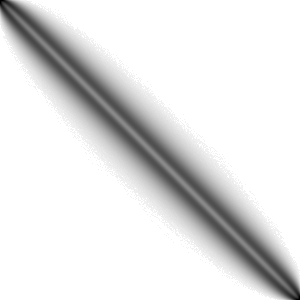}} \\[-4pt]\footnotesize$\Av(1234, 1324)$}
		&
		\makecell{\frame{\includegraphics[width=0.6in]{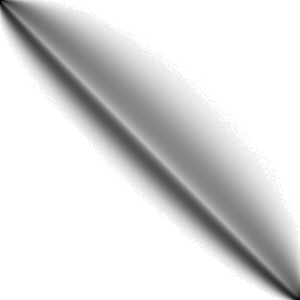}} \\[-4pt]\footnotesize$\Av(1234, 1342)$}	
		&
		\makecell{\frame{\includegraphics[width=0.6in]{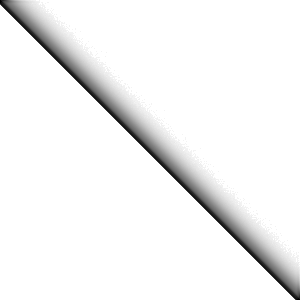}} \\[-4pt]\footnotesize$\Av(1234, 1432)$}
		&
		\makecell{\frame{\includegraphics[width=0.6in]{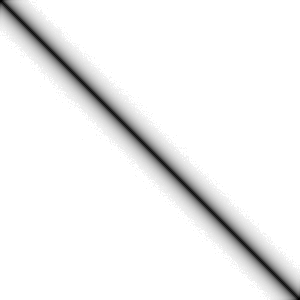}} \\[-4pt]\footnotesize$\Av(1234, 2143)$}
		&
		\makecell{\frame{\includegraphics[width=0.6in]{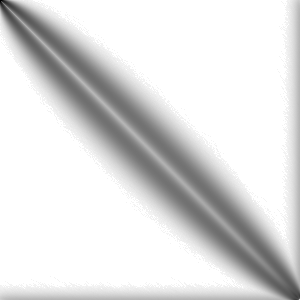}} \\[-4pt]\footnotesize$\Av(1234, 2341)$}
		\\[26pt]
		\makecell{\frame{\includegraphics[width=0.6in]{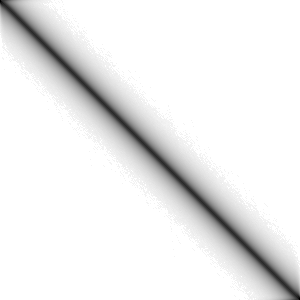}} \\[-4pt]\footnotesize$\Av(1234, 2413)$}
		&
		\makecell{\frame{\includegraphics[width=0.6in]{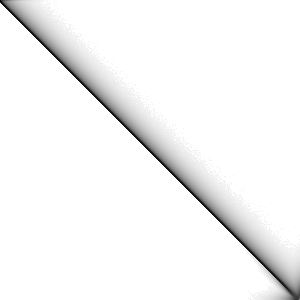}} \\[-4pt]\footnotesize$\Av(1234, 2431)$}
		&
		\makecell{\frame{\includegraphics[width=0.6in]{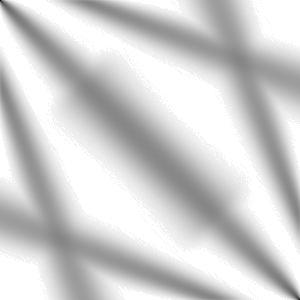}} \\[-4pt]\footnotesize$\Av(1234, 3412)$}
		&
		\makecell{\frame{\includegraphics[width=0.6in]{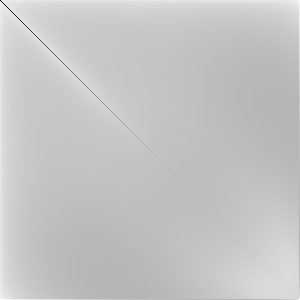}} \\[-4pt]\footnotesize$\Av(1234, 3421)$}
		&
		\makecell{\frame{\includegraphics[width=0.6in]{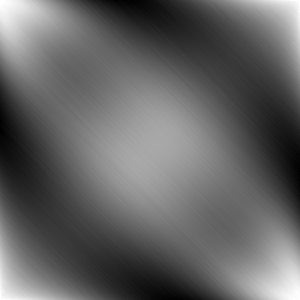}} \\[-4pt]\footnotesize$\Av(1234, 4231)$}
		&
		\makecell{\frame{\includegraphics[width=0.6in]{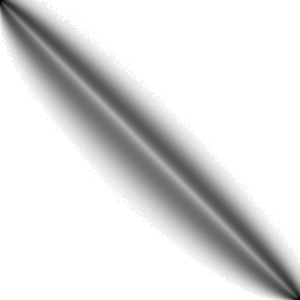}} \\[-4pt]\footnotesize$\Av(1243, 1324)$}
		\\[26pt]
		\makecell{\frame{\includegraphics[width=0.6in]{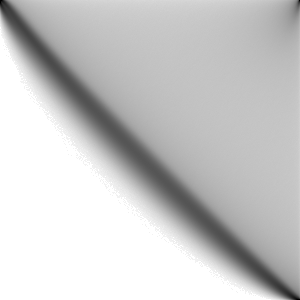}} \\[-4pt]\footnotesize$\Av(1243, 1342)$}
		&
		\makecell{\frame{\includegraphics[width=0.6in]{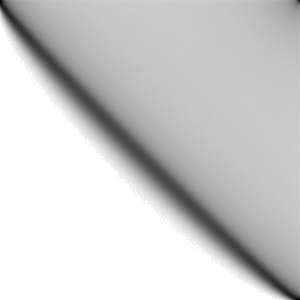}} \\[-4pt]\footnotesize$\Av(1243, 1432)$}
		&
		\makecell{\frame{\includegraphics[width=0.6in]{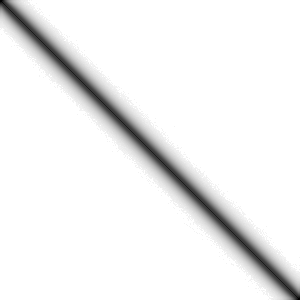}} \\[-4pt]\footnotesize$\Av(1243, 2134)$}
		&
		\makecell{\frame{\includegraphics[width=0.6in]{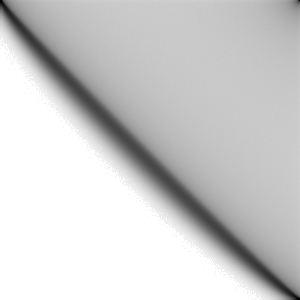}} \\[-4pt]\footnotesize$\Av(1243, 2143)$}
		&
		\makecell{\frame{\includegraphics[width=0.6in]{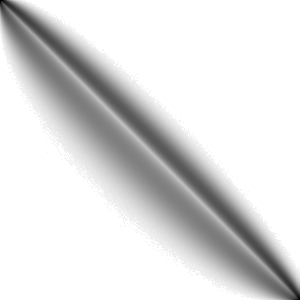}} \\[-4pt]\footnotesize$\Av(1243, 2314)$}
		&
		\makecell{\frame{\includegraphics[width=0.6in]{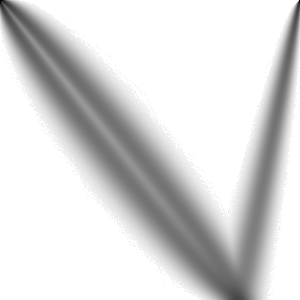}} \\[-4pt]\footnotesize$\Av(1243, 2341)$}
		\\[26pt]
		\makecell{\frame{\includegraphics[width=0.6in]{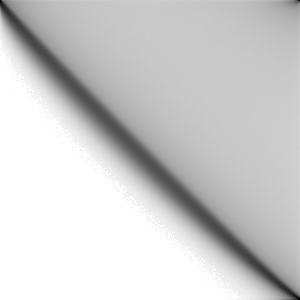}} \\[-4pt]\footnotesize$\Av(1243, 2413)$}
		&
		\makecell{\frame{\includegraphics[width=0.6in]{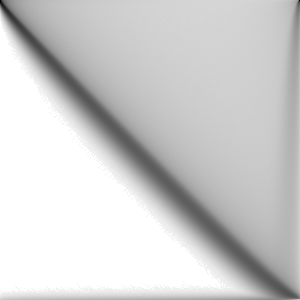}} \\[-4pt]\footnotesize$\Av(1243, 2431)$}
		&
		\makecell{\frame{\includegraphics[width=0.6in]{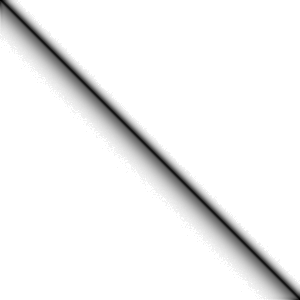}} \\[-4pt]\footnotesize$\Av(1243, 3214)$}
		&
		\makecell{\frame{\includegraphics[width=0.6in]{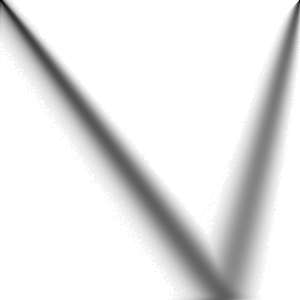}} \\[-4pt]\footnotesize$\Av(1243, 3241)$}
		&
		\makecell{\frame{\includegraphics[width=0.6in]{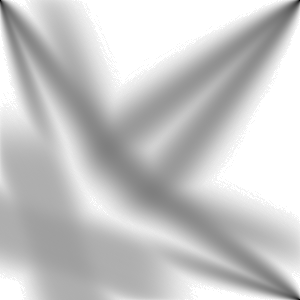}} \\[-4pt]\footnotesize$\Av(1243, 3412)$}
		&
		\makecell{\frame{\includegraphics[width=0.6in]{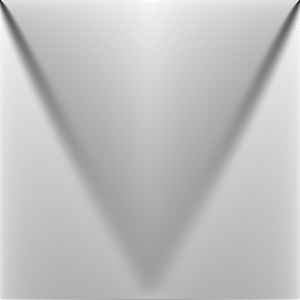}} \\[-4pt]\footnotesize$\Av(1243, 3421)$}
		\\[26pt]
		\makecell{\frame{\includegraphics[width=0.6in]{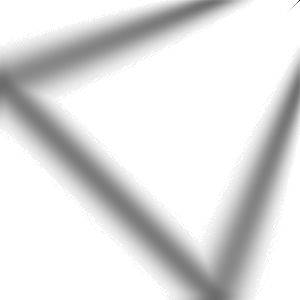}} \\[-4pt]\footnotesize$\Av(1243, 4231)$}
		&
		\makecell{\frame{\includegraphics[width=0.6in]{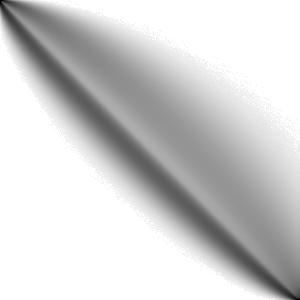}} \\[-4pt]\footnotesize$\Av(1324, 1342)$}
		&
		\makecell{\frame{\includegraphics[width=0.6in]{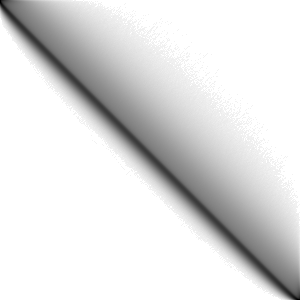}} \\[-4pt]\footnotesize$\Av(1324, 1432)$}
		&
		\makecell{\frame{\includegraphics[width=0.6in]{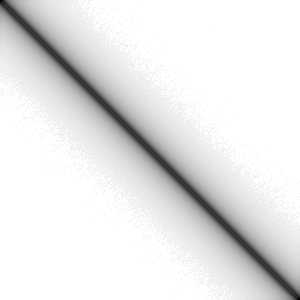}} \\[-4pt]\footnotesize$\Av(1324, 2143)$}
		&
		\makecell{\frame{\includegraphics[width=0.6in]{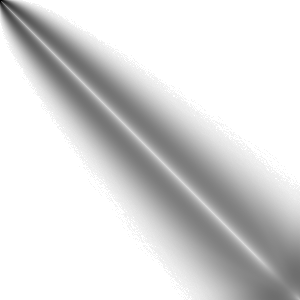}} \\[-4pt]\footnotesize$\Av(1324, 2341)$}
		&
		\makecell{\frame{\includegraphics[width=0.6in]{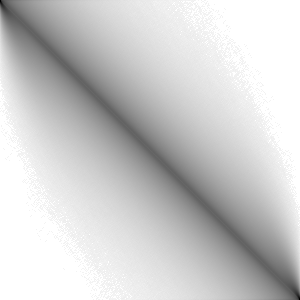}} \\[-4pt]\footnotesize$\Av(1324, 2413)$}
		\\[26pt]
		\makecell{\frame{\includegraphics[width=0.6in]{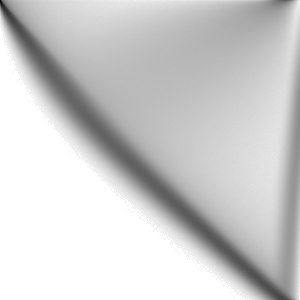}} \\[-4pt]\footnotesize$\Av(1324, 2431)$}
		&
		\makecell{\frame{\includegraphics[width=0.6in]{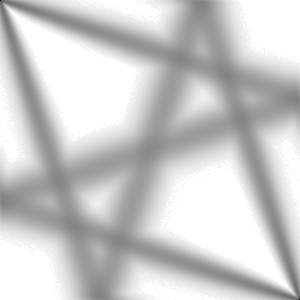}} \\[-4pt]\footnotesize$\Av(1324, 3412)$}
		&
		\makecell{\frame{\includegraphics[width=0.6in]{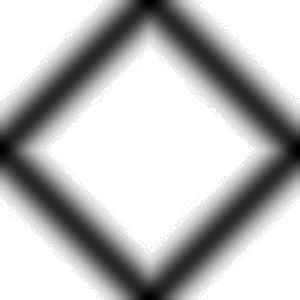}} \\[-4pt]\footnotesize$\Av(1324, 4231)$}
		&
		\makecell{\frame{\includegraphics[width=0.6in]{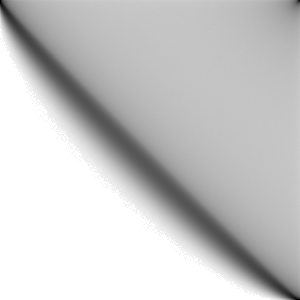}} \\[-4pt]\footnotesize$\Av(1342, 1423)$}
		&
		\makecell{\frame{\includegraphics[width=0.6in]{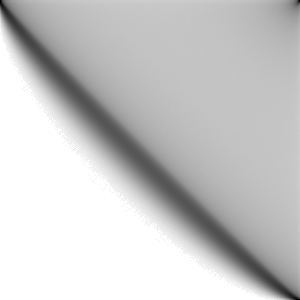}} \\[-4pt]\footnotesize$\Av(1342, 1432)$}
		&
		\makecell{\frame{\includegraphics[width=0.6in]{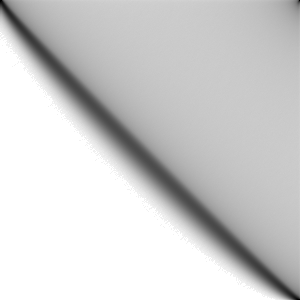}} \\[-4pt]\footnotesize$\Av(1342, 2143)$}
		\\[26pt]
		\makecell{\frame{\includegraphics[width=0.6in]{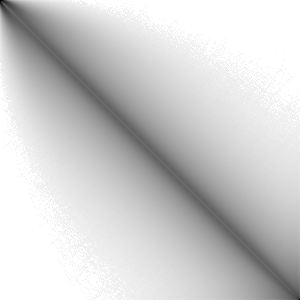}} \\[-4pt]\footnotesize$\Av(1342, 2314)$}
		&
		\makecell{\frame{\includegraphics[width=0.6in]{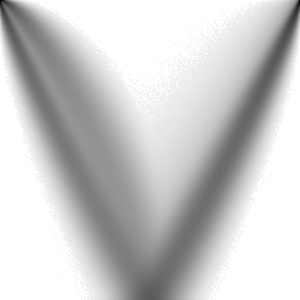}} \\[-4pt]\footnotesize$\Av(1342, 2341)$}
		&
		\makecell{\frame{\includegraphics[width=0.6in]{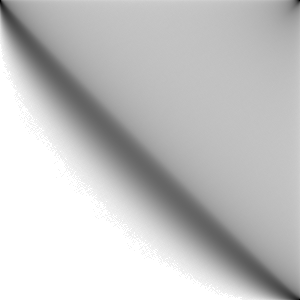}} \\[-4pt]\footnotesize$\Av(1342, 2413)$}
		&
		\makecell{\frame{\includegraphics[width=0.6in]{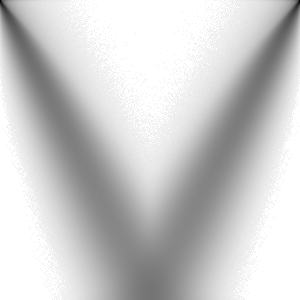}} \\[-4pt]\footnotesize$\Av(1342, 2431)$}
		&
		\makecell{\frame{\includegraphics[width=0.6in]{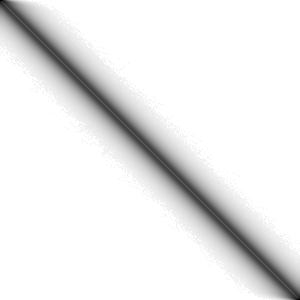}} \\[-4pt]\footnotesize$\Av(1342, 3124)$}
		&
		\makecell{\frame{\includegraphics[width=0.6in]{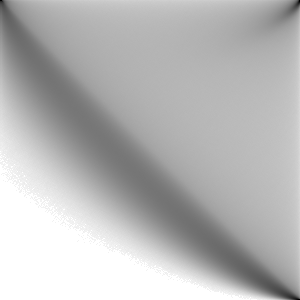}} \\[-4pt]\footnotesize$\Av(1342, 3142)$}
		\\[26pt]
		\makecell{\frame{\includegraphics[width=0.6in]{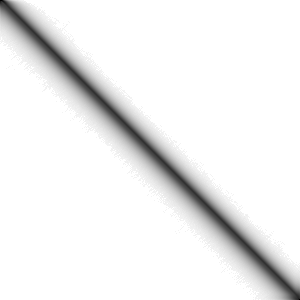}} \\[-4pt]\footnotesize$\Av(1342, 3214)$}
		&
		\makecell{\frame{\includegraphics[width=0.6in]{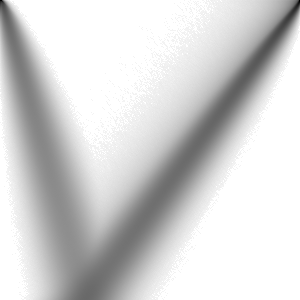}} \\[-4pt]\footnotesize$\Av(1342, 3241)$}
		&
		\makecell{\frame{\includegraphics[width=0.6in]{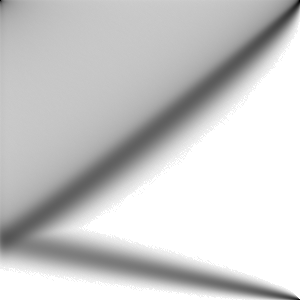}} \\[-4pt]\footnotesize$\Av(1342, 3412)$}
		&
		\makecell{\frame{\includegraphics[width=0.6in]{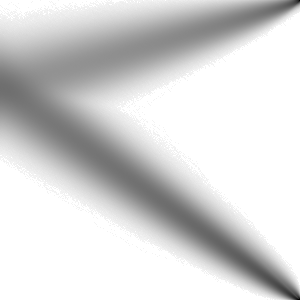}} \\[-4pt]\footnotesize$\Av(1342, 4123)$}
		&
		\makecell{\frame{\includegraphics[width=0.6in]{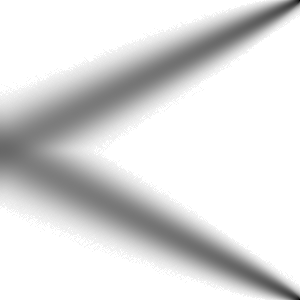}} \\[-4pt]\footnotesize$\Av(1342, 4213)$}
		&
		\makecell{\frame{\includegraphics[width=0.6in]{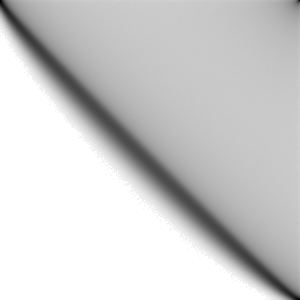}} \\[-4pt]\footnotesize$\Av(1432, 2143)$}
		\\[26pt]
		\makecell{\frame{\includegraphics[width=0.6in]{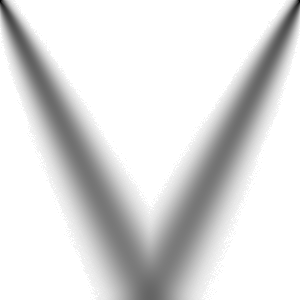}} \\[-4pt]\footnotesize$\Av(1432, 2341)$}
		&
		\makecell{\frame{\includegraphics[width=0.6in]{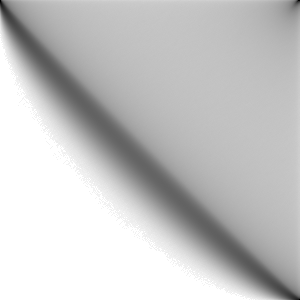}} \\[-4pt]\footnotesize$\Av(1432, 2413)$}
		&
		\makecell{\frame{\includegraphics[width=0.6in]{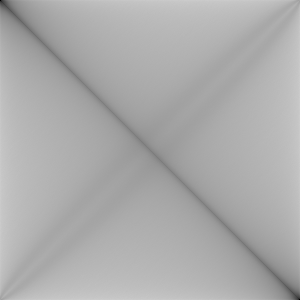}} \\[-4pt]\footnotesize$\Av(1432, 3214)$}
		&
		\makecell{\frame{\includegraphics[width=0.6in]{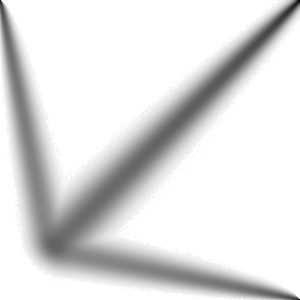}} \\[-4pt]\footnotesize$\Av(1432, 3412)$}
		&
		\makecell{\frame{\includegraphics[width=0.6in]{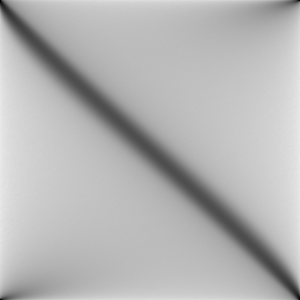}} \\[-4pt]\footnotesize$\Av(2143, 2413)$}
		&
		\makecell{\frame{\includegraphics[width=0.6in]{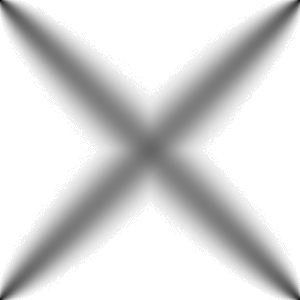}} \\[-4pt]\footnotesize$\Av(2143, 3412)$}
		\\[26pt]
		\makecell{\frame{\includegraphics[width=0.6in]{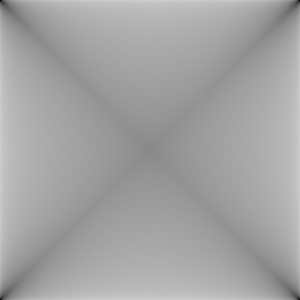}} \\[-4pt]\footnotesize$\Av(2413, 3142)$}
		&&&
		\makecell{\frame{\includegraphics[width=0.6in]{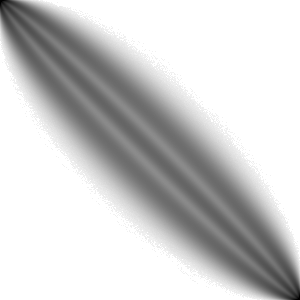}} \\[-4pt]\footnotesize$\Av(1234)$}
		&
		\makecell{\frame{\includegraphics[width=0.6in]{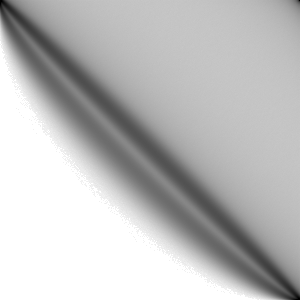}} \\[-4pt]\footnotesize$\Av(1243)$}
		&
		\makecell{\frame{\includegraphics[width=0.6in]{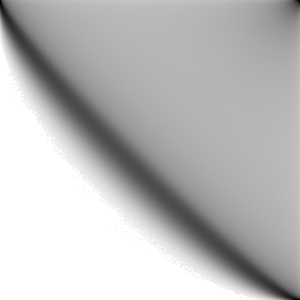}} \\[-4pt]\footnotesize$\Av(1432)$}
	\end{tabular}
	\end{center}
	\caption{Heatmaps of 58 permutation classes.}
	\label{figure:heatmap}	
\end{figure}

%% ==== %% ==== %% ==== %% ==== %% ==== %% ==== %% ==== %% ==== %% ==== %% ==== %%
%% ==== %% ==== %% ==== %% ====    SECTION SIX     ==== %% ==== %% ==== %% ==== %%
%% ==== %% ==== %% ==== %% ==== %% ==== %% ==== %% ==== %% ==== %% ==== %% ==== %%

\section{Applying Combinatorial Exploration to Permutation Patterns}
\label{section:pp-results}
%!TEX root = combinatorial-exploration.tex

This and the several following sections demonstrate the potency of Combinatorial
Exploration by applying it to a number of popular combinatorial objects. For
each new domain to which Combinatorial Exploration is to be applied we must do
the following.
\begin{enumerate}
	\item Provide a representation of combinatorial sets in this domain.
	\item Invent strategies that decompose (some of the) combinatorial sets into
	other combinatorial sets.
	\item Prove the productivity of any non-equivalence strategy.
\end{enumerate}

As will become clear over the next few sections, there are choices to be made in
the first two steps that may determine the efficacy of Combinatorial
Exploration. Consider for example the combinatorial domain explored in this
section, pattern-avoiding permutation classes (see
Section~\ref{section:pp-intro} for the relevant definitions). Before describing
strategies that decompose combinatorial sets, we must decide what the objects in
the combinatorial sets actually are. The most obvious choice is that the objects
should be precisely the things we are counting: permutations. It turns out that
such a simple structure does not lend itself to discovering or describing
decomposition strategies.

Taking some inspiration from the literature, we will define \emph{gridded
permutations} below as a generalization of permutations with a geometric flavor.
Our representations of the combinatorial domains in later sections also have a
geometric bent. This is certainly not a requirement to employ Combinatorial
Exploration, but it is our hope that these sections convey how a geometric
structure often leads to effective decomposition strategies that are easier to
describe. The task of making and implementing design decisions such as choosing
representations for objects and devising structural strategies is the portion
of using Combinatorial Exploration where knowledge and experience in the given
domain is critical. Once this task is done, one can sit back and allow
Combinatorial Exploration to enumerate combinatorial sets fully rigorously
and automatically.

Of the combinatorial domains explored in this article, we have spent by
far the most time with permutation patterns. While this section demonstrates the
depth of success that Combinatorial Exploration can bring, the succeeding
sections display its corresponding breadth, offering samples of representations
of the combinatorial objects and a few examples of strategies and resulting
proof trees, sometimes derived by hand. We expect that experts in those
combinatorial domains may have a better sense of effective object
representations and decomposition strategies than we have given here, and the
``plug-and-play'' structure of our software enables researchers to easily
implement new domains, new representations, and new strategies.

As discussed in Section~\ref{subsection:pp-success}, Combinatorial
Exploration has been a successful tool for the enumeration of permutation
classes. In this section, we will outline in more detail many of the particular
strategies that have been used for this success.

\subsection{Gridded permutations}

Murphy and Vatter~\cite{Murphy:2002vb} introduced the notion of \emph{grid
classes} as a means of defining one permutation class as a geometric combination
of others. We omit their definition here because we will use a new variation
with a small but crucial change.

We have found that when working with geometric descriptions of sets of
permutations, it is easier and vastly more effective to endow them with an
additional geometric structure that we call a \emph{gridding}. Extending this
notion, we work with \emph{gridded classes} instead of the grid classes of
Murphy and Vatter (which from our viewpoint may be more
appropriate to call \emph{griddable} classes), and we will later define a
combinatorial object called a \emph{tiling} that implicitly defines a set
of gridded permutations.

A \emph{gridded permutation} of size $n$ is a pair $(\pi, P)$, where $\pi$ is a
permutation of length $n$ called the \emph{underlying permutation} and $P =
(c_1, \ldots, c_n)$ for $c_i \in \N \times \N$ is the tuple of
\emph{positions}. The positions represent a placement of $\pi$ onto the positive
quadrant of $\mathbb{R}^2$ where the point corresponding to $(i, \pi(i))$ with
$c_i = (x, y)$ has been drawn in the square $[x, x+1) \times [y, y+1)$. For
example, Figure~\ref{fig:gridded-perm} depicts a gridded permutation of size
$9$.

\begin{figure}
	\begin{center}
		\begin{tikzpicture}[scale=0.6, x=1cm,y=1cm]

			\foreach \Point in {(0.25, 0.75), (0.75, 3.75), (1.25, 1.75), %
								(1.75, 1.25), (2.25, 3.25), (2.75, 2.75), %
								(3.25, 4.25), (3.75, 0.25), (4.25, 2.25)}{\node
				at \Point {\point{2pt}};}
			% def shrunk(p): return [(i/2*1.+0.25,v/2*1.+0.25) for i, v in
			%     enumerate(p)]
			%
			% % to ensure that the points are being properly centered:
			\draw [dashed, very thin, gray] (0.01,0.01) grid (5.5,5.5);
			\draw[-latex, thin, gray] (-0.5,0)--(6,0);
			\draw[-latex, thin, gray] (0,-0.5)--(0,6);
		\end{tikzpicture}
	\end{center}
	\caption{The gridded permutation
		$(284376915, ((0, 0), (0, 3), (1, 1), (1, 1), (2, 3), (2, 2), (3, 4),$
		$ (3,0), (4, 2)))$.
		For example, the entry $9$, which occurs at index $7$ in the permutation,
		lies in the unit cell whose lower left corner has coordinates $(3,4)$,
		reflected by the fact that $(3,4)$ is the seventh cell in the tuple of cell positions.}
	\label{fig:gridded-perm}
\end{figure}
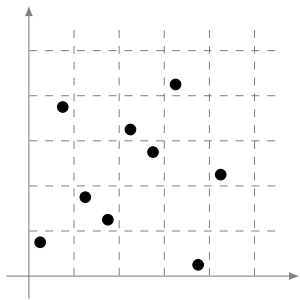

Not every pair $(\pi, P)$ is a valid gridded permutation, as the position tuple
may not be consistent with the permutation. For example, $(21, ((0, 0), (1,
1)))$ is not a valid gridded permutation---there is no way to place one entry in
the region $[0,1) \times [0,1)$ and another entry in the region $[1,2) \times
[1,2)$ such that the two entries form a $21$ pattern. 

A pair $(\pi, P)$ forms a valid gridded permutation when the relative positions of the entries in $\pi$ are consistent with the relative positions of the cells in $P$. More formally, letting $|\pi| = n$ and $P = (c_1, \ldots, c_n)$ with $c_i = (x_i, y_i)$, the pair $(\pi, P)$ forms a gridded permutation if ${x_1 \leq x_2 \leq \cdots \leq x_n}$ and ${y_{\pi^{-1}(1)} \leq y_{\pi^{-1}(2)} \leq \cdots \leq y_{\pi^{-1}(n)}}$.

When all of the entries of a gridded permutation have the same position $c$, we
abbreviate our notation by writing $(\pi, c)$ instead of $(\pi, (c, c,\dots,
c))$. In this case we say the gridded permutation is \emph{localized in cell
$c$}.

Let $\GG$ denote the set of all gridded permutations. The set $\GG$ is
inconvenient from a combinatorial perspective as, for example, it contains an
infinite number of gridded permutations of size $1$, and so we further define
the refinements $\GG^{(t, u)}$ to be the set of gridded permutations whose
positions all lie within the rectangle $[0, t) \times [0, u)$. Informally,
$\GG^{(t,u)}$ is the set of those gridded permutations that can be drawn
on a grid with width $t$ (i.e., $t$ columns) and height $u$ (i.e., $u$ rows).
Let $\GG^{(t, u)}_n$ denote the set of gridded permutations in $\GG^{(t, u)}$
with size $n$.
We can determine $|\GG^{(t, u)}_n|$ by considering where the  horizontal and
vertical grid lines can pass between the points of each permutation of length
$n$. One finds that the total number is
\[
	n! \binom{t + n - 1}{t - 1} \binom{u + n - 1}{u - 1}.
\]
In particular, the finiteness of $|\GG^{(t, u)}_n|$ implies that $\GG^{(t,u)}$
is a combinatorial set.

The notion of pattern containment in permutations extends naturally to the realm
of gridded permutations by requiring the entries of the smaller permutation to
lie in precisely the same cells in the larger permutation. Formally, we say that
the size $n$ gridded permutation $g = (\pi, (c_1, \ldots, c_n))$ \emph{contains}
the size $k$ gridded permutation $h = (\sigma, (d_1, \ldots, d_k))$ if there is
a subsequence $\pi(i_1)\pi(i_2) \cdots \pi(i_k)$ of $\pi$ whose standardization
is equal to $\sigma$ and $c_{i_j} = d_j$ for $1 \leq j \leq k$. Note that the
cells of $h$ are required to be actually equal to the cells of $g$ where $h$
occurs; there is no standardization of cells. We often use the
term \emph{pattern} colloquially to refer to the smaller permutations whose
containment is under consideration. We call the subsequence $(i_1, \ldots, i_k)$
an \emph{occurrence} of $h$ in the gridded permutation $g$. If $g$ does not
contain $h$ we say it \emph{avoids} $h$.
Figure~\ref{fig:gridded-containment-and-avoidance} shows a gridded permutation
that contains two occurrences of the pattern $(231, ((0,0), (1,1), (3,0)))$ and
avoids the pattern $(231, ((1,3), (3,4), (4,2)))$.

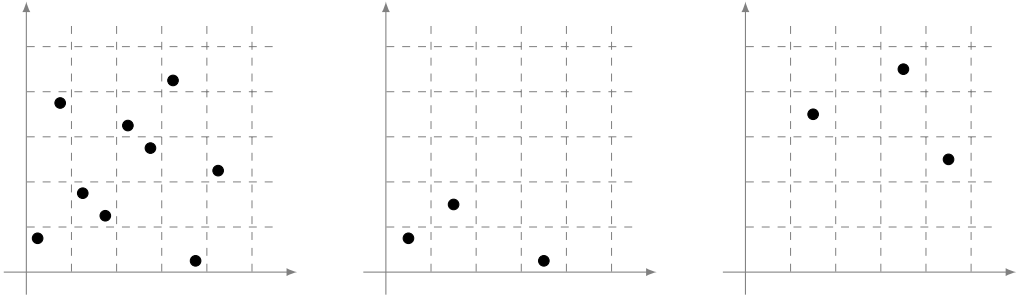
\begin{figure}
	\begin{center}
		\begin{tikzpicture}[scale=0.6, x=1cm,y=1cm]

			\foreach \Point in {(0.25, 0.75), (0.75, 3.75), (1.25, 1.75), %
								(1.75, 1.25), (2.25, 3.25), (2.75, 2.75), %
								(3.25, 4.25), (3.75, 0.25), (4.25, 2.25)}{\node
				at \Point {\point{2pt}};}
			\draw [dashed, very thin, gray] (0.01,0.01) grid (5.5,5.5);
			\draw[-latex, thin, gray] (-0.5,0)--(6,0);
			\draw[-latex, thin, gray] (0,-0.5)--(0,6);
		\end{tikzpicture}
		\qquad
		\begin{tikzpicture}[scale=0.6, x=1cm,y=1cm]
			\foreach \Point in {(0.5, 0.75), (1.5, 1.5), (3.5, 0.25)}{\node at
				\Point {\point{2pt}};}
			\draw [dashed, very thin, gray] (0.01,0.01) grid (5.5,5.5);
			\draw[-latex, thin, gray] (-0.5,0)--(6,0);
			\draw[-latex, thin, gray] (0,-0.5)--(0,6);
		\end{tikzpicture}
		\qquad
		\begin{tikzpicture}[scale=0.6, x=1cm,y=1cm]
			\foreach \Point in {(1.5, 3.5), (3.5, 4.5), (4.5, 2.5)}{\node at
				\Point {\point{2pt}};}
			\draw [dashed, very thin, gray] (0.01,0.01) grid (5.5,5.5);
			\draw[-latex, thin, gray] (-0.5,0)--(6,0);
			\draw[-latex, thin, gray] (0,-0.5)--(0,6);
		\end{tikzpicture}
	\end{center}
	\caption{The gridded permutation from Figure~\ref{fig:gridded-perm} (left)
	contains two occurrences of the gridded pattern $(231, ((0,0),(1,1), (3,0)))$ (center),
	but avoids $(231, ((1,3),(3,4),(4,2)))$ (right).}
	\label{fig:gridded-containment-and-avoidance}
\end{figure}

Extending the notion of containment to sets of patterns, we say a gridded
permutation $g$ \emph{avoids} a set of gridded patterns $\OO$ if it avoids every
gridded pattern in $\OO$ and we define $\Av(\OO)$ to be the set of gridded
permutations that avoid $\OO$. On the other hand, we say a gridded permutation
\emph{contains} a set of gridded patterns $\RR$ if it does not avoid $\RR$. Note
that this means $g$ contains $\RR$ if it contains \emph{at least one} pattern in
$\RR$; $g$ need not contain all patterns in $\RR$. The set of gridded
permutations containing $\RR$ is denoted $\Co(\RR)$.\footnote{This definition of
containment may seem odd at first, but its purpose will become clear in the
following subsection, as will our use of the letters $\OO$ and $\RR$, which will
come to represent what we call ``obstructions'' and ``requirements''
respectively.}

%%%%% ===== JP: I commented this out because I don't think we actually use it anywhere.
%Finally, as we wish to work with combinatorial sets that by definition may
%contain only a finite number of objects of each size, we define
%\[
%  \Av^{(t, u)}(\OO) = \Av(\OO) \cap \GG^{(t,u)}
%  \quad \text{ and } \quad
%  \Co^{(t, u)}(\RR) = \Co(\RR) \cap \GG^{(t,u)}.
%\]

\subsection{Tilings}

Rather than working with arbitrary combinatorial sets of gridded permutations,
we restrict ourselves to those with a certain kind of structure that we now
explain. In doing so, we are able to devise novel and efficient algorithms for
manipulating these sets that exploit this structure. We start by defining a new
combinatorial object that represents structured sets of gridded permutations.

\begin{definition}
	A \emph{tiling} is a triple $\TT = ((t, u), \OO, \RR)$ where $t$ and $u$ are
	integers, $\OO$ is a set of gridded permutations that we call
	\emph{obstructions} and $\RR = \{ \RR_1, \RR_2, \ldots, \RR_k \}$ is a set of
	sets of gridded permutations that we call \emph{requirements}.

	A tiling $\TT$ represents the combinatorial set of gridded permutations $g \in
	\GG^{(t,u)}$ such that $g$ avoids $\OO$ and $g$ contains each $\RR_i$ for $1
	\leq i \leq k$. We call this set $\Grid(\TT)$. In other words, $\Grid(\TT)$ is
	the set gridded permutations in the region $[0, t) \times [0, u)$ that avoid
	\emph{all} of the patterns in $\OO$ and contain \emph{at least one} of the
	patterns in each $\RR_i$. We call the individual gridded permutations in each
	$\RR_i$ {requirements} and we call each set $\RR_i$ a
	\emph{requirement list}.
\end{definition}

An \emph{interval} of a poset $P$ is a set $I$ with the property that for all
$a,c \in I$ and $b \in P$, if $a \leq b \leq c$, then $b \in I$. The following
proposition shows that the sets of gridded permutations that are represented by
tilings are precisely intervals in the poset of gridded permutations when
restricting to tilings of a fixed dimension. We see this as justification that
tilings are a natural structure to use for Combinatorial Exploration.

\begin{theorem}
\label{theorem:tilings-are-intervals}
	Fix positive integers $t$ and $u$, and let $\HH$ be a set of gridded
	permutations in $\GG^{(t,u)}$. The following are equivalent:
	\begin{enumerate}
		\item The set $\HH$ is an interval in the poset of gridded
		permutations $\GG^{(t,u)}$.
		\item There exists a tiling $\TT$ such that $\Grid(\TT) = \HH$.
	\end{enumerate}
\end{theorem}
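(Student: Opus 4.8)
The plan is to prove the two implications separately. The direction $(2)\Rightarrow(1)$ is essentially immediate from transitivity of the containment order on gridded permutations. Suppose $\HH=\Grid(\TT)$ for some tiling $\TT=((t,u),\OO,\RR)$ with $\RR=\{\RR_1,\ldots,\RR_k\}$, and let $a\le b\le c$ with $a,c\in\HH$ and $b\in\GG^{(t,u)}$. Since $c$ avoids every obstruction and $b\le c$, transitivity forces $b$ to avoid $\OO$ as well: an obstruction contained in $b$ would be contained in $c$. Since $a$ contains at least one pattern from each $\RR_i$ and $a\le b$, that same pattern lies inside $b$, so $b$ contains each $\RR_i$. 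Hence $b\in\Grid(\TT)=\HH$, and $\HH$ is an interval.

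For $(1)\Rightarrow(2)$, let $\HH$ be an interval of $\GG^{(t,u)}$, and set $\HH^{\downarrow}=\{g\in\GG^{(t,u)}: g\le h\text{ for some }h\in\HH\}$ and $\HH^{\uparrow}=\{g\in\GG^{(t,u)}: h\le g\text{ for some }h\in\HH\}$. The interval hypothesis gives $\HH=\HH^{\downarrow}\cap\HH^{\uparrow}$: the inclusion ``$\subseteq$'' is trivial, and if $g$ lies in both then $h_2\le g\le h_1$ for some $h_1,h_2\in\HH$, so $g\in\HH$. I then build a tiling with a single requirement list: take $\OO$ to be the set of $\le$-minimal elements of $\GG^{(t,u)}\smallsetminus\HH^{\downarrow}$, and take $\RR=\{\RR_1\}$ with $\RR_1$ the set of $\le$-minimal elements of $\HH$ (equivalently, of $\HH^{\uparrow}$). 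The fact that makes this work is that $\GG^{(t,u)}$ is \emph{well-founded}: a strict containment $g<g'$ forces $|g|<|g'|$, since a gridded permutation that contains another of the same size must equal it, so there are no infinite strictly descending chains. Consequently every up-closed subset of $\GG^{(t,u)}$ is exactly the up-set generated by its $\le$-minimal elements. Applying this to the complement $\GG^{(t,u)}\smallsetminus\HH^{\downarrow}$ (up-closed, being the complement of a down-set) shows that $g\in\GG^{(t,u)}$ avoids $\OO$ if and only if $g\in\HH^{\downarrow}$; applying it to $\HH^{\uparrow}$ shows that $g$ contains $\RR_1$ if and only if $g\in\HH^{\uparrow}$. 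Therefore $\Grid(\TT)=\HH^{\downarrow}\cap\HH^{\uparrow}=\HH$. The degenerate case $\HH=\emptyset$ is handled automatically: then $\OO=\{(\varepsilon,())\}$, every gridded permutation contains $(\varepsilon,())$, and $\Grid(\TT)=\emptyset$.

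The main point to be careful about is that the containment order on $\GG^{(t,u)}$ is \emph{not} a well-quasi-order---already $\GG^{(1,1)}$, which is just the permutation containment order, has infinite antichains---so the antichains $\OO$ and $\RR_1$ produced above can be infinite. The statement is therefore true only because the definition of a tiling permits $\OO$ and the requirement lists to be arbitrary (not necessarily finite) sets of gridded permutations, and one should check that no step of the construction or verification uses finiteness anywhere. Everything else is a routine dictionary between order theory (down-closure, up-closure, interval, minimal elements) and tilings (obstruction $=$ forbidden pattern, requirement list $=$ ``must contain at least one''), and relies on nothing beyond transitivity of containment and the well-foundedness of $\GG^{(t,u)}$.
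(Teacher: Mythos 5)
Your proof is correct, and the direction $(2)\Rightarrow(1)$ is identical to the paper's. For $(1)\Rightarrow(2)$ you and the paper share the same underlying idea---encode ``not below anything in $\HH$'' with obstructions and ``above something in $\HH$'' with a single requirement list---but you implement it differently. The paper simply takes $\OO$ to be \emph{all} gridded permutations in $\GG^{(t,u)}$ not contained in any element of $\HH$ (i.e.\ the full complement of your $\HH^{\downarrow}$) and takes the one requirement list to be $\HH$ itself; the verification is then a short direct argument (a $g\in\Grid(\TT)\smallsetminus\HH$ would, by the interval property, lie below no element of $\HH$, hence be an obstruction contained in itself), with no need for any well-foundedness lemma. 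You instead trim to the $\le$-minimal elements of the complement of $\HH^{\downarrow}$ and of $\HH$, which forces you to prove and invoke the (true, and correctly justified) fact that strict containment decreases size, so up-closed sets are generated by their minimal elements. The trade-off: the paper's construction is shorter and uses only transitivity plus the interval property, while yours produces a more economical, canonical tiling whose obstruction set and requirement list are antichains. Your caveat about infinitude is also well placed and matches the paper, which notes explicitly that the obstruction set in this construction may be infinite and only afterwards restricts attention to tilings with finite requirement lists.
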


\begin{proof}
To show that Condition 1 implies Condition 2 we construct the tiling
$\TT$. Let $\OO$ be the subset of gridded permutations in $\GG^{(t,u)}$ that are
not contained in any gridded permutation in $\HH$. These will be the obstructions
of $\TT$; this set may be infinite, which is not a problem. The
requirements will be a single requirement list containing every gridded
permutation in $\HH$, i.e.,  $\RR = \{ \HH \}$. Now define
$\TT = ((t,u), \OO, \RR)$. It remains to show that $\Grid(\TT) = \HH$.

Let $g \in \Grid(\TT)$. By the definition of $\TT$, $g$ contains at least one
permutation in $\HH$ and avoids every obstruction in $\OO$. Assume toward a
contradiction that $g \not\in \HH$. Because $\HH$ is an interval, $g$ is not
contained in any permutation in $\HH$. Thus by construction, $g \in \OO$.
This is a contradiction thus $g$ must be in $\HH$. Conversely, consider a
permutation $g \in \HH$. This satisfies the requirements of $\TT$ by the
definition of $\RR = \{\HH\}$. Moreover, $\OO$ was defined to be the set of
permutations that are not contained in any permutation in $\HH$, and so $g$
does not contain any permutation in $\OO$. Thus $g \in \Grid(\TT)$, confirming
that Condition 1 implies Condition 2.

To show that Condition 2 implies Condition 1, let $\TT = ((t,u), \OO, \RR)$ be
a tiling and define $\HH = \Grid(\TT)$. With the aim of showing that $\HH$ is an
interval, suppose that $a,c \in \HH$, $b \in \GG^{(t,u)}$ and $a \leq b \leq c$.
Since $c \in \HH$, we know that $c$ does not contain any obstruction in $\OO$, and
as $b \leq c$ the same must be true for $b$. Moreover $a$ must contain at least one
gridded permutation from each requirement list in $\RR$, and as $b \geq a$ the same
must be true for $b$. Since $b$ avoids all of the obstructions of $\TT$ and
satisfies all of the requirements, we conclude $b \in \Grid(\TT) = \HH$ and
therefore $\HH$ is an interval.
\end{proof}

Henceforth, all tilings used in this paper have the property that each requirement
list $\RR_i$ contains only finitely many gridded permutations. Under this assumption,
one useful property of the tiling representation is that for a
tiling $\TT$, it can be determined in finite time whether $\Grid(\TT)$ is empty.
This is algorithmically useful, as later parts of this section will
demonstrate.
\begin{theorem}
\label{theorem:is-empty-check}
	Consider a tiling $\TT = ((t,u), \OO, \RR)$ where
	$\RR = \{\RR_1, \ldots, \RR_k\}$. Let $\ell_i$ be the size of the largest
	gridded permutation in $R_i$ and define $L = \ell_1 + \cdots + \ell_k$.
	If $\Grid(\TT)$ is nonempty, then it contains a gridded permutation
	whose size is at most $L$.
\end{theorem}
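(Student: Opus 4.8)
The plan is to start from an arbitrary nonempty element $g \in \Grid(\TT)$ and carve out of it a small ``witnessing'' sub-gridded-permutation. Since $g$ satisfies all the requirements, for each $i \in \{1,\ldots,k\}$ there is a gridded permutation $h_i \in \RR_i$ together with an occurrence of $h_i$ in $g$, that is, a set $S_i$ of indices of $g$ whose entries, with their cells, reproduce $h_i$. Let $S = S_1 \cup \cdots \cup S_k$ and let $g'$ be the gridded permutation obtained by deleting from $g$ every entry whose index lies outside $S$. Then $|g'| = |S| \le |S_1| + \cdots + |S_k| = |h_1| + \cdots + |h_k| \le \ell_1 + \cdots + \ell_k = L$, so $g'$ has size at most $L$; it remains only to show $g' \in \Grid(\TT)$.

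Next I would verify that $g'$ actually lies in $\Grid(\TT)$. First, $g'$ is a (consistent) gridded permutation in $\GG^{(t,u)}$, since its tuple of cells is a sub-tuple of that of $g$ and therefore still lies inside $[0,t)\times[0,u)$, and consistency is inherited by sub-griddings. Deleting entries can only destroy occurrences of patterns, never create them, so $g' \le g$ in the poset of gridded permutations; since $g$ avoids every obstruction in $\OO$ and gridded pattern containment is transitive, $g'$ avoids every obstruction in $\OO$ as well. Finally, for each $i$ the occurrence indexed by $S_i \subseteq S$ survives the deletion intact, so $g'$ still contains $h_i \in \RR_i$ and hence satisfies the $i$th requirement list. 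Therefore $g' \in \Grid(\TT)$, giving the claimed element of size at most $L$.

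There is essentially no hard step here: the only points needing a word of justification are that the restriction of a gridded permutation to a set of indices is again a consistent gridded permutation and that pattern avoidance for gridded permutations is monotone under this restriction, both of which follow immediately from the definition of gridded pattern containment (cells matched exactly, underlying subsequence standardized), so I would phrase them as quick observations rather than separate lemmas. I would also note the trivial edge cases for completeness: if some $\RR_i$ contains the empty gridded permutation then $\ell_i = 0$ and $S_i = \emptyset$, causing no difficulty, and if $k = 0$ then $L = 0$ and the empty gridded permutation already belongs to $\Grid(\TT)$ whenever it is nonempty.
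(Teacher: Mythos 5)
Your proposal is correct and follows exactly the paper's argument: pick any element of $\Grid(\TT)$, take the union of index sets of one occurrence per requirement list, and restrict to those indices, noting that obstruction avoidance and the chosen occurrences survive the restriction. Your extra remarks on consistency of sub-griddings and the edge cases are fine elaborations of steps the paper leaves implicit, but the route is the same.
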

\begin{proof}
	Suppose $\Grid(\TT)$ is nonempty and let $\pi \in \Grid(\TT)$. Since
	$\pi$ satisfies all of the requirements in $\TT$, there exist gridded
	permutations $\sigma_1 \in \RR_1, \ldots, \sigma_k \in \RR_k$ such that $\pi$
	contains each $\sigma_i$. Choose one particular occurrence of each $\sigma_i$,
	let $I_i$ be the indices of $\pi$ where this occurrence is located, and let
	$I = I_1 \cup \cdots \cup I_k$. By design $|I| \leq L$.

	Consider the subpermutation $\tau$ of $\pi$ formed just from the entries
	at the indices of $I$. The permutation $\tau$ avoids all of the obstructions
	in $\OO$ and satisfies all of the requirements in $\RR$, and therefore
	$\tau \in \Grid(\TT)$ and has size at most $L$.
\end{proof}

This theorem implies that to check whether $\Grid(\TT)$ is empty, one only needs
to check whether each gridded permutation in $\GG^{(t,u)}$ of size at most $L$
avoids the obstructions and contains the requirements. While this can be done
in finite time, it is still slower than desirable. In practice, we have made
several significant optimizations to speed up this algorithm, but we will not
go into detail here.

Given any (ungridded) permutation class $\Av(B)$, define the tiling
$\TT_{\Av(B)}$ to be the $1 \times 1$ tiling with obstructions mimicking the
basis elements $B$ and no requirements:
\[
  \TT_{\Av(B)} = ((1,1), \{(\beta, (0,0)) : \beta \in B\}, \{\}).
\]
Since the map $\pi \mapsto (\pi, (0,0))$ from $\Av(B)$ to $\Grid(\TT_{\Av(B)})$
is a size-preserving bijection, we can enumerate $\Av(B)$ by applying
Combinatorial Exploration to the set $\Grid(\TT_{\Av(B)})$. The strategies we
describe in the remainder of this section are designed to be applied to sets of
gridded permutations coming from tilings, that is, sets $G$ such that $G =
\Grid(\TT)$ for some tiling $\TT$. The result is that we can define the
decomposition function of a strategy as a function whose input is a tiling $\TT$
and whose output is a sequence of tilings $(\TT^{(1)}, \ldots, \TT^{(m)})$, even
though the decomposition function really acts on the corresponding sets of
gridded permutations.

In order to explain the strategies we use for gridded permutations, we make
liberal use of figures to depict tilings graphically. The entries of gridded
permutations are drawn as points, and those points are connected by lines so
it is clear which points belong to which gridded permutations. To maximize visual
distinction, obstructions are drawn in red, with solid lines and solid round points,
while requirements are drawn in blue, with dotted lines and hollow square points.
Since the requirements of a tiling consist of a set of sets of gridded permutations
$\{\RR_1, \ldots, \RR_k\}$---a structure that is hard to convey visually---we
typically only draw tilings when each $\RR_i$ has size $1$. Thus, in this article,
whenever a tiling shows two requirements $r_1$ and $r_2$, for example, this should
be interpreted as $\{\{r_1\}, \{r_2\}\}$ (two requirement lists of length one), not
$\{\{r_1, r_2\}\}$ (one requirement list of length two), i.e., all gridded
permutations that can be drawn on the tiling must contain both $r_1$ and $r_2$.

Consider, for example, the tiling $\TT = ((3,2), \OO, \RR)$
where\footnote{Recall that when the entries of a gridded permutation all lie in
the same cell $c$, we write $(\pi, c)$ instead of $(\pi, (c,c,\ldots,c))$.}
\begin{align*}
    \OO &= \{(1, (0,1)), (1,(1,0)), (1,(2,1)), (12, (1,1)), (21, (1,1)),\\
        &\qquad (132, (0,0)), (132, (2,0)), (123, ((0,0),(2,0),(2,0)))
\end{align*}
and
\[
  \RR = \{\{(1, (1,1))\}, \{(21, (0,0))\}\}
\]
depicted on the left-hand side of Figure~\ref{fig:tiling-ex}. The gridded
permutations in $\Grid(\TT)$ are those contained in $\GG^{(3,2)}$ that:
\begin{itemize}
  \renewcommand\labelitemi{--}
  \item have no entries in the cells $(0,1)$, $(1,0)$ and $(2,1)$,
  \item have exactly one entry in the cell $(1,1)$,
  \item have two entries in cell $(0,0)$ that form a $21$ pattern,
  \item avoid the pattern $132$ fully within cell $(0,0)$ or fully within cell
  $(2,0)$, and
  \item avoid a $123$ pattern with the first entry in cell $(0,0)$ and the
  second two entries in cell $(2,0)$.
\end{itemize}

\begin{figure}
	\centering
  \begin{tikzpicture}[baseline=(current bounding box.center)]
      \node (nonemptytop3) at (12, -2) {\tiling{1.0}{3}{2}{}%
      {%
        {3/{(0.15, 0.25), (0.35, 0.75), (0.55, 0.50)}},%
        {3/{(2.45, 0.25), (2.65, 0.75), (2.85, 0.50)}},%
        {3/{(0.75, 0.1), (2.15, 0.6), (2.35, 0.675)}},%
        {2/{(1.1, 1.25), (1.4, 1.75)}},%
        {2/{(1.6, 1.75), (1.9, 1.25)}},%
        {1/{(0.5, 1.5)}},%
        {1/{(2.5, 1.5)}},%
        {1/{(1.5, 0.5)}}%
      } {%
        {1/{(1.5, 1.5)}},%
        {2/{(0.5, 0.85), (0.85, 0.5)}}%
      }};
	\end{tikzpicture}
  \qquad = \qquad
	\begin{tikzpicture}[baseline=(current bounding box.center)]
      \node (nonemptytop3) at (12, -2) {\tiling{1.0}{3}{2}{1/1}%
      {%
        {3/{(0.15, 0.25), (0.35, 0.75), (0.55, 0.50)}},%
        {3/{(2.45, 0.25), (2.65, 0.75), (2.85, 0.50)}},%
        {3/{(0.75, 0.1), (2.15, 0.6), (2.35, 0.675)}}%
      } {%
        {2/{(0.5, 0.85), (0.85, 0.5)}}%
      }};
	\end{tikzpicture}
	\caption{On the left, a tiling with all obstructions and requirements
	depicted. On the right, the same tiling shown using visual shortcuts.}
	\label{fig:tiling-ex}
\end{figure}
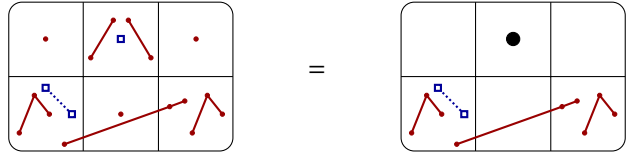

A cell is called \emph{empty} if it contains the obstruction of length $1$,
and \emph{nonempty} otherwise. It is so common to have cells that are empty or
required to
contain exactly one entry that we have visual shortcuts for these two cases:
cells that are required to be empty are drawn without the single-point
obstruction, while cells required to contain exactly one entry are drawn with a
solid, larger black circle. Figure~\ref{fig:tiling-ex-empty-and-point} shows
these two visual shortcuts. The tiling on the left-hand side of
Figure~\ref{fig:tiling-ex} is thus simplified to the tiling on the right-hand
side.

\begin{figure}
	\centering
	\begin{tikzpicture}
      \node (empty) at (12, -2) {\tiling{1.0}{1}{1}{}%
      {%
        {1/{(0.5, 0.5)}}%
      } {%
      }}; \node (equiv2) at (13, -2) {$=$}; \node (point2) at (14, -2)
      {\tiling{1.0}{1}{1}{}%
      {%
      } {%
      }};
	\end{tikzpicture}
	\qquad\qquad
	\begin{tikzpicture}
      \node (point1) at (12, -2) {\tiling{1.0}{1}{1}{}%
      {%
        {2/{(0.1, 0.25), (0.4, 0.75)}},%
        {2/{(0.6, 0.75), (0.9, 0.25)}}%
      } {%
      	{1/{(0.5, 0.5)}}%
      }}; \node (equiv2) at (13, -2) {$=$}; \node (point2) at (14, -2)
      {\tiling{1.0}{1}{1}{0/0}%
      {%
      } {%
      }};
	\end{tikzpicture}
	\caption{The two visual shortcuts that we use to make pictures of
	tilings more legible.}
	\label{fig:tiling-ex-empty-and-point}
\end{figure}
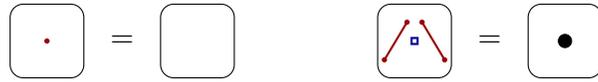

\subsection{Descriptions of Six Strategies}
\label{subsection:six-strategies}

To enumerate the (ungridded) permutation class $\Av(B)$, we start with the
combinatorial set $\TT_{\Av(B)}$ defined in the previous subsection and apply
strategies that specifically decompose sets of gridded permutations of the form $\Grid(\TT)$
for a tiling $\TT$.\footnote{As a result, all of our strategies implicitly have
decomposition functions that output $\DNA$ when the input set is not a
combinatorial set of this form.}

In this subsection we give intuitive, graphical descriptions of six fundamental
strategies, saving the details and the proofs of their productivity for
Subsection~\ref{subsection:strats-details}. The reader who chooses to skip
Subsection~\ref{subsection:strats-details} should still have a fairly complete
understanding of these six strategies. There are many more strategies that can
be efficaciously applied to the study of permutation patterns whose description
we defer to a future work whose sole focus is on Combinatorial Exploration in
the realm of permutation patterns.

In contrast to many algorithms that work by directly manipulating sets of
permutations, like the enumeration schemes of
Zeilberger~\cite{zeilberger:enumeration-schemes} and the insertion
encoding of Vatter~\cite{vatter:regular-insertion-encoding}, we can apply the strategies
defined in this section by simply manipulating the tilings themselves,
without the need to actually generate large sets of permutations. This
is a major benefit of using tilings to represent sets of
gridded permutations.

The six strategies we now introduce are, in order: requirement insertion,
obstruction/requirement simplification, point placement, row and column
separation, factorization, and obstruction inferral. The second, third, fourth,
and sixth of these strategies are equivalence strategies; to prove their
correctness in Subsection~\ref{subsection:strats-details} we describe a
size-preserving bijection between the input and output combinatorial sets. The
first and fifth of these strategies are not equivalence strategies and we
prove their productivity as outlined in Section~\ref{section:productivity}.

We will consider the enumeration of $\CC = \Av(1243, 1342, 2143)$ as a running
example. A full proof tree for $\CC$ is shown in
Figure~\ref{fig:av1243_1342_2143} on page~\pageref{fig:av1243_1342_2143}. The
root of the tree is $\TT_{\CC} = \TT_1$ and each strategy used in the tree will
be described in the remainder of this section.

\subsubsection{Requirement Insertion}
\label{subsubsection:requirement-insertion-intuition}

The strategy ``size-0-or-not'' in Example~\ref{example:factor-around-max-entry}
used the fact that every gridded permutation either has size $0$, or not, to
decompose a combinatorial set $\AA$ into a pair $d_Z(\AA) = (\BB,\CC)$ where
$\BB$ contains those elements with size $0$ and $\CC$ contains all others. This
can be reformulated as follows: every gridded permutation $\pi$ either contains
a gridded permutation of size $1$ (and thus $|\pi| \geq 1$) or avoids all
gridded permutations of size $1$ (and thus has size $0$).

This idea can be easily generalized from containing or avoiding gridded
permutations of size $1$ to containing or avoiding any set $H$ of gridded
permutations. Two applications of this strategy are seen in
Figure~\ref{fig:req-ins-from-large-tree}, which shows the top portion of the full proof tree
in Figure~\ref{fig:av1243_1342_2143} that is serving as our running example. We start
with the tiling $\TT_1$
and decompose it into the disjoint union of the tiling $\TT_2$, which avoids the
gridded pattern $(1, (0,0))$, and the tiling $\TT_3$ which contains this gridded
pattern. The same strategy is then applied to $\TT_3$ with the gridded pattern
$(12, (0,0))$. Note that $\TT_5$ shows only the requirement $(12, (0,0))$, and
not the previous requirement $(1, (0,0))$. This is because the new larger
requirement makes the smaller requirement redundant, and so we have quietly removed
it. We discuss this strategy in more detail in
Subsection~\ref{subsubsection:obs-req-simp}.

The tilings $\TT_2$ and $\TT_4$ are subject to verification strategies that we
discuss in Subsection~\ref{subsubsec:verification}.
That leaves the tiling $\TT_5$, containing those permutations in $\TT_1$ that are not
strictly decreasing, as the only unexplored tiling so far.

\begin{figure}
	\centering
	\begin{tikzpicture}
	\input{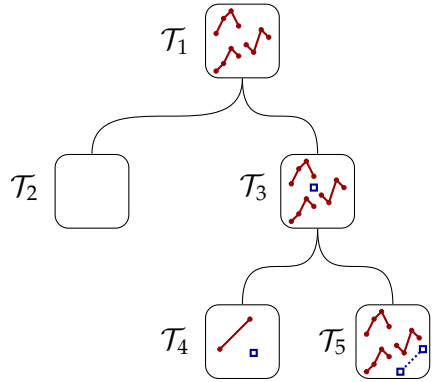}
	\end{tikzpicture}
	\caption{The top portion of the proof tree for $\Av(1243, 1342, 2143)$ shown in
	full in Figure~\ref{fig:av1243_1342_2143} on page~\pageref{fig:av1243_1342_2143}.
	The requirement $(1, (0,0))$ is inserted into $\TT_1$, and the requirement
	$(12, (0,0))$ is inserted into $\TT_3$. Note that the point requirement from
	$\TT_3$ has not been shown on $\TT_5$ because it is redundant.}
	\label{fig:req-ins-from-large-tree}
\end{figure}

More formally, let $H$ be a set of gridded patterns and $\TT = ((t,u), \OO,
\RR)$ a tiling. We define two tilings
\[
	\inso{H}{\TT} = ((t,u), \OO \cup H, \RR), \qquad \insr{H}{\TT} = ((t,u), \OO, \RR \cup \{H\}).
\]

In the first one we have added all the gridded patterns in $H$ as obstructions,
and in the second one we have added $H$ as one of the requirement lists, so every
permutation in $\Grid(\insr{H}{\TT})$ contains at least one gridded pattern in
$H$. Clearly $\Grid(\TT)$ is equal to the disjoint union
$\Grid(\inso{H}{\TT}) \sqcup \Grid(\insr{H}{\TT})$.
When $H = \{ h \}$ we drop the brackets and instead write $\inso{h}{\TT}$ and
$\insr{h}{\TT}$.

We now give the actual definition of the strategy. Given any set of gridded
permutations $H$, the strategy $\ReqIns_H$ is defined as follows\footnote{Recall
that we describe strategies by their action on tilings even though they actually
act on combinatorial sets of gridded permutations.}:
\begin{itemize}
  \renewcommand\labelitemi{--}
  \item If $\TT$ is a tiling with dimensions $t \times u$ and $H \subseteq
  \GG^{(t,u)}$, and both sets
  \[
	\inso{H}{\TT} \qquad \text{ and } \qquad \insr{H}{\TT}
  \]
  are nonempty, then $d_{\ReqIns_H}(\TT) = (\inso{H}{\TT}, \insr{H}{\TT})$.
  Otherwise $d_{\ReqIns_H}(\TT) = \DNA$.
  \item The reliance profile function is $r_{\ReqIns_H}(n) = (n, n)$.
  \item The counting functions are $c_{\ReqIns_H, (n)}((a_0,\ldots,a_n), (b_0,\ldots,b_n)) = a_n + b_n$.
\end{itemize}

Requirement Insertion is an example of what we call a \emph{disjoint-union-type
strategy}, which is a strategy $S$ such that whenever $d_S(\AA) = (\BB^{(1)},
\ldots, \BB^{(m)})$ for $m > 1$ and all $\BB^{(i)}$ are nonempty,
we have that $\AA$ is the disjoint union $\BB^{(1)} \sqcup
\cdots \sqcup \BB^{(m)}$ and so $|\AA_n| = |\BB^{(1)}_n| + \cdots +
|\BB^{(m)}_n|$. We additionally require that the reliance profile function is
$n \mapsto (n, \ldots, n)$.
Under these conditions, disjoint-union-type strategies are easily seen to
be productive; we justify the productivity of Requirement Insertion in particular
in Theorem~\ref{theorem:requirement-insertion-productive} in
Subsection~\ref{subsection:strats-details}.

\subsubsection{Obstruction and Requirement Simplification}
\label{subsubsection:obs-req-simp}
In Figure~\ref{fig:req-ins-from-large-tree}, $\TT_5$ is formed from $\TT_3$ by
adding the requirement list $\{(12, (0,0))\}$, so that the full requirements of
$\TT_5$ are
\[
  \{\;\;\{(1, (0,0))\},\;\;\{(12, (0,0))\}\;\;\}.
\]
We have not drawn the requirement of size $1$ in the figure because it is
redundant---all gridded permutations that contain $(12, (0,0)))$ also contain
$(1, (0,0))$---and so it is omitted from the figure. Throughout this work, we
will frequently simplify obstructions and requirements in the way described
here, often without mentioning it.

It is advantageous to remove redundant obstructions and requirements from
tilings. In addition to a gain in computational efficiency, it also boosts the
theoretical strength of Combinatorial Exploration by increasing our ability to
identify when two tilings represent the same set of gridded permutations. There
are several ways in which obstructions and requirements can be deleted, altered,
or even added, leading to simpler tiling representations for the same sets of
gridded permutations.

We want to first point out a subtlety around the way we have framed
requirements. As we described in the example above, the requirement $(1, (0,0))$
is redundant in the sense that it is a strictly weaker condition than the
requirement $(12, (0,0))$. It is incorrect to remove this redundancy just by
deleting the requirement from its list, yielding the full requirements
\[
  \{\;\;\{\;\},\;\;\{(12, (0,0))\}\;\;\}.
\]
A gridded permutation can be drawn on a tiling if it contains at least one
requirement from each set, and so if a list becomes empty this condition is
impossible to satisfy, implying that no gridded permutations can be drawn. The
correct way to handle this redundancy is to instead delete the entire
requirement list:
\[
  \{\;\;\{(12, (0,0))\}\;\;\}.
\]
To summarize, deleting individual requirements out of their lists ostensibly
leads to stricter conditions, while deleting an entire requirement list leads to
a weaker condition.

\paragraph{Obstruction Deletion}\ \\
The case of obstructions is more straightforward, and so we start by defining a
strategy that removes an obstruction if its removal does not change the
underlying set of gridded permutations. We call this strategy Obstruction
Deletion. Formally, for any gridded permutation $h$, define the equivalence
strategy $\ObsDel_h$ as follows:
\begin{itemize}
  \renewcommand\labelitemi{--}
  \item If $\TT = ((t,u), \OO, \RR)$ is a tiling and if $h \in \OO$, then define
  $\TT' = ((t,u), \OO \smallsetminus \{h\}, \RR)$. If $\Grid(\TT) =
  \Grid(\TT')$, then we define $d_{\ObsDel_h}(\TT) = \TT'$. Otherwise
  $d_{\ObsDel_h}(\TT) = \DNA$.
  \item The reliance profile function is $r_{\ObsDel_h}(n) = (n)$.
  \item The counting functions are $c_{\ObsDel_h, (n)}((a_0, \ldots, a_n)) = a_n$.
\end{itemize}

We should confess here that it may seem strange that we have defined a strategy
that, by its very definition, makes absolutely no change to the actual
combinatorial set under consideration---the more typical situation is that an
equivalence strategy outputs a set different from its input, but that the two
sets are equinumerous. Indeed, since this strategy only alters the
representation of the set (the tiling), and not the set itself, we could have
just described this as a computational step to simplify our representation
completely independent of the strategic framework. However, there are cases
where there are multiple tilings that could be used to describe the same set of
gridded permutations, and (1) it is not clear whether one should be considered
``simpler'' than the other and (2) it may be algorithmically expensive to detect
this. By writing such transformations as combinatorial rules, this information
is preserved in our universe of rules and any version of the tiling can be used
to construct a combinatorial specification.

Since $\ObsDel_h$ is claimed to be an equivalence strategy, we do not need to
prove that it is productive. In order to justify that it is an equivalence
strategy, we would only need to verify that $|\Grid_n(\TT)| = |\Grid_n(\TT')|$, but
this is true by definition; we only apply the strategy in cases where the
underlying gridded permutations do not change.

What this theoretical definition does not even begin to make clear is how, in
the process of Combinatorial Exploration, we detect when it can be applied. It
is, perhaps surprisingly, not always obvious when an obstruction can be deleted.
(This is due to the complication added by the notion of requirements.) What we
describe here is one sufficient condition that guarantees an obstruction can be
deleted---when we detect this condition, we apply the strategy.

Suppose a tiling $\TT$ has obstructions $h_1, h_2 \in \OO$ with $h_1 \leq h_2$.
Every gridded permutation that avoids $h_1$ also avoids $h_2$. Therefore, to any
tiling $\TT$ we may apply the strategy $\ObsDel_h$ for any non-minimal $h \in
\OO$.

\paragraph{Requirement Deletion}\ \\
There is a similar circumstance in which requirements can be deleted without
altering the underlying set of gridded permutations. As before, we first define
the strategy $\ReqDel$, which gives no sense of in which situations it may be
applied, and then we give a sufficient condition for a valid application of the
strategy.

\begin{itemize}
  \renewcommand\labelitemi{--}
  \item If $\TT = ((t,u), \OO, \RR)$ is a tiling and if $r \in \RR_i$, then
  define
  \[
    \TT' = ((t,u), \OO, \{\RR_1, \ldots, \RR_{i-1}, \RR_i \smallsetminus \{r\},
    \RR_{i+1}, \ldots, \RR_k\}).
  \]
  If $\Grid(\TT) =  \Grid(\TT')$, then we define  $d_{\ReqDel_{r,i}}(\TT) =
  \TT'$. Otherwise  $d_{\ReqDel_{r,i}}(\TT) = \DNA$.
  \item The reliance profile function is $r_{\ReqDel_{r,i}}(n) = (n)$.
  \item The counting functions are $c_{\ReqDel_{r,i}, (n)}((a_0, \ldots, a_n)) = a_n$.
\end{itemize}

As with $\ObsDel$, $\ReqDel$ is an equivalence strategy that does not change the
underlying set of gridded permutations. Suppose that a requirement list $\RR_i$
contains two gridded permutations $r_1$ and $r_2$ with $r_1 \leq r_2$. Then, any
gridded permutation that contains $r_2$ also contains $r_1$, and so $r_2$ can be
deleted from $\RR_i$ without changing the underlying gridded permutations.
Therefore, a condition sufficient to ensure that $\ReqDel_{r,i}$ can be applied
to $\TT$ is that $r \in \RR_i$ is non-minimal among $\RR_i$.

\paragraph{Requirement List Deletion}\ \\
It is sometimes possible that an entire requirement list can be deleted without
changing the underlying set of gridded permutations. Once again, we start by
formally defining the strategy $\ReqListDel$:
\begin{itemize}

  \item If $\TT = ((t,u), \OO, \RR)$ is a tiling and $1 \leq i \leq |\RR|$ then
  define
  \[
    \TT' = ((t,u), \OO, \{\RR_1, \ldots, \RR_{i-1}, \RR_{i+1}, \ldots, \RR_k\}).
  \]
  If $\Grid(\TT) =  \Grid(\TT')$, then we define  $d_{\ReqListDel_{i}}(\TT) =
  \TT'$. Otherwise  $d_{\ReqListDel_{i}}(\TT) = \DNA$.
  \item The reliance profile function is $r_{\ReqListDel_{i}}(n) = (n)$.
  \item The counting functions are $c_{\ReqListDel_{i}, (n)}((a_0, \ldots, a_n)) = a_n$.
\end{itemize}

$\ReqListDel$ is an equivalence strategy that can be applied in the following
scenario. Suppose $\RR_i \in \RR$ and there exists $j \neq i$ such that every
$r' \in \RR_j$ contains at least one $r \in \RR_i$. Then, every gridded
permutation that contains a requirement in the list $\RR_j$ also contains a
requirement in the list $\RR_i$, and so $\RR_i$ can be deleted without changing
the underlying set of gridded permutations.

In each of the descriptions of $\ObsDel$, $\ReqDel$, and $\ReqListDel$, we gave
a sufficient but not necessary condition ensuring that each strategy could be
applied. We discuss more general approaches in Subsection~\ref{subsubsection:obs-inf}.

In the figures in this section that show examples of the application of
strategies, we will often implicitly apply these obstruction and requirement
simplification strategies in order to make the pictures more readable.

\subsubsection{Point Placement}
\label{subsubsection:point-placement}

In describing the combinatorial specification for \(\Av(132)\) in
Section~\ref{section:combinatorial-exploration}, we made the observation that
the topmost point of any \(132\)-avoiding permutation could be isolated, and we
then made several inferences about the structure of such permutations. The
strategy described in this subsection, point placement, is a vast generalization
of this concept.

Point placement is an equivalence strategy that acts on a tiling by isolating
one point of a singleton requirement in a cell of its own and forcing that point
to be extreme in one of four directions: topmost, bottommost, leftmost, or
rightmost. Before a more detailed definition, consider the example shown in
Figure~\ref{figure:first-point-placement} which starts with a \(1 \times 1\)
tiling containing a \(132\) obstruction and a \(1\) requirement. The single
point of the requirement could be placed in any of the four extreme directions;
here we place it topmost. The result is a \(3 \times 3\) tiling in which the
point of the requirement is in the middle cell. To ensure that this point has
been placed in its own row and column, we add size \(1\) obstructions in cells
\((1,0)\), \((0,1)\), \((2,1)\), and \((1,2)\). To ensure that the placed point
is truly the topmost point, we add size \(1\) obstructions in cells \((0,2)\)
and \((2,2)\). Lastly, to be sure that the gridded permutations on this new
tiling still avoid \(132\), we add obstructions with the underlying \(132\) in
all possible ways\footnote{Many are redundant for obvious reasons, so we have
chosen for clarity to not draw these.}. This second tiling can be simplified to
the third tiling using a useful form of obstruction simplification that we
elaborate on in Subsection~\ref{subsubsection:obs-inf}: any gridded
permutation that can be drawn on this tiling contains a point in the cell
\((1,1)\), and thus avoiding the gridded permutation \((132,
((0,0),(1,1),(2,0)))\) is equivalent to avoiding the gridded subpermutation \((12,
((0,0), (2,0)))\). We can thus replace the former with the latter. The fourth
tiling is obtained using the Obstruction Deletion strategy to remove those
obstructions that contain \((12, ((0,0), (2,0)))\), and the fifth and final tiling
is the result of deleting the topmost row which cannot contain any points of any
gridded permutations anyway.

\begin{figure}
	\centering
	%!TEX root = combinatorial-exploration.tex
\begin{tikzpicture}[scale=1, baseline=(current bounding box.center)]
	\node at (0, 0) {
	\tiling{0.9}{1}{1}{}%
		{%
			{3/{(0.25, 0.25), (0.5, 0.75), (0.75, 0.5)}}%
		}{%
			{1/{(0.6, 0.4)}}%
		}};
\end{tikzpicture}
$\cong$
\begin{tikzpicture}[scale=1, baseline=(current bounding box.center)]
	\node at (0, 0) {
	\tiling{0.9}{3}{3}{1/1}%
		{%
			{3/{(0.25, 0.25), (0.5, 0.75), (0.75, 0.5)}},%
			{3/{(2.25, 0.25), (2.5, 0.75), (2.75, 0.5)}},%
			{3/{(0.8, 0.7), (1.75, 1.2), (2.25, 0.9)}},%
			{3/{(0.9, 0.5), (2.1, 0.75), (2.2, 0.65)}},%
			{3/{(0.6, 0.1), (0.75, 0.4), (2.1, 0.25)}}%
		}{}};
\end{tikzpicture}
$=$
\begin{tikzpicture}[scale=1, baseline=(current bounding box.center)]
	\node at (0, 0) {
	\tiling{0.9}{3}{3}{1/1}%
		{%
			{3/{(0.25, 0.25), (0.5, 0.75), (0.75, 0.5)}},%
			{3/{(2.25, 0.25), (2.5, 0.75), (2.75, 0.5)}},%
			{2/{(0.8, 0.7), (2.25, 0.9)}},%
			{3/{(0.9, 0.5), (2.1, 0.75), (2.2, 0.65)}},%
			{3/{(0.6, 0.1), (0.75, 0.4), (2.1, 0.25)}}%
		}{}};
\end{tikzpicture}
$=$
\begin{tikzpicture}[scale=1, baseline=(current bounding box.center)]
	\node at (0, 0) {
	\tiling{0.9}{3}{3}{1/1}%
		{%
			{3/{(0.25, 0.25), (0.5, 0.75), (0.75, 0.5)}},%
			{3/{(2.25, 0.25), (2.5, 0.75), (2.75, 0.5)}},%
			{2/{(0.8, 0.7), (2.25, 0.9)}}%
		}{}};
\end{tikzpicture}
$=$
\begin{tikzpicture}[scale=1, baseline=(current bounding box.center)]
	\node at (0, 0) {
	\tiling{0.9}{3}{2}{1/1}%
		{%
			{3/{(0.25, 0.25), (0.5, 0.75), (0.75, 0.5)}},%
			{3/{(2.25, 0.25), (2.5, 0.75), (2.75, 0.5)}},%
			{2/{(0.8, 0.7), (2.25, 0.9)}}%
		}{}};
\end{tikzpicture}
	\caption{The evolution of tilings that result from performing the
		Point Placement strategy on the leftmost tiling. The first and second
		tiling are equivalent in the sense that there is a size-preserving
		bijection between the sets of gridded permutations associated with each.
		The third, fourth,
		and fifth tilings arise after applying the discussed simplifications, and
		are all equal to the second tiling in the sense that the sets of gridded
		permutations associated with each are all equal.}
	\label{figure:first-point-placement}
\end{figure}
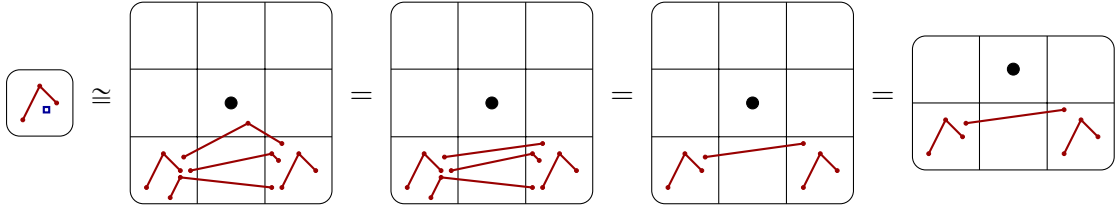

We took the time to point out each of these simplification steps individually
because in future examples they will always be applied, often without
comment, because otherwise the tilings produced by point placement have so many
obstructions that pictures of them become useless. Applications of point
placement can be more complicated than the example above in three ways: first,
we may be placing a point of a requirement that has size greater than \(1\);
second, there may be other requirement lists that have to be duplicated across
new cells in a manner similar to obstructions; and third, we often apply point
placement to tilings whose dimensions are larger than \(1 \times 1\).

More formally, consider a tiling \(\TT = ((t,u), \OO, \RR)\).
\begin{itemize}
	\renewcommand\labelitemi{--}
	\item If $\TT$ contains a singleton requirement list \(\RR_i = \{h\}\), if
		\(\ell\) is an index with \(1 \leq \ell \leq |h|\), and \(d \in
		\{\leftarrow, \rightarrow, \uparrow, \downarrow\}\) is a direction, then
		we define
		\[
			d_{\PointPl_{h, \ell, d}}(\TT) = \TT',
		\]
		where $\TT'$ is formed as described in the example above, a process
		fully explained in
		Subsection~\ref{subsubsection:point-placement-details}. Otherwise
		\(d_{\PointPl_{h, \ell, d}}(\TT) = \DNA\).
	\item The reliance profile function is \(r_{\PointPl_{h,\ell,d}}(n) = (n)\).
	\item The counting functions are \(c_{\PointPl_{h, \ell, d},(n)}((a_0, \ldots, a_n)) = a_n\).
\end{itemize}

The proof tree in Figure~\ref{fig:av1243_1342_2143} uses point placement several
times, so we take this opportunity to present these applications as further
examples.

\begin{figure}
	\centering
	\begin{tikzpicture}
	  \input{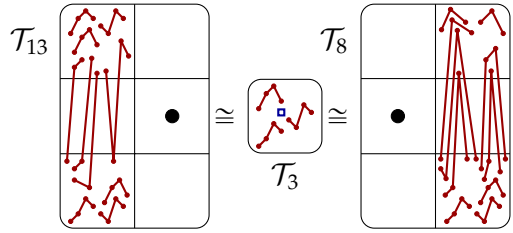}
	\end{tikzpicture}
	\caption{The tiling $\TT_3$ is equivalent to each of the tilings $\TT_8$
		and $\TT_{13}$, which are both produced by applying the point placement
		strategy to the same requirement in $\TT_3$, but with different
		directions.}
	\label{figure:big-tree-t3-to-t8}
\end{figure}

In Figure~\ref{figure:big-tree-t3-to-t8}, the strategy \(\PointPl_{(1,(0,0)), 1,
\leftarrow}\) is applied to the tiling $\TT_3$ to produce $\TT_8$, while
the strategy \(\PointPl_{(1,(0,0)), 1, \rightarrow}\)
is applied to the same tiling to produce $\TT_{13}$.

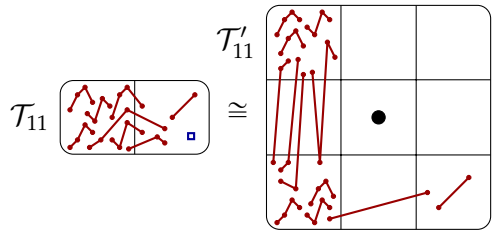
\begin{figure}
	\centering
	\begin{tikzpicture}[baseline=(current bounding box.center)]
	\node (t11) at (2.6, -11) {$\TT_{11}$};
	
	\node (middlenonempty) at (4, -11) {
		\tiling{1.0}{2}{1}{}%
		{%
			{4/{(0.14, 0.10), (0.24, 0.20), (0.34, 0.40), (0.44, 0.30)}},%
			{4/{(0.14, 0.60), (0.24, 0.80), (0.34, 0.90), (0.44, 0.70)}},%
			{4/{(0.37, 0.55), (0.47, 0.45), (0.57, 0.75), (0.67, 0.65)}},%
			{4/{(0.70, 0.20), (0.80, 0.10), (0.90, 0.43), (1.10, 0.30)}},%
			{4/{(0.40, 0.10), (0.55, 0.20), (0.90, 0.60), (1.40, 0.35)}},%
			{4/{(0.70, 0.50), (0.80, 0.80), (0.90, 0.90), (1.10, 0.65)}},%
			{3/{(0.92, 0.08), (1.30, 0.24), (1.40, 0.16)}},%
			{2/{(1.50, 0.50), (1.80, 0.80)}}%
		}{%
			{1/{(1.75, 0.25)}}%
		}%
	};
\end{tikzpicture}
$\cong$
\begin{tikzpicture}[baseline=(current bounding box.center)]
	\node (placedmiddle) at (7, -11) {
		\tiling{1.0}{3}{3}{1/1}%
		{%
			{4/{(0.15, 0.10), (0.25, 0.20), (0.35, 0.40), (0.45, 0.30)}},%
			{4/{(0.60, 0.35), (0.70, 0.55), (0.80, 0.65), (0.90, 0.45)}},%
			{4/{(0.55, 0.20), (0.65, 0.10), (0.75, 0.40), (0.85, 0.30)}},%
			{4/{(0.15, 2.60), (0.25, 2.80), (0.35, 2.90), (0.45, 2.80)}},%
			{4/{(0.55, 2.70), (0.65, 2.60), (0.75, 2.90), (0.85, 2.80)}},%
			{4/{(0.20, 2.35), (0.30, 2.55), (0.40, 2.65), (0.50, 2.45)}},%
			{4/{(0.62, 2.10), (0.72, 0.90), (0.82, 2.50), (0.92, 2.30)}},%
			{3/{(0.20, 0.65), (0.40, 0.55), (0.50, 2.07)}},%
			{3/{(0.20, 0.80), (0.30, 0.90), (0.43, 2.27)}},%
			{3/{(0.10, 0.90), (0.20, 2.15), (0.30, 2.25)}},%
			{2/{(2.30, 0.30), (2.70, 0.70)}},%
			{2/{(0.85, 0.15), (2.15, 0.50)}}%
		}{}%
	};
	\useasboundingbox (current bounding box.south west) rectangle (current bounding box.north east);
	\node (t11p) at (5.1, -10) {$\TT_{11}'$};
\end{tikzpicture}
	\caption{The tiling $\TT_{11}'$ is the result of applying point placement to $\TT_{11}$.}
	\label{figure:big-tree-t11-to-t11p}
\end{figure}

For an example in which the input tiling has dimensions larger than \(1 \times
1\), the tiling \(\TT_{11}'\) shown in Figure~\ref{figure:big-tree-t11-to-t11p} is the
result of applying \(\PointPl_{(1,(1,0)), 1, \leftarrow}\) to \(T_{11}\).

\begin{figure}
	\centering
	\begin{tikzpicture}[baseline=(current bounding box.center)]
	\node (contains12) at (3, -4) {
		\tiling{1.0}{1}{1}{}%
		{%
			{4/{(0.15, 0.10), (0.25, 0.20), (0.35, 0.40), (0.45, 0.30)}},%
			{4/{(0.15, 0.60), (0.25, 0.80), (0.35, 0.90), (0.45, 0.70)}},%
			{4/{(0.55, 0.45), (0.65, 0.35), (0.75, 0.65), (0.85, 0.55)}}%
		}{%
			{2/{(0.65, 0.15), (0.85, 0.35)}}%
		}%
	};
%	\useasboundingbox (current bounding box.south west) rectangle (current bounding box.north east);
	\node (t5) at (2.1, -4) {$\TT_5$};
\end{tikzpicture}
$\cong$
\begin{tikzpicture}[baseline=(current bounding box.center)]
	\node (nonemptyright) at (5.6, -4) {
		\tiling{1.0}{3}{3}{1/1}%
		{%
			{4/{(0.15, 0.10), (0.25, 0.20), (0.35, 0.40), (0.45, 0.30)}},%
			{4/{(0.60, 0.35), (0.70, 0.55), (0.80, 0.65), (0.90, 0.45)}},%
			{4/{(0.55, 0.20), (0.65, 0.10), (0.75, 0.40), (0.85, 0.30)}},%
			{4/{(0.15, 2.60), (0.25, 2.80), (0.35, 2.90), (0.45, 2.80)}},%
			{4/{(0.55, 2.70), (0.65, 2.60), (0.75, 2.90), (0.85, 2.80)}},%
			{4/{(0.20, 2.35), (0.30, 2.55), (0.40, 2.65), (0.50, 2.45)}},%
			{4/{(0.62, 2.10), (0.72, 0.90), (0.82, 2.50), (0.92, 2.30)}},%
			{3/{(0.20, 0.65), (0.40, 0.55), (0.50, 2.07)}},%
			{3/{(0.20, 0.80), (0.30, 0.90), (0.43, 2.27)}},%
			{3/{(0.10, 0.90), (0.20, 2.15), (0.30, 2.25)}},%
			{2/{(0.85, 0.15), (2.25, 0.75)}},%
			{2/{(2.25, 0.25), (2.75, 0.75)}}%
		}{%
			{1/{(0.55, 0.55)}}%
		}};
	\useasboundingbox (current bounding box.south west) rectangle (current bounding box.north east);
	\node (t55) at (3.75, -3) {$\TT_5'$};
\end{tikzpicture}
$\cong$
\begin{tikzpicture}[baseline=(current bounding box.center)]		
	\node (placed12) at (10.5, -4) {
		\tiling{1.0}{5}{5}{1/1, 3/3}%
		{%
			{2/{(2.85, 0.15), (4.25, 0.75)}},%
			{2/{(2.464, 0.75), (2.826, 4.50)}},%
			{2/{(2.40, 2.90), (2.60, 4.25)}},%
			{4/{(2.15, 0.10), (2.25, 0.20), (2.35, 0.40), (2.45, 0.30)}},%
			{4/{(2.60, 0.35), (2.70, 0.55), (2.80, 0.65), (2.90, 0.45)}},%
			{4/{(2.55, 0.20), (2.65, 0.10), (2.75, 0.40), (2.85, 0.30)}},%
			{4/{(2.13, 0.45), (2.23, 0.55), (2.31, 2.07), (2.42, 0.93)}},%
			{4/{(2.65, 0.66), (2.71, 0.90), (2.74, 2.10), (2.86, 0.76)}},%
			{4/{(2.61, 0.80), (2.67, 2.25), (2.80, 2.40), (2.93, 0.93)}},%
			{4/{(2.07, 0.80), (2.14, 0.67), (2.29, 2.64), (2.54, 0.93)}},%
			{3/{(2.08, 2.65), (2.20, 2.92), (2.40, 2.75)}},%
			{3/{(2.60, 2.75), (2.75, 2.90), (2.90, 2.60)}},%
			{3/{(2.07, 0.93), (2.22, 2.78), (2.48, 2.61)}},%
			{4/{(0.14, 4.10), (0.24, 4.20), (0.34, 4.40), (0.44, 4.30)}},%
			{4/{(0.14, 4.60), (0.24, 4.80), (0.34, 4.90), (0.44, 4.70)}},%
			{4/{(0.37, 4.55), (0.47, 4.45), (0.57, 4.75), (0.67, 4.65)}},%
			{4/{(0.70, 4.20), (0.80, 4.10), (0.90, 4.43), (2.10, 4.30)}},%
			{4/{(0.40, 4.10), (0.55, 4.20), (0.90, 4.60), (2.40, 4.35)}},%
			{4/{(0.70, 4.50), (0.80, 4.80), (0.90, 4.90), (2.10, 4.65)}},%
			{3/{(0.92, 4.08), (2.30, 4.24), (2.40, 4.16)}},%
			{2/{(2.50, 4.50), (2.80, 4.80)}},%
			{2/{(4.25, 0.25), (4.75, 0.75)}}%
		}{}
	};
	\useasboundingbox (current bounding box.south west) rectangle (current bounding box.north east);
	\node (t6) at (7.6, -2) {$\TT_5''$};
\end{tikzpicture}
	\caption{The tiling $\TT_5''$ is the result of applying point placement twice to $\TT_5$.}
	\label{figure:big-tree-t5-to-t6}
\end{figure}
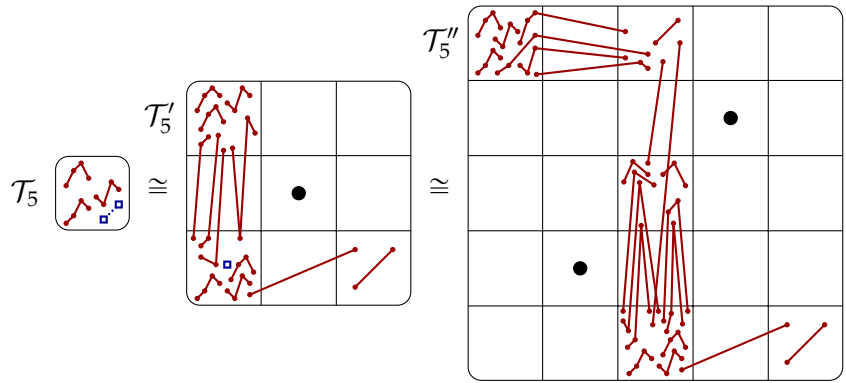

Finally, as demonstrated in Figure~\ref{figure:big-tree-t5-to-t6} the tiling
\(\TT_5''\) is produced by twice applying point placement to \(\TT_5\), placing
both points of a size two requirement. First, \(\PointPl_{(12,(0,0)), 2,
\rightarrow}\) is applied, placing the \(2\) in the requirement \((12,(0,0))\)
as far to the right as possible. To the result, \(\PointPl_{(1, (0,0)), 1,
\leftarrow}\) is applied, placing the sole point of the size \(1\) requirement
as far to the left as possible.

To prove that point placement is an equivalence strategy, we are required to
show the existence of a size-preserving bijection between any input tiling and
its corresponding output tiling. This requires rather a lot of bookkeeping, and
is verified in Subsection~\ref{subsubsection:point-placement-details}.

\subsubsection{Row Separation and Column Separation}
\label{subsubsection:row-col-sep}

The rightmost tiling in Figure~\ref{figure:first-point-placement} is the result
of placing a point into a \(1 \times 1\) tiling and performing several
simplifications. The strategies described in this subsection, row separation and
column separation, permit even further simplification of this tiling and others.

The presence of the \((12, (0,0), (2,0))\) obstruction in that tiling implies
that for any gridded permutation drawn on the tiling, any entries in cell
\((0,0)\) must lie above any entries in cell \((2,0)\). To capture this
information, we create a new tiling, shown on the right in
Figure~\ref{figure:row-col-sep-first-example}, in which the content of these two
cells has been separated into two rows.

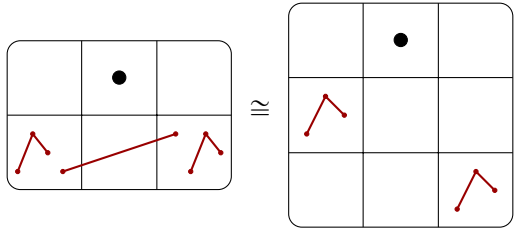
\begin{figure}
	\centering
	\begin{tikzpicture}[baseline=(current bounding box.center)]
		\node (nonemptytop3) at (4, -5) {
			\tiling{1.0}{3}{2}{1/1}%
			{%
				{3/{(0.15, 0.25), (0.35, 0.75), (0.55, 0.50)}},%
				{3/{(2.45, 0.25), (2.65, 0.75), (2.85, 0.50)}},%
				{2/{(0.75, 0.25), (2.25, 0.75)}}%
			}{}%
		};
	\end{tikzpicture}
	$\cong$
	\begin{tikzpicture}[baseline=(current bounding box.center)]
		\node (nonemptytop4) at (8, -5){
			\tiling{1.0}{3}{3}{1/2}%
			{%
				{3/{(0.25, 1.25), (0.5, 1.75), (0.75, 1.50)}},%
				{3/{(2.25, 0.25), (2.5, 0.75), (2.75, 0.50)}}%
			}{}%
		};
	\end{tikzpicture}
	\caption{An application of row separation.}
	\label{figure:row-col-sep-first-example}
\end{figure}

Another example of this strategy is found in the tiling \(\TT_5''\) (which itself was the
result of twice applying point placement to \(\TT_5\)) in the proof tree in
Figure~\ref{fig:av1243_1342_2143}. As we show in
Figure~\ref{figure:row-col-sep-second-example}, the obstruction
\((12,((2,0),(4,0)))\) implies that the two nonempty cells in the bottom row can
be separated. Further, the two obstructions \((12, ((2,0),(2,4)))\) and \((12,
((2,2),(2,4)))\) imply that any entries in cells \((2,0)\) and \((2,2)\) must
lie to the right of any entries in cell \((2,4)\), and so the cells in this
column can be separated, yielding the tiling $\TT_6$ in Figure~\ref{fig:av1243_1342_2143}.

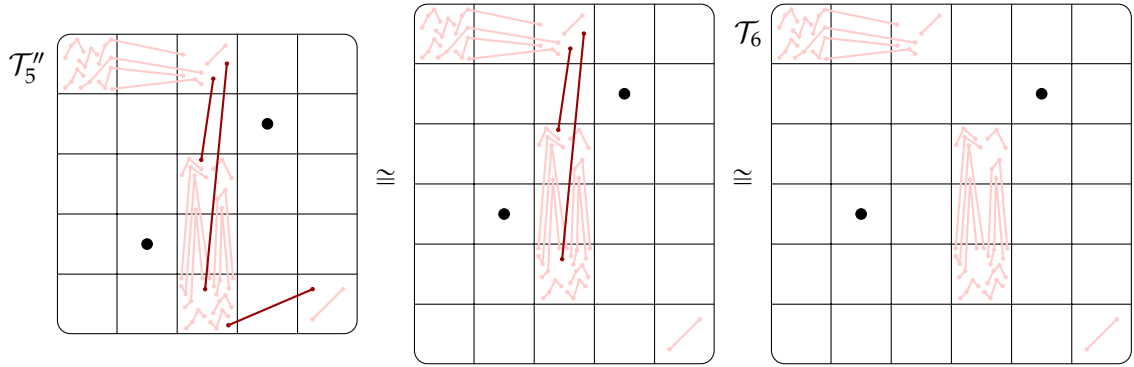
\begin{figure}
	\centering
	\begin{tikzpicture}[baseline=(current bounding box.center)]
	\node (n0) at (0, 0.45) {
		\tilinglight{0.8}{5}{5}{1/1, 3/3}%
		{%
			{2/{(2.85, 0.15), (4.25, 0.75)}},%
			{2/{(2.464, 0.75), (2.826, 4.50)}},%
			{2/{(2.40, 2.90), (2.60, 4.25)}}%
		}{}{%
			{4/{(2.15, 0.10), (2.25, 0.20), (2.35, 0.40), (2.45, 0.30)}},%
			{4/{(2.60, 0.35), (2.70, 0.55), (2.80, 0.65), (2.90, 0.45)}},%
			{4/{(2.55, 0.20), (2.65, 0.10), (2.75, 0.40), (2.85, 0.30)}},%
			{4/{(2.13, 0.45), (2.23, 0.55), (2.31, 2.07), (2.42, 0.93)}},%
			{4/{(2.65, 0.66), (2.71, 0.90), (2.74, 2.10), (2.86, 0.76)}},%
			{4/{(2.61, 0.80), (2.67, 2.25), (2.80, 2.40), (2.93, 0.93)}},%
			{4/{(2.07, 0.80), (2.14, 0.67), (2.29, 2.64), (2.54, 0.93)}},%
			{3/{(2.08, 2.65), (2.20, 2.92), (2.40, 2.75)}},%
			{3/{(2.60, 2.75), (2.75, 2.90), (2.90, 2.60)}},%
			{3/{(2.07, 0.93), (2.22, 2.78), (2.48, 2.61)}},%
			{4/{(0.14, 4.10), (0.24, 4.20), (0.34, 4.40), (0.44, 4.30)}},%
			{4/{(0.14, 4.60), (0.24, 4.80), (0.34, 4.90), (0.44, 4.70)}},%
			{4/{(0.37, 4.55), (0.47, 4.45), (0.57, 4.75), (0.67, 4.65)}},%
			{4/{(0.70, 4.20), (0.80, 4.10), (0.90, 4.43), (2.10, 4.30)}},%
			{4/{(0.40, 4.10), (0.55, 4.20), (0.90, 4.60), (2.40, 4.35)}},%
			{4/{(0.70, 4.50), (0.80, 4.80), (0.90, 4.90), (2.10, 4.65)}},%
			{3/{(0.92, 4.08), (2.30, 4.24), (2.40, 4.16)}},%
			{2/{(2.50, 4.50), (2.80, 4.80)}},%
			{2/{(4.25, 0.25), (4.75, 0.75)}}%
		}%
	};
	\useasboundingbox (current bounding box.south west) rectangle (current bounding box.north east);
	\node (t8) at (-2.4, 2) {$\TT_5''$};
\end{tikzpicture}
$\cong$
\begin{tikzpicture}[baseline=(current bounding box.center)]
	\node (n1) at (5.25, 0) {
		\tilinglight{0.8}{5}{6}{1/2, 3/4}%
		{%
			{2/{(2.464, 1.75), (2.826, 5.50)}},%
			{2/{(2.40, 3.90), (2.60, 5.25)}}%
		}{}{%
			{4/{(2.15, 1.10), (2.25, 1.20), (2.35, 1.40), (2.45, 1.30)}},%
			{4/{(2.60, 1.35), (2.70, 1.55), (2.80, 1.65), (2.90, 1.45)}},%
			{4/{(2.55, 1.20), (2.65, 1.10), (2.75, 1.40), (2.85, 1.30)}},%
			{4/{(2.13, 1.45), (2.23, 1.55), (2.31, 3.07), (2.42, 1.93)}},%
			{4/{(2.65, 1.66), (2.71, 1.90), (2.74, 3.10), (2.86, 1.76)}},%
			{4/{(2.61, 1.80), (2.67, 3.25), (2.80, 3.40), (2.93, 1.93)}},%
			{4/{(2.07, 1.80), (2.14, 1.67), (2.29, 3.64), (2.54, 1.93)}},%
			{3/{(2.08, 3.65), (2.20, 3.92), (2.40, 3.75)}},%
			{3/{(2.60, 3.75), (2.75, 3.90), (2.90, 3.60)}},%
			{3/{(2.07, 1.93), (2.22, 3.78), (2.48, 3.61)}},%
			{4/{(0.14, 5.10), (0.24, 5.20), (0.34, 5.40), (0.44, 5.30)}},%
			{4/{(0.14, 5.60), (0.24, 5.80), (0.34, 5.90), (0.44, 5.70)}},%
			{4/{(0.37, 5.55), (0.47, 5.45), (0.57, 5.75), (0.67, 5.65)}},%
			{4/{(0.70, 5.20), (0.80, 5.10), (0.90, 5.43), (2.10, 5.30)}},%
			{4/{(0.40, 5.10), (0.55, 5.20), (0.90, 5.60), (2.40, 5.35)}},%
			{4/{(0.70, 5.50), (0.80, 5.80), (0.90, 5.90), (2.10, 5.65)}},%
			{3/{(0.92, 5.08), (2.30, 5.24), (2.40, 5.16)}},%
			{2/{(2.50, 5.50), (2.80, 5.80)}},%
			{2/{(4.25, 0.25), (4.75, 0.75)}}%
		}%
	};
\end{tikzpicture}
$\cong$
\begin{tikzpicture}[baseline=(current bounding box.center)]
	\node (n2) at (11, 0) {
		\tilinglight{0.8}{6}{6}{1/2, 4/4}%
		{}{}{%
			{4/{(3.15, 1.10), (3.25, 1.20), (3.35, 1.40), (3.45, 1.30)}},%
			{4/{(3.60, 1.35), (3.70, 1.55), (3.80, 1.65), (3.90, 1.45)}},%
			{4/{(3.55, 1.20), (3.65, 1.10), (3.75, 1.40), (3.85, 1.30)}},%
			{4/{(3.13, 1.45), (3.23, 1.55), (3.31, 3.07), (3.42, 1.93)}},%
			{4/{(3.65, 1.66), (3.71, 1.90), (3.74, 3.10), (3.86, 1.76)}},%
			{4/{(3.61, 1.80), (3.67, 3.25), (3.80, 3.40), (3.93, 1.93)}},%
			{4/{(3.07, 1.80), (3.14, 1.67), (3.29, 3.64), (3.54, 1.93)}},%
			{3/{(3.08, 3.65), (3.20, 3.92), (3.40, 3.75)}},%
			{3/{(3.60, 3.75), (3.75, 3.90), (3.90, 3.60)}},%
			{3/{(3.07, 1.93), (3.22, 3.78), (3.48, 3.61)}},%
			{4/{(0.14, 5.10), (0.24, 5.20), (0.34, 5.40), (0.44, 5.30)}},%
			{4/{(0.14, 5.60), (0.24, 5.80), (0.34, 5.90), (0.44, 5.70)}},%
			{4/{(0.37, 5.55), (0.47, 5.45), (0.57, 5.75), (0.67, 5.65)}},%
			{4/{(0.70, 5.20), (0.80, 5.10), (0.90, 5.43), (2.10, 5.30)}},%
			{4/{(0.40, 5.10), (0.55, 5.20), (0.90, 5.60), (2.40, 5.35)}},%
			{4/{(0.70, 5.50), (0.80, 5.80), (0.90, 5.90), (2.10, 5.65)}},%
			{3/{(0.92, 5.08), (2.30, 5.24), (2.40, 5.16)}},%
			{2/{(2.50, 5.50), (2.80, 5.80)}},%
			{2/{(5.25, 0.25), (5.75, 0.75)}}%
		}%
	};
	\useasboundingbox (current bounding box.south west) rectangle (current bounding box.north east);
	\node (t8) at (8.3, 2) {$\TT_6$};
\end{tikzpicture}
	\caption{The tiling $\TT_6$ is the result of applying row separation and then
		column separation to $\TT_5''$.}
	\label{figure:row-col-sep-second-example}
\end{figure}

We now give the formal definition of the row separation equivalence strategy;
the column separation strategy is defined similarly. Consider a tiling \(\TT =
((t,u), \OO, \RR)\). For a particular row \(r\), let \(S\) be a nonempty subset
of the nonempty cells in \(r\) and let \(S'\) denote the remaining nonempty
cells in \(r\). We can apply row separation to row \(r\), splitting it in two
rows in which the lower row inherits the cells from \(S\) and the upper row
inherits the cells from \(S'\), if there is no gridded permutation that can be
drawn on \(\TT\) that possesses an entry in a cell in \(S\) whose value is larger
than an entry in a cell in \(S'\). A sufficient condition to ensure this is the
following: for every pair of cells \((c,c') \in S \times S'\), if \(c\) is to
the left of \(c'\) then \(\OO\) contains the obstruction \((21, (c, c'))\),
otherwise if \(c\) is to the right of \(c'\) then \(\OO\) contains the
obstruction \((12, (c', c))\).

As the examples above suffice to understand the idea of row and column
separation, our formal definition below omits the full details, which are
deferred to Subsection~\ref{subsubsection:row-col-sep-details}.
\begin{itemize}
	\renewcommand\labelitemi{--}
	\item For a tiling \(\TT = ((t,u), \OO, \RR)\), a row \(r\), and a nonempty
	subset \(S\) of the nonempty cells in row \(r\), if the obstructions
	described above are all present, then we define
	\[
		d_{\RowSep_{r,S}}(\TT) = \TT'
	\]
	where \(\TT'\) is the tiling in which the cells in \(S\) have been moved
	into a separate row below the cells in row \(r\) but not in \(S\), and the
	obstructions and requirements have been adjusted accordingly. Otherwise
	\(d_{\RowSep_{r,S}}(\TT) = \DNA\).
	\item The reliance profile function is \(r_{\RowSep_{r,S}}(n) = (n)\).
	\item The counting functions are \(c_{\RowSep_{r,S}}((a_0, \ldots, a_n)) = a_n\).
\end{itemize}

The column separation strategy \(\ColSep_{c,S}\) is defined analogously.

\subsubsection{Factor}
\label{subsubsection:factor}

Most of the strategies we have already described transform tilings into new
tilings that are in some way more complicated---larger, more obstructions, more
requirements, etc. Factorization is a strategy that identifies when pieces of a
tiling do not interact with each other, and splits them into several subtilings.
This tends to lead to combinatorial specifications that are recursive.

We say that two subsets \(S_1\) and \(S_2\) of cells of a tiling are
\emph{non-interacting} if no cell of \(S_1\) shares a row or column with any
cell of \(S_2\) and if there is no obstruction or requirement list that involves
cells in both \(S_1\) and \(S_2\). Figure~\ref{figure:factor-example} shows a
\(6 \times 6\) tiling that has 5 minimal pairwise non-interacting sets of cells:
\[
	\{(0,5), (2,5)\} \qquad \{(1,2)\} \qquad \{(3,1),(3,3)\} \qquad \{(4,4)\} \qquad \{(5,0)\},
\]
although the factorization actually performed in that figure leaves the
non-interacting sets \(\{(1,2)\}\) and \(\{(3,1),(3,3)\}\) together.
Why do we not use the full factorization? This is the magic of Combinatorial
Exploration---it has discovered that this partial factorization permits the
discovery of the proof tree from Figure~\ref{fig:av1243_1342_2143} that we are
currently discussing, whereas a human searching for this proof tree
by hand may not have gone down this path.

\begin{figure}[t]
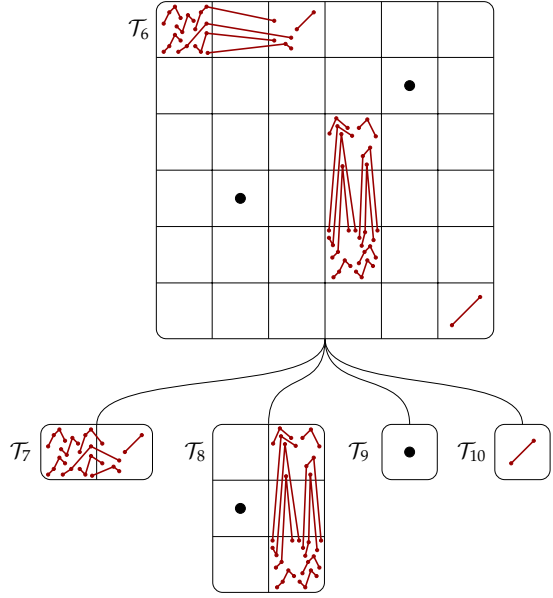

	\begin{center}
	\scalebox{0.75}{
		\begin{tikzpicture}
		   \node (t6) at (4.2, -0.5) {$\TT_6$};

		   \input{bigprooftree-large-tiling.tex}
		   \input{bigprooftree-last-two-factors-of-large-tiling.tex}

		   \begin{scope}[xshift=7.5cm, yshift=-1cm]
			   \input{bigprooftree-left-placement-subtiling.tex}
		   \end{scope}

		   \begin{scope}[xshift=-1cm]
			   \node (t8) at (6.2, -8) {$\TT_8$};
			   \input{bigprooftree-topmost-factor-of-large-tiling.tex}
		   \end{scope}
	    \end{tikzpicture}
	}
	\end{center}
	\caption{The tiling $\TT_6$ factors into four tilings.}
	\label{figure:factor-example}
\end{figure}

The counting functions of the factorization strategy are more interesting than
the other strategies we have discussed. Suppose \(\AA\) is a tiling that factors
into two tilings \(\BB^{(1)}\) and \(\BB^{(2)}\). The non-interactivity of the
cells that became \(\BB^{(1)}\) with the cells that became \(\BB^{(2)}\) imply
that each gridded permutation \(\alpha\) of size \(n\) that can be drawn on
\(\AA\) can be formed uniquely from a pair \(\beta_1, \beta_2\) where
\(\beta_1\) can be drawn on \(\BB^{(1)}\), \(\beta_2\) can be drawn
\(\BB^{(2)}\), and \(|\beta_1| + |\beta_2| = n\). Therefore,
\[
	|\AA_n| = \sum_{i=0}^n |\BB^{(1)}_i||\BB^{(2)}_{n-i}|.
\]
More generally, when a tiling \(\AA\) factors into $m$ tilings \(\BB^{(1)},
\ldots, \BB^{(m)}\) we have
\[
	|\AA_n| = \sum_{i_1 + \cdots + i_m = n} |\BB^{(1)}_{i_1}| \cdots |\BB^{(m)}_{i_m}|.
\]
However, for the first time, we are in danger of defining a strategy that does
not satisfy the productivity conditions defined in
Section~\ref{section:productivity}. Consider, for example, the factorization
shown in Figure~\ref{figure:factor-non-productive} in which a tiling is factored
into two subtilings, one containing just a point, and the other containing two
cells. Given the general counting formula above, it feels natural to define the
reliance profile function of a strategy that factors a tiling into two
subtilings to be
\[
	r(n) = (n, n).
\]

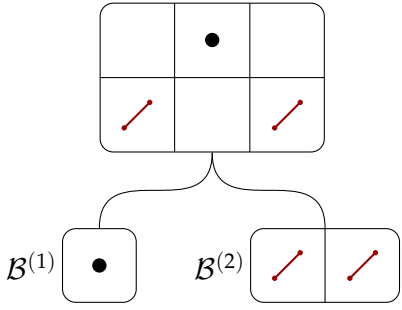
\begin{figure}
	\begin{center}
	\begin{tikzpicture}
		\node at (0,0) {
			\tiling{1.0}{3}{2}{1/1}{%
				{2/{(0.33, 0.33), (0.67, 0.67)}},%
				{2/{(2.33, 0.33), (2.67, 0.67)}}%
			}{}%
		};
		\node at (-1.5, -2.5) {\tiling{1.0}{1}{1}{0/0}{}{}};
		\node at (-2.4, -2.5) {$\BB^{(1)}$};
		\node at (1.5, -2.5) {
			\tiling{1.0}{2}{1}{}{%
				{2/{(0.33, 0.33), (0.67, 0.67)}},%
				{2/{(1.33, 0.33), (1.67, 0.67)}}%
			}{}%
		};
		\node at (0.1, -2.5) {$\BB^{(2)}$};
		\ptedge{(0, 0)}{(-0.5, 0.3)}{(-1.5, -2.5)}{(-0.5,0.8)}
		\ptedge{(0, 0)}{(-0.5, 0.3)}{(1.5, -2.5)}{(-0.5,0.8)}
    \end{tikzpicture}
	\end{center}
	\caption{An example of factoring that demonstrates that productivity
		relies on careful definition of different reliance profile functions
		for different situations.}
	\label{figure:factor-non-productive}
\end{figure}

\label{pageref:potential-productivity-problem}Figure~\ref{figure:factor-non-productive}
reveals a problem: the subtiling \(\BB^{(2)}\) has counting sequence
\(|\BB^{(2)}_n| = 2^n\) while the original tiling has a termwise strictly
smaller counting sequence \(|\AA_n| = 2^{n-1}\), violating Condition 2(a) of
Definition~\ref{definition:productive-strategy} that requires
\begin{quote}
	If \(\AA_N\) relies on \(\BB_N^{(i)}\) for some \(N \in \N\), then
	\(|\AA_n| \geq |\BB_n^{(i)}|\) for all \(n \in \N\).
\end{quote}
This problem is easily mitigated with a bit of care. In fact, the enumeration of
\(\AA_n\) does not actually depend on the enumeration of \(\BB^{(2)}_n\) because
\(|\BB^{(1)}_0| = 0\), simplifying the counting formula slightly by eliminating
the \(i=0\) term in the summation:
\[
	|\AA_n| = \sum_{i=0}^n |\BB^{(1)}_i||\BB^{(2)}_{n-i}| = \sum_{i=1}^n |\BB^{(1)}_i||\BB^{(2)}_{n-i}|.
\]
More generally, we will define the factorization strategy so that the
enumeration of \(\AA_n\) for the parent tiling \(\AA\) only depends on the
enumeration of the size \(n\) gridded permutations in a child tiling
\(\BB^{(i)}\) when absolutely required. We prove in
Subsection~\ref{subsubsection:factorization-details} that the resulting strategy
is always productive. According to this new definition, the reliance profile
function for the factorization in Figure~\ref{figure:factor-non-productive} is
\[
	r(n) = (n, n-1).
\]

Formally, consider a tiling \(\TT = ((t,u), \OO, \RR)\).
\begin{itemize}
	\renewcommand\labelitemi{--}
	\item Let \(P\) be a partition of the nonempty cells of \(\TT\) into \(m\)
	parts, and for concreteness consider the parts of \(P\) to be indexed in
	increasing order by their lexicographically smallest cell. If the cells of
	any part of \(P\) interact with the cells of any other part, then \(\TT\)
	cannot be factored according to this partition of cells. Thus, assume now
	that \(P\) is such that the parts are non-interacting, so that \(\TT\) will
	be factored into subtilings \(\BB^{(1)}, \ldots, \BB^{(m)}\). Assume that
	each of these subtilings contains at least one gridded permutation of size
	at least \(1\). Define
	\[
		S = \{i \in \{1, \ldots, m\} \,:\, |\BB^{(j)}_0| = 0\text{ for some }j \neq i\}.
	\]
	If \(i \in S\), then the enumeration of \(\TT_n\) will \emph{not} rely on
	the enumeration of \(\BB^{(i)}_n\) because \(|\BB^{(j)}_0| = 0\) for some
	\(j \neq i\).

	With such \(P\) and \(S\), we define
	\[
		d_{\Factor_{P,S}}(\TT) = (\BB^{(1)}, \ldots, \BB^{(m)}).
	\]

	In the case of an incompatible partition \(P\) or set \(S\), or where some
	\(\BB^{(i)}\) does not contain a gridded permutation of size at least
	\(1\), we define
	\[
		d_{\Factor_{P,S}}(\TT) = \DNA,
	\]
	as usual.

	\item Fix a partition \(P\) and its corresponding set \(S\). The reliance
	profile function is
	\[
		r_{\Factor_{P,S}}(n) = (r^{(1)}(n), \ldots, r^{(m)}(n))
	\]
	where
	\[
		r^{(i)}(n) =
			\begin{cases}
				n-1, & \text{if }i \in S\\
				n, & \text{if }i \not\in S
			\end{cases}.
	\]
	\item To describe the counting function we first define vectors of
	indeterminates
	\[
		b^{(i)} =
			\begin{cases}
				(b^{(i)}_0, \ldots, b^{(i)}_{n-1}), & \text{if }i \in S\\[5pt]
				(b^{(i)}_0, \ldots, b^{(i)}_{n}), & \text{if }i \not\in S\\
			\end{cases}.
	\]
	The counting functions are
	\[
			c_{\Factor_{P,S},(n)}(b^{(1)}, \ldots, b^{(m)}) = \sum_{(i_1, \ldots, i_m) \in I} b^{(1)}_{i_1} \cdots b^{(m)}_{i_m},
	\]
	where the sum is over
	\[
		I = \{(i_1, \ldots, i_m) \in \{0, \ldots, n\}^m \,:\, i_1 + \cdots + i_m = n\text{ and }i_\ell \neq n\text{ if }\ell \in S\}.
	\]
\end{itemize}

To illustrate this slightly intensive definition, the strategy applied to the
parent tiling in Figure~\ref{figure:factor-example} is \(\Factor_{P,S}\) where
\[
	P = \{ \quad \{(0,5),(2,5)\}, \quad \{(1,2),(3,1),(3,3)\}, \quad \{(4,4)\}, \quad \{(5,0)\} \quad \}
\]
and \(S = \{1,2,3,4\}\). The reliance profile function is
\[
	r_{\Factor_{P,S}}(n) = (n-1, n-1, n-1, n-1)
\]
and the counting functions are
\[
	c_{\Factor_{P,S}, (n)}(b^{(1)}, b^{(2)}, b^{(3)}, b^{(4)}) = \sum_{
		\substack{
			i_1 + i_2 + i_3 + i_4 = n\\
			0 \leq i_1, i_2, i_3, i_4 \leq n-1
		}
	} b^{(1)}_{i_1}b^{(2)}_{i_2}b^{(3)}_{i_3}b^{(4)}_{i_4}.
\]

\subsubsection{Obstruction Inferral}
\label{subsubsection:obs-inf}

It is sometimes possible to place an additional obstruction onto a tiling
without changing the underlying set of gridded permutations. For a rather
trivial example, see the tiling in Figure~\ref{figure:obs-inf-ex-1}, in which
the obstruction \((1, (1,0))\) can be added. Sometimes, but not always, this
inferred obstruction is a subobstruction of an existing obstruction.

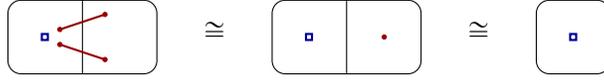
\begin{figure}
	\begin{center}
	\begin{tikzpicture}[baseline=(current bounding box.center)]
		\node at (0,0) {
			\tiling{1.0}{2}{1}{}{%
				{2/{(0.7, 0.6), (1.3, 0.8)}},%
				{2/{(0.7, 0.4), (1.3, 0.2)}}%
			}{%
				{1/{(0.5, 0.5)}}%
			}%
		};
    \end{tikzpicture}
    \quad $\cong$ \quad
    \begin{tikzpicture}[baseline=(current bounding box.center)]
		\node at (0,0) {
			\tiling{1.0}{2}{1}{}{%
				{1/{(1.5, 0.5)}}%
			}{%
				{1/{(0.5, 0.5)}}%
			}%
		};
    \end{tikzpicture}
    \quad $\cong$ \quad
    \begin{tikzpicture}[baseline=(current bounding box.center)]
		\node at (0,0) {
			\tiling{1.0}{1}{1}{}{%
			}{%
				{1/{(0.5, 0.5)}}%
			}%
		};
    \end{tikzpicture}
	\end{center}
	\caption{On the left, an example of a tiling to which the
		obstruction inferral strategy can be applied. In the middle,
		the result of applying obstruction inferral. On the right,
		an equivalent tiling that results from deleting the
		rightmost cell, which cannot contain any entries.}
	\label{figure:obs-inf-ex-1}
\end{figure}

As with some of the strategies that we have previously introduced, we will start
with a simple definition and then follow it with the more detailed discussion
about when the strategy can be applied. Formally, the strategy \(\ObsInf_h\) is
defined as follows.
\begin{itemize}
	\renewcommand\labelitemi{--}
	\item If \(\TT\) is a tiling with dimensions \(t \times u\), \(h \in
	\GG^{(t,u)} \smallsetminus \OO\), and
	\[
	  \Grid(\TT) = \Grid(((t,u), \OO \cup \{h\}, \RR)),
	\]
	then $d_{\ObsInf_h}(\TT) = ((t,u), \OO \cup \{h\}, \RR)$. Otherwise
	$d_{\ObsInf_h}(\TT) = \DNA$.
	\item The reliance profile function is $r_{\ObsInf_h}(n) = (n)$.
	\item The counting functions are $c_{\ObsInf_h, (n)}((a_0,\ldots,a_n)) = a_n$.
\end{itemize}

Like many of our previous strategies, Obstruction Inferral does not actually
change the set of gridded permutations at all, only altering the tiling
representation used for the set. As such, it is true by definition that
Obstruction Inferral is a valid equivalence strategy.

We will now discuss two ways to determine when Obstruction Inferral can be applied
to a tiling, the first being more limited but computationally easy and the second
being fully general but computationally intensive.

\paragraph{First Obstruction Inferral Case}\ \\
In discussing the point placement strategy in Section~\ref{subsubsection:point-placement}, we
mentioned in Figure~\ref{figure:first-point-placement}
that the obstruction \((132, ((0,0), (1,1), (2,0)))\)
could be replaced with the subobstruction \((12, ((0,0),(2,0)))\) because of the
point cell \((1,1)\). This phenomenon can be detected more generally. Suppose
\(h\) is an obstruction on the tiling \(\TT\) and partition the entries of \(h\)
maximally such that no two entries in different parts occur in cells that share
a row or column. Call this partition \(P\). If there exists a part \(p \in P\)
such that the subgridded permutation formed by the entries in \(p\) is contained
in every requirement in some requirement list, then the smaller obstruction
formed by deleting the entries in \(p\) from \(h\) can be added to \(\TT\)
without changing the underlying set of gridded permutations.

In the example from Figure~\ref{figure:first-point-placement}, \(P\) has two
parts, the first containing the subgridded permutation \((12, ((0,0),(2,0)))\)
and the second containing the subgridded permutation \((1, ((1,1)))\). Since
\((1,1)\) is a point cell, the subgridded permutation coming from this part of
the partition can be deleted from the obstruction.

To see that this claim is true in general, suppose the tiling \(T\) has an
obstruction \(h\), and that in the partition \(P\) defined above there is part
\(p\) such that the subobstruction \(h'\) arising from deleting the subgridded
permutation corresponding to \(p\) from \(h\) can be inferred. Let \(R\) be the
requirement list that enabled this inferral (i.e., every requirement in \(R\)
contains as a pattern the subgridded permutation formed by the entries in
\(p\)). We claim that no gridded permutation that can be drawn on \(\TT\)
contains the pattern \(h'\). To the contrary, suppose there were such a
permutation \(\pi\) that contained \(h'\). Since \(\pi \in \Grid(\TT)\), it must
also contain at least one of the requirements in \(R\). All of these requirements
contain the pattern formed by the entries in \(p\). Crucially, since the cells
involved in \(p\) share no rows nor columns with the cells involved in other
parts of \(P\), the only manner in which a gridded permutation can contain
\(h'\) and \(R\) is by containing the full obstruction \(h\).

\paragraph{Second Obstruction Inferral Case}\ \\
Theorem~\ref{theorem:is-empty-check} shows that it can be checked in
finite time whether \(\Grid(\TT) = \emptyset\) for any given tiling \(\TT\). We
can use this fact to determine precisely when an obstruction may be
inferred, although the computational burden is significant.

To infer whether an obstruction \(h\) may be added onto the tiling \(\TT
= ((t,u), \OO, \RR))\), we form a new tiling \(\TT'\) in which the potential new
obstruction is instead placed in a new requirement list:
\[
	\TT' = ((t,u), \OO, \RR \cup \{h\}).
\]

If \(\Grid(\TT')\) is empty, we can conclude that no gridded permutation that
can be drawn on \(\TT\) contains the pattern \(h\). Therefore, \(h\) may be
inferred as an obstruction on \(\TT\) without eliminating any of its gridded
permutations.

\subsection{Verification Strategies}\label{subsubsec:verification}

In Subsection~\ref{subsection:comb-specs}, we discussed the concept of
``verification strategies'' -- nullary strategies that represent when the
enumeration of a combinatorial set is known independently from the current
Combinatorial Exploration process. The proof tree for $\Av(1243, 1342, 2143)$ in
Figure~\ref{fig:av1243_1342_2143} uses
four different verification strategies, and so now that we have introduced several
strategies in the previous subsection, we will take this opportunity to trace
through the tree and see how they are applied, pointing out the verification
strategies as we encounter them.

We want to emphasize that this proof tree is the \emph{output} of successful
Combinatorial Exploration. The process of finding this tree involves the
discovery of many combinatorial rules, and sometime after all rules
in this particular tree have been discovered, the algorithm notices (using
Algorithm~\ref{algorithm:specfinder} in Subsection~\ref{subsection:tree-searcher})
that this particular subset of rules forms a combinatorial specification.

%!TEX root = combinatorial-exploration.tex

\begin{figure}
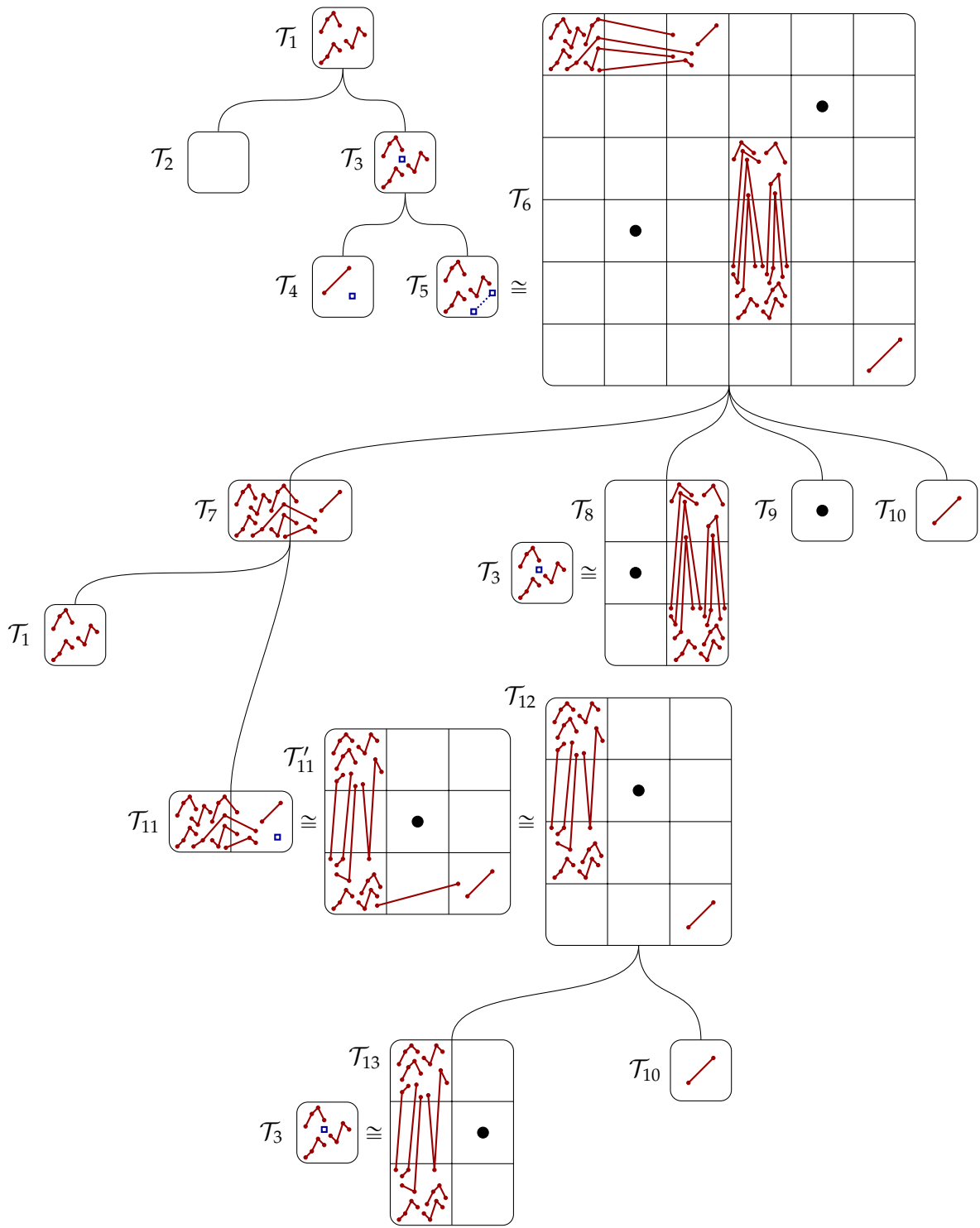

    \centering
    \begin{tikzpicture}[scale=1]

	  \input{bigprooftree-insertion.tex}

      \node (equiv) at (2.83 , -4) {$\cong$};
%      \node (t6) at (7.2, 0.8) {$\TT_6 = \fcsep{3}{\frsep{1}{\fplace{(1,(0,0))_1}{\leftarrow}{}}}$};
%      \node (t6) at (7.93, 0.3) {$\place{(12,(0,0))_2}{\rightarrow}{\TT_5})))$};

	  \begin{scope}[shift={(-1.3,0.4)}]
	  
		  \node (t6) at (4.15, -3) {$\TT_6$};
	  	
		  \input{bigprooftree-large-tiling.tex}
	
		  \input{bigprooftree-last-two-factors-of-large-tiling.tex}
	
	      \begin{scope}[xshift=7.5cm, yshift=-1cm]
		      \input{bigprooftree-left-placement.tex}
	      \end{scope}
	
	      \begin{scope}[xshift=-4cm]
			  \input{bigprooftree-topmost-factor-of-large-tiling.tex}
	%	      \node (t3) at (0.5, -10.75) {$\inso{(1,(1,0))}{\TT_7} = \TT_1$};
	          \node (t3) at (0.1, -10) {$\TT_1$};
		      \node (middleempty) at (1, -10) {
		      \tiling{1.0}{1}{1}{}%
		      {%
		        {4/{(0.15, 0.10), (0.25, 0.20), (0.35, 0.40), (0.45, 0.30)}},%
		        {4/{(0.15, 0.60), (0.25, 0.80), (0.35, 0.90), (0.45, 0.70)}},%
		        {4/{(0.55, 0.45), (0.65, 0.35), (0.75, 0.65), (0.85, 0.55)}}%
		      }
		      {%
		      }};
		      \begin{scope}[xshift=-0.5cm, yshift=-2cm]
	%		      \node (t11) at (2.7, -11.75) {$\insr{(1,(1,0))}{\TT_7} = \TT_{11}$};
			      \node (t11) at (2.6, -11) {$\TT_{11}$};
			      \node (middlenonempty) at (4, -11) {
			      \tiling{1.0}{2}{1}{}%
			      {%
			        {4/{(0.14, 0.10), (0.24, 0.20), (0.34, 0.40), (0.44, 0.30)}},%
			        {4/{(0.14, 0.60), (0.24, 0.80), (0.34, 0.90), (0.44, 0.70)}},%
			        {4/{(0.37, 0.55), (0.47, 0.45), (0.57, 0.75), (0.67, 0.65)}},%
			        {4/{(0.70, 0.20), (0.80, 0.10), (0.90, 0.43), (1.10, 0.30)}},%
			        {4/{(0.40, 0.10), (0.55, 0.20), (0.90, 0.60), (1.40, 0.35)}},%
			        {4/{(0.70, 0.50), (0.80, 0.80), (0.90, 0.90), (1.10, 0.65)}},%
			        {3/{(0.92, 0.08), (1.30, 0.24), (1.40, 0.16)}},%
			        {2/{(1.50, 0.50), (1.80, 0.80)}}%
			      }
			      {%
			        {1/{(1.75, 0.25)}}%
			      }};
			      \node (equiv4) at (5.25, -11) {$\cong$};
			      \node (t11p) at (5.1, -10) {$\TT_{11}'$};

				\node (placedmiddle) at (7, -11) {
				\tiling{1.0}{3}{3}{1/1}%
			      {%
			        {4/{(0.15, 0.10), (0.25, 0.20), (0.35, 0.40), (0.45, 0.30)}},%
			        {4/{(0.60, 0.35), (0.70, 0.55), (0.80, 0.65), (0.90, 0.45)}},%
			        {4/{(0.55, 0.20), (0.65, 0.10), (0.75, 0.40), (0.85, 0.30)}},%
			        {4/{(0.15, 2.60), (0.25, 2.80), (0.35, 2.90), (0.45, 2.80)}},%
			        {4/{(0.55, 2.70), (0.65, 2.60), (0.75, 2.90), (0.85, 2.80)}},%
			        {4/{(0.20, 2.35), (0.30, 2.55), (0.40, 2.65), (0.50, 2.45)}},%
			        {4/{(0.62, 2.10), (0.72, 0.90), (0.82, 2.50), (0.92, 2.30)}},%
			        {3/{(0.20, 0.65), (0.40, 0.55), (0.50, 2.07)}},%
			        {3/{(0.20, 0.80), (0.30, 0.90), (0.43, 2.27)}},%
			        {3/{(0.10, 0.90), (0.20, 2.15), (0.30, 2.25)}},%
			        {2/{(2.30, 0.30), (2.70, 0.70)}},%
			        {2/{(0.85, 0.15), (2.15, 0.50)}}%
			      }
			      {%
			      }};
			      
			     \node (equivT11) at (8.75, -11) {$\cong$};

				\begin{scope}[shift={(3.55cm, 0cm)}]
	%		      \node (t12) at (11.9, -11) {$\TT_{12} = \rsep{1}{\place{(1,(0,0))_1}{\leftarrow}{\TT_{11}}}$};
			      \node (t12) at (5.1, -9) {$\TT_{12}$};
			      \node (placedmiddle) at (7, -11) {
			      \tiling{1.0}{3}{4}{1/2}%
			      {%
			        {4/{(0.15, 1.10), (0.25, 1.20), (0.35, 1.40), (0.45, 1.30)}},%
			        {4/{(0.60, 1.35), (0.70, 1.55), (0.80, 1.65), (0.90, 1.45)}},%
			        {4/{(0.55, 1.20), (0.65, 1.10), (0.75, 1.40), (0.85, 1.30)}},%
			        {4/{(0.15, 3.60), (0.25, 3.80), (0.35, 3.90), (0.45, 3.80)}},%
			        {4/{(0.55, 3.70), (0.65, 3.60), (0.75, 3.90), (0.85, 3.80)}},%
			        {4/{(0.20, 3.35), (0.30, 3.55), (0.40, 3.65), (0.50, 3.45)}},%
			        {4/{(0.62, 3.10), (0.72, 1.90), (0.82, 3.50), (0.92, 3.30)}},%
			        {3/{(0.20, 1.65), (0.40, 1.55), (0.50, 3.07)}},%
			        {3/{(0.20, 1.80), (0.30, 1.90), (0.43, 3.27)}},%
			        {3/{(0.10, 1.90), (0.20, 3.15), (0.30, 3.25)}},%
			        {2/{(2.30, 0.30), (2.70, 0.70)}}%
			      }
			      {%
			      }};
	
			      \input{bigprooftree-right-placement.tex}
	
			      \node (t10) at (7.1, -15) {$\TT_{10}$};
			      \node (decr2) at (8, -15) {
			      \tiling{1.0}{1}{1}{}%
			      {%
			        {2/{(0.3, 0.3), (0.7, 0.7)}}%
			      }{}};
			    \end{scope}
			  \end{scope}
		  \end{scope}
	
	      \ptedge{(middle)}{(-0.55,0.81)}{(middleempty)}{(-0.5,0.79)}
	      \ptedge{(middle)}{(-0.55,0.81)}{(middlenonempty)}{(-0.5,0.79)}
	
	      \ptedge{(placedmiddle)}{(-0.5,-0.69)}{(nonemptyright)}{(-0.5,1.79)}
	      \ptedge{(placedmiddle)}{(-0.5,-0.69)}{(decr2)}{(-0.5,0.79)}
		\end{scope}
    \end{tikzpicture}
    \caption{A pictorial representation of the combinatorial specification found by Combinatorial Exploration for \(\Av(1243, 1342, 2143)\).}
    \label{fig:av1243_1342_2143}
  \end{figure}

We previously discussed the top part of the tree in
Figure~\ref{fig:req-ins-from-large-tree}. To the root tiling $\TT_1$, we apply
the requirement insertion strategy. Inserting $H_1 = \{(1, ((0, 0)))\}$ into
$\TT_1$ creates the rule $\TT_1 \xleftarrow{\ReqIns_{H_1}} (\TT_2,
\TT_3)$.

The tiling $\TT_2$ represents the set containing only the empty gridded
permutation of size $0$. This is clearly a set whose enumeration is known
\emph{a priori} and so to it we apply the verification strategy $V_{\TT_2}$
(recall from Section~\ref{subsection:comb-specs} that each set corresponds to a
unique verification strategy). The result is a rule $\TT_2
\xleftarrow{V_{\TT_2}} ()$.

Moving further into the tree, the insertion of $H_2 = \{(12, ((0, 0), (0,0)))\}$
into $\TT_3$ gives the rule $\TT_3 \xleftarrow{\ReqIns_{H_2}} (\TT_4, \TT_5)$.%
\footnote{Recall from our previous discussion that technically $\TT_5$ should
have a point requirement, but it is detected to be redundant and removed.
Although in practice this should be depicted as its own rule (an equivalence
strategy), we have hidden it, and several other such simplification, from this
figure.}
The set of gridded permutations represented by $\TT_4$ are precisely those
single-celled permutations that are strictly decreasing and have size at least
$1$. The enumeration of these permutations is evidently $a_0 = 0$ and $a_n = 1$
for $n \geq 1$, and since this is known independently of the structural
decomposition being described, we can apply the verification strategy
$V_{\TT_4}$ to produce the rule $\TT_4 \xleftarrow{V_{\TT_4}} ()$.

As this shows, a researcher who is applying Combinatorial Exploration has a lot
of flexibility in choosing to which combinatorial sets we can apply a
verification strategy. In this particular case, had we not employed the strategy
$V_{\TT_4}$ we could still have easily completed this part of the proof tree by
applying the point placement strategy to the requirement in $\TT_4$, then
applying the factor strategy to the result.

It is imperative to verify the two \emph{atomic sets},
the set containing only the empty permutation ($\TT_2$ here)
and the set containing only the gridded permutation of size $1$ ($\TT_9$
here). When deciding which other sets should be verified, one should keep in
mind that the goal is to shorten the search for a proof tree by identifying sets
that can be independently enumerated, and therefore eliminating the need to expand
them further.

There are many domain-specific algorithms to enumerate certain sets of
permutations in polynomial time. Here we will just briefly mention one: the
insertion encoding of Vatter~\cite{vatter:regular-insertion-encoding},
can be extended to some
single-row or single-column tilings. In these cases, one can be certain that a
combinatorial set can be enumerated, and so for the sake of computational
efficiency it makes sense to apply verification strategies to them.

For a slightly more experimental search, one might choose to verify
combinatorial sets that they simply suspect could be enumerated if needed,
either by hand or with a separate self-contained application of Combinatorial
Exploration. We have found this to be effective in the domain of
permutation patterns, where we often use the rule-of-thumb that when we are
searching for a proof tree for a permutation class $\Av(B)$ then any $1 \times
1$ tiling whose combinatorial set is a subclass $\Av(B')$ can be verified (even
if it carries requirements as in the case of $\TT_4$ above) as
long as $B'$ is obtained from $B$ by adding a pattern of length at most the
length of the longest pattern in $B$. This heuristic is not guaranteed to work;
for example, there is a permutation class that avoids two patterns of length $4$
and has an algebraic generating function such that if you add a particular third
pattern of length $4$ to its basis, the resulting class has an unknown generating
function that is conjectured to be non-D-finite~\cite{albert:c-machines}.

Continuing the traversal in the proof tree for $\Av(1243, 1342, 2143)$, we are now
at tiling $\TT_5$. After applying two point placements as shown in
Figure~\ref{figure:big-tree-t5-to-t6}, followed by the row and column
separations shown in Figure~\ref{figure:row-col-sep-second-example}
we obtain a chain of equivalence
rules between $\TT_5$ and $\TT_6$. Applying the factor strategy to $\TT_6$ gives
us four tilings, $\TT_7$, $\TT_8$, $\TT_9$ and $\TT_{10}$. The tiling $\TT_9$ is
verified as an atom, and the tiling $\TT_{10}$ represents a subclass of the
class being explored, and so as discussed above we mark it verified as well. The
tiling $\TT_8$ is already known to be equivalent to $\TT_3$, because earlier in the
process of Combinatorial Exploration, we applied point placement to $\TT_3$ and
obtained $\TT_8$. The last tiling, $\TT_7$, cannot be verified so we must
continue to decompose it. So, we apply requirement insertion with
$H = \{(1, ((1,0)))\}$ (a point in cell $(1,0)$) to it. When $H$ is
avoided, one obtains the left-hand child, which is identical to $\TT_1$, the
root. When $H$ is contained, one obtains the right-hand child, $\TT_{11}$, to
which we apply point placement and row separation. This produces the tiling
$\TT_{12}$ that factors into $\TT_{10}$ (which we have already seen and
verified) and $\TT_{13}$ which is equivalent to $\TT_3$ via a point placement.

This set of combinatorial rules forms a specification, and since all of the
rules are produced by productive strategies, we are guaranteed that the
specification can be used to produce terms of the counting sequence for
$\Av(1243, 1342, 2143)$ in polynomial time.

Below, we show the full specification on the left (omitting the names of
the strategies, which would typically be shown over the arrows, for space),
and the corresponding system of generating functions on the right. As
previously discussed, combinatorial sets that are equivalent are contracted
into an equivalence class in the specification; in this case, this occurs
three times. We let $\EE_3$, $\EE_5$, and $\EE_{11}$ denote the equivalence
classes $\{\TT_3, \TT_8, \TT_{13}\}$, $\{\TT_5, \TT_6\}$, and
$\{\TT_{11}, \TT_{11}', \TT_{12}\}$, respectively.

\begin{align*}
	\TT_1 &\leftarrow (\TT_2, \EE_3) & T_1(x) &= T_2(x) + E_3(x)\\
	\TT_2 &\leftarrow () & T_2(x) &= 1\\
	\EE_3 &\leftarrow (\TT_4, \EE_5) & E_3(x) &= T_4(x) + E_5(x)\\
	\TT_4 &\leftarrow () & T_4(x) &= x/(1-x)\\
	\EE_5 &\leftarrow (\TT_7, \EE_3, \TT_9, \TT_{10}) & E_5(x) &= T_7(x)\cdot E_3(x)\cdot T_9(x)\cdot T_{10}(x)\\
	\TT_7 &\leftarrow (\TT_1, \EE_{11}) & T_7(x) &= T_1(x) + E_{11}(x)\\
	\TT_9 &\leftarrow () & T_9(x) &= x\\
	\TT_{10} &\leftarrow () & T_{10}(x) &= 1/(1-x)\\
	\EE_{11} &\leftarrow (\EE_3, \TT_{10}) & E_{11}(x) &= E_3(x) \cdot T_{10}(x)
\end{align*}

The system of equations can be solved to find the generating function for the class:
\[
	T_1(x) = \frac{1 + x - \sqrt{1-6x+5x^2}}{2x(2-x)}.
\]

\subsection{Further Details of the Six Presented Strategies}
\label{subsection:strats-details}

In this subsection, we will provide full details of each strategy described
in Subsection~\ref{subsection:six-strategies}, and prove the productivity
of the relevant ones. Although each strategy can be easily understood on an
intuitive level from our earlier rough descriptions, the full details and
proofs often involve quite a bit of detailed argument.

\subsubsection{Requirement Insertion}

The Requirement Insertion strategy decomposes a set of gridded permutations into
those that avoid all gridded permutations from a set $H$ and those that contain
at least one gridded permutation from $H$. The formal definition of Requirement
Insertion is given in
Subsection~\ref{subsubsection:requirement-insertion-intuition} on
page~\pageref{subsubsection:requirement-insertion-intuition}. It is a
disjoint-union-type strategy, and so the proof of its productivity is rather
simple.
\begin{theorem}
\label{theorem:requirement-insertion-productive}
The Requirement Insertion strategy is productive.
\end{theorem}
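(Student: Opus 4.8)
The plan is to invoke the general criterion established earlier: by Theorem~\ref{theorem:prod-strats-implies-prod-spec} it suffices to verify the two conditions of Definition~\ref{definition:productive-strategy}, and in fact the paper has already observed that any \emph{disjoint-union-type strategy} (as defined in Subsection~\ref{subsubsection:requirement-insertion-intuition}) is productive, so the proof amounts to checking that Requirement Insertion is of this type and then citing that observation. Concretely, I would recall the formal definition of $\ReqIns_H$: its decomposition function, when it does not output $\DNA$, returns $d_{\ReqIns_H}(\TT) = (\inso{H}{\TT}, \insr{H}{\TT})$, and these two outputs are only produced when both are nonempty. The reliance profile function is $n \mapsto (n,n)$ and the counting functions are $c_{\ReqIns_H,(n)}((a_0,\dots,a_n),(b_0,\dots,b_n)) = a_n + b_n$.

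First I would establish the disjoint-union identity: for any tiling $\TT$ with dimensions $t \times u$ and any $H \subseteq \GG^{(t,u)}$, every gridded permutation in $\Grid(\TT)$ either avoids every pattern in $H$ or contains at least one pattern in $H$, and these two possibilities are mutually exclusive and exhaustive. Hence $\Grid(\TT) = \Grid(\inso{H}{\TT}) \sqcup \Grid(\insr{H}{\TT})$, which is exactly the statement already made in Subsection~\ref{subsubsection:requirement-insertion-intuition}; taking cardinalities size by size gives $|\Grid_n(\TT)| = |\Grid_n(\inso{H}{\TT})| + |\Grid_n(\insr{H}{\TT})|$ for all $n$. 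This shows the counting functions $c_{\ReqIns_H,(n)}$ correctly compute $|\AA_n|$ from the child enumerations, and together with the reliance profile $n \mapsto (n,n)$ it certifies that $\ReqIns_H$ meets the requirements of Definition~\ref{def:strat} and is a disjoint-union-type strategy.

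Next I would verify the two productivity conditions directly (which is what the general disjoint-union argument does under the hood). Condition~1 asks that whenever $\AA_N$ relies on $\BB^{(i)}_j$ we have $j \le N$; since $r_{\ReqIns_H}(N) = (N,N)$, reliance of $\AA_N$ on $\BB^{(i)}_j$ means $j \le N$, so Condition~1 holds. For Condition~2, note that $\AA_N$ relies on $\BB^{(i)}_N$ for every $N$ and both $i$, so I must check $|\AA_n| \ge |\BB^{(i)}_n|$ for all $n$ and $|\AA_\ell| > |\BB^{(i)}_\ell|$ for some $\ell$. The first inequality is immediate from the disjoint union $|\AA_n| = |\BB^{(1)}_n| + |\BB^{(2)}_n|$ with nonnegative summands. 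The strict inequality is where the nonemptiness hypothesis in the definition of $d_{\ReqIns_H}$ does its work: because the strategy only applies (i.e., does not return $\DNA$) when \emph{both} $\inso{H}{\TT}$ and $\insr{H}{\TT}$ are nonempty, there is some size $\ell_1$ with $|\BB^{(1)}_{\ell_1}| \ge 1$ and some size $\ell_2$ with $|\BB^{(2)}_{\ell_2}| \ge 1$; taking $\ell = \ell_2$ gives $|\AA_\ell| = |\BB^{(1)}_\ell| + |\BB^{(2)}_\ell| > |\BB^{(1)}_\ell|$ and taking $\ell = \ell_1$ gives the strict inequality for $i=2$. Thus Condition~2 holds for both children.

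The proof is essentially bookkeeping; the only subtle point — and hence the ``main obstacle'' if there is one — is making sure the strict part of Condition~2 is actually guaranteed, which is why it matters that $d_{\ReqIns_H}$ is defined to return $\DNA$ unless \emph{both} the avoiding and the containing tiling are nonempty. If one allowed a degenerate application where, say, $\Grid(\insr{H}{\TT}) = \emptyset$, then $\AA$ and $\BB^{(1)}$ would be equinumerous and Condition~2(b) would fail; the nonemptiness guard in the strategy's definition is precisely what rules this out. I would close by remarking that this same reasoning shows every disjoint-union-type strategy with the stated reliance profile is productive, and then invoke Theorem~\ref{theorem:prod-strats-implies-prod-spec} to conclude that any specification built from such rules is productive.
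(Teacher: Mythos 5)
Your proposal is correct and follows essentially the same route as the paper's proof: Condition~1 of Definition~\ref{definition:productive-strategy} follows from the reliance profile $n \mapsto (n,n)$, and Condition~2 follows from the disjoint-union identity $|\AA_n| = |\BB^{(1)}_n| + |\BB^{(2)}_n|$ together with the requirement that both children be nonempty, which is exactly the argument the paper gives. The only cosmetic point is that your opening and closing appeals to Theorem~\ref{theorem:prod-strats-implies-prod-spec} are unnecessary, since the statement to prove is precisely that the strategy satisfies Definition~\ref{definition:productive-strategy}, which you then verify directly.
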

\begin{proof}
	We need to verify Conditions 1 and 2 of
	Definition~\ref{definition:productive-strategy}. The reliance profile
	function of this strategy is $n \mapsto (n, n)$, which ensures that
	Condition 1 is satisfied.

	In order to check Condition 2, suppose that $\AA$ is a set of gridded
	permutations, and let $H$ be the set of patterns under consideration. Let
	$\BB$ and $\CC$ be the subsets of $\AA$ that avoid or contain $H$,
	respectively. Condition 2 requires that:
	\begin{enumerate}[(a)]
		\item $|\AA_n| \geq |\BB_n|$ and $|\AA_n| \geq |\CC_n|$ for all $n \in
		\N$;
		\item $|\AA_k| > |\BB_k|$ for some $k \in \N$ and $|\AA_\ell| >
		|\CC_\ell|$ for some $\ell \in \N$.
	\end{enumerate}
	Since every gridded permutation either avoids or contains the set $H$, the
	set $\AA$ is the disjoint union of $\BB$ and $\CC$. Moreover, the formal
	definition of Requirement Insertion requires that both $\BB$ and $\CC$ are
	nonempty in order for it apply. Both parts of Condition 2 follow immediately
	from these two facts.
\end{proof}

The above proof makes clear that once any strategy is known to be a
disjoint-union-type strategy, it is guaranteed to be productive.

\subsubsection{Obstruction and Requirement Simplification}
\label{subsubsection:obs-req-simp-details}
The equivalence strategies defined in
Subsection~\ref{subsubsection:obs-req-simp} --- Obstruction Deletion,
Requirement Deletion, and Requirement List Deletion --- were specifically
defined to only apply to a tiling when the corresponding alteration did not
change the underlying set of gridded permutations. We discussed in this section
that as a result, these three strategies are in some sense trivial. However,
they allow us to detect that the tiling representation of a set of gridded
permutations can be simplified, which is advantageous for detecting equality of
sets.

To prove that these three strategies are in fact equivalence strategies, one
would need to show that whenever $d_S(\AA) = \BB$, we have $|\AA_n| = |\BB_n|$
for all $n$. Since these strategies do not even change the underlying set of
gridded permutations, we have by definition the much stronger equality $\AA =
\BB$.

\subsubsection{Point Placement}
\label{subsubsection:point-placement-details}

The justification that the point placement strategy is an equivalence strategy
is technical and requires quite a bit of bookkeeping. The material in this
subsection is independent of the rest of this work, and so readers who wish
to skip this subsection on their first reading may freely do so.

Consider the tiling $\TT = ((t,u), \OO, \RR)$ with
$\RR = \{\RR_1, \ldots, \RR_k\}$, and suppose $\RR_1 = \{r\}$. We will describe
the tiling $\TT''$ that results from placing the point $r(I)$ of the requirement
$r$ in the extreme $d$ direction for some
$d \in \{\leftarrow, \rightarrow, \uparrow, \downarrow\}$.

Point placement results in the row and column of the placed point, $r(I)$, being
split into three rows and three columns, with the cell containing the placed
point itself becoming nine cells. For the rest of this section, suppose the
placed point is in cell $c = (c_x, c_y)$. The effect of this splitting on the
gridded permutations in $\TT$ can be thought of pictorially in the following way:
Take any gridded permutation $g \in \Grid(\TT)$, and add two new vertical lines
in column $c_x$ and two new horizontal lines in row $c_y$ such that none of these
four lines intersect any of the entries of $g$.
The result is a gridded permutation in $\GG^{(t+2,u+2)}$.
Figure~\ref{figure:multiplex-example} shows an example of this procedure. Given
a gridded permutation $g$, we will now define the set of \emph{multiplexes} of
$g$ around cell $c$, denoted $M_c(g)$, to be the set of all gridded
permutations in $\GG^{(t+2,u+2)}$ formed by adding two vertical and two
horizontal lines through $c$ in this way.

\begin{figure}
	\begin{center}
		\begin{tikzpicture}[scale=1.2, baseline=(current bounding box.center)]
			\foreach \Point in {(0.25, 0.67), (0.5, 2.33), (0.75, 1.6), %
								({1+1/7}, 2.75), ({1+2/7}, 1.2), ({1+3/7}, 1.8), %
								({1+6/7}, 2.5), ({1+5/7}, 0.33), ({1+4/7}, 1.4)}{\node
				at \Point {\point{1.5pt}};}
			\draw[dashed, very thin, gray] (0.01,0.01) grid (2.25,3.25);
			\draw[-latex, thin, gray] (-0.5,0)--(2.5,0);
			\draw[-latex, thin, gray] (0,-0.5)--(0,3.5);
		\end{tikzpicture}
		$\longrightarrow$
		\begin{tikzpicture}[scale=1.2, baseline=(current bounding box.center)]
			\foreach \Point in {(0.25, 0.67), (0.5, 2.33), (0.75, 1.6), %
								({1+1/7}, 2.75), ({1+2/7}, 1.2), ({1+3/7}, 1.8), %
								({1+6/7}, 2.5), ({1+5/7}, 0.33), ({1+4/7}, 1.4)}{\node
				at \Point {\point{1.5pt}};}
			\draw[dashed, very thin, gray] (0.01,0.01) grid (2.25,3.25);
			\draw[-latex, thin, gray] (-0.5,0)--(2.5,0);
			\draw[-latex, thin, gray] (0,-0.5)--(0,3.5);
			\draw[thin] ({1+1.5/7}, -0.25) -- ({1+1.5/7}, 3.25);
			\draw[thin] ({1+5.5/7}, -0.25) -- ({1+5.5/7}, 3.25);
			\draw[thin] (-0.25, 1.1) -- (2.25, 1.1);
			\draw[thin] (-0.25, 1.7) -- (2.25, 1.7);
		\end{tikzpicture}
		$\longrightarrow$
		\begin{tikzpicture}[scale=1.2, baseline=(current bounding box.center)]
			\foreach \Point in {(0.25, 0.67), (0.5, 4.25), (0.75, 2.75), %
								(1.5, 4.75), (2.2, 2.25), (2.4, 3.5), %
								(2.6, 2.5), (2.8, 0.33), (3.5, 4.5) %
								}{\node
				at \Point {\point{1.5pt}};}
			\draw[dashed, very thin, gray] (0.01,0.01) grid (4.25,5.25);
			\draw[-latex, thin, gray] (-0.5,0)--(4.5,0);
			\draw[-latex, thin, gray] (0,-0.5)--(0,5.5);
			\draw[thick] (1, 1) rectangle (4, 4);
		\end{tikzpicture}
	\end{center}
	\caption{On the left, a gridded permutation $g$ of size $9$. In the center, the
		same gridded permutation with two vertical lines drawn in column $c_x=1$
		and two horizontal lines drawn in row $c_y=1$. On the right, the resulting
		gridded permutation, in which the outlined $3 \times 3$ region corresponds
		to the single cell $(c_x, c_y) = (1, 1)$ which has now been split. This
		rightmost gridded permutation (which has the same underlying permutation,
		but different cell assignments) is called a \emph{multiplex} of $g$.}
	\label{figure:multiplex-example}
\end{figure}
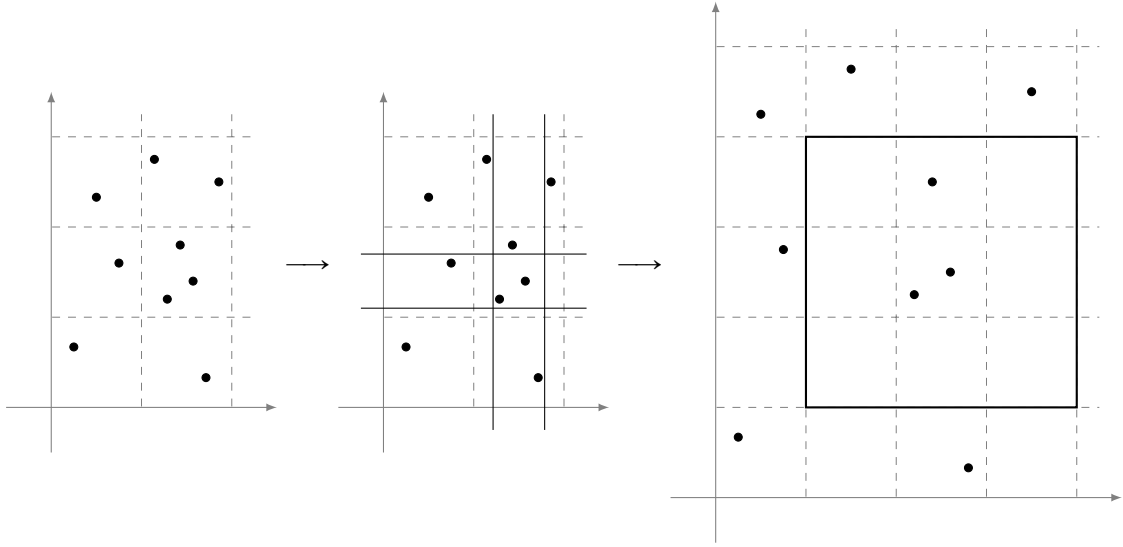

Given a multiplexed gridded permutation $g' \in M_c(g)$, the original
gridded permutation $g$ can be recovered by, essentially, removing the two
vertical lines between columns $c_x$, $c_x+1$, and $c_x+2$ and removing the
two horizontal lines between rows $c_y$, $c_y+1$, and $c_y+2$. In this context
we call $g$ the \emph{contraction} of $g'$ around $c$. To make this more
formal, consider the following map:
\[
	b_p : i \mapsto \left\{\begin{array}{ll}
		i & i < p\\
		p & i \in \{p, p+1, p+2\}\\
		i-2 & i > p+2
	\end{array}\right..
\]
With this terminology, we can define what contracting a gridded permutation
around the cell $c = (c_x, c_y)$ does to the cell that each entry lies in:
\[
	\beta_{(c_x,c_y)} : (u,v) \mapsto (b_{c_x}(u), b_{c_y}(v)).
\]
The reader is encouraged to refer back to Figure~\ref{figure:multiplex-example}
which has $c = (1,1)$, and in this case, for example,
$\beta_{(1,1)}(0, 1) = (0,1)$,
$\beta_{(1,1)}(0, 2) = (0,1)$,
$\beta_{(1,1)}(1, 4) = (1,2)$, and
$\beta_{(1,1)}(3, 2) = (1,1)$.

We can now formally define the multiplex of $g = (\pi, (c_1, \ldots, c_n))$
around the cell $c$ to be those gridded permutations whose contractions are
equal to $g$:
\[
	M_c(g) = \{g'=(\pi, (c_1', \ldots, c_n')) \in \GG : \beta_c(c_i') = c_i\text{ for all $i$ such that }1 \leq i \leq n\}.
\]

Extending this notation to sets of gridded permutations, we define
\[
	M_c(S) = \bigcup_{g \in S} M_c(g).
\]
It is important to note that because $M_c(g)$ can be thought of as the preimage
of $g$ under the contraction map, it follows that if $g \neq h$, then
$M_c(g) \cap M_c(h) = \emptyset$.

Before moving on, we will state and prove two pattern preservation lemmas.
\begin{lemma}
	\label{lemma:if-pattern-then-multiplex-pattern}
	Fix two gridded permutations $g$ and $h$ and a cell $c$. If $h \leq g$,
	then every multiplex of $g$ contains some multiplex of $h$, i.e., for all
	$g' \in M_c(g)$, there exists some $h' \in M_c(h)$ such that $h' \leq g'$.
\end{lemma}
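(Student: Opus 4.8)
The plan is to produce the required multiplex $h'$ explicitly by transporting a fixed occurrence of $h$ in $g$ up to $g'$. Write $h = (\sigma, (d_1, \ldots, d_k))$ and $g = (\pi, (c_1, \ldots, c_n))$. Since $h \leq g$, fix once and for all an occurrence of $h$ in $g$: indices $1 \leq i_1 < i_2 < \cdots < i_k \leq n$ such that the standardization of $\pi(i_1)\pi(i_2)\cdots\pi(i_k)$ equals $\sigma$ and $c_{i_j} = d_j$ for each $j$. Now let $g' = (\pi, (c_1', \ldots, c_n')) \in M_c(g)$ be arbitrary; by the definition of a multiplex, $g'$ has the same underlying permutation $\pi$ as $g$ and satisfies $\beta_c(c_i') = c_i$ for every $i$. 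I would then define
\[
  h' = (\sigma, (c_{i_1}', c_{i_2}', \ldots, c_{i_k}')),
\]
that is, $h'$ keeps the entries of $g'$ at the positions $i_1, \ldots, i_k$ of the fixed occurrence, together with whatever cells those entries occupy in $g'$.

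The first thing to check is that $h'$ is a legitimate gridded permutation. Since $g'$ is consistent, it admits a drawing of $\pi$ realizing the cell assignment $(c_1', \ldots, c_n')$; restricting that drawing to the entries at indices $i_1, \ldots, i_k$ yields a drawing of a length-$k$ permutation whose standardization is $\sigma$ and which realizes the cells $(c_{i_1}', \ldots, c_{i_k}')$, so $h'$ is consistent. Next I would verify $h' \leq g'$: the indices $i_1 < \cdots < i_k$ form a subsequence of $\pi$ whose standardization is $\sigma$ (this is exactly the fixed occurrence, and $g'$ has underlying permutation $\pi$), and the cells of $g'$ at those indices are by construction the cells of $h'$, so $(i_1, \ldots, i_k)$ is an occurrence of $h'$ in $g'$.

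Finally I would verify $h' \in M_c(h)$, which amounts to showing $\beta_c(c_{i_j}') = d_j$ for each $j$. But $\beta_c(c_{i_j}') = c_{i_j}$ because $g' \in M_c(g)$, and $c_{i_j} = d_j$ because $(i_1, \ldots, i_k)$ was chosen to be an occurrence of $h$ in $g$; chaining these equalities gives the claim. The whole argument is a definition-chase, so there is no serious obstacle; the only point requiring care is the bookkeeping — threading the occurrence indices, the inherited cells, and the contraction map $\beta_c$ together consistently, and confirming that passing to a subpermutation preserves consistency of griddings. It is worth stating explicitly that, because $M_c(\cdot)$ is a fibre of the contraction map, $h'$ genuinely lies in $M_c(h)$ rather than in $M_c(h'')$ for some different pattern $h''$; this is guaranteed precisely because the fixed occurrence pins down the cells $d_j = c_{i_j}$.
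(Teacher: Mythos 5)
Your proposal is correct and follows essentially the same route as the paper's proof: fix an occurrence of $h$ in $g$, transport it to $g'$ by taking the entries of $g'$ at those same indices to define $h'$, and conclude $h' \in M_c(h)$ from $\beta_c(c_{i_j}') = c_{i_j} = d_j$. The only difference is presentational — the paper defines $h'$ as the pattern $g'$ contains at those indices (making consistency and $h' \leq g'$ automatic), whereas you define it by its cell tuple and then verify those two points explicitly.
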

\begin{proof}
	Fix a cell $c$ and gridded permutations $g = (\pi, (c_1, \ldots, c_n))$ and
	$h = (\sigma, (d_1, \ldots, d_k))$. Suppose that $g$ contains $h$ at the
	indices $i_1, \ldots, i_k$, implying that $\pi(i_1)\ldots\pi(i_k)$ is
	order-isomorphic to $\sigma$ and $c_{i_\ell} = d_\ell$ for
	$1 \leq \ell \leq k$.

	Let $g' \in M_c(g)$ be arbitrary. We can write
	$g' = (\pi, (c_1', \ldots, c_n'))$,
	with the property that
	$\beta_{c}(c_i') = c_i$
	for all $i$. Define $h'$ to be the gridded permutation that $g'$ contains
	at the indices $i_1, \ldots, i_k$. The underlying permutation of $h'$ is
	still $\sigma$, so
	$h' = (\sigma, (c_{i_1}', \ldots, c_{i_k}'))$.
	Because we must have $\beta_c(c_{i_\ell}') = c_{i_\ell} = d_\ell$, it follows
	that $h' \in M_c(h)$, completing the proof.
\end{proof}

\begin{lemma}
	\label{lemma:if-multiplex-pattern-then-pattern}
	Fix two gridded permutations $g$ and $h$ and a cell $c$. If some multiplex
	of $g$ contains some multiplex of $h$, then $h \leq g$.
\end{lemma}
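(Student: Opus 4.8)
The plan is to show directly that contracting a gridded permutation around the cell $c$ preserves pattern containment, which immediately yields the claim. First I would unwind the definitions: fix a multiplex $g' = (\pi, (c_1', \ldots, c_n')) \in M_c(g)$ and a multiplex $h' = (\sigma, (d_1', \ldots, d_k')) \in M_c(h)$, so that $\beta_c(c_i') = c_i$ for $1 \le i \le n$ and $\beta_c(d_j') = d_j$ for $1 \le j \le k$, where $g = (\pi,(c_1,\ldots,c_n))$ and $h = (\sigma,(d_1,\ldots,d_k))$. By hypothesis $h' \le g'$, so there are indices $i_1 < \cdots < i_k$ such that $\pi(i_1)\cdots\pi(i_k)$ standardizes to $\sigma$ and $c_{i_j}' = d_j'$ for every $j$.

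Next I would check that these same indices witness $h \le g$. The underlying permutation of $g$ is again $\pi$, so the subsequence $\pi(i_1)\cdots\pi(i_k)$ still standardizes to $\sigma$; this part carries over verbatim, since multiplexing and contraction never alter the underlying permutation, only the cell labels. For the cell condition, applying $\beta_c$ to the equalities $c_{i_j}' = d_j'$ gives $c_{i_j} = \beta_c(c_{i_j}') = \beta_c(d_j') = d_j$ for each $j$. Hence the indices $i_1, \ldots, i_k$ form an occurrence of $h$ in $g$, and therefore $h \le g$, as desired.

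I do not anticipate any real obstacle: this is the companion to Lemma~\ref{lemma:if-pattern-then-multiplex-pattern}, and the only point deserving a moment's care is to observe that the occurrence of $h'$ in $g'$ supplies the cell equalities $c_{i_j}' = d_j'$, which push forward through the (non-injective) map $\beta_c$ to exactly the cell equalities needed for an occurrence in $g$. It is also worth noting in the writeup that it suffices to work with a single pair $g', h'$ of multiplexes realizing the containment — nothing uniform over $M_c(g)$ or $M_c(h)$ is being asserted — so no case analysis on the position of the occurrence relative to the split cell $c$ is required.
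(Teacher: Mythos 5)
Your argument is correct and is essentially the paper's own proof: both take a witnessing occurrence of $h'$ in $g'$, observe that the underlying permutation is untouched, and push the cell equalities through the contraction map $\beta_c$ to obtain an occurrence of $h$ in $g$ at the same indices. No gaps; the only cosmetic difference is that the paper writes the cells of $h'$ directly as $c_{i_1}', \ldots, c_{i_k}'$ rather than introducing separate symbols $d_j'$.
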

\begin{proof}
	Fix a cell $c$ and gridded permutations $g$ and $h$. Suppose that
	$g' \in M_c(g)$ and $h' \in M_c(h)$ are such that $g'$ contains $h'$ at
	the indices $i_1, \ldots, i_k$. Thus we can write
	$g' = (\pi, (c_1', \ldots, c_n'))$ and
	$h' = (\sigma, (c_{i_1}', \ldots, c_{i_k}'))$
	where $\pi$ contains $\sigma$ at the same indices. This implies that
	$g = (\pi, (\beta_c(c_1'), \ldots, \beta_c(c_n')))$ and
	$h = (\sigma, (\beta_c(c_{i_1}'), \ldots, \beta_c(c_{i_k}')))$, from which
	we see that $g$ contains $h$ at the indices $i_1, \ldots, i_k$.
\end{proof}

We are now ready to define an intermediate tiling $\TT'$ derived from our
original tiling $\TT = ((t,u), \OO, \{\{r\}, \RR_2, \ldots, \RR_k\})$. The
tiling $\TT'$ will contain the subset of multiplexes (around cell $(c_x,c_y)$)
of the permutations in $\Grid(\TT)$ that contain exactly one point in the cell
$(c_x+1, c_y+1)$, and no other points in column $c_x+1$ or row $c_y+1$. To that
end, define $\OO' = M_c(\OO)$ and $\RR_i' = M_c(\RR_i)$ for $2 \leq i \leq k$.
To ensure that cell $(c_x+1, c_y+1)$ contains exactly one point, we define
additional obstructions $A = \{(12, (c_x+1, c_y+1)), (21, (c_x+1, c_y+1))\}$
and an additional requirement list $C = \{(1, (c_x+1, c_y+1))\}$. To ensure
that no other cells in column $c_x+1$ or row $c_y+1$ have any entries, we
define yet more obstructions
\[
	B_1 = \{(1, (c_x+1, j)) : 0 \leq j < u, j \neq c_y+1\}
\]
and
\[
	B_2 = \{(1, (i, c_y+1)) : 0 \leq i < t, i \neq c_x+1\}.
\]
The tiling $\TT'$ can now be defined:
\[
	\TT' = ((t+2, u+2), \OO' \cup A \cup B_1 \cup B_2, \{\RR_2', \ldots, \RR_k', C\}).
\]
Note that the requirement $r$ actually being placed has been dropped.

The set $\Grid(\TT')$ contains some, but not all, of the multiplexes of
gridded permutations in $\Grid(\TT)$ in a way that we will make more precise.
In particular, the multiplexes in $\Grid(\TT')$ are exactly those in which one
entry in cell $c$ has been isolated into its own cell $(c_x+1, c_y+1)$, with no
other points in that cell's row or column. Informally, these are the multiplexes
that result from picking one point in cell $c$ and adding the two vertical lines
just to the left and just to the right of this point and adding the two
horizontal lines just below and just above this point. The lemma below makes
this more precise.

\begin{lemma}
	\label{lemma:point-multiplex-in-T-prime}
	Let $g \in \Grid(\TT)$ and suppose that $g$ has precisely $\ell$ points
	in cell $c  = (c_x, c_y)$. Then,
	\[
		\left|M_c(g) \cap \Grid(\TT')\right| = \ell.
	\]
\end{lemma}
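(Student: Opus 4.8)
The plan is to set up a bijection between the set of points of $g$ in cell $c$ and the set $M_c(g) \cap \Grid(\TT')$. First I would note that, by the construction of $\TT'$, a multiplex $g' \in M_c(g)$ lies in $\Grid(\TT')$ exactly when it satisfies the extra obstructions and requirements that were added: the obstructions $A$ force at most one entry in cell $(c_x+1,c_y+1)$, the requirement list $C$ forces at least one entry there, and the obstructions $B_1, B_2$ force no entries elsewhere in column $c_x+1$ or row $c_y+1$. Since the contraction map collapses columns $c_x, c_x+1, c_x+2$ back to column $c_x$ and rows $c_y, c_y+1, c_y+2$ back to row $c_y$, the entries of $g'$ that can possibly land in the new column $c_x+1$ or row $c_y+1$ are precisely those entries of $g$ that were in cell $c$; all other entries' positions are forced by $\beta_c(c_i') = c_i$ to stay outside that column and row. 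Hence a multiplex in $\Grid(\TT')$ is determined by choosing one of the $\ell$ entries of $g$ in cell $c$ to be ``the point'' — placed in cell $(c_x+1,c_y+1)$ — and sending all other cell-$c$ entries to one of the surrounding eight cells, with the choice of surrounding cell forced by the relative horizontal and vertical position of that entry with respect to the chosen point.

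Concretely, I would argue as follows. Let $p_1, \ldots, p_\ell$ be the entries of $g$ lying in cell $c$, listed say in order of position (index). For each $j \in \{1,\ldots,\ell\}$, define a multiplex $g'_j$ by: leaving every entry outside cell $c$ with its cell unchanged (this is forced, and consistent since those entries avoid column $c_x+1$ and row $c_y+1$), placing $p_j$ in cell $(c_x+1,c_y+1)$, and placing each $p_m$ with $m \neq j$ in the unique one of the eight cells $\{c_x, c_x+1, c_x+2\} \times \{c_y, c_y+1, c_y+2\} \setminus \{(c_x+1,c_y+1)\}$ dictated by whether $p_m$ is left of, equal to (impossible, as they are distinct entries), or right of $p_j$ in position, and below, above, or at the same height as $p_j$ in value. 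One checks that $g'_j \in M_c(g)$ (its contraction is $g$, since $\beta_c$ sends all nine sub-cells back to $c$ and leaves the rest fixed) and that $g'_j \in \Grid(\TT')$ (it has exactly one entry in $(c_x+1,c_y+1)$, namely $p_j$, and no other entries in column $c_x+1$ or row $c_y+1$). Conversely, given any $g' \in M_c(g) \cap \Grid(\TT')$, the requirement $C$ and the obstructions $A$ force exactly one entry of $g'$ into cell $(c_x+1,c_y+1)$; that entry, under contraction, must come from cell $c$, so it is some $p_j$, and then the constraints $B_1, B_2$ together with the fact that contraction must return $g$ force every other cell-$c$ entry into exactly the surrounding cell prescribed above. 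Thus $g' = g'_j$, so the map $j \mapsto g'_j$ is a bijection and $|M_c(g) \cap \Grid(\TT')| = \ell$.

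The main obstacle I expect is the bookkeeping needed to verify that the $g'_j$ so constructed are genuinely well-defined gridded permutations — i.e., that the assignment of surrounding cells to the non-chosen cell-$c$ entries is consistent (no two entries forced into positions that contradict the relative order given by $\pi$), and that $g'_j$ really avoids all the multiplexed obstructions $\OO' = M_c(\OO)$ and satisfies all the multiplexed requirements $\RR_i' = M_c(\RR_i)$. For the obstructions, I would invoke Lemma~\ref{lemma:if-multiplex-pattern-then-pattern}: if $g'_j$ contained some obstruction $o' \in M_c(\OO)$, then $g$ would contain the corresponding $o \in \OO$, contradicting $g \in \Grid(\TT)$. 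For the requirements, I would invoke Lemma~\ref{lemma:if-pattern-then-multiplex-pattern}: since $g$ contains some requirement $r_i \in \RR_i$, the multiplex $g'_j$ contains some multiplex of $r_i$, which lies in $\RR_i'$. The remaining consistency check — that the eight surrounding cells can be assigned coherently — follows because the two new vertical lines and two new horizontal lines are drawn so as to miss all entries, so the relative order of any two cell-$c$ entries is preserved and each such entry's sub-cell is determined independently by its position and value relative to $p_j$; this is exactly the ``adding lines that miss all points'' picture formalized just before the lemma, so no genuine difficulty arises, only careful case enumeration.
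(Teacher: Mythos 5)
Your proposal is correct and follows essentially the same route as the paper: both use Lemmas~\ref{lemma:if-multiplex-pattern-then-pattern} and~\ref{lemma:if-pattern-then-multiplex-pattern} to dispose of $\OO'$ and the lists $\RR_2',\ldots,\RR_k'$, and then observe that $A$, $B_1$, $B_2$, and $C$ single out exactly one multiplex of $g$ for each of the $\ell$ entries in cell $c$, namely the one isolating that entry. The only cosmetic slip is that the non-chosen cell-$c$ entries are in fact forced into the four corner subcells (not any of the eight surrounding cells), since $B_1$ and $B_2$ exclude the middle column and row; this does not affect the argument.
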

\begin{proof}
	Suppose that $g \in \Grid(\TT)$ and consider $g' \in M_c(g)$. Because $g$
	avoids all obstructions in $\OO$,
	Lemma~\ref{lemma:if-multiplex-pattern-then-pattern} shows that $g'$ avoids
	all obstructions in $\OO'$. Similarly, by
	Lemma~\ref{lemma:if-pattern-then-multiplex-pattern},
	$g'$ contains at least one requirement in each list $\RR_2', \ldots, \RR_k'$.

	It remains to show that if $g$ contains $\ell$ entries in cell $c$,
	then there are precisely $\ell$ elements of $M_c(g)$ that avoid the
	obstructions in $A \cup B_1 \cup B_2$ and contain the requirement $C$.
	These obstructions together with the requirement collectively enforce the
	property that every gridded permutation in $\Grid(\TT')$ has exactly one
	point in the cell $(c_x+1, c_y+1)$, and no other points in the same row
	or column as this cell. For each of the $\ell$ entries in cell $c$ of $g$,
	there is one multiplex $g'$ of $g$ that has that entry alone in cell
	$(c_x+1, c_y+1)$ with no other entries sharing a column or row with this
	cell, and this $g'$ is in $\Grid(\TT')$.

	Clearly, for any two of the $\ell$ entries of $g$ in cell $c$, the
	multiplexed versions that isolate these entries are different. Therefore,
	$M_c(g) \cap \Grid(\TT')$ contains $\ell$ gridded permutations, each of which
	has one of the $\ell$ entries in cell $c$ of $g$ isolated in cell
	$(c_x+1, c_y+1)$ of $g'$, with no other entries sharing a row or column.
\end{proof}

In the proof above, we discussed multiplexes of $g$ that have a single entry
isolated in cell $(c_x+1, c_y+1)$ with no other entries sharing a row or
column. These are relevant to the rest of this subsection, so we will call them
the \emph{point multiplexes} of $g$, and if the point $g(i)$ is in cell $c$ of
$g$, then we will specifically call the multiplex that isolates that point
$m_c^i(g)$. The lowercase ``$m$'' is meant to convey that $m_c^i(g)$ is one single
gridded permutation, while $M_c(g)$ is a set of gridded permutations.

Our goal now is to construct a tiling $\TT''$ by adding obstructions and one
requirement to $\TT'$ with the result that $|M_c(g) \cap \Grid(\TT'')| = 1$
for any $g \in \Grid(\TT)$. Recall that we are placing point $r(I)$ of
the original requirement $r$ in the extreme $d$ direction. For $g$ with $\ell$
entries in cell $c$, the  obstructions and requirement will invalidate $\ell-1$
of the point multiplexes, leaving only the one in which the isolated point is
the point that, out of all points playing the role of $r(I)$ in an occurrence
of $r$, is the one in the most extreme $d$ direction.

First, we must add a requirement so that not only does every permutation
in $\Grid(\TT'')$ contain some mutiplex of the requirement $r$ that we're
placing, but more specifically contains the single point multiplex $r'$ in which
the isolated point of any gridded permutation in $\Grid(\TT'')$ plays the role
of $r'(I)$ in an occurrence of $r$. (This is not yet enforcing that this
point is in the most extreme $d$ direction.) To achieve this, we add the
requirement list $\{m_c^I(r)\}$ as a new requirement list in $\TT''$.

Lastly, we need to craft new obstructions to add to $\TT''$ to enforce the
property that the isolated point in any $g' \in \Grid(\TT'')$ is the point
in the most extreme $d$ direction that plays the role of $r(I)$ in any
occurrence of $r$. This is actually simple. Consider the set
of all point multiplexes of $r$. For any $r'$ in this set with the property that
the cell $c_I$ in which the entry $r'(I)$ occurs is further the $d$ direction
than the cell $(c_x+1, c_y+1)$ that contains the isolated point, we add $r'$
as an obstruction to $\TT''$.

At this point, $\TT''$ contains all obstructions and requirements of $\TT'$
together with the new obstructions and the one new requirement described above.
The tiling $\TT''$ is the output tiling of the point placement strategy, and
so we must now prove that there is a size-preserving bijection between
$\Grid(\TT)$ and $\Grid(\TT'')$. We start with the following lemma.
\begin{lemma}
	\label{lemma:point-placement-multiplex-size-1}
	Let $g \in \Grid(\TT)$. Then,
	\[
		\left|M_c(g) \cap \Grid(\TT'')\right| = 1.
	\]
\end{lemma}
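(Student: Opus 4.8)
The plan is to combine the already-established Lemma~\ref{lemma:point-multiplex-in-T-prime} with a careful reading of the single extra requirement list and the extra obstructions that distinguish $\TT''$ from $\TT'$. Since $\TT''$ arises from $\TT'$ only by adjoining obstructions and one requirement list, $\Grid(\TT'') \subseteq \Grid(\TT')$, so $M_c(g) \cap \Grid(\TT'')$ is a subset of $M_c(g) \cap \Grid(\TT')$, which by Lemma~\ref{lemma:point-multiplex-in-T-prime} is exactly the set of point multiplexes $\{m_c^i(g) : g(i)\text{ lies in cell }c\}$, one for each of the $\ell$ entries of $g$ in cell $c$. So it suffices to decide which of these $\ell$ point multiplexes both contain the new requirement $m_c^I(r)$ and avoid all the new obstructions, and to show that exactly one does.

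First I would handle the requirement. In a containment of gridded permutations cells must match exactly, and the cell $(c_x+1,c_y+1)$ of $m_c^i(g)$ holds only the single entry $g(i)$; therefore $m_c^i(g)$ contains $m_c^I(r)$ precisely when $g$ has an occurrence of $r$ in which $g(i)$ plays the role of $r(I)$. The forward direction follows by contracting (Lemma~\ref{lemma:if-multiplex-pattern-then-pattern}) together with the cell-matching constraint, and the backward direction by multiplexing the occurrence (Lemma~\ref{lemma:if-pattern-then-multiplex-pattern}) and observing that, since no two entries of a permutation share a row or column, the other entries of $r$ lying in cell $c$ are forced into exactly the corner cells prescribed by $m_c^I(r)$. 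Let $S$ be the set of positions $i$ with $g(i)$ in cell $c$ and $g(i)$ playing the role of $r(I)$ in some occurrence of $r$ in $g$. Since $g \in \Grid(\TT)$ contains $r$ and the placed entry $r(I)$ lies in cell $c$, the set $S$ is nonempty, and after imposing the new requirement the surviving candidates are exactly $\{m_c^i(g) : i \in S\}$.

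Next I would use the new obstructions to trim this family to a single element. Within the cell $c$ any two distinct entries are comparable in direction $d$ (by value when $d \in \{\uparrow,\downarrow\}$, by position when $d \in \{\leftarrow,\rightarrow\}$), so there is a unique $i^* \in S$ whose entry $g(i^*)$ is the most extreme in direction $d$ among $\{g(i) : i \in S\}$, and the claim will be that $m_c^{i^*}(g)$ is the unique surviving point multiplex. For the ``at most one'' half: if $i \in S$ and $i \neq i^*$, choose $j \in S$ with $g(j)$ strictly more extreme in direction $d$ than $g(i)$ together with an occurrence of $r$ in $g$ witnessing $j \in S$; gridding that occurrence (along with $g(i)$, if it is not already among its entries) inside $m_c^i(g)$ produces, by the construction of $\TT''$, one of its obstructions---a configuration in which an $r(I)$-candidate occupies a cell strictly further in direction $d$ than the isolated point---so $m_c^i(g) \notin \Grid(\TT'')$. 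For the ``at least one'' half: if $m_c^{i^*}(g)$ contained such an obstruction, contracting it (Lemma~\ref{lemma:if-multiplex-pattern-then-pattern}) would yield an occurrence of $r$ in $g$ whose $r(I)$-entry lies in cell $c$ and is strictly more extreme in direction $d$ than $g(i^*)$, contradicting the maximality of $i^*$; hence $m_c^{i^*}(g)$ avoids every new obstruction, and since it also lies in $\Grid(\TT')$ and contains $m_c^I(r)$ it lies in $\Grid(\TT'')$. Together these give $|M_c(g) \cap \Grid(\TT'')| = 1$.

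The step I expect to be the main obstacle is the obstruction bookkeeping in the previous paragraph: pinning down exactly which gridded patterns the construction of $\TT''$ adjoins and proving that ``$m_c^i(g)$ contains one of them'' is equivalent to ``$g(i)$ is not the $d$-most-extreme element of $S$'', while carefully handling the dichotomy of whether or not $g(i)$ participates in the dominating occurrence of $r$. The two pattern-preservation lemmas move patterns between a gridded permutation and its multiplexes essentially for free, so the real effort is in tracking cells---in particular which corner cells of the split of $c$ count as ``further in direction $d$''---and matching the resulting occurrence structures against the obstructions listed in the definition of $\TT''$. Everything outside that paragraph is an immediate consequence of Lemma~\ref{lemma:point-multiplex-in-T-prime} and the exact-cell-matching property of gridded containment.
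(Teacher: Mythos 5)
Your proof is correct and is essentially the paper's own argument, spelled out in more detail: Lemma~\ref{lemma:point-multiplex-in-T-prime} reduces the problem to the $\ell$ point multiplexes of $g$, the new requirement $m_c^I(r)$ keeps exactly those isolating an entry that plays the role of $r(I)$ in some occurrence of $r$ (your set $S$), and the new obstructions eliminate every member of $S$ except the $d$-most extreme one, which survives by the maximality argument you give. One small repair: drop the parenthetical ``along with $g(i)$, if it is not already among its entries''---adjoining $g(i)$ to the witnessing occurrence would give a pattern longer than $r$ and so could never match an obstruction, and it is unnecessary, because the adjoined obstructions are keyed only to the cell in which the $r(I)$-entry lands (strictly further in direction $d$ than the central cell), so the occurrence of $r$ through $g(j)$ by itself, regridded inside $m_c^i(g)$, is already an occurrence of one of them whether or not it passes through the isolated point.
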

\begin{proof}
	Let $g \in \Grid(\TT)$ and recall that $M_c(g) \cap \Grid(\TT')$ was shown
	to contain one point multiplex of $g$ for each of the $\ell$ points in cell
	$c$ of $g$. Of all the points in cell $c$ of $g$ that play the role of
	$r(I)$ in an occurrence of $r$ (there is at least one such point because
	$\{r\}$ is a requirement list of $\TT$), suppose the one in the most extreme
	$d$ direction is located at index $i$. Then, by design, the point multiplex
	$m_c^i(g)$ is contained in $\Grid(\TT'')$. The other $\ell-1$ point
	multiplexes of $g$ are each isolating different points, and each of these
	other points either does not play the role of $r(I)$ in an occurrence of
	$r$, or does, but not in the most extreme $d$ direction, and therefore
	they are precluded from $\Grid(\TT'')$ by virtue of the new obstructions
	enforcing extremeness of direction or the new requirement forcing the
	isolated point to play the role of $r(I)$.
\end{proof}

The lemma above allows us to define a map $\Gamma : \Grid(\TT) \to \Grid(\TT'')$
in which $\Gamma(g)$ is defined to be the single element of $M_c(g)$ in
$\Grid(\TT'')$. Note that the cell $c$, the requirement $r$, the index of the
placed point $I$, and the direction $d$ are all implicit parameters of $\Gamma$.

\begin{theorem}
	\label{theorem:point-placement-bijection}
	The map $\Gamma$ defined above is a size-preserving bijection.
\end{theorem}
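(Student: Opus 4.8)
The plan is to establish that $\Gamma$ is size-preserving, injective, and surjective, with each property following rather directly from the construction and the multiplex/contraction machinery already set up. \textbf{Size-preservation} is immediate: for any $g \in \Grid(\TT)$, every element of $M_c(g)$ has the same underlying permutation as $g$ (the contraction map $\beta_c$ only alters cell assignments, not the permutation itself), so $|\Gamma(g)| = |g|$.

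For \textbf{injectivity}, I would use the observation made just after the definition of $M_c$: because $M_c(g)$ is the preimage of $g$ under the contraction map around $c$, if $g \neq h$ then $M_c(g) \cap M_c(h) = \emptyset$. Since $\Gamma(g) \in M_c(g)$ and $\Gamma(h) \in M_c(h)$, distinct inputs give outputs lying in disjoint sets, hence $\Gamma(g) \neq \Gamma(h)$. This is essentially a one-line argument once the disjointness of multiplex sets is invoked.

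For \textbf{surjectivity}, I would take an arbitrary $g'' \in \Grid(\TT'')$ and let $g$ be its contraction around $c$, i.e., the gridded permutation with underlying permutation equal to that of $g''$ and cell positions obtained by applying $\beta_c$. The task is to show $g \in \Grid(\TT)$ and that $g'' = \Gamma(g)$, i.e., $g'' \in M_c(g) \cap \Grid(\TT'')$. That $g'' \in M_c(g)$ holds by the definition of contraction. To see $g \in \Grid(\TT)$: since $g''$ avoids every obstruction in $\OO' = M_c(\OO)$, Lemma~\ref{lemma:if-pattern-then-multiplex-pattern} (in contrapositive form) shows $g$ avoids every obstruction in $\OO$; and since $g''$ contains a requirement in each $\RR_i' = M_c(\RR_i)$ for $i \geq 2$, Lemma~\ref{lemma:if-multiplex-pattern-then-pattern} shows $g$ contains a requirement in each $\RR_i$. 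The one remaining requirement list of $\TT$, namely $\{r\}$, is handled by noting that $g''$ contains the requirement list $\{m_c^I(r)\}$, so $g'$ contains $m_c^I(r)$, which contains a multiplex of $r$ (indeed $m_c^I(r) \in M_c(r)$ since it is a point multiplex of $r$), and Lemma~\ref{lemma:if-multiplex-pattern-then-pattern} again gives that $g$ contains $r$. Hence $g \in \Grid(\TT)$, and by Lemma~\ref{lemma:point-placement-multiplex-size-1} the unique element of $M_c(g) \cap \Grid(\TT'')$ is $\Gamma(g)$; since $g''$ lies in this intersection, $g'' = \Gamma(g)$.

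The main obstacle I anticipate is the surjectivity step, specifically verifying carefully that the contraction $g$ of $g''$ genuinely satisfies \emph{all} the requirements and avoids \emph{all} the obstructions of the original tiling $\TT$ --- in particular making sure the $\{r\}$ requirement transfers correctly, since $\TT''$ only directly enforces $\{m_c^I(r)\}$ plus the extra direction-enforcing obstructions, not $\{r\}$ verbatim. One must trace through that containing $m_c^I(r)$ really does force the contraction to contain $r$, which is exactly what the two pattern-preservation lemmas are designed to give, but it requires stating the implications in the right direction. The other routine-but-fiddly point is confirming that $g''$, being in $\Grid(\TT'') \subseteq \Grid(\TT')$, has exactly one entry in cell $(c_x+1, c_y+1)$ and none elsewhere in that row or column, so that its contraction is well-defined and the construction is genuinely inverse to $\Gamma$; this follows from the obstructions $A \cup B_1 \cup B_2$ and requirement $C$ baked into $\TT'$, but should be spelled out. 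Everything else is bookkeeping that the preceding lemmas have already absorbed.
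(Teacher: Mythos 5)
Your proposal is correct and follows essentially the same route as the paper's proof: size-preservation from the fact that multiplexes share the underlying permutation, injectivity from the disjointness of multiplex sets, and surjectivity by contracting an arbitrary element of $\Grid(\TT'')$ and checking membership in $\Grid(\TT)$ via Lemmas~\ref{lemma:if-pattern-then-multiplex-pattern} and~\ref{lemma:if-multiplex-pattern-then-pattern}, including the transfer of the placed requirement through $m_c^I(r) \in M_c(r)$. The concerns you flag are handled exactly as you anticipate, so no changes are needed.
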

\begin{proof}
	Lemma~\ref{lemma:point-placement-multiplex-size-1} ensures that $\Gamma$ is
	well-defined, and the fact that $\Gamma$ is size-preserving follows from
	the fact that every element of $M_c(g)$ has the same size of $g$. Moreover,
	$\Gamma$ is injective as a result of the observation that if $g \neq h$
	then $M_c(g) \cap M_c(h) = \emptyset$.

	To see that $\Gamma$ is surjective, let $g' \in \Grid(\TT'')$ and let $g$ be
	such that $g' \in M_c(g)$, i.e, $g$ is the contraction of $g'$. We will
	check that $g \in \Grid(\TT)$, from which it follows that $\Gamma(g) = g'$.

	Let $o$ be any obstruction of $\TT$ and recall that every mutiplex in
	$M_c(o)$ is an obstruction of $\TT''$. Therefore, if $g$ were to contain $o$,
	Lemma~\ref{lemma:if-pattern-then-multiplex-pattern} would imply that $g'$
	contains a multiplex of $o$, a contradiction. Therefore, $g$ avoids all
	obstructions of $\TT$.

	One of the requirements of $\TT''$ is a point multiplex of the requirement $r$
	being placed, therefore $g'$ contains this point multiplex. By
	Lemma~\ref{lemma:if-multiplex-pattern-then-pattern}, it follows that $g$
	satisfies the requirement list $\{r\}$.

	Lastly, consider any of the other requirement lists $\RR_i$ of $\TT$ for
	$2 \leq i \leq k$. Since $g'$ contains some element of $M_c(\RR_i)$,
	Lemma~\ref{lemma:if-multiplex-pattern-then-pattern} once again implies
	that $g$ contains some element of $\RR_i$.

	Therefore, $g \in \Grid(\TT)$, confirming that $\Gamma$ is a size-preserving
	bijection, and therefore the point placement strategy is an equivalence
	strategy.
\end{proof}

\subsubsection{Row Separation and Column Separation}
\label{subsubsection:row-col-sep-details}

In Subsection~\ref{subsubsection:row-col-sep} we defined row separation and
column separation as equivalence strategies that can be applied when the
cells in a row or column can be split into two separate rows or columns. We will
discuss only row separation here, but the results all follow \emph{mutatis
mutandis} for column separation.

For the remainder of this subsection, let $\TT = ((t,u), \OO, \{\RR_1, \ldots,
\RR_k\})$ be a tiling. Suppose there is a row $r$ with a subset $S$ of nonempty
cells such that row separation applies. From the definition of row separation in
Subsection~\ref{subsubsection:row-col-sep}, this means that if we set $S'$ to be
the nonempty cells in row $r$ that are not in $S$, then for every pair of cells
$(c_1,c_2) \in S \times S'$, if $c_1$ is to the left of $c_2$ then $(21,
(c_1,c_2)) \in \OO$ and if $c_1$ is to the right of $c_2$ then $(12, (c_2, c_1))
\in \OO$. We call these gridded permutations the \emph{critical row patterns}
and denote the set of them by $C$.

In order to formally define the tiling $\TT'$ produced by the row separation
strategy, we first need to define a mapping $\gamma_{r,S}$ that describes how
row separation moves entries around between cells. To that end, define
\[
	\gamma_{r,S} : (i,j) \mapsto \left\{\begin{array}{ll}
		(i,j),& j < r\text{ or }(j = r\text{ and }i \in S),\\[3pt]
		(i,j+1),& j > r\text{ or }(j = r\text{ and }i \not\in S),\\
	\end{array}\right..
\]

Let $[N]$ denote the set $\{1, \ldots, N\}$.
The domain of $\gamma_{r,S}$ is $[t] \times [u]$. The range of $\gamma_{r,S}$
contains all cells in $[t] \times [u+1]$ except several in rows $r$ and $r+1$,
and those cells that it doesn't contain will contain point obstructions in
$\TT'$ (i.e., they will be empty). We define the codomain of $\gamma_{r,S}$ to
be its range.

Now, the tiling $\TT'$ that results from applying Row Separation to row $r$ and
cells $S$ is formed by applying the map $\gamma_{r,S}$ to the cells in each
obstruction and requirement of $\TT$. To that end, define the map $\Gamma_{r,S}$
on gridded permutations by
\[
	\Gamma_{r,S}((\pi, (c_1, \ldots, c_n))) = (\pi, (\gamma_{r,S}(c_1), \ldots, \gamma_{r,S}(c_n))).
\]
From here on out we will refer to $\gamma_{r,S}$ and $\Gamma_{r,S}$ just as
$\gamma$ and $\Gamma$, as $r$ and $S$ are fixed throughout.

We must take a moment to verify that for any gridded permutation $g$, the
definition of $\Gamma(g)$ actually produces a valid gridded permutation when $g$
does not contain any of the critical row patterns.
\begin{lemma}
	\label{lemma:row-col-sep-Gamma-g-valid}
	If $g$ avoids all of the critical row patterns, then the definition of
	$\Gamma(g)$ above produces a valid gridded permutation.
\end{lemma}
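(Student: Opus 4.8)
The plan is to reduce the lemma to the concrete characterization of validity. Writing $g = (\pi, (c_1,\ldots,c_n))$ with $c_i = (x_i, y_i)$, recall (and, if not already available, first establish) that a pair $(\pi,P)$ is a valid gridded permutation exactly when two conditions hold: (i) $x_i \le x_j$ whenever $i < j$, and (ii) $\pi(i) < \pi(j)$ whenever $y_i < y_j$. Condition (i) says the columns are weakly increasing along the permutation, and (ii) says an entry in a strictly lower row than another must have smaller value; together they are precisely what is needed to place points in the prescribed cells realizing $\pi$, and they capture the consistency notion used throughout the paper (the non-example $(21,((0,0),(1,1)))$ fails (ii)). Since $\gamma = \gamma_{r,S}$ never alters the first coordinate of a cell, $\Gamma(g)$ automatically inherits (i) from $g$, and also trivially lands in $\GG^{(t,u+1)}$; so the entire argument comes down to checking (ii) for $\Gamma(g)$.

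For (ii) I would first record the geometric content of row separation. The row-index part of $\gamma$ fixes rows $0,\dots,r-1$, shifts rows $r+1,\dots,u-1$ up by one, and tears old row $r$ into a lower new row $r$ (inheriting the cells of $S$) and an upper new row $r+1$ (inheriting the cells of $S'$). Thus passing from $g$ to $\Gamma(g)$ is exactly inserting one new horizontal grid line inside old row $r$, with the entries of $g$ in cells of $S$ required to lie below it and those in cells of $S'$ above it. Such a line can be inserted without crossing any entry — equivalently $\Gamma(g)$ is realizable, i.e.\ valid — if and only if every entry of $g$ in a cell of $S$ has smaller value than every entry of $g$ in a cell of $S'$. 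So the lemma reduces to showing that avoiding the critical row patterns forces this value separation.

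This last step is where the hypothesis is used, and it is the only substantive point. Let $a$ be an index with $c_a \in S$ and $b$ an index with $c_b \in S'$; both cells lie in row $r$. If the column of $c_a$ is to the left of that of $c_b$, then $a < b$ by condition (i), and were $\pi(a) > \pi(b)$ the pair $(a,b)$ would be an occurrence of the critical pattern $(21,(c_a,c_b)) \in \OO$ in $g$ — impossible — so $\pi(a) < \pi(b)$. If instead $c_a$ is to the right of $c_b$, the symmetric argument with the critical pattern $(12,(c_b,c_a)) \in \OO$ again gives $\pi(a) < \pi(b)$. Hence all $S$-entries are smaller than all $S'$-entries, as required.

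To make the proof self-contained without invoking the geometric realization, I would instead verify (ii) directly: given indices $i,j$ with $\tilde{y}_i < \tilde{y}_j$ (the new row indices of $c_i, c_j$), a short case split on whether $c_i$ and/or $c_j$ lies in old row $r$ reduces everything either to the inequality $y_i < y_j$ holding in $g$ — this covers all cases where at most one of $c_i,c_j$ sits in old row $r$, using that the row map is monotone off row $r$ — or to the case $y_i = y_j = r$ with $c_i \in S$, $c_j \in S'$, settled by the critical-pattern argument above. The monotone cases are routine bookkeeping with the piecewise definition of $\gamma$; I expect the main obstacle in writing the proof cleanly to be organizing this case analysis so that the single genuinely content-bearing case (both cells in old row $r$), which is exactly where ``$g$ avoids the critical row patterns'' is indispensable, stands out clearly.
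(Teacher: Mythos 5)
Your proposal is correct and takes essentially the same route as the paper's proof: the column assignments are untouched by $\gamma$, the row map is monotone away from row $r$, and the only possible inconsistency in $\Gamma(g)$ is an entry in a cell of $S$ having larger value than an entry in a cell of $S'$, which is precisely an occurrence of a critical row pattern ($(21,(c_1,c_2))$ or $(12,(c_2,c_1))$ depending on the relative column order), excluded by hypothesis. Your write-up simply makes explicit the consistency characterization of gridded permutations and the case analysis that the paper treats informally.
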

\begin{proof}
	The concern to address is whether the underlying pattern is consistent with
	the new cell assignments. First, note that the column assignments of every
	entry remain unchanged by $\gamma$. The row assignments stay largely
	unchanged except that some entries in row $r$ move to row $r+1$.

	This could pose a problem if $g$ contained a $12$ pattern in row $r$,
	but in $\Gamma(g)$ the $1$ moves to row $r+1$ while the $2$ stays in row
	$r$---clearly such a gridded permutation is impossible. A problem would
	similarly occur if $g$ contained a $21$ pattern in row $r$ such that, again,
	the $1$ moves to row $r+1$ under the effect of $\Gamma$ while the $2$ stays
	in row $r$. However, these two problematic patterns are precisely the critical
	row patterns, which $g$ is assumed to avoid.
\end{proof}

We extend the definition of $\Gamma$ to sets of gridded permutations under the
definition $\Gamma(A) = \{\Gamma(g): g \in A\}$.

We can finally give the formal definition of the tiling $\TT'$ that results from
row separation. We may assume without loss of generality that the obstructions
$\OO$ of $\TT$ are pairwise incomparable. Now define
\[
	\TT' = ((t,u+1), \Gamma(\OO \smallsetminus C) \cup \OO', \{\Gamma(\RR_1), \ldots, \Gamma(\RR_k)\}),
\]
where
\[
	\OO' = \{(1, (i,r)): i \not\in S\} \cup \{(1, (i,r+1)): i \in S\}
\]
is a set of size 1 obstructions setting empty the cells in row $r$ that
correspond to columns not in $S$ and the cells in row $r+1$ that correspond to
columns that are in $S$.

To prove that Row Separation is an equivalence strategy, we must exhibit a
size-preserving bijection between $\Grid(\TT)$ and $\Grid(\TT')$. In fact, in
addition to being helpful in the definition of $\TT'$, $\Gamma$ is such a
bijection (with domain and codomain appropriately defined). The following lemma
will help to establish this.
\begin{lemma}
	The cell map $\gamma$ is a bijection.
\end{lemma}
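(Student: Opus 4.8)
The plan is to show directly that $\gamma = \gamma_{r,S}$ is a bijection from its domain $[t]\times[u]$ onto its codomain, which was \emph{defined} to be its range. Since surjectivity onto the range is automatic, the only real content is injectivity. I would first record the piecewise description of $\gamma$: a cell $(i,j)$ with $j<r$, or with $j=r$ and $i\in S$, is fixed; a cell $(i,j)$ with $j>r$, or with $j=r$ and $i\notin S$, is sent to $(i,j+1)$. In particular $\gamma$ never changes the first coordinate, so it suffices to check injectivity row-by-row (i.e., for each fixed column index $i$, the induced map on $j$ is injective), and then observe that the images of distinct columns stay in distinct columns.

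For the injectivity argument I would split on which of the two branches each input falls into. If $\gamma(i,j) = \gamma(i',j')$ then $i = i'$, so write $i = i'$ and compare $j,j'$. If both $(i,j)$ and $(i,j')$ are in the ``fixed'' branch, then $j = j'$ immediately. If both are in the ``shift'' branch, then $j+1 = j'+1$, so again $j = j'$. The only thing to rule out is a ``mixed'' collision: $(i,j)$ fixed and $(i,j')$ shifted with $j = j'+1$. I would argue this is impossible by cases on where $j'$ sits. If $j' > r$ then $j = j'+1 > r+1 > r$, so $(i,j)$ lies strictly above row $r$ and is therefore in the shift branch, not the fixed branch — contradiction. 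If $j' = r$ (so $i\notin S$, since $(i,j')$ is in the shift branch) then $j = r+1 > r$, so $(i,j)$ is again in the shift branch — contradiction. Either way the mixed case cannot occur, so $\gamma$ is injective, hence a bijection onto its range, i.e., onto its codomain as defined.

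The main (and only mild) obstacle is purely bookkeeping: being careful that the two branches of the piecewise definition genuinely partition the domain (every $(i,j)\in[t]\times[u]$ satisfies exactly one of the two conditions, which is clear since for $j\ne r$ exactly one of $j<r$, $j>r$ holds, and for $j=r$ exactly one of $i\in S$, $i\notin S$ holds), and that the ``image of the fixed branch'' and ``image of the shift branch'' are described correctly so that one can see they overlap only in the excluded mixed configuration — which we have just shown is vacuous. There is no deep idea here; the lemma is a sanity check that the row-separation construction of $\TT'$ in Subsection~\ref{subsubsection:row-col-sep-details} is well-posed, and it feeds into the subsequent claim that $\Gamma$ (the induced map on gridded permutations) is a size-preserving bijection between $\Grid(\TT)$ and $\Grid(\TT')$. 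I would keep the write-up to a few lines, essentially the case analysis above.
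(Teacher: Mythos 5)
Your proof is correct and follows essentially the same route as the paper's: a direct injectivity check (noting the first coordinate is preserved and then doing a short case analysis on rows, yours organized by preimage branch, the paper's by image row), with surjectivity being immediate because the codomain was declared to be the range. No gap to report.
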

\begin{proof}
	Consider two cells $c_1 = (i_1, j_1)$ and $c_2 = (i_2, j_2)$. Suppose that
	$\gamma(c_1) = \gamma(c_2)$ and let $(a,b)$ denote this value. Since
	$\gamma$ holds the first component constant, we must have $i_1 = a = i_2$.
	If $b \leq r$ then $j_1 = b = j_2$, and if $b \geq r+2$ then $j_1 = b - 1 =
	j_2$. Lastly, if $b = r+1$, then $j_1$ and $j_2$ could equal $r$ or $r+1$,
	but this depends only on the fixed set $S$ and the quantities $i_1$ and
	$i_2$, which have already been shown to be equal. Therefore $j_1 = j_2$, and
	so $\gamma$ is injective. The surjectivity of $\gamma$ was ensured by our
	declaration that its codomain equals its range.
\end{proof}

We now state and prove two lemmas about the preservation of patterns under
the $\Gamma$ map that are similar to
Lemmas~\ref{lemma:if-pattern-then-multiplex-pattern}
and~\ref{lemma:if-multiplex-pattern-then-pattern} from
Subsection~\ref{subsubsection:point-placement-details}.

\begin{lemma}
	\label{lemma:if-pattern-then-row-sep-pattern}
	Consider a tiling $\TT$ with row $r$ and set of cells $S$ in row $r$ to which
	row separation applies. Let $h,g \in \Grid(\TT)$ with $h \leq g$.
	Then $\Gamma(h) \leq \Gamma(g)$.
\end{lemma}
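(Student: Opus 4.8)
The plan is to exploit two facts: first, that the map $\Gamma = \Gamma_{r,S}$ leaves the underlying permutation of a gridded permutation completely untouched and merely relabels each cell via the fixed cell map $\gamma = \gamma_{r,S}$; and second, that every gridded permutation lying in $\Grid(\TT)$ avoids all of the critical row patterns, since the set $C$ of critical row patterns is contained in $\OO$, so that Lemma~\ref{lemma:row-col-sep-Gamma-g-valid} guarantees $\Gamma(g)$ and $\Gamma(h)$ are genuine gridded permutations and it makes sense to speak of containment between them.

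First I would fix notation. Write $g = (\pi, (c_1, \ldots, c_n))$ and $h = (\sigma, (d_1, \ldots, d_k))$, and let $i_1 < \cdots < i_k$ be an occurrence of $h$ in $g$, so that $\pi(i_1) \cdots \pi(i_k)$ standardizes to $\sigma$ and $c_{i_j} = d_j$ for each $j$ with $1 \leq j \leq k$. By the definition of $\Gamma$ we have $\Gamma(g) = (\pi, (\gamma(c_1), \ldots, \gamma(c_n)))$ and $\Gamma(h) = (\sigma, (\gamma(d_1), \ldots, \gamma(d_k)))$. Since $g$ and $h$ both lie in $\Grid(\TT)$, they avoid all obstructions in $\OO$, and in particular all patterns in $C \subseteq \OO$; hence by Lemma~\ref{lemma:row-col-sep-Gamma-g-valid} both $\Gamma(g)$ and $\Gamma(h)$ are valid gridded permutations in $\GG^{(t,u+1)}$.

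Then I would check that the very same index set $i_1 < \cdots < i_k$ witnesses an occurrence of $\Gamma(h)$ inside $\Gamma(g)$. The underlying subsequence of $\Gamma(g)$ at these indices is still $\pi(i_1) \cdots \pi(i_k)$, which standardizes to $\sigma$, the underlying permutation of $\Gamma(h)$. The cell condition is also satisfied: the cell of $\Gamma(g)$ at index $i_j$ is $\gamma(c_{i_j})$, and the $j$th cell of $\Gamma(h)$ is $\gamma(d_j)$, and these agree because $c_{i_j} = d_j$ implies $\gamma(c_{i_j}) = \gamma(d_j)$. Therefore $\Gamma(h) \leq \Gamma(g)$.

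I do not expect any real obstacle here: the only subtlety is the one just handled, namely ensuring $\Gamma(g)$ and $\Gamma(h)$ are well-defined gridded permutations before asserting a containment relation between them, which is exactly what the hypothesis $g, h \in \Grid(\TT)$ secures through the preceding lemma. This result is the row-separation analogue of Lemma~\ref{lemma:if-pattern-then-multiplex-pattern}, and the companion converse statement (that $\Gamma(h) \leq \Gamma(g)$ forces $h \leq g$) should follow by running the same bookkeeping in reverse, using that $\gamma$ is a bijection so that the cell equalities $\gamma(c_{i_j}) = \gamma(d_j)$ can be pulled back to $c_{i_j} = d_j$.
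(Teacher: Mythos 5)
Your proposal is correct and follows essentially the same route as the paper: both arguments observe that $\Gamma$ fixes the underlying permutation and applies the cell map $\gamma$ entrywise, so the very same index set witnessing $h \leq g$ witnesses $\Gamma(h) \leq \Gamma(g)$, with well-definedness of $\Gamma(g)$ and $\Gamma(h)$ supplied by Lemma~\ref{lemma:row-col-sep-Gamma-g-valid} since both lie in $\Grid(\TT)$ and hence avoid the critical row patterns. No gaps to report.
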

\begin{proof}
	Let $h$ and $g$ be as in the hypotheses and note that $\Gamma(h)$ and
	$\Gamma(g)$ are valid gridded permutations by
	Lemma~\ref{lemma:row-col-sep-Gamma-g-valid}. Suppose that $g$ contains
	$h$ at the indices $i_1, \ldots, i_k$, so we can write
	\begin{align*}
		g &= (\pi, (c_1, \ldots, c_n)) \text{ and }\\
		h &= (\sigma, (c_{i_1}, \ldots, c_{i_k}))
	\end{align*}
	where $\pi$ contains $\sigma$ at the same indices. Then,
	\begin{align*}
		\Gamma(g) &= (\pi, (\gamma(c_1), \ldots, \gamma(c_n))) \text{ and }\\
		\Gamma(h) &= (\sigma, (\gamma(c_{i_1}), \ldots, \gamma(c_{i_k}))),
	\end{align*}
	confirming that $\Gamma(g)$ contains $\Gamma(h)$ at the same indices.
\end{proof}

\begin{lemma}
	\label{lemma:if-row-sep-pattern-then-pattern}
	Consider a tiling $\TT$ with row $r$ and set of cells $S$ to which row
	separation applies. Let $h,g \in \Grid(\TT)$ and suppose
	$\Gamma(h) \leq \Gamma(g)$. Then, $h \leq g$.
\end{lemma}
\begin{proof}
	Let $h$ and $g$ be as in the hypotheses and assume that
	$\Gamma(h) \leq \Gamma(g)$ at the indices $i_1, \ldots, i_k$. Writing
	\begin{align*}
		g &= (\pi, (c_1, \ldots, c_n)) \text{ and }\\
		h &= (\sigma, (d_1, \ldots, d_k)),
	\end{align*}
	we have
	\begin{align*}
		\Gamma(g) &= (\pi, (\gamma(c_1), \ldots, \gamma(c_n))) \text{ and }\\
		\Gamma(h) &= (\sigma, (\gamma(d_1), \ldots, \gamma(d_k))).
	\end{align*}
	As $\Gamma(g)$ contains $\Gamma(h)$ at the indices $i_1, \ldots, i_k$, we
	know that $\pi$ contains $\sigma$ at the same indices and that
	$\gamma(c_{i_\ell}) = \gamma(d_\ell)$ for $1 \leq \ell \leq k$. Since
	$\gamma$ is a bijection, we have $c_{i_\ell} = d_\ell$ for each $\ell$,
	and therefore $g$ contains $h$ also at the indices $i_1, \ldots, i_k$.
\end{proof}

With these lemmas in hand, we are ready to prove that row separation is an
equivalence strategy.
\begin{theorem}
	Row separation (and correspondingly, column separation) are equivalence
	strategies.
\end{theorem}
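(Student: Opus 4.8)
The plan is to prove that row separation is an equivalence strategy by exhibiting a size-preserving bijection between $\Grid(\TT)$ and $\Grid(\TT')$, where $\TT'$ is the output tiling; since an equivalence strategy is only required to identify that the input and output sets are equinumerous, a size-preserving bijection suffices. The bijection will be (the restriction of) the map $\Gamma = \Gamma_{r,S}$ already constructed above, and all of the machinery needed --- bijectivity of the cell map $\gamma$, the validity Lemma~\ref{lemma:row-col-sep-Gamma-g-valid}, and the two pattern-preservation Lemmas~\ref{lemma:if-pattern-then-row-sep-pattern} and~\ref{lemma:if-row-sep-pattern-then-pattern} --- is already in place. The one preliminary observation is that every $g \in \Grid(\TT)$ avoids all of the critical row patterns $C$, since $C \subseteq \OO$; hence $\Gamma(g)$ is a genuine gridded permutation in $\GG^{(t,u+1)}$ by Lemma~\ref{lemma:row-col-sep-Gamma-g-valid}, and it has the same underlying permutation as $g$, so $\Gamma$ restricted to $\Grid(\TT)$ is automatically size-preserving.

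First I would check that $\Gamma$ maps $\Grid(\TT)$ into $\Grid(\TT')$, by inspecting the obstructions and requirements of $\TT'$ in turn. The obstructions of the form $\Gamma(o)$ with $o \in \OO \smallsetminus C$ are handled using Lemma~\ref{lemma:if-row-sep-pattern-then-pattern}: the assumed pairwise incomparability of $\OO$ forces such an $o$ to avoid every critical row pattern, so $\Gamma(o)$ is valid, and if $\Gamma(g)$ contained $\Gamma(o)$ then $o \leq g$, contradicting $g \in \Grid(\TT)$; together with the size-$1$ obstructions $\OO'$, which are never met because the cells they forbid are exactly the cells lying outside the range of $\gamma$, this shows $\Gamma(g)$ avoids all obstructions of $\TT'$. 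For the requirements, given a requirement list $\RR_i$ of $\TT$, pick $\rho \in \RR_i$ with $\rho \leq g$; then $\rho$ avoids the critical row patterns (being below $g$), so Lemma~\ref{lemma:if-pattern-then-row-sep-pattern} gives $\Gamma(\rho) \leq \Gamma(g)$, i.e., $\Gamma(g)$ contains an element of the requirement list $\Gamma(\RR_i)$ of $\TT'$. Hence $\Gamma(g) \in \Grid(\TT')$. In both uses I would note that these lemmas only need the gridded permutations in question to avoid the critical row patterns, even though they are stated for elements of $\Grid(\TT)$.

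Next I would show $\Gamma$ is a bijection onto $\Grid(\TT')$. Injectivity is immediate from the injectivity of $\gamma$: if $\Gamma(g) = \Gamma(h)$ then their underlying permutations coincide and their cell tuples coincide after applying $\gamma$, so $g = h$. For surjectivity, take $g' \in \Grid(\TT')$; because $g'$ avoids $\OO'$, all of its occupied cells lie in the range of $\gamma$, so applying $\gamma^{-1}$ cellwise produces a placement $g$ on $\GG^{(t,u)}$. I would then verify $g$ is a valid gridded permutation: $\gamma^{-1}$ fixes every column and merges precisely rows $r$ and $r+1$ of $g'$ into row $r$ of $g$, and because a single row imposes no constraint on the relative values of its entries, no inconsistency can arise. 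Moreover, since in $g'$ the entries in the lower row $r$ have strictly smaller values than those in the upper row $r+1$, the $S$/$S'$ split forces $g$ to avoid all of the critical row patterns automatically. With $g$ in hand one checks $\Gamma(g) = g'$ by construction, and reruns the two pattern-preservation lemmas exactly as in the previous paragraph to conclude that $g$ avoids every obstruction of $\TT$ and contains every requirement list of $\TT$, so $g \in \Grid(\TT)$. This exhibits $\Gamma$ as a size-preserving bijection $\Grid(\TT) \to \Grid(\TT')$, and the same argument \emph{mutatis mutandis} handles column separation.

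The step I expect to be the main obstacle is the surjectivity bookkeeping: carefully tracking, for $g' \in \Grid(\TT')$, which of its entries sit in the two split rows and in which columns, and from this showing that $\gamma^{-1}$ applied cellwise always yields a valid gridded permutation that moreover avoids the critical row patterns --- this is precisely what makes $\gamma^{-1}$ a genuine two-sided inverse to $\Gamma$. A smaller, more routine point is confirming that the pattern-preservation lemmas, although phrased for members of $\Grid(\TT)$, apply verbatim to obstructions (which avoid the critical patterns by incomparability) and to requirements or sub-patterns of grid elements (which avoid them trivially), so that their images under $\Gamma$ are legitimate gridded permutations.
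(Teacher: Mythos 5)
Your proposal is correct and follows essentially the same route as the paper's proof: it exhibits $\Gamma$ as a size-preserving bijection, using Lemma~\ref{lemma:row-col-sep-Gamma-g-valid} for well-definedness, Lemmas~\ref{lemma:if-pattern-then-row-sep-pattern} and~\ref{lemma:if-row-sep-pattern-then-pattern} for membership in $\Grid(\TT')$, injectivity of $\gamma$ for injectivity, and the size-$1$ obstructions $\OO'$ to invert via $\gamma^{-1}$ for surjectivity. If anything, you are more thorough than the paper on the surjectivity step, where the paper simply defines the preimage via $\gamma^{-1}$ and leaves implicit the check that it avoids the critical row patterns and lies in $\Grid(\TT)$.
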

\begin{proof}
	In order to establish the theorem, we will show that $\Gamma : \Grid(\TT)
	\to \Grid(\TT')$, with $\Gamma$ as defined earlier, is a size-preserving
	bijection. Let us note right away that $\Gamma$ does not change the
	underlying pattern of a gridded permutation, it simply affects the cells
	that the pattern lies in; this makes clear that $\Gamma$ is size-preserving.

	We first consider whether $\Gamma$ is validly defined. Let $g \in
	\Grid(\TT)$ and define $g' = \Gamma(g)$. Since $g$ avoids the critical row
	patterns Lemma~\ref{lemma:row-col-sep-Gamma-g-valid} ensures that $g'$ is a
	validly defined gridded permutation.

	Next we will demonstrate that $g' \in \Grid(\TT')$. The obstructions of
	$\TT'$ are split into those of the form $\Gamma(\sigma)$ where $\sigma$ is
	an obstruction of $\TT$ that is not a critical row pattern, and the size 1
	obstructions defined as $\OO'$. The definition of $\Gamma$ ensures that $g'$
	will not contain any points in the cells that $\OO'$ requires be empty, and
	the contrapositive of Lemma~\ref{lemma:if-row-sep-pattern-then-pattern} shows
	that for any non-critical row pattern $\sigma$,
	since $g$ avoids $\sigma$ we must have that $g'$ avoids $\Gamma(\sigma)$.
	Similarly, Lemma~\ref{lemma:if-pattern-then-row-sep-pattern} shows that for
	each requirement list $\RR_i$, since $g$ contains $\RR_i$ we must have that
	$g'$ contains $\Gamma(\RR_i)$. Thus, $h \in \Grid(\TT')$.

	For injectivity, let $g_1 = (\pi_1, (c_1, \ldots, c_n))$ and $g_2 = (\pi_2,
	(d_1, \ldots, d_n))$. By the definition of $\Gamma$,
	\begin{align*}
		\Gamma(g_1) &= (\pi_1, (\gamma(c_1), \ldots, \gamma(c_n))),\text{ and}\\
		\Gamma(g_2) &= (\pi_2, (\gamma(d_1), \ldots, \gamma(d_n))).
	\end{align*}
	Suppose that $\Gamma(g_1) = \Gamma(g_2)$. Then, since $\Gamma$ does not
	affect the underlying pattern, $\pi_1 = \pi_2$, and we will use $\pi$ to
	denote this permutation. Moreover, $\gamma(c_i)
	= \gamma(d_i)$ for all $i$, and so by the injectivity of $\gamma$ it follows
	that $c_i = d_i$ for all $i$. Thus $g_1 = g_2$, verifying that $\Gamma$ is
	injective.

	To see that $\Gamma$ is surjective, let $g' = (\pi, (d_1, \ldots, d_n)) \in
	\Grid(\TT')$. Every cell $d_i$ is in the range of $\gamma$ because the cells
	in $[t] \times [u+1]$ that are not in the range of $\gamma$ contain size 1
	obstructions implying that no gridded permutation can have entries in those
	cells. Therefore, we may define $g = (\pi, (\gamma^{-1}(d_1), \ldots,
	\gamma^{-1}(d_n)))$ from which it is clear that $\Gamma(g) = g'$.
	\end{proof}

\subsubsection{Factor}
\label{subsubsection:factorization-details}

In this subsection, we must demonstrate the productivity of the Factor strategy
\(\Factor_{P,S}\). As discussed in an example on
page~\pageref{pageref:potential-productivity-problem}, the reliance profile
function for the Factor strategy was carefully defined in terms of the set \(S\)
to ensure productivity. For ease of exposition, we prove productivity in the
case of a partition \(P\) into two parts. Productivity for larger partitions
follows inductively.

\begin{theorem}
\label{theorm:factor-is-productive}
	The Factor strategy is productive.
\end{theorem}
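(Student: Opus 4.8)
The plan is to verify directly the two conditions in the definition of a productive strategy (Definition~\ref{definition:productive-strategy}), treating a two-part partition $P$ in full detail and then reducing the general case to it by induction on the number of parts, as the theorem statement indicates. Write $\AA = \Grid(\TT)$ for the parent set and $\BB^{(1)}, \BB^{(2)}$ for the two child sets. Because the parts of $P$ are non-interacting --- they share no row, no column, and no obstruction or requirement list --- a gridded permutation on $\TT$ is exactly a choice of a gridded permutation on $\BB^{(1)}$ together with one on $\BB^{(2)}$, with no interleaving freedom, which is the block-sum identity $|\AA_n| = \sum_{i+j = n} |\BB^{(1)}_i|\,|\BB^{(2)}_j|$ established when the strategy was defined. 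As a preliminary step I would record that the counting function of $\Factor_{P,S}$ really does compute this quantity: the only terms it omits are those with $i_\ell = n$ for some $\ell \in S$, and each such term vanishes, since $\ell \in S$ forces the complementary index to equal $0$ while $|\BB^{(j)}_0| = 0$ for the witnessing $j \neq \ell$.

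Condition~1 is immediate, since $r^{(i)}(n) \in \{n-1, n\}$ for all $i$ and $n$, so $\AA_N$ relying on $\BB^{(i)}_j$ forces $j \leq r^{(i)}(N) \leq N$. The role of the carefully chosen set $S$ is that it confines the verification of Condition~2 to the indices $i \notin S$: if $\AA_N$ relies on $\BB^{(i)}_N$ then $N \leq r^{(i)}(N)$, which can only happen when $r^{(i)}(N) = N$, i.e. when $i \notin S$. So suppose $i \notin S$, say $i = 1$; by definition of $S$ this says $|\BB^{(2)}_0| \neq 0$, and since there is a unique gridded permutation of size $0$, in fact $|\BB^{(2)}_0| = 1$.

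Now Condition~2(a) follows by keeping only the $j = 0$ term of the block sum: $|\AA_n| \geq |\BB^{(1)}_n|\,|\BB^{(2)}_0| = |\BB^{(1)}_n|$ for every $n$. For Condition~2(b) I would use the standing hypothesis that each of $\BB^{(1)}, \BB^{(2)}$ contains a gridded permutation of size at least $1$: let $p \geq 0$ be minimal with $|\BB^{(1)}_p| \geq 1$ and $q \geq 1$ minimal with $|\BB^{(2)}_q| \geq 1$, and put $\ell = p + q$. The index pairs $(\ell, 0)$ and $(p, q)$ are distinct (as $q \geq 1$) and both occur in the sum for $|\AA_\ell|$, so $|\AA_\ell| \geq |\BB^{(1)}_\ell|\,|\BB^{(2)}_0| + |\BB^{(1)}_p|\,|\BB^{(2)}_q| \geq |\BB^{(1)}_\ell| + 1 > |\BB^{(1)}_\ell|$. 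The case $i = 2 \notin S$ is identical after swapping the roles of the two factors, which completes the two-part case.

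For a partition into $m > 2$ parts, I would peel off the part containing the lexicographically smallest cell: the remaining cells form a factorable sub-tiling $\CC$ whose associated set is the block product of the other $m-1$ child sets, and $|\AA_n| = \sum_{i+j=n} |\BB^{(1)}_i|\,|\CC_j|$, so the two-part estimates apply at each stage of the iteration. The step I expect to be the main obstacle is the bookkeeping needed to see that the reliance-profile function and the set $S$ of the $m$-ary strategy are consistent with this iterated two-part decomposition --- in particular, that a child index lies in the global $S$ exactly when it would be ``dropped'' at some stage of the peeling --- so that the induction genuinely yields productivity of the original $m$-ary strategy. If this matching turns out to be awkward, an alternative I would keep in reserve is to rerun the argument of the previous two paragraphs verbatim on the $m$-fold block sum, isolating the multi-index supported only on slot $i$ for part~(a) and a second multi-index supported on slots $i$ and one $j_0 \neq i$ for part~(b); this sidesteps the induction entirely at the cost of departing from the stated plan.
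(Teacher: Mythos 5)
Your proposal is correct and follows essentially the same route as the paper's proof: verify Condition 1 directly from the reliance profile, note that Condition 2 only needs checking for indices $i \notin S$ (the paper does this via a case split on $S$ rather than your uniform reduction), get 2(a) from the $j=0$ term of the convolution using $|\BB^{(j)}_0| \geq 1$ for $j \neq i$, get 2(b) by exhibiting a second nonzero term at size $\ell$ built from minimal positive sizes guaranteed by the hypothesis that each factor contains an object of size at least $1$, and handle $m > 2$ parts by induction, exactly as the paper does (it likewise only treats the two-part case in detail).
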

\begin{proof}
	Suppose \(d_{\Factor_{P,S}}(\AA) = (\BB^{(1)}, \BB^{(2)})\). Recall that $S
	\subseteq \{1, 2\}$, and if $i \in S$ then $\AA_n$ does not rely on
	$\BB^{(i)}_n$. We consider the cases $S = \emptyset$, $S = \{1\}$, and $S =
	\{1,2\}$, while the fourth case $S = \{2\}$ is symmetric to the case $S =
	\{1\}$.

	To prove productivity, we have to check Conditions 1 and 2 in
	Definition~\ref{definition:productive-strategy} on
	page~\pageref{definition:productive-strategy}. Condition 1 requires that
	$\AA_n$ can never rely on $\BB^{(1)}$ or $\BB^{(2)}$ at some size longer
	than $n$, and that is true in all cases by the definition of the reliance
	profile function for the Factor strategy. Condition 2 requires that for
	each $i \in \{1, 2\}$, if $\AA_N$ relies on $\BB^{(i)}_N$ for some
	$N \in \N$, then:
	\begin{enumerate}[(a)]
		\item $|\AA_n| \geq |\BB_n^{(i)}|$ for all $n \in \N$, and
		\item $|\AA_\ell| > |\BB_\ell^{(i)}|$ for some $\ell \in \N$.
	\end{enumerate}

	\textbf{Case 1.} Suppose $S = \{1, 2\}$. In this case, $\AA_n$ relies on
	neither $\BB^{(1)}_n$ nor $\BB^{(2)}_n$, and so Condition 2 trivially
	holds.

	\textbf{Case 2.} Suppose $S = \{1\}$. This means that $|\BB^{(2)}_0| = 0$
	and that $\AA_n$ does not rely on $\BB^{(1)}_n$. We therefore only need to
	verify Condition 2 for $\BB^{(2)}$. First note that for all $n \in
	\N$,
	\[
		|\AA_n| = |\BB^{(1)}_0||\BB^{(2)}_n| + |\BB^{(1)}_1||\BB^{(2)}_{n-1}| + \cdots + |\BB^{(1)}_{n-1}||\BB^{(2)}_1|
	\]
	and that all terms on the right-hand side are nonnegative integers. In
	particular, $|\BB^{(1)}_0| \geq 1$ because $2 \not\in S$. Thus $|\AA_n| \geq
	|\BB^{(2)}_n|$ for all $n$, verifying Condition 2(a). To check Condition
	2(b), let $m_1 > 0$ be minimal such that $|\BB^{(1)}_{m_1}| > 0$ and let
	$m_2 > 0$ be minimal such that $|\BB^{(2)}_{m_2}| > 0$; the existence of
	$m_1$ and $m_2$ are guaranteed by the requirement in the definition of the
	Factor strategy that $\BB^{(1)}$ and $\BB^{(2)}$ each contain at least one
	object of size at least $1$. Now, we have
	\[
		|\AA_{m_1+m_2}| \geq |\BB^{(1)}_0||\BB^{(2)}_{m_1+m_2}| + |\BB^{(1)}_{m_1}||\BB^{(2)}_{m_2}|,
	\]
	and because $|\BB^{(1)}_0|$, $|\BB^{(1)}_{m_1}|$ and $|\BB^{(2)}_{m_2}|$ are
	all at least $1$, we have
	\[
		|\AA_{m_1+m_2}| > |\BB^{(2)}_{m_1+m_2}|,
	\]
	verifying Condition 2(b).

	\textbf{Case 3.} Suppose $S = \emptyset$. In this case, we are guaranteed
	that both $|\BB^{(1)}_0| \geq 1$ and $|\BB^{(2)}_0| \geq 1$. Condition 2
	must now be checked for both $\BB^{(1)}$ and $\BB^{(2)}$. We omit the
	details, as they are substantially  similar to Case 2.
\end{proof}

\subsubsection{Obstruction Inferral}
As was the case with Obstruction and Requirement Simplification in
Subsection~\ref{subsubsection:obs-req-simp-details}, there are no details that need
to be checked. By definition, the Obstruction Inferral strategy only
applies in cases where the underlying set of gridded permutations is not
modified at all, guaranteeing by definition that it is an equivalence strategy.

\subsection{Recap}
This concludes our discussion applying Combinatorial Exploration to the domain of
permutation patterns. In this work, we described only six (families of) strategies
that together can successfully find combinatorial specifications for many
permutation classes. However, we have developed a handful of additional, more
complicated strategies that we will describe in future work, and which permit
the discovery of combinatorial specifications for many more classes, including
many listed in Subsection~\ref{subsection:pp-success}. To give just a taste here,
among these additional strategies, a few examples are
\begin{itemize}[label=$\diamond$]
	\item \emph{row placement}, which is a disjoint-union-type strategy that
			splits a tiling into multiple children depending on whether a given
			row is empty, or if nonempty, which cell in the row has the smallest
			(or largest) entry; similarly, \emph{column placement};
	\item \emph{fusion}, which detects when two rows or columns can be merged
			together	 into one row or column.
			This is informally like deleting the line that separates
			two rows, and this is the strategy that introduces the catalytic
			variables discussed in Subsection~\ref{subsection:pp-success};
	\item several novel verification strategies that detect when the enumeration
			of a particular tiling can be computed independently, often by
			running Combinatorial Exploration again in a subprocess on this
			particular tiling.
\end{itemize}

%% ==== %% ==== %% ==== %% ==== %% ==== %% ==== %% ==== %% ==== %% ==== %% ==== %%
%% ==== %% ==== %% ==== %% ====   SECTION SEVEN    ==== %% ==== %% ==== %% ==== %%
%% ==== %% ==== %% ==== %% ==== %% ==== %% ==== %% ==== %% ==== %% ==== %% ==== %%

\section{Applying Combinatorial Exploration to Alternating Sign Matrices}
\label{section:asm-results}
%!TEX root = combinatorial-exploration.tex

% Get refs from here when I'm at RU https://en.wikipedia.org/wiki/Alternating_sign_matrix

\subsection{Alternating Sign Matrices}

An \emph{alternating sign matrix} (ASM) is a matrix in which every entry is $0$, $1$,
or $-1$, the sum of every row and every column is $1$, and the nonzero entries in each
row and in each column alternate between $1$ and $-1$. These conditions imply that
every alternating sign matrix is a square matrix.

Permutations can be thought of as matrices whose entries are either $0$ or $1$ 
and in which every row and every column contains a single $1$, and thus ASMs are a
generalization of permutations.

The enumeration of ASMs was first completed by Zeilberger~\cite{zeilberger:asm} who
proved that the number of $n \times n$ ASMs is
\[
	\prod_{k = 0}^{n-1} \frac{(3k + 1)!}{(n + k)!}.
\]
Kuperberg~\cite{kuperberg:asm} and Fischer~\cite{fischer:asm} later gave
shorter proofs.

The containment relation of permutations extends naturally to ASMs and pattern
avoidance has been studied on these objects.
Johansson and Linusson~\cite{Johansson:2008gr} defined what it means for an 
ASM to avoid a permutation as a pattern (which we will repeat below) and
computed the enumeration of $132$-avoiding ASMs. In this brief section, we will
show how Combinatorial Exploration can be applied to ASMs and describe a handful
of strategies that would be sufficient to recover the enumeration of the
$132$-avoiding ASMs.

\begin{definition}
	To any permutation $\pi$ of size $n$ we associate the $n \times n$ matrix
	with a $1$ in each entry $(i, \pi(i))$ and a $0$ elsewhere.
\end{definition}

We choose to index our matrices with Cartesian coordinates to match the standard
way of depicting permutations. For example, the permutation matrix of $132$
is $\scriptstyle\left[\begin{matrix}0&1&0\\0&0&1\\1&0&0\end{matrix}\right]$.

\begin{definition}
	\label{definition:asm-contains-perm}
	An ASM $M$ is said to \emph{contain} a permutation $\pi$ if it is possible to
	form the permutation matrix of $\pi$ from $M$ by deleting rows, deleting columns,
	and changing some $-1$ and $1$ entries into $0$ entries. Otherwise $M$
	is said to \emph{avoid} $\pi$.
\end{definition}
This definition will be generalized below after further definitions have been
made.

\subsection{Gridded and Partial ASMs}

In order to perform Combinatorial Exploration, we need to decide which
combinatorial sets to work with and how they will be represented (the analogue
of tilings), and then define strategies that decompose these sets. Like in the
case of permutations, we will actually work with a gridded version of the objects
at hand\footnote{This is not a requirement to use Combinatorial Exploration,
but we have found it effective. Our penchant for geometrizing objects
may be a result of our own extensive experience working with gridded
permutations.}. Because this section, and the several that follow, are meant to
serve just as ``proof-of-concepts'' for Combinatorial Exploration, we will not
attempt to make the definitions of the objects and strategies involved
completely formal.

\begin{definition}
	A \emph{gridded ASM} is an ASM in which the columns and rows have been
	partitioned into contiguous, possibly-empty parts. Less formally, a
	gridded ASM can be formed by taking an ASM and adding any number of vertical
	lines either between columns or to the extreme left or extreme right, 
	then doing the same with horizontal lines to rows. Like with gridded permutations,
	we will refer to the rectangular regions as \emph{cells}.
\end{definition}

For example,
$\left[\scriptstyle\begin{array}{r|r|rr|r|r}
	&&0 & 0 & 1 & 0\\\hline
	&&0 & 1 & 0 & 0\\
	&&1 & 0 & -1 & 1\\
	&&0 & 0 & 1 & 0\\\hline
	&&&&&
\end{array}\right]$
is a gridded ASM whose underlying ASM is
$\left[\scriptstyle\begin{array}{rrrr}
	0 & 0 & 1 & 0\\
	0 & 1 & 0 & 0\\
	1 & 0 & -1 & 1\\
	0 & 0 & 1 & 0
\end{array}\right]$. Cell $(0,2)$ of the gridded ASM contains no entries, and
cell $(2,1)$ contains six entries.

Because we will be applying strategies that pull ASMs apart into smaller
matrices (similar to the ``factor'' strategy for permutations), we also need to
define a generalization of an ASM that does not enforce the row and column sum
conditions.

\begin{definition}
	A \emph{partial ASM} is a (not necessarily square) matrix whose entries are
	all $0$, $1$, and $-1$ with the property that the nonzero entries in each
	row and column alternate between $-1$ and $1$. It is not required that
	the entries in each row and column sum to $1$, and thus it is permitted
	for the leftmost or rightmost nonzero entry in a row to be $-1$, and
	similarly for columns. \emph{Gridded partial ASMs} are defined in the
	analogous way.
\end{definition}

We can now extend Definition~\ref{definition:asm-contains-perm} by defining when
one partial ASM contains another.

\begin{definition}
	A partial ASM $M_1$ is said to \emph{contain} a partial ASM $M_2$ if it is possible to
	form $M_2$ from $M_1$ by deleting rows, deleting columns,
	and changing some $-1$ and $1$ entries into $0$ entries. Otherwise,
	$M_1$ is said to \emph{avoid} $M_2$.
\end{definition}
This containment relation extends in the natural way to containment of gridded
partial ASMs just as in the case of gridded permutations.

\subsection{ASM-Tilings}

Having now defined the combinatorial objects involved, we will define an analogue
of tilings that represent sets of gridded (partial) ASMs and can be easily
manipulated with combinatorial strategies. We will keep our definition rather
informal, prioritizing understanding over formality because this section is
only meant to serve as a proof-of-concept.

\begin{definition}
	An \emph{ASM-tiling} $\TT$ is a structure that represents a set of gridded
	partial ASMs. It is defined by four components. The first three are identical
	to the three components that define a gridded permutation tiling: dimensions
	$(t, u)$, a set $\OO$ of gridded partial ASMs called \emph{obstructions}, and
	a set $\RR = \{\RR_1, \ldots, \RR_k\}$ of sets of gridded partial ASMs called
	\emph{requirements}. The gridded partial ASMs that $\TT$ represents, 
	$\Grid(\TT)$, must each avoid all of the gridded partial ASMs in $\OO$ and
	must each contain at least one gridded partial ASM in each $\RR_i$.
	However, the fourth component in the definition adds a further condition
	that must be met by each element of $\Grid(\TT)$.
	
	The fourth component is a marking of each portion of the border of the
	ASM-tiling as either ``partial'' or ``complete''. For example,
	if the part of the border along the top of an ASM-tiling above column
	$c$ is marked ``partial'' (pictorially, we will draw that part with an
	oscillating line), this represents that gridded partial ASMs $M$ in
	$\Grid(\TT)$ may, in any columns of $M$ that correspond to column $c$
	in $\TT$, have a topmost nonzero entry equal to $-1$ (which is
	permitted in partial ASMs, but not standard (complete) ASMs).
	Similarly, if the part of the border along the bottom of column $c$
	is marked ``partial'', then columns of $M$ that correspond to
	column $c$ in $\TT$ may have a bottommost nonzero entry $-1$.
	Similar conditions apply to rows.
\end{definition}

Consider the example ASM-tiling $\TT$ shown on the left in
Figure~\ref{figure:asm-tiling-perm-example}. The dimensions of $\TT$ are
$(t,u) = (3,2)$, and there is a single requirement $r$ that is the gridded
ASM whose matrix consists of a single entry with the
value $1$ in cell $(1,0)$. There are five obstructions that consist of
two $0$ entries. Avoiding two $0$ entries vertically in a cell forces any
gridded partial ASM drawn on $\TT$ to have at most a single row of entries
in that cell (and thus in the entire row of that cell in $\TT$).
Similarly, avoiding two $0$ entries horizontally forces at most a single
column of entries. As a result, the requirement $r$ together with the
five obstructions consisting of two zeros essentially isolate the ``1''
entry much like point placement does on gridded permutation tilings.

\begin{figure}
	\centering
	\begin{tikzpicture}[baseline=(current bounding box.center)]
		\node (asmex) at (2, -2) {
			\asmtiling{1.0}{3}{2}{}%
			{% obstructions
			    {1/{(0.8, 0.52)/1}},%
			    {1/{(0.3, 0.5)/{-1}}},%
				{2/{(0.25, 1.8)/{-1}, (0.85, 1.6)/1}},%
				{2/{(0.5, 1.15)/{-1}, (2.45, 1.85)/{-1}}},%
				{3/{(2.15, 1.15)/1, (2.5, 1.4)/1, (2.85, 1.65)/1}},%
                % Vertical 00
				{2/{(0.6, 0.25)/0, (0.6, 0.75)/0}},%
				{2/{(1.85, 0.25)/0, (1.85, 0.75)/0}},%
				{2/{(2.5, 0.25)/0, (2.5, 0.75)/0}},%
				% Horizontal 00
				{2/{(1.15, 0.87)/0, (1.65, 0.87)/0}},%
				{2/{(1.25, 1.85)/0, (1.75, 1.85)/0}}%
			}
			{{1/{(1.4, 0.4)/1}}}
			{0/0, 0/2}%
			{0/1}
			{} %actual rows: 0/, actual columns: 1/
		};
		\node (asmex2) at (6.5, -2) {
			\asmtiling{1.0}{3}{2}{1/0/1}%
			{% obstructions
			    {1/{(0.5, 0.5)/2}},%
				{2/{(0.25, 1.8)/{-1}, (0.85, 1.6)/1}},%
				{2/{(0.5, 1.15)/{-1}, (2.45, 1.85)/{-1}}},%
				{3/{(2.15, 1.15)/1, (2.5, 1.4)/1, (2.85, 1.65)/1}}%
			}
			{}
            {0/0, 0/2}%
			{0/1}
			{0/0, 1/1} %actual rows: 0/, actual columns: 1/
		};
	\end{tikzpicture}
	\qquad
	$\left[\scriptstyle\begin{array}{rrr|r|rrrr}
	1 &  0 & -1 &  0 & 0 &  1 & 0 & 0 \\
	0 & -1 &  0 &  1 & 0 & -1 & 1 & 0 \\
	0 &  1 &  0 & -1 & 0 &  1 & 0 & 0 \\\hline
	0 &  0 &  0 &  1 & 0 &  0 & 0 & 0
\end{array}\right]$
	\caption{An ASM-tiling $\TT$ shown with all obstructions and requirements (left)
	and the same tiling (middle) shown with the convention that actual rows and columns have an asterisk ($\ast$) next to them, cells that can only contain $0$ have a point obstruction, and placed entries
	are bold. On the right is a partial gridded ASM in $\Grid(\TT)$.}
	\label{figure:asm-tiling-perm-example}
\end{figure}
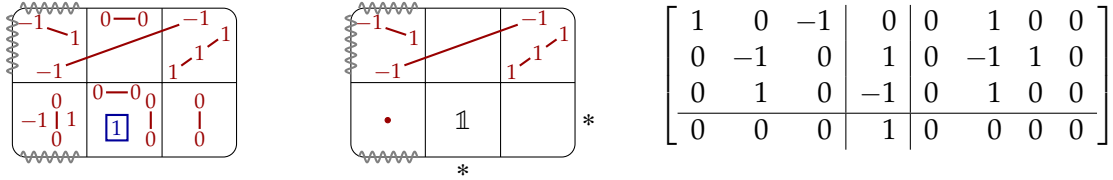

There are two size $1$ obstructions in the bottom-left cell, which forbid
any gridded partial ASM drawn on $\TT$ from containing a non-zero entry
gridded in that cell.
The other three obstructions are shown with an abbreviated notation
in which the $0$s have been omitted. For example, the size $2$ obstruction
in the top-left corner forces any gridded partial ASM drawn on $\TT$ to
avoid the pattern
$\scriptstyle\left[\begin{matrix}-1&0\\0&1\end{matrix}\right]$
in its top-left cell. The size $3$ obstruction in the top-right corner
forces any gridded partial ASM draw on $\TT$ to avoid the pattern 
$\scriptstyle\left[\begin{matrix}0&0&1\\0&1&0\\1&0&0\end{matrix}\right]$
in its top-right corner. Finally, the size $2$ obstruction that crosses
between two cells forces any gridded partial ASM drawn on $\TT$ to not
contain a $\scriptstyle\left[\begin{matrix}0&-1\\-1&0\end{matrix}\right]$
pattern in which the first column is in the top-left cell and the second
column is in the top-right cell.

Lastly, in this example, three segments of the boundary of $\TT$ have been
designated ``partial''. This implies that any columns of a gridded partial
ASM drawn on $\TT$ that are gridded into the leftmost column of $\TT$ are
permitted to have a topmost entry equal to $-1$ and a bottommost entry
equal to $-1$. Similarly, any rows that are gridded into the topmost row of
$\TT$ are permitted to have a leftmost entry equal to $-1$, but not a
rightmost entry.

In the middle of Figure~\ref{figure:asm-tiling-perm-example} the same tiling
is shown with the convention that rows of $\TT$ that can only contain a single
row of any gridded partial ASM have an asterisk ($\ast$) on the boundary, and similarly
for columns. Cells which can only contain $0$ are marked with a point obstruction
and placed entries are shown bold.
On the right is one example of
a gridded partial ASM that belongs to $\Grid(\TT)$.

\subsection{ASMs Avoiding the Pattern 132}
\label{subsection:132-ASMs}

One result of Johansson and Linusson~\cite{Johansson:2008gr} is
the enumeration of ASMs that avoid the permutation $132$ (in the sense of
Definition~\ref{definition:asm-contains-perm}). Here we show that their
result can be automatically discovered using Combinatorial Exploration
on the objects described above. We see this as justification that a
fully implemented version of Combinatorial Exploration in the ASM domain
would be likely to discover many new results, including the open
questions raised in~\cite{Johansson:2008gr}.

We are also using this opportunity to preview a significant extension
of Combinatorial Exploration that will be detailed in later work.
Figures~\ref{figure:asmf} and~\ref{figure:asmf2} depict a set of combinatorial rules that
provide sufficient structural information to compute the enumeration of
the $132$-avoiding ASMs, but the reader will immediately notice the figures
show, what appear to be, two separate proof trees, not one.

%!TEX root = combinatorial-exploration.tex

\begin{figure}
    \centering
    \begin{tikzpicture}[scale=1]

	  \input{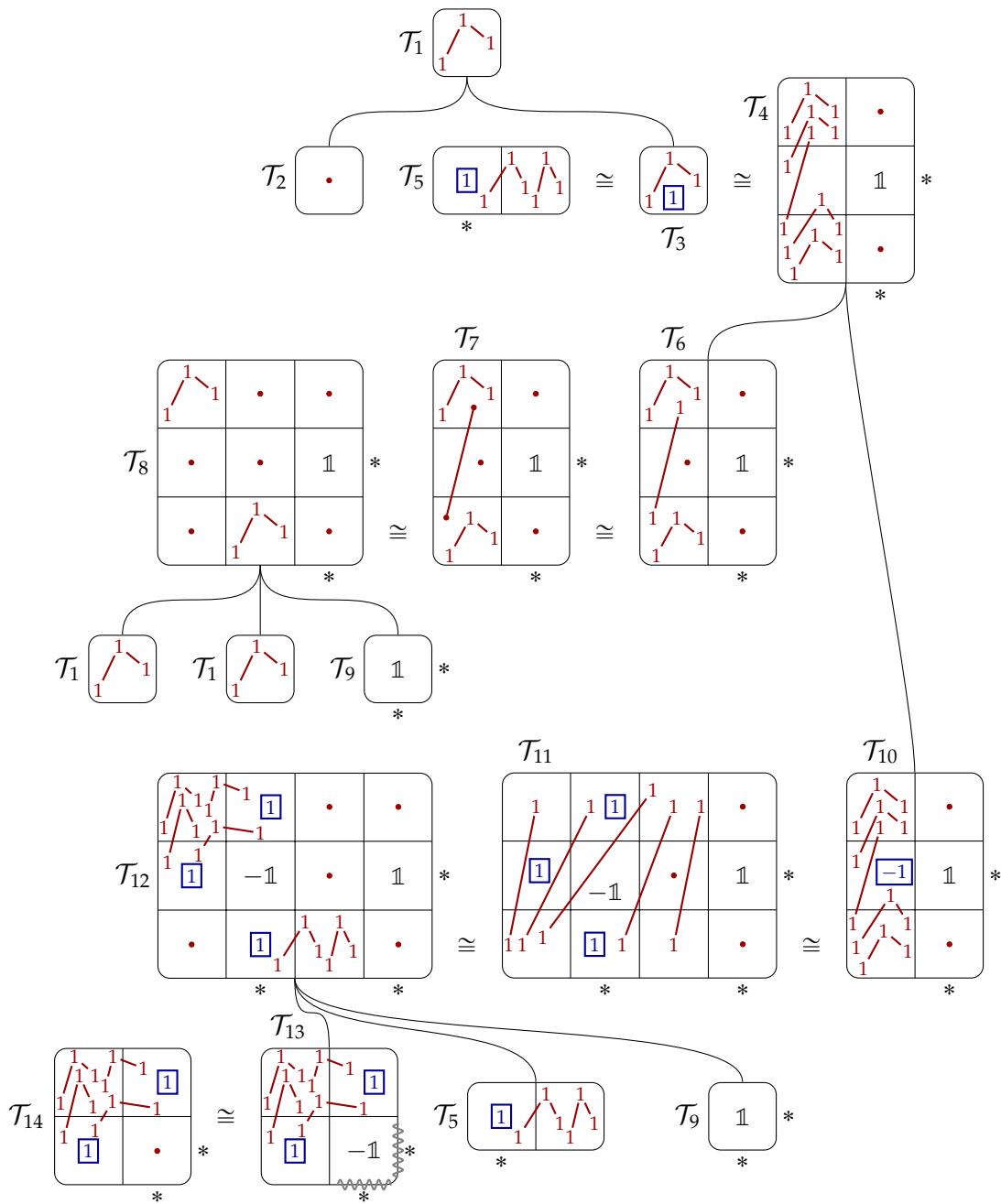}

	  \begin{scope}[shift={(-4,0.9)}]
          
          \node (t6) at (5, -5.2) {$\TT_6$};
          \node (placed1insempty) at (5.5, -7) {
      	  \asmtiling{1.0}{2}{3}{1/1/1}%
          {%
          {1/{(1.5, 0.5)/2}},%
          {1/{(1.5, 2.5)/2}},%
          {1/{(0.7, 1.5)/2}},%
          {2/{(0.2, 0.7)/1, (0.6, 2.3)/1}},%
	      {3/{(0.25, 0.15)/1, (0.55, 0.7)/1, (0.9, 0.4)/1}},%
%	      {3/{(0.15, 0.45)/1, (0.65, 1.22)/1, (0.9, 0.8)/1}},%
          {3/{(0.15, 2.2)/1, (0.45, 2.83)/1, (0.85, 2.50)/1}}%
%          {3/{(0.15, 1.7)/1, (0.5, 2.5)/1, (0.85, 2.20)/1}}%
          }
          {%
          }
          {}{} %squiggly lines
		  {0/1,1/1} %actual rows: 0/, actual columns: 1/
          };
      \node (equiv3) at (4 , -8) {$\cong$};
      \node (equiv4) at (1 , -8) {$\cong$};
      
      	  \node (t10) at (8, -11.2) {$\TT_{10}$};
          \node (placed1insnonempty) at (8.5, -13) {
      	  \asmtiling{1.0}{2}{3}{1/1/1}%
          {%
          {1/{(1.5, 0.5)/2}},%
          {1/{(1.5, 2.5)/2}},%
          {2/{(0.1, 0.8)/1, (0.5, 2.2)/1}},%
	      {3/{(0.25, 0.15)/1, (0.55, 0.7)/1, (0.9, 0.4)/1}},%
	      {3/{(0.15, 0.45)/1, (0.65, 1.22)/1, (0.9, 0.8)/1}},%
          {3/{(0.15, 2.2)/1, (0.45, 2.83)/1, (0.85, 2.50)/1}},%
          {3/{(0.15, 1.7)/1, (0.5, 2.5)/1, (0.85, 2.20)/1}}%
          }
          {%
          	{1/{(0.7, 1.55)/{-1}}}%
          }
          {}{} %squiggly lines
		  {0/1,1/1} %actual rows: 0/, actual columns: 1/
          };
          
          \node (t7) at (2, -5.2) {$\TT_7$};
          \node (placed1insemptyinf) at (2.5, -7) {
      	  \asmtiling{1.0}{2}{3}{1/1/1}%
          {%
          {1/{(1.5, 0.5)/2}},%
          {1/{(1.5, 2.5)/2}},%
          {1/{(0.7, 1.5)/2}},%
          {2/{(0.2, 0.7)/2, (0.6, 2.3)/2}},%
	      {3/{(0.25, 0.15)/1, (0.55, 0.7)/1, (0.9, 0.4)/1}},%
          {3/{(0.15, 2.2)/1, (0.45, 2.83)/1, (0.85, 2.50)/1}}%
          }
          {%
          }
          {}{} %squiggly lines
		  {0/1,1/1} %actual rows: 0/, actual columns: 1/
          };
          
          \node (t8) at (-2.8, -7) {$\TT_8$};
          \node (placed1insemptyinfsep) at (-1.0, -7) {
      	  \asmtiling{1.0}{3}{3}{2/1/1}%
          {%
          {1/{(0.5, 0.5)/2}},%
          {1/{(2.5, 0.5)/2}},%
          {1/{(1.5, 2.5)/2}},%
          {1/{(2.5, 2.5)/2}},%
          {1/{(0.5, 1.5)/2}},%
          {1/{(1.5, 1.5)/2}},%
	      {3/{(1.15, 0.2)/1, (1.45, 0.83)/1, (1.85, 0.50)/1}},%
          {3/{(0.15, 2.2)/1, (0.45, 2.83)/1, (0.85, 2.50)/1}}%
          }
          {%
          }
          {}{} %squiggly lines
		  {0/1,1/2} %actual rows: 0/, actual columns: 1/
          };
          
          \node (t1b) at (-3.8, -10) {$\TT_1$};
		  \node (rootb) at (-3, -10) {
		  \asmtiling{1.0}{1}{1}{}%
          {%
        	{3/{(0.15, 0.2)/1, (0.45, 0.83)/1, (0.85, 0.50)/1}}%
          }
          {%
          }
          {}{} %squiggly lines
          {} %actual rows: 0/, actual columns: 1/
          };
          
          \node (t1c) at (-1.8, -10) {$\TT_1$};
		  \node (rootc) at (-1, -10) {
		  \asmtiling{1.0}{1}{1}{}%
          {%
        	{3/{(0.15, 0.2)/1, (0.45, 0.83)/1, (0.85, 0.50)/1}}%
          }
          {%
          }
          {}{} %squiggly lines
          {} %actual rows: 0/, actual columns: 1/
          };
          
          \node (t9) at (0.2, -10) {$\TT_9$};
		  \node (uno) at (1, -10) {
		  \asmtiling{1.0}{1}{1}{0/0/1}%
          {%
          }
          {%
          }
          {}{} %squiggly lines
          {0/0, 1/0} %actual rows: 0/, actual columns: 1/
          };
          \ptedge{(placed1insemptyinfsep)}{(-0.5,-0.19)}{(rootb)}{(-0.5,0.8)}
          \ptedge{(placed1insemptyinfsep)}{(-0.5,-0.19)}{(rootc)}{(-0.5,0.8)}
          \ptedge{(placed1insemptyinfsep)}{(-0.5,-0.19)}{(uno)}{(-0.5,0.8)}
          
          \node (t11) at (3, -11.2) {$\TT_{11}$};
          \node (placed1insnonemptyinf) at (4.5, -13) {
      	  \asmtiling{1.0}{4}{3}{1/{0.75}/{-1},3/1/1}%
          {%
          {1/{(2.5, 1.5)/2}},%
          {1/{(3.5, 0.5)/2}},%
          {1/{(3.5, 2.5)/2}},%
          {2/{(0.1, 0.5)/1, (0.5, 2.5)/1}},%
          {2/{(0.3, 0.5)/1, (1.3, 2.5)/1}},%
          {2/{(0.6, 0.6)/1, (2.2, 2.75)/1}},%
          {2/{(1.75, 0.5)/1, (2.5, 2.5)/1}},%
          {2/{(2.5, 0.5)/1, (2.9, 2.5)/1}}%
%	      {3/{(0.25, 0.15)/1, (0.55, 0.7)/1, (0.9, 0.4)/1}},%
%	      {3/{(0.15, 0.45)/1, (0.65, 1.22)/1, (0.9, 0.8)/1}},%
%          {3/{(0.15, 2.2)/1, (0.45, 2.83)/1, (0.85, 2.50)/1}},%
%          {3/{(0.15, 1.7)/1, (0.5, 2.5)/1, (0.85, 2.20)/1}}%
          }
          {%
            {1/{(1.35, 0.5)/1}},%
            {1/{(0.55, 1.57)/1}},%
            {1/{(1.65, 2.5)/1}}%
          }
          {}{} %squiggly lines
		  {0/1,1/1,1/3} %actual rows: 0/, actual columns: 1/
          };
          \node (t12) at (-2.85, -13) {$\TT_{12}$};
          \node (placed1insnonemptyinfinf) at (-0.5, -13) {
      	  \asmtiling{1.0}{4}{3}{1/1/{-1},3/1/1}%
          {%
          {1/{(0.5, 0.5)/2}},%
          {1/{(2.5, 2.5)/2}},%
          {1/{(2.5, 1.5)/2}},%
          {1/{(3.5, 0.5)/2}},%
          {1/{(3.5, 2.5)/2}},%
	      {3/{(2.5, 0.2)/1, (2.65, 0.83)/1, (2.85, 0.40)/1}},%
	      {3/{(1.75, 0.2)/1, (2.15, 0.83)/1, (2.35, 0.40)/1}},%
          {3/{(0.1, 2.2)/1, (0.3, 2.87)/1, (0.6, 2.60)/1}},%
          {3/{(0.75, 2.45)/1, (0.86, 2.87)/1, (1.3, 2.70)/1}},%
          {3/{(0.15, 1.7)/1, (0.37, 2.6)/1, (0.57, 2.15)/1}},%
          {3/{(0.6, 1.8)/1, (0.85, 2.22)/1, (1.5, 2.12)/1}}%
          }
          {%
            {1/{(1.5, 0.5)/1}},%
            {1/{(0.5, 1.5)/1}},%
            {1/{(1.65, 2.5)/1}}%
          }
          {}{} %squiggly lines
		  {0/1,1/1,1/3} %actual rows: 0/, actual columns: 1/
          };
      \node (equiv5) at (7 , -14) {$\cong$};
      \node (equiv6) at (2 , -14) {$\cong$};
      
      	  \node (t13) at (-0.6, -15.2) {$\TT_{13}$};
          \node (placed1insnonemptyinfinfF1) at (0, -16.5) {
      	  \asmtiling{1.0}{2}{2}{1/0/{-1}}%
          {%
          {3/{(0.1, 1.2)/1, (0.3, 1.87)/1, (0.6, 1.60)/1}},%
          {3/{(0.75, 1.45)/1, (0.86, 1.87)/1, (1.3, 1.70)/1}},%
          {3/{(0.15, 0.7)/1, (0.37, 1.6)/1, (0.57, 1.15)/1}},%
          {3/{(0.6, 0.8)/1, (0.85, 1.22)/1, (1.5, 1.12)/1}}%
          }
          {%
            {1/{(0.5, 0.5)/1}},%
            {1/{(1.65, 1.5)/1}}%
          }
          {1/0}{2/0} %squiggly lines
		  {0/0,1/1} %actual rows: 0/, actual columns: 1/
          };
          \node (t5b) at (1.7, -16.5) {$\TT_5$};
          \node (placed1insnonemptyinfinfF2) at (3, -16.5) {
      	  \asmtiling{1.0}{2}{1}{}%
          {%
	      {3/{(1.5, 0.2)/1, (1.65, 0.83)/1, (1.85, 0.40)/1}},%
	      {3/{(0.75, 0.2)/1, (1.15, 0.83)/1, (1.35, 0.40)/1}}%
          }
          {%
            {1/{(0.5, 0.5)/1}}%
          }
          {}{} %squiggly lines
		  {1/0} %actual rows: 0/, actual columns: 1/
          };
          \node (t9b) at (5.2, -16.5) {$\TT_9$};
		  \node (placed1insnonemptyinfinfF3) at (6, -16.5) {
		  \asmtiling{1.0}{1}{1}{0/0/1}%
          {%
          }
          {%
          }
          {}{} %squiggly lines
          {0/0, 1/0} %actual rows: 0/, actual columns: 1/
          };
          \ptedge{(placed1insnonemptyinfinf)}{(-0.5,-0.19)}{(placed1insnonemptyinfinfF1)}{(-0.5,1.3)}
          \ptedge{(placed1insnonemptyinfinf)}{(-0.5,-0.19)}{(placed1insnonemptyinfinfF2)}{(-0.5,0.8)}
          \ptedge{(placed1insnonemptyinfinf)}{(-0.5,-0.19)}{(placed1insnonemptyinfinfF3)}{(-0.5,0.8)}
          
          \node (t14) at (-4.4, -16.5) {$\TT_{14}$};
          \node (placed1insnonemptyinfinfF1inf) at (-3, -16.5) {
      	  \asmtiling{1.0}{2}{2}{}%
          {%
          {1/{(1.5, 0.5)/2}},%
          {3/{(0.1, 1.2)/1, (0.3, 1.87)/1, (0.6, 1.60)/1}},%
          {3/{(0.75, 1.45)/1, (0.86, 1.87)/1, (1.3, 1.70)/1}},%
          {3/{(0.15, 0.7)/1, (0.37, 1.6)/1, (0.57, 1.15)/1}},%
          {3/{(0.6, 0.8)/1, (0.85, 1.22)/1, (1.5, 1.12)/1}}%
          }
          {%
            {1/{(0.5, 0.5)/1}},%
            {1/{(1.65, 1.5)/1}}%
          }
          {}{} %squiggly lines
		  {0/0,1/1} %actual rows: 0/, actual columns: 1/
          };
          \node (equiv7) at (-1.5 , -16.5) {$\cong$};
          
          \ptedge{(placed1)}{(-0.5,-0.19)}{(placed1insempty)}{(-0.5,1.8)}
          \ptedge{(placed1)}{(-0.5,-0.19)}{(placed1insnonempty)}{(-0.5,1.8)}
		\end{scope}
    \end{tikzpicture}
    \caption{The first partial proof tree for $132$-avoiding ASMs. Note that
    obstructions which use points should be interpreted to forbid both $1$ and
    $-1$ entries, as in tiling $\TT_7$. In some tilings, like $\TT_{11}$ we have
    not shown all obstructions, only the most important ones.}
    \label{figure:asmf}
  \end{figure}

%!TEX root = combinatorial-exploration.tex

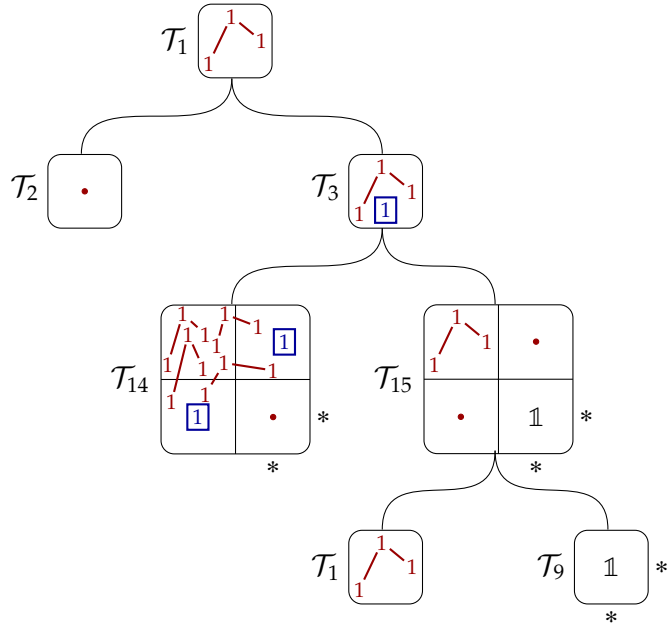
\begin{figure}
    \centering
    \begin{tikzpicture}[scale=1]

\node (t1) at (-2.8, 0) {$\TT_1$};
\node (root) at (-2, 0) {
\asmtiling{1.0}{1}{1}{}%
{%
	{3/{(0.15, 0.2)/1, (0.45, 0.83)/1, (0.85, 0.50)/1}}%
}
{%
}
{}{} %squiggly lines
{} %actual rows: 0/, actual columns: 1/
};
\node (t2) at (-4.8, -1.95) {$\TT_2$};
\node (empty) at (-4, -2) {
\asmtiling{1.0}{1}{1}{}%
{%
    {1/{(0.5, 0.5)/2}}%
}
{%
}
{}{} %squiggly lines
{} %actual rows: 0/, actual columns: 1/
};
\node (t3) at (-0.8, -1.95) {$\TT_3$};
\node (nonempty) at (0, -2) {
\asmtiling{1.0}{1}{1}{}%
{%
	{3/{(0.15, 0.2)/1, (0.45, 0.83)/1, (0.85, 0.50)/1}}%
}
{%
	{1/{(0.5, 0.25)/1}}%
}
{}{} %squiggly lines
{} %actual rows: 0/, actual columns: 1/
};
      
	\ptedge{(root)}{(-0.545,0.8)}{(empty)}{(-0.545,0.8)}
	\ptedge{(root)}{(-0.545,0.8)}{(nonempty)}{(-0.545,0.8)}
          
          \node (t14) at (-3.4, -4.5) {$\TT_{14}$};
          \node (placed1insnonemptyinfinfF1inf) at (-2, -4.5) {
          \asmtiling{1.0}{2}{2}{}%
          {%
          {1/{(1.5, 0.5)/2}},%
          {3/{(0.1, 1.2)/1, (0.3, 1.87)/1, (0.6, 1.60)/1}},%
          {3/{(0.75, 1.45)/1, (0.86, 1.87)/1, (1.3, 1.70)/1}},%
          {3/{(0.15, 0.7)/1, (0.37, 1.6)/1, (0.57, 1.15)/1}},%
          {3/{(0.6, 0.8)/1, (0.85, 1.22)/1, (1.5, 1.12)/1}}%
          }
          {%
            {1/{(0.5, 0.5)/1}},%
            {1/{(1.65, 1.5)/1}}%
          }
          {}{} %squiggly lines
		  {0/0,1/1} %actual rows: 0/, actual columns: 1/
          };
      
      	  \node (t10) at (0.1, -4.5) {$\TT_{15}$};
          \node (placed1insnonempty) at (1.5, -4.5) {
      	  \asmtiling{1.0}{2}{2}{1/0/1}%
          {%
          {1/{(1.5, 1.5)/2}},%
          {1/{(0.5, 0.5)/2}},%
          {3/{(0.15, 1.2)/1, (0.45, 1.83)/1, (0.85, 1.50)/1}}%
          }
          {%
          }
          {}{} %squiggly lines
		  {0/0,1/1} %actual rows: 0/, actual columns: 1/
          };
          
          \node (t1c) at (-0.8, -7) {$\TT_1$};
		  \node (rootc) at (0, -7) {
		  \asmtiling{1.0}{1}{1}{}%
          {%
        	{3/{(0.15, 0.2)/1, (0.45, 0.83)/1, (0.85, 0.50)/1}}%
          }
          {%
          }
          {}{} %squiggly lines
          {} %actual rows: 0/, actual columns: 1/
          };
          
          \node (t9) at (2.2, -7) {$\TT_9$};
		  \node (uno) at (3, -7) {
		  \asmtiling{1.0}{1}{1}{0/0/1}%
          {%
          }
          {%
          }
          {}{} %squiggly lines
          {0/0, 1/0} %actual rows: 0/, actual columns: 1/
          };
          \ptedge{(placed1insnonempty)}{(-0.545,0.35)}{(rootc)}{(-0.545,0.8)}
          \ptedge{(placed1insnonempty)}{(-0.545,0.35)}{(uno)}{(-0.545,0.8)}
          
          \ptedge{(nonempty)}{(-0.545,0.81)}{(placed1insnonemptyinfinfF1inf)}{(-0.545,1.3)}
          \ptedge{(nonempty)}{(-0.545,0.81)}{(placed1insnonempty)}{(-0.545,1.3)}
    \end{tikzpicture}
    \caption{The second partial proof tree for $132$-avoiding ASMs.}
    \label{figure:asmf2}
  \end{figure}

Neither of these are actually proof trees, because their corresponding
combinatorial specifications do not have the property that every set
appearing on a right-hand side also appears on exactly one left-hand
side (in terms of the proof tree perspective, this condition requires
that every set that is the child of a rule is also the parent of
exactly one rule). So, while the set of combinatorial rules formed from
taking both trees together is not a combinatorial specification, it
still contains sufficient information to find the enumeration of all
sets involved.\footnote{However, we would not be surprised if an
implemented version of Combinatorial Exploration found a genuine
combinatorial specification.} We call such a set of rules a
\emph{combinatorial forest}. These generalize combinatorial
specifications and we will describe in future work how the
Combinatorial Exploration algorithm can be modified to search for
combinatorial forests, along with a productivity-style guarantee
that they suffice to enumerate each of the sets involved.

We have surpressed some obstructions in the tilings in the two trees that
are not relevant, like in tiling $\TT_{11}$ in Figure~\ref{figure:asmf}.
We use a point in an obstruction to mean either a $1$ or a $-1$.

We will end this section by roughly describing the strategies that
produce the rules shown in the two partial proof trees in
Figures~\ref{figure:asmf} and~\ref{figure:asmf2} and then solving
the corresponding system of equations.

\begin{itemize}[label=$\diamond$]
	\item There is a disjoint-union-type strategy similar to the 
		requirement insertion strategy on gridded permutations that
		decomposes a set into two subsets depending on whether a
		specific gridded partial ASM is contained or avoided. The rule
		$\TT_1 \leftarrow (\TT_2, \TT_3)$ inserts the
		gridded ASM
		$\scriptstyle\left[\begin{array}{c}1\end{array}\right]$
		as a requirement into cell $(0,0)$. The rule
		$\TT_4 \leftarrow (\TT_6, \TT_{10})$
		inserts the requirement 
		$\scriptstyle\left[\begin{array}{c|c} & \\\hline -1 & \\\hline &\end{array}\right]$.
	\item There are equivalence strategies $\TT_3 \leftarrow (\TT_4)$
		and $\TT_{10} \leftarrow (\TT_{11})$ that are essentially the
		same as the point placement strategy for gridded permutations.
		The rule $\TT_3 \leftarrow (\TT_5)$ is also produced by a
		point-placement-style strategy, but in this case it does not
		isolate the $1$ in its own row, just in its own column.
	\item The rule $\TT_3 \leftarrow (\TT_{14}, \TT_{15})$ is a
		disjoint-union-type strategy that places the ``$1$'' requirement
		depending on whether the rightmost $1$ is in the bottom-right
		corner or not.
	\item The rules $\TT_2 \leftarrow ()$ and $\TT_9 \leftarrow ()$
		are verification strategies, as $\Grid(\TT_2)$ contains only
		the size $0$ gridded ASM and $\Grid(\TT_9)$ contains only
		the size $1$ ASM with the entry $1$.
	\item The equivalence rule $\TT_6 \leftarrow (\TT_7)$ recognizes
		that the presence of the obstruction
		$\scriptstyle\left[\begin{array}{cc|c}
			0&1&\\\hline&\\\hline 1&0
		\end{array}\right]$
		crossing from the bottom-left cell to the top-left cell
		implies the existence of the three additional obstructions
		$\scriptstyle\left[\begin{array}{cc|c}
			0&-1&\\\hline&\\\hline 1&0
		\end{array}\right]$,
		$\scriptstyle\left[\begin{array}{cc|c}
			0&1&\\\hline&\\\hline -1&0
		\end{array}\right]$,
		and
		$\scriptstyle\left[\begin{array}{cc|c}
			0&-1&\\\hline&\\\hline -1&0
		\end{array}\right]$
		that cross in the same way. This is because, for example, if
		the pattern
		$\scriptstyle\left[\begin{array}{cc|c}
			0&1&\\\hline&\\\hline -1&0
		\end{array}\right]$
		occurred, the ASM row sum property would require the presence
		of a $1$ to the left of the $-1$, which would then create the
		forbidden obstruction together with the $1$ in the top-left cell.
		Similar logic leads to the rule $\TT_{11} \leftarrow (\TT_{12})$:
		if either of the cells containing a $1$ obstruction contained
		a $-1$, then the ASM row and column sum properties would
		force them to also contain a $1$, which is forbidden.
	\item The equivalence rule $\TT_7 \leftarrow (\TT_8)$ is
		essentially the same as column separation for gridded
		permutations.
	\item The equivalence rule $\TT_{13} \leftarrow (\TT_{14})$
		observes that the mapping of taking a gridded partial ASM
		in $\Grid(\TT_{13})$ and changing the $-1$ in the bottom-right
		corner into a $0$ is a size-preserving bijection to
		$\Grid(\TT_{14})$. Note that this conversion now ensures
		that all ASMs in $\Grid(\TT_{14})$ obey the row and column
		sum properties, while this was not true for those partial
		ASMs in $\Grid(\TT_{13})$.
	\item Finally, there are three rules,
		$\TT_8 \leftarrow (\TT_1, \TT_1, \TT_9)$,
		$\TT_{12} \leftarrow (\TT_{13}, \TT_5, \TT_9)$, and
		$\TT_{15} \leftarrow (\TT_1, \TT_9)$,
		that come from a strategy similar to the factor strategy
		of gridded permutations. The first and third rule are
		rather straight-forward: each cell of the parent is in
		its own row and column, and no parts of the border are marked
		as partial, and therefore, the cells of the parent can be split
		into the children shown, all of which also have no parts of
		their own borders marked as partial. The second rule is more
		intricate. Any place where one child shares a row or column
		with another child, those shared parts of the boundary
		must get marked as partial in the children. This explains
		the partial boundary of $\TT_{13}$. At first it seems like
		$\TT_5$ and $\TT_9$ should also each have a part of their
		boundaries marked as partial (the left half of the top
		border of $\TT_5$ and the left border of $\TT_9$), but in each
		case, the $-1$ cell that stayed with $\TT_{13}$ implies that
		the row and column sum conditions actually remain satisfied
		on $\TT_5$ and $\TT_9$.
\end{itemize}

From this combinatorial forest (the union of the rules in each of the
partial proof trees), we obtain an algebraic system of equations as
outlined in Section~\ref{section:transfer-tools}, which can be solved
to discover that the generating function for $\TT_1$ is
\[
	T_1(x) = \frac{3-x-\sqrt{1-6x+x^2}}{2}
\]
and therefore the counting sequence for the $132$-avoiding ASMs is
the large Schr\"oder numbers:
\[
	1, 1, 2, 6, 22, 90, 394, 1806, 8558, \ldots.
\]

%% ==== %% ==== %% ==== %% ==== %% ==== %% ==== %% ==== %% ==== %% ==== %% ==== %%
%% ==== %% ==== %% ==== %% ====   SECTION SEVEN    ==== %% ==== %% ==== %% ==== %%
%% ==== %% ==== %% ==== %% ==== %% ==== %% ==== %% ==== %% ==== %% ==== %% ==== %%

\section{Applying Combinatorial Exploration to Polyominoes}
\label{section:polyomino-results}
%!TEX root = combinatorial-exploration.tex

\subsection{Polyominoes}
In this section a \emph{square} refers to a region $[i, i+1] \times [j, j+1]$
in the Euclidean plane. A \emph{polyomino} is a finite union of squares with a
connected interior. So, for example, two squares that only share a corner
would not be a polyomino. Two polyominoes are considered equal if one can be
transformed into the other by a vertical or a horizontal translation of the
plane.%
\footnote{Some texts consider two polyominoes equal if one can be
obtained from the other by rotation, but we consider these to be
different.}
The word ``cell'' is more common for what we call a ``square'', but we
will use ``cell'' for another concept below.

Polyominoes have been used for modeling physical phenomena such as
crystal-growth (see Dhar~\cite{Dhar}) and percolation (see 
Broadbent and Hammersley~\cite{broadbent_hammersley_1957},
Conway and Guttmann~\cite{Conway_2dim}, and
Rensburg~\cite{Rensburg}).
Figure~\ref{figure:smallpolys} shows the nonempty
polyominoes with at most three squares.

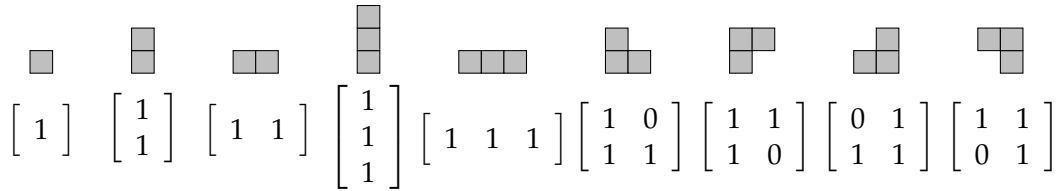
\begin{figure}
	\begin{center}
	\begin{tikzpicture}[scale=0.3, baseline=(current bounding box.center)]
    \foreach \x/\y in {0/0}
      {%
        \filldraw[fill=lightgray] (\x, \y) rectangle ++(1,1);
      }  
    \node at (0.5,-2.5){  
    $\left[\scriptstyle\begin{array}{c}
	1 
    \end{array}\right]$
    };
    
    \begin{scope}[shift={(4.5,0)}]
    \foreach \x/\y in {0/0, 0/1}
      {%
        \filldraw[fill=lightgray] (\x, \y) rectangle ++(1,1);
      }
    \node at (0.5,-2.5){  
    $\left[\scriptstyle\begin{array}{c}
	1 \\
	1
    \end{array}\right]$
    };
    \end{scope}
    
    \begin{scope}[shift={(9,0)}]
    \foreach \x/\y in {0/0, 1/0}
      {%
        \filldraw[fill=lightgray] (\x, \y) rectangle ++(1,1);
      }
    \node at (1.1,-2.5){  
    $\left[\scriptstyle\begin{array}{cc}
	1 & 1\\
    \end{array}\right]$
    };
    \end{scope}
    
    \begin{scope}[shift={(14.5,0)}]
    \foreach \x/\y in {0/0, 0/1, 0/2}
      {%
        \filldraw[fill=lightgray] (\x, \y) rectangle ++(1,1);
      }
    \node at (0.5,-2.9){  
    $\left[\scriptstyle\begin{array}{c}
	1 \\
	1 \\
	1
    \end{array}\right]$
    };
    \end{scope}
    
    \begin{scope}[shift={(19,0)}]
    \foreach \x/\y in {0/0, 1/0, 2/0}
      {%
        \filldraw[fill=lightgray] (\x, \y) rectangle ++(1,1);
      }
    \node at (1.5,-2.9){  
    $\left[\scriptstyle\begin{array}{ccc}
	1 & 1 & 1
    \end{array}\right]$
    };
    \end{scope}
    
    \begin{scope}[shift={(25.5,0)}]
    \foreach \x/\y in {0/0, 1/0, 0/1}
      {%
        \filldraw[fill=lightgray] (\x, \y) rectangle ++(1,1);
      }
    \node at (1.1,-2.9){  
    $\left[\scriptstyle\begin{array}{cc}
	1 & 0 \\
	1 & 1
    \end{array}\right]$
    };
    \end{scope}
    
    \begin{scope}[shift={(31,0)}]
    \foreach \x/\y in {0/0, 0/1, 1/1}
      {%
        \filldraw[fill=lightgray] (\x, \y) rectangle ++(1,1);
      }
    \node at (1.1,-2.9){  
    $\left[\scriptstyle\begin{array}{cc}
	1 & 1 \\
	1 & 0
    \end{array}\right]$
    };
    \end{scope}
    
    \begin{scope}[shift={(36.5,0)}]
    \foreach \x/\y in {0/0, 1/0, 1/1}
      {%
        \filldraw[fill=lightgray] (\x, \y) rectangle ++(1,1);
      }
    \node at (1.1,-2.9){  
    $\left[\scriptstyle\begin{array}{cc}
	0 & 1 \\
	1 & 1
    \end{array}\right]$
    };
    \end{scope}
    
    \begin{scope}[shift={(42,0)}]
    \foreach \x/\y in {1/0, 0/1, 1/1}
      {%
        \filldraw[fill=lightgray] (\x, \y) rectangle ++(1,1);
      }
    \node at (1.1,-2.9){  
    $\left[\scriptstyle\begin{array}{cc}
	1 & 1 \\
	0 & 1
    \end{array}\right]$
    };
    \end{scope}

    \end{tikzpicture}
	\end{center}
	\caption{Nonempty polyominoes with at most three squares and their matrix representations}
	\label{figure:smallpolys}
\end{figure}

There are at least two natural definitions of the size of a polyomino,
each of which turns the set of polyominoes into a combinatorial set.
These are \emph{area}, the number of squares in the polyomino, as well
as \emph{perimeter}, the distance along the border. These two definitions
are sometimes used simultaneously, leading to bivariate generating
functions. It is worth noting that the total number of polyominoes of
a given size (either area or perimeter) is unknown, a stark contrast to
the case of permutations.%
\footnote{If we take the area as the size function then the enumeration of
polyominoes is $1, 1, 2, 6, 19, 63, 216, \dotsc$
and is sequence A001168 on the OEIS. While the general formula for these
numbers, $a(n)$, is unknown, it is known that the limit $a(n)^{1/n}$
exists (see Klarner~\cite{klarner_1967}), and is estimated to be close
to $4.06$ (Jensen~\cite{Iwan}).}

However, as is the case with permutations and many other combinatorial
objects, several subsets of polyominoes have been intensely studied,
including those that are convex,
directed and convex, column-convex, directed column-convex (see
Bousquet-M\'{e}lou and F\'{e}dou~\cite{bousquet-melou:95-convex} and
Bousquet-M\'{e}lou~\cite{bousquet-melou:96-convex}), inscribed in a
rectangle (see Goupil, Cloutier, and Nouboud~\cite{goupil:rectangle}),
tree-like (see Aleksandrowicz, Asinowski,
Barequet~\cite{Aleksandrowicz:poly-perm}),
centered convex, Z-convex, $4$-stack, bi-centered (see Fedou,
Rinaldi, and Drosini~\cite{fedou:4-stack}),
and L-convex (see Castiglione, Frosini, Restivo, and
Rinaldi~\cite{castiglione:L-convex} and Guttmann and
Kotesovec~\cite{guttmann:L-convex-asymptotics}).

Many of the subsets that have been studied can be defined as the
set of polyominoes that avoid certain patterns, and in this section we
present a proof-of-concept to demonstrate that Combinatorial Exploration
can be effective in the domain of polyominoes as well. Later in this section,
we will derive a proof tree for the set of Ferrers diagrams (whose enumeration
is equal to the number of integer partitions) and describe, but not show,
a proof tree for the L-convex polyominoes (also sometimes called moon
polyominoes).

We start by using the matrix-based description of polyominoes given 
by Frosini, Guerrini, and Rinaldi~\cite{Frosinipatterns} in which squares
of a polyomino are represented by a $1$ and the remaining locations of the
bounding box are represented by a $0$. Figure~\ref{figure:smallpolys}
shows the matrix representations for each polyomino with at most $3$
squares, and Figure~\ref{figure:larger-poly} shows a larger example.
From this description, we can define when one polyomino contains another.

\begin{figure}
	\begin{center}
	\begin{tikzpicture}[scale=0.3, baseline=(current bounding box.center)]
    \foreach \x/\y in {0/3, 1/2, 1/3, 1/4, 2/0, 2/1, 2/2, 2/4, 3/0, 3/1, 3/2, 3/3, 3/4, 4/0, 4/4, 5/0, 5/1, 5/2}
      {%
        \filldraw[fill=lightgray] (\x, \y) rectangle ++(1,1);
      }  
    
    \begin{scope}[shift={(10,0)}]
    \foreach \x/\y in {0/3, 1/2, 1/3, 1/4, 2/0, 2/1, 2/2, 2/4, 3/0, 3/1, 3/2, 3/3, 3/4, 4/0, 4/4, 5/0, 5/1, 5/2}
      {%
        \filldraw[fill=lightgray] (\x, \y) rectangle ++(1,1);
        \node at (\x+0.5, \y+0.5) {$\scriptstyle 1$};
      }
    \foreach \x/\y in {0/0, 0/1, 0/2, 0/4, 1/0, 1/1, 2/3, 4/1, 4/2, 4/3, 5/3, 5/4}
      {%
        \node at (\x+0.5, \y+0.5) {$\scriptstyle 0$};
      }
    \end{scope}
    
    \node at (27,2.5) {
    $\left[\scriptstyle\begin{array}{cccccc}
	0 & 1 & 1 & 1 & 1 & 0 \\
	1 & 1 & 0 & 1 & 0 & 0 \\
	0 & 1 & 1 & 1 & 0 & 1 \\
	0 & 0 & 1 & 1 & 0 & 1 \\
	0 & 0 & 1 & 1 & 1 & 1
    \end{array}\right]$
    };

    \end{tikzpicture}
	\end{center}
	\caption{A polyomino, with its matrix drawn on top and shown separately.}
	\label{figure:larger-poly}
\end{figure}
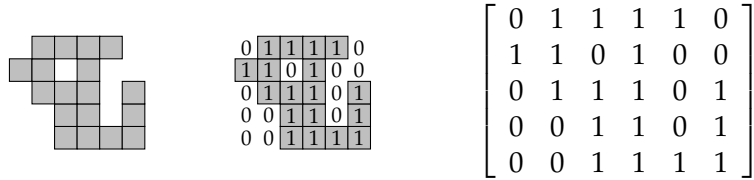

\begin{definition}[\cite{Frosinipatterns}]
	A polyomino $P_1$ is said to \emph{contain} a polyomino $P_2$ if it
	is possible to form the matrix for $P_2$ from the matrix for $P_1$
	by deleting rows and deleting columns.
\end{definition}
We may extend this definition slightly by allowing $P_2$ to
be a matrix that does not satisfy the bounding box criteria, e.g., 
the polyominoes that avoid
$P_2 = \left[\begin{matrix}1&0\end{matrix}\right]$
are those that do not have a square to the left of a non-square (within
the bounding box of the polyomino). In this context, we call matrices
such as $P_2$ \emph{polyomino patterns}.

Following the development of Combinatorial Exploration in the domain of
alternating sign matrices in Section~\ref{section:asm-results}, we will
now define a gridded version of polyominoes and a tiling-like structure
that represents the sets of gridded polyominoes upon which we will
perform Combinatorial Exploration. 

\subsection{Gridded Polyominoes}
\begin{definition}
	A \emph{gridded polyomino} is a polyomino in which the columns and
	rows have been partitioned into contiguous, possibly-empty parts.
	Less formally, a gridded polyomino can be formed by taking the
	matrix representation of a polyomino
	and adding any number of vertical lines either between columns or to
	the extreme left or extreme right, then doing the same with horizontal
	lines to rows. Like with gridded permutations and gridded ASMs,
	we will refer to the rectangular regions as \emph{cells}.
	Figure~\ref{figure:gridded-poly-example} shows an example of a
	gridded polyomino.
\end{definition}

\begin{figure}
	\begin{center}
		
	\begin{tikzpicture}[scale=0.3, baseline=(current bounding box.center)]
    \foreach \x/\y in {0/2, 1/1, 1/2}
      {%
        \filldraw[fill=lightgray] (\x, \y) rectangle ++(1,1);
      }
    \begin{scope}[shift={(2.75,0)}]
    \foreach \x/\y in {0/0, 0/1, 1/0, 1/1, 1/2}
      {%
        \filldraw[fill=lightgray] (\x, \y) rectangle ++(1,1);
      }
    \end{scope}
    \begin{scope}[shift={(5.25,0)}]
    \foreach \x/\y in {1/0, 1/1}
      {%
        \filldraw[fill=lightgray] (\x, \y) rectangle ++(1,1);
      }
    \end{scope}
    
    \begin{scope}[shift={(2.75,-1.5)}]
    \foreach \x/\y in {0/0, 1/0}
      {%
        \filldraw[fill=lightgray] (\x, \y) rectangle ++(1,1);
      }
    \end{scope}
    \begin{scope}[shift={(5.25,-1.5)}]
    \foreach \x/\y in {0/0, 1/0}
      {%
        \filldraw[fill=lightgray] (\x, \y) rectangle ++(1,1);
      }
    \end{scope}
    
    \begin{scope}[shift={(0,3.5)}]
    \foreach \x/\y in {1/0}
      {%
        \filldraw[fill=lightgray] (\x, \y) rectangle ++(1,1);
      }
    \end{scope}
    \begin{scope}[shift={(2.75,3.5)}]
    \foreach \x/\y in {0/0, 1/0}
      {%
        \filldraw[fill=lightgray] (\x, \y) rectangle ++(1,1);
      }
    \end{scope}
    \begin{scope}[shift={(5.25,3.5)}]
    \foreach \x/\y in {0/0}
      {%
        \filldraw[fill=lightgray] (\x, \y) rectangle ++(1,1);
      }
    \end{scope}
    
    \draw (-0.25,-1.75) -- (-0.25,4.75);
    \draw (2.25,-1.75) -- (2.25,4.75);
    \draw (2.5,-1.75) -- (2.5,4.75);
    \draw (5,-1.75) -- (5,4.75);
    \draw (-0.75,-0.25) -- (7.5,-0.25);
    \draw (-0.75,3.25) -- (7.5,3.25);

    \end{tikzpicture}
    \quad\quad\quad\quad
    $\left[\scriptstyle\begin{array}{|cc||cc|cc}
	0 & 1 & 1 & 1 & 1 & 0 \\\hline
	1 & 1 & 0 & 1 & 0 & 0 \\
	0 & 1 & 1 & 1 & 0 & 1 \\
	0 & 0 & 1 & 1 & 0 & 1 \\\hline
	0 & 0 & 1 & 1 & 1 & 1
    \end{array}\right]$
	\end{center}
	\caption{A gridding of the polyomino from Figure~\ref{figure:larger-poly}.}
	\label{figure:gridded-poly-example}
\end{figure}
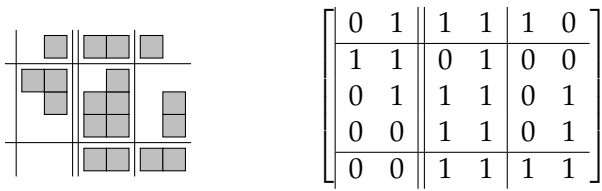

The containment relation of polyominoes and the notion of a polyomino
pattern extend in a natural way to gridded analogues in the same way as
with gridded permutations.

\subsection{Polyomino Tilings}

For the remainder of this section, we will focus on enumerating polyominoes
by area (the number of squares); only small changes would be needed to set
up Combinatorial Exploration to enumerate by perimeter.

\begin{definition}
	A \emph{poly-tiling} $\TT$ is a structure that represents a set of gridded
	polyominoes. It is defined by four components. The first three are identical
	to the three components that define a gridded permutation tiling: dimensions
	$(t, u)$, a set $\OO$ of gridded polyomino patterns called
	\emph{obstructions}, and a set $\RR = \{\RR_1, \ldots, \RR_k\}$ of sets
	of gridded polyomino patterns called \emph{requirements}. The gridded
	polyominoes that $\TT$ represents, $\Grid(\TT)$, must each avoid all of
	the gridded polyomino patterns in $\OO$ and must each contain at least one
	gridded polyomino pattern in each $\RR_i$.
	
	The fourth component, new to this domain, is called a \emph{tracking list}.
	The tracking list designates a subset of the rows of the tiling. This
	component does not change which gridded polyominoes are in $\Grid(\TT)$.
	Instead, it signals to the combinatorial strategies that
	we are not just interested in the sequence $a_n$ for the number
	of polyominoes in $\Grid(\TT)$ with area $n$, but the more refined
	sequence $a_{n,k}$ for the number of polyominoes in $\Grid(\TT)$ with
	area $n$ and with $k$ squares in the rows indicated by the tracking list.
	Informally, when applying strategies to a tiling, the strategy will look
	at the tracking list to determine whether they can be applied, and if
	so what the children will be. Correspondingly, the counting functions for
	the strategy will involve two indices, $n$ and $k$.
	
	This is a simplified version of the strategies briefly discussed
	in Subsection~\ref{subsection:pp-success} that lead to catalytic
	variables in the systems of equations for generating functions, and
	which will be discussed further in future work.
\end{definition}

Consider the example poly-tiling $\TT$ shown on the left in
Figure~\ref{figure:poly-tiling-example}. The dimensions of $\TT$ are
$(t, u) = (2,2)$, and there is a single requirement, the gridded polyomino
pattern that contains $0$ in the cell $(1,0)$. The cells $(1,0)$, $(1,1)$ each contain the four
obstructions
$\left[\begin{matrix}0&0\end{matrix}\right]$,
$\left[\begin{matrix}0&1\end{matrix}\right]$,
$\left[\begin{matrix}1&0\end{matrix}\right]$, and
$\left[\begin{matrix}1&1\end{matrix}\right]$.
These force any gridded polyomino in $\Grid(\TT)$ to have at most one column
gridded into those cells. The vertical versions of these four obstructions
appear in cells $(0,0)$, $(1,0)$, and they enforce a similar condition on the rows.
The eight obstructions and one requirement in cell $(1,0)$ enforce that
any gridded polyomino in $\Grid(\TT)$ has a $0$ gridded into this cell and
no other entries. There are four remaining obstructions, two local
to the cell $(0,1)$ and two crossing between cells. This tiling has
an empty tracking list. The second subfigure in Figure~\ref{figure:poly-tiling-example} shows the same tiling with the same kind of abbreviations as we
used for ASM-tilings. The proof tree for Ferrers diagrams in
Figure~\ref{figure:ferrers-proof-tree} has many examples of tilings that do
have tracking lists, shown by drawing an oscillating line around the rows in
the list.

\begin{figure}[t]
	\centering
	\begin{tikzpicture}[baseline=(current bounding box.center)]
		\node (polyex) at (2, -2) {
			\polytiling{1}{2}{2}{}%
			{% obstructions
				% Vertical in (0,0)
                {2/{(0.32, 0.15)/1, (0.32, 0.5)/1}},%
                {2/{(0.44, 0.15)/1, (0.44, 0.5)/0}},%
                {2/{(0.56, 0.15)/0, (0.56, 0.5)/1}},%
                {2/{(0.68, 0.15)/0, (0.68, 0.5)/0}},%
				% Horizontal in (1,0)
				{2/{(1.1, 0.80)/0, (1.4, 0.80)/0}},%
				{2/{(1.1, 0.60)/0, (1.4, 0.60)/1}},%
				{2/{(1.1, 0.40)/1, (1.4, 0.40)/0}},%
				{2/{(1.1, 0.20)/1, (1.4, 0.20)/1}},%
				% Vertical in (1,0)
                {2/{(1.89, 0.15)/1, (1.89, 0.5)/1}},%
                {2/{(1.77, 0.15)/1, (1.77, 0.5)/0}},%
                {2/{(1.65, 0.15)/0, (1.65, 0.5)/1}},%
                {2/{(1.53, 0.15)/0, (1.53, 0.5)/0}},%
				% Horizontal in (1,1)
				{2/{(1.35, 1.80)/0, (1.65, 1.80)/0}},%
				{2/{(1.35, 1.60)/0, (1.65, 1.60)/1}},%
				{2/{(1.35, 1.40)/1, (1.65, 1.40)/0}},%
				{2/{(1.35, 1.20)/1, (1.65, 1.20)/1}},%
				{5/{(0.35, 1.2)/1, (0.65, 1.2)/0, (0.65, 0.8)/1, (0.35, 0.8)/1, (0.35, 1.2)/1}},%
				{5/{(0.85, 1.55)/0, (1.15, 1.55)/1, (1.15, 1.15)/1, (0.85, 1.15)/0, (0.85, 1.55)/0}},%
				{3/{(0.35, 1.88)/1, (0.63, 1.88)/0, (0.91, 1.88)/1}},%
				{3/{(0.15, 1.80)/1, (0.15, 1.5)/0, (0.15, 1.20)/1}}%
			}
			{{1/{(1.7, 0.8)/0}}}
			{}%
			{}
			{} %actual rows: 0/, actual columns: 1/
		};
		\node (polyex2) at (6, -2) {
			\polytiling{1.0}{2}{2}{1/0/0}%
			{% obstructions
				{5/{(0.35, 1.2)/1, (0.65, 1.2)/0, (0.65, 0.8)/1, (0.35, 0.8)/1, (0.35, 1.2)/1}},%
				{5/{(0.85, 1.55)/0, (1.15, 1.55)/1, (1.15, 1.15)/1, (0.85, 1.15)/0, (0.85, 1.55)/0}},%
				{3/{(0.35, 1.88)/1, (0.63, 1.88)/0, (0.91, 1.88)/1}},%
				{3/{(0.15, 1.80)/1, (0.15, 1.5)/0, (0.15, 1.20)/1}}%
			}
			{}
			{}%
			{}
			{0/0, 1/1} %actual rows: 0/, actual columns: 1/
		};
	\end{tikzpicture}
	\quad\quad\quad
	$\left[\scriptstyle\begin{array}{rrr|r}
	0 & 0 & 1 & 1  \\
	0 & 1 & 1 & 0  \\
	1 & 1 & 1 & 1  \\\hline
	1 & 0 & 1 & 0
    \end{array}\right]$
    \quad\quad\quad
    \begin{tikzpicture}[scale=0.3, baseline=(current bounding box.center)]
    \foreach \x/\y in {0/0, 1/0, 2/0, 1/1, 2/1, 2/2}
      {%
        \filldraw[fill=lightgray] (\x, \y) rectangle ++(1,1);
      }
    \begin{scope}[shift={(3.5,0)}]
    \foreach \x/\y in {0/0, 0/2}
      {%
        \filldraw[fill=lightgray] (\x, \y) rectangle ++(1,1);
      }
    \end{scope}
    \begin{scope}[shift={(0,-1.5)}]
    \foreach \x/\y in {0/0, 2/0}
      {%
        \filldraw[fill=lightgray] (\x, \y) rectangle ++(1,1);
      }
    \end{scope}
    \draw (3.25,-1.75) -- (3.25,3.25);
    \draw (-0.25,-0.25) -- (4.75,-0.25);

    \end{tikzpicture}

	\caption{A tiling $\TT$ with all obstructions and requirements shown
	explicitly; the same tiling with abbreviations; a gridded polyomino on the
	tiling in matrix form; the same polyomino in traditional form.}
	\label{figure:poly-tiling-example}
\end{figure}
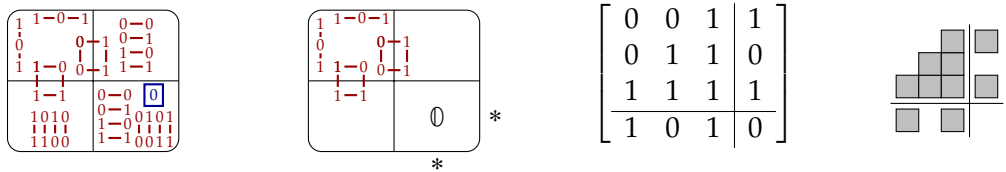

The third subfigure of Figure~\ref{figure:poly-tiling-example} shows the matrix
representation of a gridded polyomino in $\Grid(\TT)$. It has three columns
that have been gridded into the first column of $\TT$ and one gridded into
the second, and it has one row that has been gridded into the first row of $\TT$,
and three gridded into the second. The last subfigure of
Figure~\ref{figure:poly-tiling-example} shows the same gridded polyomino in the
traditional form.

\subsection{Ferrers Diagrams}

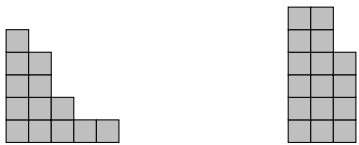
\begin{figure}
	\centering
	\begin{tikzpicture}[scale=0.3, baseline=(current bounding box.center)]
    \foreach \x/\y in {0/0, 0/1, 0/2, 0/3, 0/4, 1/0, 1/1, 1/2, 1/3, 2/0, 2/1, 3/0, 4/0}
      {%
        \filldraw[fill=lightgray] (\x, \y) rectangle ++(1,1);
      }
    \begin{scope}[shift={(12.5,0)}]
    \foreach \x/\y in {0/0, 0/1, 0/2, 0/3, 0/4, 0/5, 1/0, 1/1, 1/2, 1/3, 1/4, 1/5, 2/0, 2/1, 2/2, 2/3}
      {%
        \filldraw[fill=lightgray] (\x, \y) rectangle ++(1,1);
      }
    \end{scope}
    \end{tikzpicture}
	\caption{Two Ferrers diagrams.}
	\label{figure:ferrers-example}
\end{figure}

A Ferrers diagram is a polyomino in which all rows are aligned to the left
side of the bounding box and each row is at least as big as the one above
it. Figure~\ref{figure:ferrers-example} shows two examples of Ferrers
diagrams. Ferrers diagrams with $n$ squares are in bijection with integer
partitions of $n$: the partition $p_1 \geq p_2 \geq \ldots \geq p_\ell$ of
$n$ maps to the Ferrers diagram whose row lengths, starting from the bottom,
are $p_1, p_2, \ldots, p_\ell$. As a result, the enumeration of Ferrers diagrams
with $n$ squares is known, and the generating function satisfies an
algebraic differential equation, but does not satisfy a linear differential
equation.

\begin{figure}[tbhp]
	\centering
\begin{tikzpicture}[scale=0.9]

% LEVEL 0
\begin{scope}[yshift=100]
\node (t1) at (-3.45, 0) {$\TT_1$};
\node (root) at (-2.5, 0) {
\polytiling{1.0}{1}{1}{}%
			{% obstructions
				{2/{(0.25, 0.85)/0, (0.6, 0.85)/1}},%
				{2/{(0.85, 0.65)/1, (0.85, 0.2)/0}}%
			}
			{}
			{}{}
			{} %actual rows: 0/, actual columns: 1/
};

% LEVEL 1
\node (t0) at (-5.45, -1.95) {$\TT_2$};
\node (empty) at (-4.5, -2) {
\polytiling{1.0}{1}{1}{}%
			{% obstructions
			{1/{(0.5, 0.5)/2}}%
			}
			{}
			{}{}
			{} %actual rows: 0/, actual columns: 1/
};
\node (t2) at (-1.45, -1.95) {$\TT_3$};
\node (nonempty) at (-0.5, -2) {
\polytiling{1.0}{1}{1}{}%
			{% obstructions
				{2/{(0.25, 0.85)/0, (0.6, 0.85)/1}},%
				{2/{(0.85, 0.65)/1, (0.85, 0.2)/0}}%
			}
			{{1/{(0.25, 0.35)/1}}}
			{}{}
			{} %actual rows: 0/, actual columns: 1/
};
\node (t2') at (3, -0.65) {$\TT_3'$};
\node (nonempty') at (3, -2) {
\polytiling{1.0}{3}{2}{1/0/1}%
			{% obstructions
			    {1/{(2.5, 0.5)/1}},%
			    {0/{(0.5, 0.5)/0}},%
				{2/{(0.55, 1.5)/0, (0.85, 1.5)/1}},%x
				{2/{(0.55, 1.15)/0, (1.5, 1.15)/1}},%x
				{2/{(0.55, 1.85)/0, (2.45, 1.85)/1}},%x
				{2/{(1.5, 1.5)/0, (2.45, 1.5)/1}},%x
				{2/{(2.15, 1.15)/0, (2.45, 1.15)/1}},%x
				{2/{(0.15, 1.675)/1, (0.15, 1.275)/0}},%
				{2/{(1.15, 1.675)/1, (1.15, 1.275)/0}},%
				{2/{(2.85, 1.6755)/1, (2.85, 1.275)/0}},%
				{2/{(2.65, 1.5)/1, (2.65, 0.7)/0}}%
			}
			{}
			{}{}
			{0/0, 1/1} %actual rows: 0/, actual columns: 1/
};
\node (t2'') at (7, -0.65) {$\TT_3''$};
\node (nonempty'') at (7, -2) {
\polytiling{1.0}{2}{2}{1/0/1}%
			{% obstructions
			    {0/{(0.5, 0.5)/0}},%
				{2/{(0.55, 1.5)/0, (0.85, 1.5)/1}},%x
				{2/{(0.55, 1.15)/0, (1.5, 1.15)/1}},%x
				{2/{(0.15, 1.675)/1, (0.15, 1.275)/0}},%
				{2/{(1.85, 1.675)/1, (1.85, 1.275)/0}}%
			}
			{}
			{}{}
			{0/0, 1/1} %actual rows: 0/, actual columns: 1/
};

\node (equiv1) at (0.6 , -1.95) {$\cong$};
\node (equiv2) at (5.35 , -1.95) {$\cong$};

	\ptedge{(root)}{(-0.5,0.78)}{(empty)}{(-0.5,0.815)}
	\ptedge{(root)}{(-0.5,0.78)}{(nonempty)}{(-0.5,0.815)}
	
% LEVEL 2
\node (t3) at (5.4, -5) {$\TT_4$};
\node (firstass) at (7, -5) {
\polytiling{1.0}{2}{2}{1/0/1}%
			{% obstructions
			    {0/{(0.5, 0.5)/0}},%
				{2/{(0.55, 1.5)/0, (0.85, 1.5)/1}},%x
				{2/{(0.55, 1.15)/0, (1.5, 1.15)/1}},%x
				{2/{(0.15, 1.675)/1, (0.15, 1.275)/0}},%
				{2/{(1.85, 1.675)/1, (1.85, 1.275)/0}}%
			}
			{}
			{0}{}
			{0/0, 1/1} %actual rows: 0/, actual columns: 1/
};
\end{scope}
	\ptedge{(nonempty'')}{(-0.5,0.26)}{(firstass)}{(-0.5,1.35)}

% LEVEL 3
\begin{scope}[xshift=28.6, yshift=12]
\node (t4) at (-7.6, -3.5) {$\TT_5$};
\node (firstass4) at (-6, -3.5) {
\polytiling{1.0}{2}{1}{1/0/1}%
			{% obstructions
			    {0/{(0.5, 0.5)/0}}%
			}
			{}
			{0}{}
			{0/0, 1/1} %actual rows: 0/, actual columns: 1/
};
\node (t5) at (-0.1, -5.5) {$\TT_6$};
\node (firstass5) at (1.5, -5.5) {
\polytiling{1.0}{2}{3}{1/0/1, 1/1/0}%
			{% obstructions
			    {0/{(0.5, 0.5)/0}},%
			    {0/{(1.5, 2.5)/1}},%
				{2/{(0.55, 1.5)/0, (0.85, 1.5)/1}},%x
				{2/{(0.55, 2.5)/0, (0.85, 2.5)/1}},%x
				{2/{(0.15, 2.675)/1, (0.15, 2.275)/0}},%
				{2/{(0.25, 2.675)/1, (0.25, 1.5)/0}}%
			}
			{}
			{0}{}
			{0/0, 0/1, 1/1} %actual rows: 0/, actual columns: 1/
};
\node (t6) at (4.4, -5.5) {$\TT_7$};
\node (firstass6) at (6, -5.5) {
\polytiling{1.0}{2}{3}{1/0/1, 1/1/1}%
			{% obstructions
			    {0/{(0.5, 0.5)/0}},%
			    {0/{(0.5, 1.5)/0}},%
				{2/{(0.55, 2.5)/0, (0.85, 2.5)/1}},%x
				{2/{(0.15, 2.675)/1, (0.15, 2.275)/0}},%
				{2/{(0.55, 2.15)/0, (1.5, 2.15)/1}},%x
				{2/{(1.85, 2.675)/1, (1.85, 2.275)/0}}%
			}
			{}
			{0}{}
			{0/0, 0/1, 1/1} %actual rows: 0/, actual columns: 1/
};
\end{scope}
	\ptedge{(firstass)}{(-0.5,0.26)}{(firstass4)}{(-0.5,0.825)}
	\ptedge{(firstass)}{(-0.5,0.26)}{(firstass5)}{(-0.5,1.87)}
	\ptedge{(firstass)}{(-0.5,0.26)}{(firstass6)}{(-0.5,1.87)}

% LEVEL 4
\begin{scope}[xshift=-55, yshift=65]
\node (t9) at (-3.85, -12.5) {$\TT_{10}$};
\node (t9t) at (-2.75, -12.5) {
\polytiling{1.0}{1}{3}{0/0/1, 0/1/0}%
			{% obstructions
			    {0/{(0.5, 2.5)/1}}%
			}
			{}
			{0}{}
			{0/0, 0/1, 1/0} %actual rows: 0/, actual columns: 1/
};

\node (t10) at (-1.1, -12.5) {$\TT_{11}$};
\node (t10t) at (1.2, -12.5) {
\polytiling{1.0}{3}{3}{1/0/1, 1/1/0, 2/0/1, 2/1/0}%
			{% obstructions
			    {0/{(0.5, 0.5)/0}},%
			    {0/{(1.5, 2.5)/1}},%
			    {0/{(2.5, 2.5)/1}},%
				{2/{(0.55, 1.5)/0, (0.85, 1.5)/1}},%x
				{2/{(0.55, 2.5)/0, (0.85, 2.5)/1}},%x
				{2/{(0.15, 2.675)/1, (0.15, 2.275)/0}},%
				{2/{(0.25, 2.675)/1, (0.25, 1.5)/0}}%
			}
			{}
			{0}{}
			{0/0, 0/1, 1/1, 1/2} %actual rows: 0/, actual columns: 1/
};
\node (t11) at (4.2, -12.5) {$\TT_{12}$};
\node (t11t) at (6.5, -12.5) {
\polytiling{1.0}{3}{3}{1/0/1, 1/1/1, 2/0/1, 2/1/0}%
			{% obstructions
			    {0/{(0.5, 0.5)/0}},%
			    {0/{(0.5, 1.5)/0}},%
			    {0/{(2.5, 2.5)/1}},%
				{2/{(0.55, 2.5)/0, (0.85, 2.5)/1}},%x
				{2/{(0.15, 2.675)/1, (0.15, 2.275)/0}},%
				{2/{(1.85, 2.675)/1, (1.85, 2.275)/0}},%
				{2/{(0.55, 2.15)/0, (1.5, 2.15)/1}}%x
			}
			{}
			{0}{}
			{0/0, 0/1, 1/1, 1/2} %actual rows: 0/, actual columns: 1/
};
\end{scope}

	\ptedge{(firstass5)}{(-0.5,-0.27)}{(t9t)}{(-0.5,1.87)}
	\ptedge{(firstass5)}{(-0.5,-0.27)}{(t10t)}{(-0.5,1.87)}
	\ptedge{(firstass5)}{(-0.5,-0.27)}{(t11t)}{(-0.5,1.87)}

% LEVEL 5
\begin{scope}[xshift=-50, yshift=50]
\node (t10f1) at (-4.6, -16.5) {$\TT_6$};
\node (t10f1t) at (-3, -16.5) {
\polytiling{1.0}{2}{3}{1/0/1, 1/1/0}%
			{% obstructions
			    {0/{(0.5, 0.5)/0}},%
			    {0/{(1.5, 2.5)/1}},%
				{2/{(0.55, 1.5)/0, (0.85, 1.5)/1}},%x
				{2/{(0.55, 2.5)/0, (0.85, 2.5)/1}},%x
				{2/{(0.15, 2.675)/1, (0.15, 2.275)/0}},%
				{2/{(0.25, 2.675)/1, (0.25, 1.5)/0}}%
			}
			{}
			{0}{}
			{0/0, 0/1, 1/1} %actual rows: 0/, actual columns: 1/
};
\node (t10f2) at (-1, -15.5) {$\TT_8$};
\node (t10f2t) at (0, -15.5) {
\polytiling{1.0}{1}{1}{0/0/1}%
			{% obstructions
			}
			{}
			{0}{}
			{0/0, 1/0} %actual rows: 0/, actual columns: 1/
};
\node (t11f1) at (1.4, -16.5) {$\TT_7$};
\node (t11f1t) at (3, -16.5) {
\polytiling{1.0}{2}{3}{1/0/1, 1/1/0}%
			{% obstructions
			    {0/{(0.5, 0.5)/0}},%
			    {0/{(1.5, 2.5)/1}},%
				{2/{(0.55, 1.5)/0, (0.85, 1.5)/1}},%x
				{2/{(0.55, 2.5)/0, (0.85, 2.5)/1}},%x
				{2/{(0.15, 2.675)/1, (0.15, 2.275)/0}},%
				{2/{(0.25, 2.675)/1, (0.25, 1.5)/0}}%
			}
			{}
			{0}{}
			{0/0, 0/1, 1/1} %actual rows: 0/, actual columns: 1/
};
\node (t11f2) at (4.75, -15.5) {$\TT_8$};
\node (t11f2t) at (5.75, -15.5) {
\polytiling{1.0}{1}{1}{0/0/1}%
			{% obstructions
			}
			{}
			{0}{}
			{0/0, 1/0} %actual rows: 0/, actual columns: 1/
};
\end{scope}
\node (t3') at (5.4, -14.25) {$\TT_4$};
\node (firstass') at (7, -14.25) {
\polytiling{1.0}{2}{2}{1/0/1}%
			{% obstructions
			    {0/{(0.5, 0.5)/0}},%
				{2/{(0.55, 1.5)/0, (0.85, 1.5)/1}},%x
				{2/{(0.55, 1.15)/0, (1.5, 1.15)/1}},%x
				{2/{(0.15, 1.675)/1, (0.15, 1.275)/0}},%
				{2/{(1.85, 1.675)/1, (1.85, 1.275)/0}}%
			}
			{}
			{0}{}
			{0/0, 1/1} %actual rows: 0/, actual columns: 1/
};

	\ptedge{(firstass6)}{(-0.5,-0.27)}{(firstass')}{(-0.5,1.35)}
	\ptedge{(t10t)}{(-0.75,-0.27)}{(t10f1t)}{(-0.5,1.87)}
	\ptedge{(t10t)}{(-0.75,-0.27)}{(t10f2t)}{(-0.5,0.825)}
	\ptedge{(t11t)}{(-0.75,-0.27)}{(t11f1t)}{(-0.5,1.87)}
	\ptedge{(t11t)}{(-0.75,-0.27)}{(t11f2t)}{(-0.5,0.825)}

\begin{scope}[xshift=50, yshift=375]
% LEVEL 6
\node (t7b) at (-9, -18.25) {$\TT_8$};
\node (t7bt) at (-8, -18.25) {
\polytiling{1.0}{1}{1}{0/0/1}%
			{% obstructions
			}
			{}
			{0}{}
			{0/0,1/0} %actual rows: 0/, actual columns: 1/
};
\node (t8) at (-6.3, -18.25) {$\TT_9$};
\node (t8t) at (-4, -18.25) {
\polytiling{1.0}{3}{1}{1/0/1, 2/0/1}%
			{% obstructions
			    {0/{(0.5, 0.5)/0}}%
			}
			{}
			{0}{}
			{0/0, 1/1, 1/2} %actual rows: 0/, actual columns: 1/
};

    \ptedge{(firstass4)}{(-0.5,0.815)}{(t8t)}{(-0.5,0.825)}
    \ptedge{(firstass4)}{(-0.5,0.815)}{(t7bt)}{(-0.5,0.825)}
    
% LEVEL 7
\node (t8f1) at (-9.65, -20.25) {$\TT_5$};
\node (t8f1t) at (-8, -20.25) {
\polytiling{1.0}{2}{1}{1/0/1}%
			{% obstructions
			    {0/{(0.5, 0.5)/0}}%
			}
			{}
			{0}{}
			{0/0, 1/1} %actual rows: 0/, actual columns: 1/
};
\node (t8f2) at (-5.75, -20.25) {$\TT_8$};
\node (t8f2t) at (-4.75, -20.25) {
\polytiling{1.0}{1}{1}{0/0/1}%
			{% obstructions
			}
			{}
			{0}{}
			{0/0, 1/0} %actual rows: 0/, actual columns: 1/
};

    \ptedge{(t8t)}{(-0.75,0.78)}{(t8f1t)}{(-0.5,0.825)}
    \ptedge{(t8t)}{(-0.75,0.78)}{(t8f2t)}{(-0.5,0.825)}
\end{scope}
	
\end{tikzpicture}
	\caption{A proof tree for Ferrers diagrams.}
	\label{figure:ferrers-proof-tree}
\end{figure}
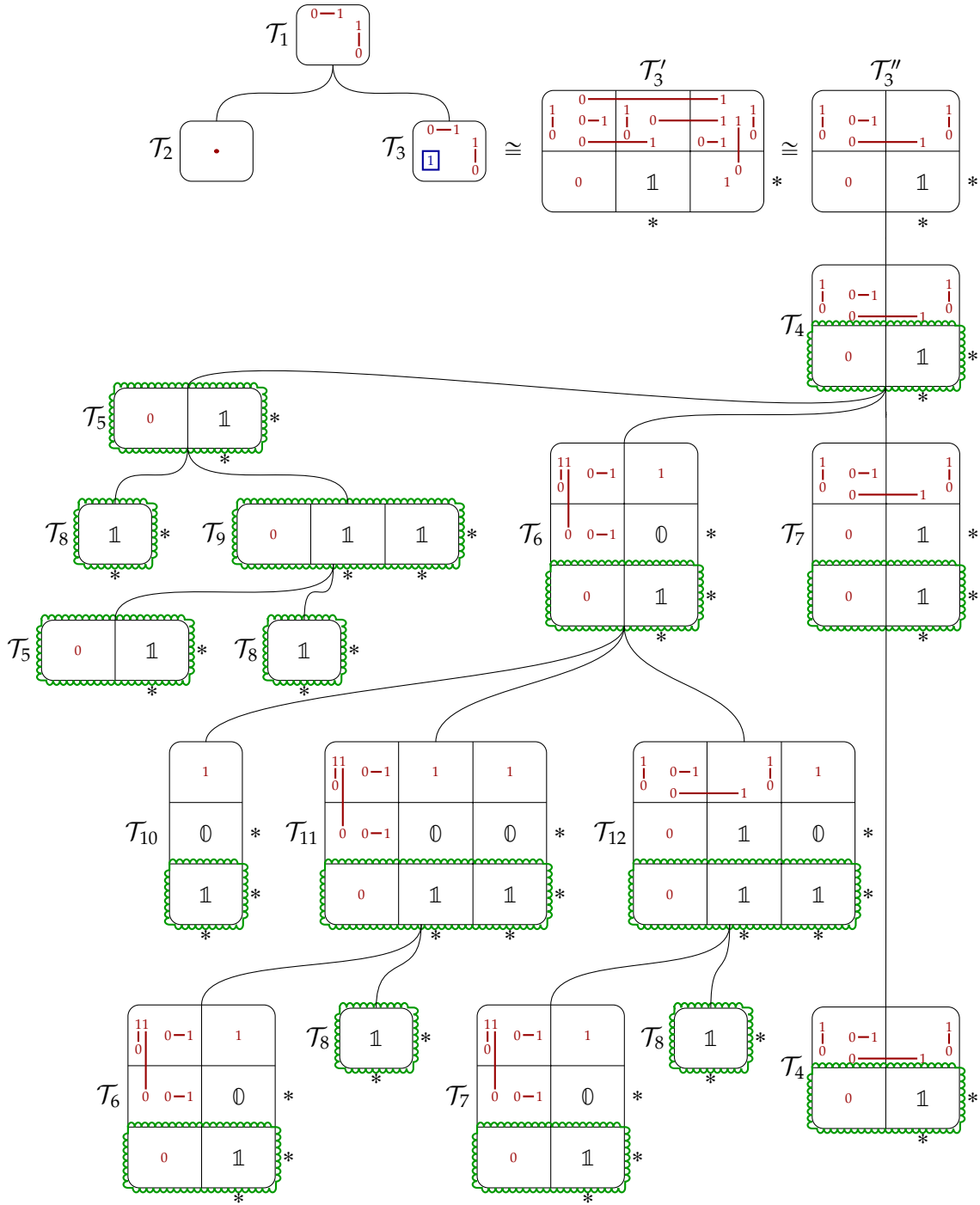

Ferrers diagrams are precisely those polyominoes that avoid the
polyomino patterns
$\beta_1 = \left[\begin{matrix}0&1\end{matrix}\right]$
and
$\beta_2 = \left[\begin{matrix}1\\0\end{matrix}\right]$.
Figure~\ref{figure:ferrers-proof-tree} shows a proof tree for the set
of Ferrers diagrams. As with other domains, we have used visual
abbreviations to simplify the pictures of poly-tilings.

We will now briefly explain the strategies that produce each rule of the
tree.
\begin{itemize}[label=$\diamond$]
	\item The rule $\TT_1 \leftarrow (\TT_2, \TT_3)$ is produced
			by a requirement-insertion-style strategy that splits into
			the case where the polyomino is empty and the case where it
			contains at least one square.
	\item A point-placement-style strategy places the requirement in $\TT_3$
			first as bottommost as possible, then as rightmost as possible,
			to get $\TT_3'$ (some implicit simplifications have been
			applied, just as we often did in the permutation domain). A
			different strategy observes that the rightmost column in 
			$\TT_3'$ cannot contain any entries, as it would then contain a
			$\beta_2$ pattern, and therefore $\TT_3$ and $\TT_3''$ are
			equinumerous.
	\item The rule $\TT_3'' \leftarrow (\TT_4)$ simply adds the tracking
			list for the bottommost row. Although it may at first feel
			backward, this is a valid strategy because we can certainly
			obtain the counting sequence for $\TT_3''$ if we know the
			counting sequence for $\TT_4$; indeed, we just ignore the extra
			index corresponding to the tracking list.
	\item The rules $\TT_4 \leftarrow (\TT_5, \TT_6, \TT_7)$,
			$\TT_5 \leftarrow (\TT_8, \TT_9)$, and
			$\TT_6 \leftarrow (\TT_{10}, \TT_{11}, \TT_{12})$
			are disjoint-union-type strategies that split into cases
			depending on what the extreme value in a particular cell in
			a row or column of height or width $1$ is. For example, in
			the first rule mentioned above, either the top row of $\TT_4$
			contains no entries at all (giving $\TT_5$) or the entry of
			that row that is as far down and to the right as possible is
			a $0$ (giving $\TT_6$), or it is a $1$ (giving $\TT_7$).
	\item The rules $\TT_9 \leftarrow (\TT_5, \TT_8)$,
			$\TT_{11} \leftarrow (\TT_6, \TT_8)$, and
			$\TT_{12} \leftarrow (\TT_7, \TT_8)$ are similar to the factor
			strategy on permutations. They detect when a particular square
			placed into a row and column of height and width $1$ can be 
			removed from every gridded polyomino on the tiling without
			violating the connectedness property.
	\item Two verification strategies give the rules $\TT_{2} \leftarrow ()$ and $\TT_{8} \leftarrow ()$ because their generating functions are easily seen to be $1$ and $xy$, respectively.
		A more complicated verification strategy gives the rule $\TT_{10} \leftarrow ()$ by
			detecting that there are no gridded polyominoes in
			$\Grid(\TT_{10})$ because the placed $0$ will lead to a violation
			of the bounding box condition, i.e., that the topmost and bottommost
			rows and the leftmost and rightmost columns all contain at least
			one square.
	\item Lastly, and most interestingly, the rule $\TT_7 \leftarrow (\TT_4)$
			is produced by a strategy that notices that the two bottommost rows
			of $\TT_7$ must be identical. Therefore, they can be ``merged''
			together. To see that this is a valid strategy, let $a_{n,k}$
			denote the number of gridded polyominoes in $\Grid(\TT_7)$ with
			$n$ squares, $k$ of which are in the tracked row, and let
			$b_{n,k}$ be the corresponding sequence for $\Grid(\TT_4)$.
			Then, we have the counting formula
			\(
				a_{n,k} = b_{n-k,k}
			\)
			and the generating function equation
			\(
				A(x,y) = B(x, xy).
			\)
\end{itemize}

As discussed in Section~\ref{section:transfer-tools}, this proof tree
provides a polynomial-time counting algorithm for the number of
Ferrers diagrams of size $n$. Moreover, the system of equations for
the generating functions of the combinatorial sets is the following.%
\footnote{The variable $y$ tracks the number of squares of each
gridded polyomino that are in rows identified by the tracking list
of the tiling.} 
\begin{align*}
	T_1(x) &= T_2(x) + T_3(x) & 								T_7(x,y) &= T_4(x, xy)\\
	T_2(x) &= 1 & 												T_8(x,y) &= xy \\
	T_3(x) &= T_4(x,1) & 										T_9(x,y) &= T_5(x,y) \cdot T_8(x,y)\\
	T_4(x,y) &= T_5(x,y) + T_6(x,y) + T_7(x,y) &				T_{10}(x,y) &= 0\\
	T_5(x,y) &= T_8(x,y) + T_9(x,y) &							T_{11}(x,y) &= T_6(x,y) \cdot T_8(x,y)\\
	T_6(x,y) &= T_{10}(x,y) + T_{11}(x,y) + T_{12}(x,y) & 		T_{12}(x,y) &= T_7(x,y) \cdot T_8(x,y)
\end{align*}
This system can be solved to find the known generating function
\[
	T_1(x) = \prod_{i=1}^\infty \frac{1}{1-x^i}.
\]

We have also derived, by hand, a proof tree for the large class of
\emph{L-convex polyominoes}, which also sometimes go by the name
\emph{moon polyominoes}. These can be defined by avoiding the
four polyomino patterns
\[
	\left[\begin{matrix}
		1 & 0 & 1
	\end{matrix}\right],
	\qquad
	\left[\begin{matrix}
		1 \\ 0 \\ 1
	\end{matrix}\right],
	\qquad
	\left[\begin{matrix}
		1 & 0 \\ 0 & 1
	\end{matrix}\right],
	\qquad
	\left[\begin{matrix}
		0 & 1 \\ 1 & 0
	\end{matrix}\right],
\]
and their enumeration is also already known as sequence A126764 in the
OEIS~\cite{oeis}. Like with Ferrers diagrams, their generating
function is non-D-finite.
The proof tree is too large to show here, involving $47$
combinatorial sets. Nevertheless, it serves as further proof that
Combinatorial Exploration would be effective in the domain
of polyominoes.

%% ==== %% ==== %% ==== %% ==== %% ==== %% ==== %% ==== %% ==== %% ==== %% ==== %%
%% ==== %% ==== %% ==== %% ====   SECTION EIGHT    ==== %% ==== %% ==== %% ==== %%
%% ==== %% ==== %% ==== %% ==== %% ==== %% ==== %% ==== %% ==== %% ==== %% ==== %%

\section{Applying Combinatorial Exploration to Pattern-Avoiding Set Partitions}
\label{section:set-partitions-results}
%!TEX root = combinatorial-exploration.tex

\subsection{Set Partitions}

A set partition of size $n$ is a decomposition of the set
$[n] = \{1,\ldots,n\}$ into a set of nonempty disjoint subsets
$B_1, \ldots, B_k$ called \emph{blocks} whose union is $[n]$.
For readability, we write set partitions without commas and braces,
using $/$ to separate the blocks. The order of the blocks and the order
within the blocks do not matter, so for example the set partitions
$134 / 28 / 5 / 79$ and $97 / 82 / 341 / 5$ are considered the same; the
first of these is written in \emph{standard form}, in which the entries
within the blocks are written in increasing order, and the blocks are
written in increasing order of their smallest elements. The five set
partitions of size $3$, written in standard form, are
\[
	123, \qquad 1 / 23, \qquad 12 / 3, \qquad 13 / 2, \qquad 1 / 2 / 3.
\]

An alternative way to express a set partition $\sigma = B_1 / \cdots / B_k$
is with a word $w = w(1) \ldots w(n)$ over the positive integers such
that $w(i) = j$ if $i \in B_j$. The five partitions of size $3$ can be
expressed in this way as
\[
	111, \qquad 122, \qquad 112, \qquad 121, \qquad 123.
\]

Words of this kind are called \emph{restricted growth functions}
and can be defined by the conditions
\begin{enumerate}
	\item $w(1) = 1$,
	\item for all $i \geq 2$, $w(i) \leq 1 + \max(\{w(1), \ldots, w(i - 1)\})$.
\end{enumerate}

There have been several different notions of pattern avoidance studied for set
partitions. For example, the definition used by Klazar~\cite{klazar:ababfree,%
klazar:countingpatternfreesetpartitions,%
klazar:countingpatternfreesetpartitionstwo} and more recently Bloom and
Saracino~\cite{bloom:patternavoidancealaklazar},
Chen, Deng, Du, Stanley, and Yan~\cite{chen:crossingsandnesting},
Riordan~\cite{riordan:distributionofcrossing}, and
Touchard~\cite{touchard:surunproblemedeconfigurations} says
that $\sigma$ contains $\tau$ if $\tau$ can be obtained by deleting entries
from blocks of $\sigma$ and standardizing the remaining entries. With this
notion, the set partition $145 / 23$ contains $12 / 34$ as can be seen by
deleting the $1$
and standardizing (and rearranging the blocks, which is permitted because
the blocks of a set partition are unordered). A second definition
involving arc-diagrams has been studied as well by,
for example Bloom and Elizalde~\cite{bloom:patternavoidanceinmatching}.

In this section we work with a third notion of pattern avoidance that
uses the correspondence
with restricted growth functions. This notion was investigated by
Sagan~\cite{sagan:patternavoidanceinsetpartitions} (who also considered
Klazar's version of pattern avoidance) and then studied
further by
Campbell, Dahlberg, Dorward, Gerhard, Grubb, Purcell, and
Sagan~\cite{campbell:rgfpatternandstatistics},
Dahlberg, Dorward, Gerhard, Grubb, Purcell, Reppuhn, and
Sagan~\cite{dahlberg:setpartitionpatternandstatistics},
and Jel\'{\i}nek, Mansour, and 
Shattuck~\cite{jelinek:onmultiplepatternavoidingsetpartitions}. 
In this third version of pattern
avoidance, we say that $\sigma$ contains $\tau$ if
there is a (not necessarily consecutive) subsequence of $w(\sigma)$ that
is order-isomorphic to $w(\tau)$. In this notion, it is no longer true
that $145 / 23$ contains $12 / 34$ because the corresponding restricted
growth functions are $12211$ and $1122$, and the former does not contain
the latter as a subsequence. As in previous sections, we can generalize
this containment notion to words that are not themselves restricted
growth functions, e.g., the set partition $12211$ contains the word $21$.
As before, we will call these words that are not necessarily actual set partitions
\emph{patterns}.

At the end of this section, we will give two proof trees, one for
set partitions avoiding the pattern $1212$ and the other for set
partitions avoiding the pattern $111$. The set partitions avoiding
$1212$ are often called \emph{non-crossing} set partitions, and we
will re-discover that they are counted by the Catalan numbers. Set
partitions avoiding $111$ have each part of size at most $2$; for
these we write down a system of differential equations and
solve to re-discover that the generating function is D-finite.

\subsection{Gridded Set Partitions}

As in the previous sections, we will define a gridded version of set
partitions, an analogous version of pattern containment, and then define
a tiling-like object to represent sets of gridded set partitions. The
easiest way to do this for this short proof-of-concept is to think of
a set partition as a $0/1$ matrix that contains a $1$ in the $(i,j)$
entry (indexed with Cartesian coordinates) if block $B_i$, written in
standard form, contains the entry $j$. All other entries are $0$.
For example, the set partition $134 / 2$ is represented by the matrix
\(
	\scriptstyle\left[\begin{matrix}
		1 & 0\\
		1 & 0\\
		0 & 1\\
		1 & 0
	\end{matrix}\right].
\)

All such matrices have exactly one $1$ in each row. The restricted
growth function property can be phrased in the following way: the
$(1,1)$ entry must be $1$, and for all $i > 1$, if the lowest $1$ in
column $i$ is at location $(i,j)$, then subcolumn consisting of the
bottommost $j-1$ entries of column $i-1$ must contain at least one $1$.

Having defined the matrix representation of a set partition, we can
now lean on previous sections by defining a gridded set partition as
simply the matrix representation with vertical and horizontal lines
added in the usual way. Gridded set partition patterns (those that do
not necessarily obey the restricted growth function condition) are defined
similarly, and the definition of pattern avoidance can be easily
extended to these gridded analogues.

\subsection{Set Partition Tilings}

\begin{definition}
	An \emph{SP-tiling} $\TT$ is a structure that represents a set of gridded
	set partitions. It is defined by three components, identical to those
	that define a gridded permutation tiling: dimensions
	$(t, u)$, a set $\OO$ of gridded set partition patterns called
	\emph{obstructions}, and a set $\RR = \{\RR_1, \ldots, \RR_k\}$ of sets
	of gridded set partition patterns called \emph{requirements}. The gridded
	set partitions that $\TT$ represents, $\Grid(\TT)$, must each avoid all of
	the gridded set partition patterns in $\OO$ and must each contain at least one
	gridded set partition pattern in each $\RR_i$.
\end{definition}

Consider the example SP-tiling $\TT$ shown on the left in
Figure~\ref{figure:SP-tiling-example}. The dimensions of $\TT$ are
$(t, u) = (2,2)$, and there is a single requirement, the size $1$ gridded
set partition in the cell $(1,0)$. Like gridded permutation tilings, we
can depict obstructions and requirements using dots with lines between them
to signify the locations of the $1$s. For example, the obstruction in the top-right corner
is the gridded set partition $1223$ (in restricted growth function notation)
with all entries in the cell $(1,1)$.

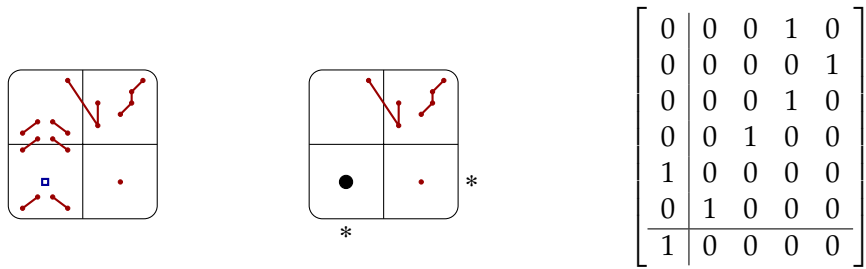
\begin{figure}
	\begin{center}
		\begin{tikzpicture}[baseline=(current bounding box.center)]
		\node (polyex) at (2, -2) {
			\sptiling{1}{2}{2}{}%
			{% obstructions
				% Local to (0,0)
                {2/{(0.20, 0.15), (0.40, 0.30)}},%
                {2/{(0.60, 0.30), (0.80, 0.15)}},%
                % Local to (0,1)
                {2/{(0.20, 1.15), (0.40, 1.30)}},%
                {2/{(0.60, 1.30), (0.80, 1.15)}},%
                % Crossing between (0,0) and (0,1)
                {2/{(0.20, 0.925), (0.40, 1.0725)}},%
                {2/{(0.60, 1.0725), (0.80, 0.925)}},%
                % Local to (1,0)
                {1/{(1.50, 0.5)}},%
                % Local to (1,1)
                {4/{(1.5, 1.4), (1.65, 1.55), (1.65, 1.7), (1.8, 1.85)}},%
                % Crossing between (0,1) and (1,1)
                {3/{(0.8, 1.85), (1.2, 1.25), (1.2, 1.55)}}%
			}
			{{1/{(0.5, 0.5)}}}
			{}%
			{}
			{} %actual rows: 0/, actual columns: 1/
		};
		\node (polyex2) at (6, -2) {
			\sptiling{1.0}{2}{2}{0/0}%
			{% obstructions
			    % Local to (1,0)
                {1/{(1.50, 0.5)}},%
                % Local to (1,1)
                {4/{(1.5, 1.4), (1.65, 1.55), (1.65, 1.7), (1.8, 1.85)}},%
                % Crossing between (0,1) and (1,1)
                {3/{(0.8, 1.85), (1.2, 1.25), (1.2, 1.55)}}%
			}
			{}
			{}%
			{}
			{0/0, 1/0} %actual rows: 0/, actual columns: 1/
		};
	\end{tikzpicture}
	\quad\quad\quad\quad\quad
	$\left[\scriptstyle\begin{array}{r|rrrr}
	0 & 0 & 0 & 1 & 0 \\
	0 & 0 & 0 & 0 & 1 \\
	0 & 0 & 0 & 1 & 0 \\
	0 & 0 & 1 & 0 & 0 \\
	1 & 0 & 0 & 0 & 0\\
	0 & 1 & 0 & 0 & 0\\\hline
	1 & 0 & 0 & 0 & 0
    \end{array}\right]$
	\end{center}
	\caption{An SP-tiling with all obstructions and requirements shown,
	followed by an abbreviated versions, and finally a gridded set partition on the
	tiling with underlying set partition $13/2/4/57/6$.
	The corresponding restricted growth function is $1213454$.}
	\label{figure:SP-tiling-example}
\end{figure}

The six size $2$ obstructions, together with the size $1$ requirement,
force every gridded set partition in $\Grid(\TT)$ to have at most one column
gridded into the first column of $\TT$. The size $1$ obstruction forces
all gridded set partitions in $\Grid(\TT)$ to have no entries gridded into
cell $(1,0)$ (although this is actually already implied by the
constraint that the matrix of a set partition has exactly one $1$ per row).
Lastly, there is a crossing size $3$ obstruction with
underlying word $221$ and the previously mentioned size $4$ obstruction
with underlying word $1223$.

The middle of Figure~\ref{figure:SP-tiling-example} shows the visual
abbreviations that we use with SP-tilings: we draw an asterisk below
a column to show that it has width $1$, similarly for rows, and use a solid black point to
show a cell with precisely one element of the set partition.
The right side of the figure shows a gridded set partition in
$\Grid(\TT)$.

\subsection{Non-Crossing Set Partitions}

\begin{figure}
	\centering
\begin{tikzpicture}[scale=1]

% LEVEL 0
\node (t1) at (-1.25, 0) {$\TT_1$};
\node (root) at (-0.5, 0) {
			\sptiling{1}{1}{1}{}%
			{% obstructions
				% Local to (0,0)
                {4/{(0.35, 0.20), (0.35, 0.60), (0.65, 0.40), (0.65, 0.80)}}%
			}
			{}
			{}%
			{}
			{} %actual rows: 0/, actual columns: 1/
};

% LEVEL 1
\node (t0) at (-3.25, -2) {$\TT_2$};
\node (empty) at (-2.5, -2) {
			\sptiling{1}{1}{1}{}%
			{% obstructions
				% Local to (0,0)
                {1/{(0.5, 0.50)}}%
			}
			{}
			{}%
			{}
			{} %actual rows: 0/, actual columns: 1/
};
\node (t2) at (0.75, -2) {$\TT_3$};
\node (nonempty) at (1.5, -2) {
			\sptiling{1}{1}{1}{}%
			{% obstructions
				% Local to (0,0)
                {4/{(0.35, 0.20), (0.35, 0.60), (0.65, 0.40), (0.65, 0.80)}}%
			}
			{{1/{(0.55, 0.22)}}}
			{}%
			{}
			{} %actual rows: 0/, actual columns: 1/
};
\node (t3) at (4.0, -0.75) {$\TT_3'$};
\node (nonempty') at (4.0, -2) {
\sptiling{1.0}{2}{2}{0/0}%
			{% obstructions
			    % Local to (1,0)
                {1/{(1.50, 0.5)}},%
                % Local to (1,1)
                {4/{(1.35, 1.20), (1.35, 1.60), (1.65, 1.40), (1.65, 1.80)}},%
                % Crossing between (0,1) and (1,1)
                {3/{(0.8, 1.60), (1.2, 1.40), (1.2, 1.80)}}%
			}
			{}
			{}%
			{}
			{0/0, 1/0} %actual rows: 0/, actual columns: 1/
};

\node (equiv1) at (2.5 , -2) {$\cong$};
\node (equiv2) at (-0.5 , -10) {$\cong$};
\node (equiv3) at (2.5 , -10) {$\cong$};
\node (equiv4) at (5.5 , -10) {$\cong$};

	\ptedge{(root)}{(-0.5,0.815)}{(empty)}{(-0.5,0.8)}
	\ptedge{(root)}{(-0.5,0.815)}{(nonempty)}{(-0.5,0.8)}
	
% LEVEL 2
\begin{scope}[xshift=-4cm, yshift=-3.5cm]
\node (t6) at (0.75, -7) {$\TT_6$};
\node (t6t) at (2, -7) {
\sptiling{1.0}{2}{2}{0/0}%
			{% obstructions
			    % Local to (1,0)
                {1/{(1.50, 0.5)}},%
                % Local to (1,1)
                {4/{(1.35, 1.20), (1.35, 1.60), (1.65, 1.40), (1.65, 1.80)}},%
                % Crossing between (0,1) and (1,1)
                {3/{(0.8, 1.60), (1.2, 1.40), (1.2, 1.80)}}%
			}
			{{1/{(0.5, 1.5)}}}
			{}%
			{}
			{0/0, 1/0} %actual rows: 0/, actual columns: 1/
};

\node (t6') at (5, -3.7) {$\TT_6$'};
\node (t6t') at (5, -6) {
\sptiling{1.0}{2}{4}{0/0, 0/2}%
			{% obstructions
			    % Local to (1,0)
                {1/{(1.50, 0.5)}},%
                % Local to (0,1)
                {1/{(0.50, 1.5)}},%
                % Local to (1,2)
                {1/{(1.50, 2.5)}},%
                % Local to (1,1)
                {4/{(1.35, 1.20), (1.35, 1.60), (1.65, 1.40), (1.65, 1.80)}},%
                % Local to (1,3)
                {4/{(1.35, 3.20), (1.35, 3.60), (1.65, 3.40), (1.65, 3.80)}},%
                % Crossing between (0,3) and (1,3)
                {3/{(0.8, 3.60), (1.2, 3.40), (1.2, 3.80)}},%
                % Crossing between (1,1) and (1,3)
                {2/{(1.2, 1.60), (1.2, 3.20)}}%
			}
			{}
			{}%
			{}
			{0/0, 0/2, 1/0} %actual rows: 0/, actual columns: 1/
};

\node (t6'') at (8, -3.7) {$\TT_6''$};
\node (t6t'') at (8, -6) {
\sptiling{1.0}{2}{4}{0/0, 0/2}%
			{% obstructions
			    % Local to (1,0)
                {1/{(1.50, 0.5)}},%
                % Local to (0,1)
                {1/{(0.50, 1.5)}},%
                % Local to (1,2)
                {1/{(1.50, 2.5)}},%
                % Local to (1,1)
                {4/{(1.35, 1.20), (1.35, 1.60), (1.65, 1.40), (1.65, 1.80)}},%
                % Local to (1,3)
                {4/{(1.35, 3.20), (1.35, 3.60), (1.65, 3.40), (1.65, 3.80)}},%
                % Crossing between (0,3) and (1,3)
                {3/{(0.8, 3.60), (1.2, 3.40), (1.2, 3.80)}},%
                % Crossing between (1,1) and (1,3)
                {2/{(1.2, 1.60), (1.2, 3.20)}},%
                {2/{(1.85, 1.60), (1.65, 3.20)}}%
			}
			{}
			{}%
			{}
			{0/0, 0/2, 1/0} %actual rows: 0/, actual columns: 1/
};

\node (t6''') at (11.5, -3.7) {$\TT_6'''$};
\node (t6t''') at (11.5, -6) {
\sptiling{1.0}{3}{4}{0/0, 0/2}%
			{% obstructions
			    % Local to (1,0)
                {1/{(1.50, 0.5)}},%
                % Local to (0,1)
                {1/{(0.50, 1.5)}},%
                % Local to (1,2)
                {1/{(1.50, 2.5)}},%
                % Local to (1,3)
                {1/{(1.50, 3.25)}},%
                % Local to (2,0)
                {1/{(2.50, 0.5)}},%
                % Local to (2,1)
                {1/{(2.50, 1.5)}},%
                % Local to (2,2)
                {1/{(2.50, 2.5)}},%
                % Local to (1,1)
                {4/{(1.35, 1.20), (1.35, 1.60), (1.65, 1.40), (1.65, 1.80)}},%
                % Local to (2,3)
                {4/{(2.35, 3.20), (2.35, 3.60), (2.65, 3.40), (2.65, 3.80)}},%
                % Crossing between (0,3) and (2,3)
                {3/{(0.8, 3.60), (2.2, 3.40), (2.2, 3.80)}}%
            }
            {}
			{}%
			{}
			{0/0, 0/2, 1/0} %actual rows: 0/, actual columns: 1/
};

\node (t3') at (6.75, -10) {$\TT_3'$};
\node (nonempty'') at (8, -10) {
\sptiling{1.0}{2}{2}{0/0}%
			{% obstructions
			    % Local to (1,0)
                {1/{(1.50, 0.5)}},%
                % Local to (1,1)
                {4/{(1.35, 1.20), (1.35, 1.60), (1.65, 1.40), (1.65, 1.80)}},%
                % Crossing between (0,1) and (1,1)
                {3/{(0.8, 1.60), (1.2, 1.40), (1.2, 1.80)}}%
			}
			{}
			{}%
			{}
			{0/0, 1/0} %actual rows: 0/, actual columns: 1/
};

\node (t1'') at (10.25, -10) {$\TT_1$};
\node (root'') at (11, -10) {
			\sptiling{1}{1}{1}{}%
			{% obstructions
				% Local to (0,0)
                {4/{(0.35, 0.20), (0.35, 0.60), (0.65, 0.40), (0.65, 0.80)}}%
			}
			{}
			{}%
			{}
			{} %actual rows: 0/, actual columns: 1/
};

\node (t5') at (12.25, -10) {$\TT_5$};
\node (t5t') at (13, -10) {
			\sptiling{1}{1}{1}{0/0}%
			{% obstructions
			}
			{}
			{}%
			{}
			{0/0, 1/0} %actual rows: 0/, actual columns: 1/
};

\end{scope}

% LEVEL 3
\begin{scope}[xshift=2cm, yshift=2.5cm]
\node (t4) at (-5.25, -7) {$\TT_4$};
\node (t4t) at (-4, -7) {
\sptiling{1.0}{2}{2}{0/0}%
			{% obstructions
			    % Local to (1,0)
                {1/{(1.50, 0.5)}},%
                % Local to (0,1)
                {1/{(0.5, 1.5)}},%
                % Local to (1,1)
                {4/{(1.35, 1.20), (1.35, 1.60), (1.65, 1.40), (1.65, 1.80)}}%
			}
			{}
			{}%
			{}
			{0/0, 1/0} %actual rows: 0/, actual columns: 1/
};

\node (t1') at (-3.75, -9.5) {$\TT_1$};
\node (root') at (-3, -9.5) {
			\sptiling{1}{1}{1}{}%
			{% obstructions
				% Local to (0,0)
                {4/{(0.35, 0.20), (0.35, 0.60), (0.65, 0.40), (0.65, 0.80)}}%
			}
			{}
			{}%
			{}
			{} %actual rows: 0/, actual columns: 1/
};

\node (t5) at (-5.75, -9.5) {$\TT_5$};
\node (t5t) at (-5, -9.5) {
			\sptiling{1}{1}{1}{0/0}%
			{% obstructions
			}
			{}
			{}%
			{}
			{0/0, 1/0} %actual rows: 0/, actual columns: 1/
};
\end{scope}

%	\ptedge{(nonempty')}{(-0.5,0.4)}{(t4t)}{(-0.5,1.2)}
%	\ptedge{(nonempty')}{(-0.5,0.4)}{(t6t)}{(-0.5,1.2)}
	
%	\draw (1,-3) to [out=225, in=25] (-3.5,-5.5) to [out = 200, in=160] (-2,-9);
%	\draw (1,-3) to [out=210, in=25] (-0.5,-3.5) to [out = 200, in=160] (-2,-6);
	\draw (4,-3) to [out=242, in=80] (-2,-3.5);
	\draw (4,-3) to [out=280, in=69] (-2,-9.5);
	
	\ptedge{(t4t)}{(-0.5,0.4)}{(t5t)}{(-0.5,0.8)}
	\ptedge{(t4t)}{(-0.5,0.4)}{(root')}{(-0.5,0.8)}
	
	\ptedge{(t6t''')}{(-0.5,-0.7)}{(nonempty'')}{(-0.5,1.2)}
	\ptedge{(t6t''')}{(-0.5,-0.7)}{(root'')}{(-0.5,0.8)}
	\ptedge{(t6t''')}{(-0.5,-0.7)}{(t5t')}{(-0.5,0.8)}
	
\end{tikzpicture}
	\caption{A proof tree for non-crossing set partitions.}
	\label{figure:av-1212}
\end{figure}

Figure~\ref{figure:av-1212} shows a proof tree for the set partitions that
avoid the pattern $1212$. The strategies used are simple and will be
familiar after having read the section on strategies for gridded permutations.
\begin{itemize}[label=$\diamond$]
	\item The rules $\TT_1 \leftarrow (\TT_2, \TT_3)$ and 
			$\TT_3' \leftarrow (\TT_4, \TT_6)$ are produced by requirement
			insertion strategies. The first inserts into cell $(0,0)$ and
			the second inserts into cell $(0,1)$.
	\item The rules $\TT_4 \leftarrow (\TT_1, \TT_5)$ and
			$\TT_6''' \leftarrow (\TT_1, \TT_3', \TT_5)$ are produced by
			a factor-like strategy that splits apart cells that do not
			interact in their rows and columns and have no obstructions or requirements
			crossing between them.
	\item The rules $\TT_3 \leftarrow (\TT_3')$ and
			$\TT_6 \leftarrow (\TT_6')$ place a requirement into its own row,
			much like the point placement strategy for permutations.
	\item The rule $\TT_6' \leftarrow (\TT_6'')$ infers that all gridded
			set partitions in $\Grid(\TT_6')$ also avoid the $21$ obstruction
			crossing between the cells $(1,3)$ and $(1,1)$, as a result of the
			vertical size $2$ obstruction and the restricted growth function
			property. Because of this new obstruction
			the rule $\TT_6'' \leftarrow (\TT_6''')$ is produced by a
			column separation strategy.
\end{itemize}

From this proof tree, we can write down an algebraic system of equations
whose solution identifies that $T_1(x)$ is the Catalan generating function
as expected.

\subsection{Set Partitions with Parts of Size at Most $2$}

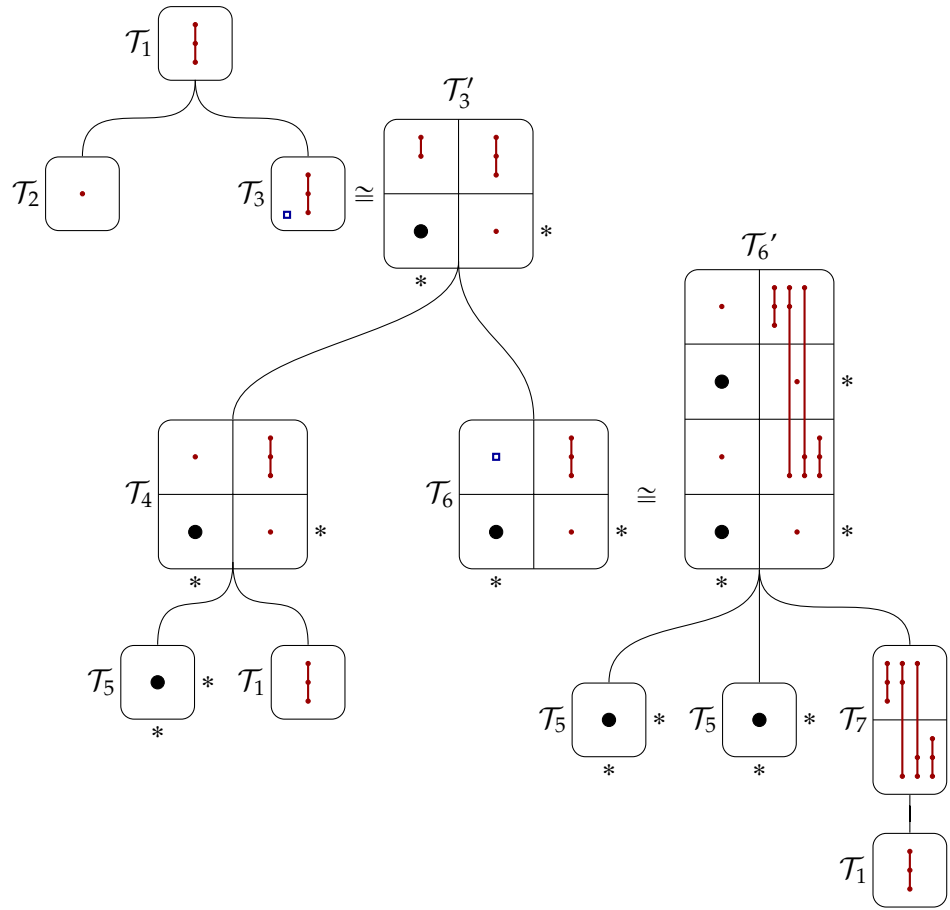
\begin{figure}
	\centering
\begin{tikzpicture}[scale=1]

% LEVEL 0
\node (t1) at (-3.25, 0) {$\TT_1$};
\node (root) at (-2.5, 0) {
			\sptiling{1}{1}{1}{}%
			{% obstructions
				% Local to (0,0)
                {3/{(0.5, 0.25), (0.5, 0.50), (0.5, 0.75)}}%
			}
			{}
			{}%
			{}
			{} %actual rows: 0/, actual columns: 1/
};

% LEVEL 1
\node (t0) at (-4.75, -2) {$\TT_2$};
\node (empty) at (-4, -2) {
			\sptiling{1}{1}{1}{}%
			{% obstructions
				% Local to (0,0)
                {1/{(0.5, 0.50)}}%
			}
			{}
			{}%
			{}
			{} %actual rows: 0/, actual columns: 1/
};
\node (t2) at (-1.75, -2) {$\TT_3$};
\node (nonempty) at (-1.0, -2) {
			\sptiling{1}{1}{1}{}%
			{% obstructions
				% Local to (0,0)
                {3/{(0.5, 0.25), (0.5, 0.50), (0.5, 0.75)}}%
			}
			{{1/{(0.22, 0.22)}}}
			{}%
			{}
			{} %actual rows: 0/, actual columns: 1/
};
\node (t3) at (1, -0.65) {$\TT_3'$};
\node (nonempty') at (1, -2) {
\sptiling{1.0}{2}{2}{0/0}%
			{% obstructions
			    % Local to (1,0)
                {1/{(1.50, 0.5)}},%
                % Local to (1,1)
                {3/{(1.5, 1.25), (1.5, 1.50), (1.5, 1.75)}},%
                % Local to (0,1)
                {2/{(0.5, 1.50), (0.5, 1.75)}}%
			}
			{}
			{}%
			{}
			{0/0, 1/0} %actual rows: 0/, actual columns: 1/
};

\node (equiv1) at (-0.25 , -2) {$\cong$};
\node (equiv2) at (3.5 , -6) {$\cong$};

	\ptedge{(root)}{(-0.5,0.815)}{(empty)}{(-0.5,0.8)}
	\ptedge{(root)}{(-0.5,0.815)}{(nonempty)}{(-0.5,0.8)}
	
% LEVEL 2
\begin{scope}[xshift=0cm, yshift=1cm]
\node (t6) at (0.75, -7) {$\TT_6$};
\node (t6t) at (2, -7) {
\sptiling{1.0}{2}{2}{0/0}%
			{% obstructions
			    % Local to (1,0)
                {1/{(1.50, 0.5)}},%
                % Local to (1,1)
                {3/{(1.5, 1.25), (1.5, 1.50), (1.5, 1.75)}}%
			}
			{{1/{(0.5, 1.5)}}}
			{}%
			{}
			{0/0, 1/0} %actual rows: 0/, actual columns: 1/
};

\node (t6') at (5, -3.7) {$\TT_6$'};
\node (t6t') at (5, -6) {
\sptiling{1.0}{2}{4}{0/0, 0/2}%
			{% obstructions
			    % Local to (1,0)
                {1/{(1.50, 0.5)}},%
                % Local to (0,1)
                {1/{(0.50, 1.5)}},%
                % Local to (0,3)
                {1/{(0.50, 3.5)}},%
                % Local to (1,2)
                {1/{(1.50, 2.5)}},%
                % Local to (1,1)
                {3/{(1.8, 1.25), (1.8, 1.50), (1.8, 1.75)}},%
                % Local to (1,3)
                {3/{(1.2, 3.25), (1.2, 3.50), (1.2, 3.75)}},%
                % Crossing between (1,1) and (1,3)
                {3/{(1.4, 1.25), (1.4, 3.50), (1.4,3.75)}},%
                {3/{(1.6, 1.25), (1.6, 1.50), (1.6,3.75)}}%
			}
			{}
			{}%
			{}
			{0/0, 0/2, 1/0} %actual rows: 0/, actual columns: 1/
};

\node (t7) at (6.25, -10) {$\TT_7$};
\node (t7t) at (7, -10) {
\sptiling{1.0}{1}{2}{}%
			{% obstructions
			    % Local to (1,1)
                {3/{(0.8, 0.25), (0.8, 0.50), (0.8, 0.75)}},%
                % Local to (1,3)
                {3/{(0.2, 1.25), (0.2, 1.50), (0.2, 1.75)}},%
                % Crossing between (1,1) and (1,3)
                {3/{(0.4, 0.25), (0.4, 1.50), (0.4,1.75)}},%
                {3/{(0.6, 0.25), (0.6, 0.50), (0.6,1.75)}}%
			}
			{}
			{}%
			{}
			{} %actual rows: 0/, actual columns: 1/
};

\node (t5'') at (2.25, -10) {$\TT_5$};
\node (t5t'') at (3, -10) {
			\sptiling{1}{1}{1}{0/0}%
			{% obstructions
			}
			{}
			{}%
			{}
			{0/0, 1/0} %actual rows: 0/, actual columns: 1/
};

\node (t5') at (4.25, -10) {$\TT_5$};
\node (t5t') at (5, -10) {
			\sptiling{1}{1}{1}{0/0}%
			{% obstructions
			}
			{}
			{}%
			{}
			{0/0, 1/0} %actual rows: 0/, actual columns: 1/
};

\end{scope}

% LEVEL 3
\begin{scope}[xshift=2cm, yshift=1cm]
\node (t4) at (-5.25, -7) {$\TT_4$};
\node (t4t) at (-4, -7) {
\sptiling{1.0}{2}{2}{0/0}%
			{% obstructions
			    % Local to (1,0)
                {1/{(1.50, 0.5)}},%
                % Local to (0,1)
                {1/{(0.5, 1.5)}},%
                % Local to (1,1)
                {3/{(1.5, 1.25), (1.5, 1.50), (1.5, 1.75)}}%
			}
			{}
			{}%
			{}
			{0/0, 1/0} %actual rows: 0/, actual columns: 1/
};

\node (t1') at (-3.75, -9.5) {$\TT_1$};
\node (root') at (-3, -9.5) {
			\sptiling{1}{1}{1}{}%
			{% obstructions
				% Local to (0,0)
                {3/{(0.5, 0.25), (0.5, 0.50), (0.5, 0.75)}}%
			}
			{}
			{}%
			{}
			{} %actual rows: 0/, actual columns: 1/
};

\node (t5) at (-5.75, -9.5) {$\TT_5$};
\node (t5t) at (-5, -9.5) {
			\sptiling{1}{1}{1}{0/0}%
			{% obstructions
			}
			{}
			{}%
			{}
			{0/0, 1/0} %actual rows: 0/, actual columns: 1/
};
\end{scope}

\node (t1'') at (6.25, -11) {$\TT_1$};
\node (root'') at (7, -11) {
			\sptiling{1}{1}{1}{}%
			{% obstructions
				% Local to (0,0)
                {3/{(0.5, 0.25), (0.5, 0.50), (0.5, 0.75)}}%
			}
			{}
			{}%
			{}
			{} %actual rows: 0/, actual columns: 1/
};

	\ptedge{(nonempty')}{(-0.5,0.4)}{(t4t)}{(-0.5,1.3)}
	\ptedge{(nonempty')}{(-0.5,0.4)}{(t6t)}{(-0.5,1.3)}
	
	\ptedge{(t4t)}{(-0.5,0.4)}{(t5t)}{(-0.5,0.8)}
	\ptedge{(t4t)}{(-0.5,0.4)}{(root')}{(-0.5,0.8)}
	
	\ptedge{(t6t')}{(-0.5,-0.7)}{(t7t)}{(-0.5,1.3)}
	\ptedge{(t6t')}{(-0.5,-0.7)}{(t5t'')}{(-0.5,0.8)}
	\ptedge{(t6t')}{(-0.5,-0.7)}{(t5t')}{(-0.5,0.8)}
	
	\ptedge{(t7t)}{(-0.5,0.3)}{(root'')}{(-0.5,0.8)}
	
\end{tikzpicture}
	\caption{A proof tree for set partitions with parts of size at most $2$.}
	\label{figure:av-111}
\end{figure}

Figure~\ref{figure:av-111} shows a proof tree for the set partitions that
avoid the pattern $111$. They are the set partitions with the property that 
every block has size $1$ or $2$. The ordinary generating function for their
counting sequence is D-finite, but not algebraic, and we will see how this
proof tree allows the defining linear differential equation to be computed.
This sequence is A000085 in the OEIS~\cite{oeis}.

All of the rules in this proof tree except for $\TT_7 \leftarrow (\TT_1)$
are produced by requirement insertion, point placement, and factor
strategies as seen in the previous example. The rule
$\TT_7 \leftarrow (\TT_1)$ is produced by a strategy similar to the
``fusion'' strategy mentioned at the end of
Section~\ref{section:pp-results}. The gridded set partitions in
$\Grid(\TT_7)$ can be formed uniquely by starting with a gridded set
partition in $\Grid(\TT_1)$ and adding a horizontal line between any
pair of entries, or at the extreme bottom and top. In this way,
a gridded set partition of size $n$ in $\Grid(\TT_1)$ produces $n+1$
gridded set partitions of size $n$ in $\Grid(\TT_7)$. This leads
to the generating function equation $T_7(x) = \ds\frac{d}{dx}(xT_1(x))$.%
\footnote{Alternatively, one can write an algebraic system of equations
that uses two catalytic variables, but no derivatives.}

The entire system of equations is
\begin{align*}
T_1(x) &= T_2(x) + T_3(x)\\
T_2(x) &= 1\\
T_3(x) &= T_4(x) + T_6(x)\\
T_4(x) &= T_1(x) \cdot T_5(x)\\
T_5(x) &= x\\
T_6(x) &= T_5(x)^2 \cdot T_7(x)\\
T_7(x) &= \frac{d}{dx}\left(x \cdot T_1(x)\right).	
\end{align*}
We can deduce from this that
\[
	T_1(x) = 1 + (x+x^2)T_1(x) + x^3\frac{d}{dx}T_1(x),
\]
proving that $T_1(x)$ is D-finite.

%% ==== %% ==== %% ==== %% ==== %% ==== %% ==== %% ==== %% ==== %% ==== %% ==== %%
%% ==== %% ==== %% ==== %% ====    SECTION NINE    ==== %% ==== %% ==== %% ==== %%
%% ==== %% ==== %% ==== %% ==== %% ==== %% ==== %% ==== %% ==== %% ==== %% ==== %%

\section{Further Algorithmic Considerations}
\label{section:algorithmic}
%!TEX root = combinatorial-exploration.tex

In this section we aim to give a small taste of the extensive algorithmic
design work required to actually implement Combinatorial Exploration in an
effective manner.  Our implementation of Combinatorial Exploration, which we
call the \CSS{}, is open-source and can be found on
Github~\cite{comb-spec-searcher}. Our code is fully compatible with the alternative Python implementation
called \texttt{PyPy}, a free just-in-time compiler that in our experience tends
to speed up computations by a factor of around 5 at the cost of some increased
memory usage.
The \CSS{} is completely domain-agnostic, making no reference to
permutations or any other combinatorial objects. In order to employ the \CSS{} to
perform Combinatorial Exploration, one must implement their own Python classes
and functions representing their combinatorial objects and strategies, and
plug these into the \CSS{}. For more details about how this is done, one can
refer to the ``README'' file in the \CSS{} repository~\cite{comb-spec-searcher} which shows
a sample implementation of Combinatorial Exploration for the simple domain of
binary words. Additionally, our \texttt{Tilings} repository~\cite{tilings} on
Github contains our implementation of Combinatorial Exploration for permutations
discussed in Section~\ref{section:pp-results}.

The user initiates Combinatorial Exploration by specifying the combinatorial
set that they wish to be enumerated, and a set of strategies to be used, which
we call a \emph{strategy pack}. The strategy pack separates the strategies into
several different groups depending on how one wishes them to be used and in what
order. The groups are called \emph{inferral strategies}, \emph{initial strategies}, 
\emph{expansion strategies}, and \emph{verification strategies}.

The \CSS{} was designed with the \emph{separation of concerns} design principle
in mind, making the software as flexible as possible for future development.
Work is delegated to a handful of separate components that work together to manage
the flow of Combinatorial Exploration. The most important of these components,
which we discuss in more detail below, are
\begin{itemize}[label=$\diamond$]
	\item the \Queue{}, which manages the order in which combinatorial sets
			are decomposed by strategies;
	\item the \ClassDB{}, which is in charge of storing the combinatorial
			sets discovered, including their compression and decompression
			to improve memory usage;
	\item the \EquivDB{}, a union-find data structure that tracks
			the equivalence classes of combinatorial sets;
	\item the \RuleDB{}, which stores all combinatorial rules discovered during
			the exploration process;
	\item the \SpecSearcher{}, which searches for a subset of combinatorial rules
			that form a combinatorial specification.
\end{itemize}

The \Queue{} controls the order in which combinatorial sets are considered and in
which the strategies in the strategy pack are applied to them. It
fundamentally has two operations: a method for adding a combinatorial set to the
queue, and a method that tells \CSS{} which combinatorial set should be expanded
next and, moreover, which strategy should be used for this expansion. The
default \Queue{} is actually composed of three separate queues: the
\emph{working queue}, the \emph{current queue}, and the \emph{next queue}.

When a combinatorial set is added to the \Queue{} for the first time, it is added
to the working queue. When \CSS{} is ready for a new combinatorial set and
requests one from the \Queue{}, the \Queue{} first takes from the working queue if
it is nonempty. Each combinatorial set $\AA$ taken from the working queue is
immediately expanded using the strategies in the pack that are designated as
inferral strategies. These are equivalence strategies for which, if they apply,
then we no longer want to try applying any additional strategies to the parent
$\AA$. For permutations, the row and column separation strategies discussed in
Subsection~\ref{subsubsection:row-col-sep} is effective as an inferral
strategy---if a tiling has two columns that separate, we want that separation
to occur right away, and then there is no need to do further work on the
unseparated version. If an inferral strategy applies, the child set is added
to the working queue and $\AA$ is removed from the \Queue{} permanently.

If no inferral strategy successfully applies, $\AA$ is then expanded using the
strategies in the pack that are designated as initial strategies. The children
of any strategy that applies are placed in the working queue, and after $\AA$
has been expanded with all initial strategies, it is moved to the next queue. 
Good candidates for initial strategies are those that decompose a set into
simpler sets that are likely to already exist in the universe like, for instance,
the factor strategy of Subsection~\ref{subsubsection:factor}. Treating the
factor strategy as an initial strategy means we will attempt to factor a tiling
long before we apply other strategies that tend to push deeper into the universe,
like the requirement insertion strategy of
Subsection~\ref{subsubsection:requirement-insertion-intuition}.

If the working queue is empty, the next combinatorial set to be expanded will be
taken from the current queue. When a combinatorial set is taken from the current
queue, it is expanded using the strategies in the pack designated as
expansion strategies. The expansion strategies are further split into subsets of
strategies
$S_1$, $S_2$, $\ldots$, $S_k$. The first time a combinatorial set is taken from
the current queue it is expanded using the strategies in set $S_1$ and added
back to the current queue, the second time using the strategies in $S_2$ and so
on. Once the last subset $S_k$ has been applied, we discard the combinatorial
set from \Queue{} as it has been fully expanded. This allows fine-grained
control over how exploration of the universe of sets proceeds simply by
modifying the strategy pack. This can be useful when performing
Combinatorial Exploration on a set for which one has some intuition about
which strategies are more likely to lead to effective decompositions and
which are not.

Finally, when both the working queue and the current queue are empty, all sets
in the next queue are moved to the current queue, and expansion continues.
The lifecycle of an individual combinatorial set $\AA$ is outlined in
Figure~\ref{fig:queueflowchart}.

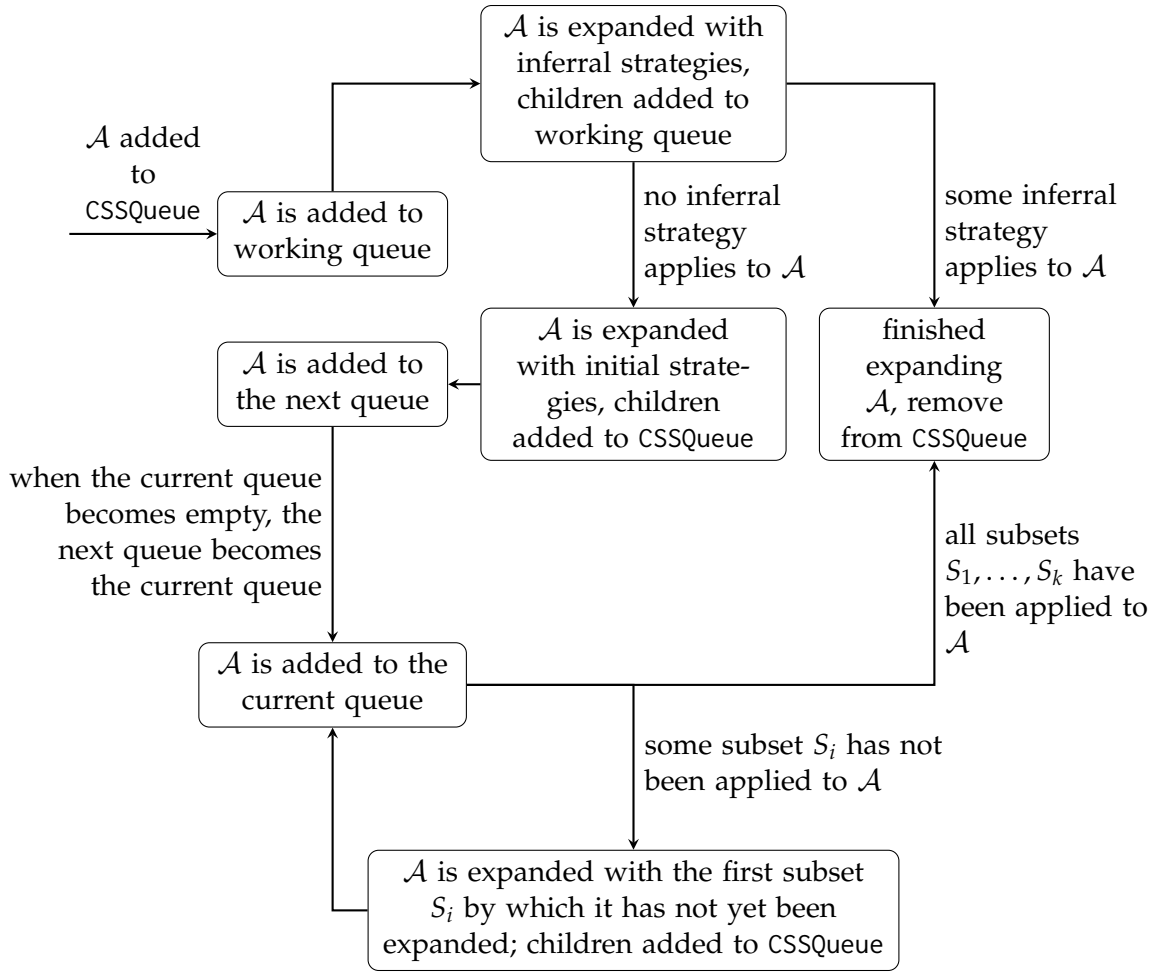
\begin{figure}[t]
	\centering
	\begin{tikzpicture}[node distance=2cm]
		% NODES
		\node (working) at (0, 0) [rectangle, rounded corners, minimum
		width=3cm, minimum height=1cm, text centered, draw=black, text width =
		2.8cm] {$\AA$ is added to working queue};
		\node (inferral) at (4, 2) [rectangle, rounded corners, minimum
		width=4cm, minimum height=1cm, text centered, draw=black, text width =
		3.8cm] {$\AA$ is expanded with inferral strategies, children added to
		working queue};
		\node (initial) at (4, -2) [rectangle, rounded corners, minimum
		width=4cm, minimum height=1cm, text centered, draw=black, text width =
		3.8cm] {$\AA$ is expanded with initial strategies, children added to
		\Queue};
		\node (next) at (0, -2) [rectangle, rounded corners, minimum width=3cm,
		minimum height=1cm, text centered, draw=black, text width = 2.8cm]
		{$\AA$ is added to the next queue};
		\node (current) at (0, -6) [rectangle, rounded corners, minimum
		width=3.5cm, minimum height=1cm, text centered, draw=black, text width =
		3.3cm] {$\AA$ is added to the current queue};
		\node (expansion) at (4, -9) [rectangle, rounded corners, minimum
		width=7cm, minimum height=1cm, text centered, draw=black, text width =
		6.8cm] {$\AA$ is expanded with the first subset $S_i$ by which it has
		not yet been expanded; children added to \Queue};
		\node (finished) at (8, -2) [rectangle, rounded corners, minimum
		width=3cm, minimum height=1cm, text centered, draw=black, text width =
		2.8cm] {finished expanding $\AA$, remove from \Queue};
		% ARROWS
		\draw [thick, ->, >=stealth] (-3.5, 0) -- node[anchor=south, text
		centered, text width = 2cm] {$\AA$ added to \Queue} (working);
		\draw [thick, ->, >=stealth] (working) |- (inferral);
		\draw [thick, ->, >=stealth] (inferral) -| node[anchor=west,
		text width = 2.5cm, yshift=-2cm] {some \mbox{inferral} strategy \mbox{applies}
		to $\AA$} (finished);
		\draw [thick, ->, >=stealth] (inferral) -- node[anchor=west,
		text width = 2.5cm] {no \mbox{inferral} strategy \mbox{applies} to
		$\AA$} (initial);
		\draw [thick, ->, >=stealth] (initial) -- (next);
		\draw [thick, ->, >=stealth] (next) -- node[anchor=east,
		text width = 5cm, align=right] {when the current queue becomes empty,
		the next queue becomes the current queue} (current);
		\draw [thick, ->, >=stealth] (current) -| node[anchor=west,
		text width = 3cm, yshift=1.3cm] {all subsets
		$S_1, \ldots, S_k$ have been applied to $\AA$} (finished);
		\draw [thick, ->, >=stealth] (current) -| node[anchor=north,
		xshift=2.15cm, yshift=-0.5cm, text width = 4cm] {some
		subset $S_i$ has not been applied to $\AA$} (expansion);
		\draw [thick, ->, >=stealth] (expansion) -| (current);
	\end{tikzpicture}
	\caption{The lifecycle of a combinatorial set $\AA$ in the default \Queue{}
	employed by \CSS{}}
	\label{fig:queueflowchart}
\end{figure}

Every time a combinatorial set $\AA$ is discovered for the first time it is
assigned a unique integer label and stored in the \ClassDB. In practice there
can be millions of combinatorial sets discovered in a single run of the \CSS{},
which imposes a significant memory burden. To mitigate this, our implementation
includes the option of storing a compressed version of the
combinatorial set, and the \ClassDB{} manages the corresponding compression and
decompression as needed.

Recall that Subsection~\ref{subsection:equivalencestrategies} describes how
equivalence strategies may be used to determine that two combinatorial sets have
the same enumeration. As such rules violate the productivity conditions, sets
are collected into equivalence classes. The \EquivDB{} is a union-find data
structure that efficiently represents the equivalence classes of combinatorial
sets. It contains routines for efficiently merging two equivalence classes,
finding a representative for a given equivalence class, and finding a shortest
path of equivalence rules between two sets in the same equivalence class.

Immediately after a combinatorial set $\AA$ is assigned a label by the
\ClassDB{}, the \CSS{} checks if any of the verification strategies in
the strategy pack apply to $\AA$. If so, this information is passed to the
\EquivDB{} because all equivalent classes may also be considered verified.

To expand a combinatorial set $\AA$ with a strategy $S$, the decomposition
strategy $d_S$ is applied to $\AA$. If $S$ successfully applies to $\AA$,
it creates a rule $\AA \xleftarrow{S} (\BB^{(1)}, \ldots, \BB^{(k)})$.
%Depending on the strategy used, it is important to check
%whether not any of the combinatorial sets are precisely the empty set. For
%example, say the decomposition function of $S$ teaches us that $\AA = \BB^{(1)}
%\sqcup \BB^{(2)}$ but we discover that $\BB^{(2)} = \emptyset$, and so $\AA$ is
%equivalent $\BB^{(1)}$. Leaving this rule with the empty $\BB^{(2)}$ could lead
%to \CSS{} returning a tautological specification if there were some other
%strategies recognising the equivalence between $\AA$ and $\BB^{(1)}$. Instead we
%treat $S$ as an equivalence strategy and update the \EquivDB{} of the
%equivalence.
%
%The $\ClassDB$ keeps track of whether or not combinatorial sets are empty. It is
%often possible to avoid this potentially heavy check for a combinatorial set.
%For example, if we find $\AA \xleftarrow{S} (\BB^{(1)}, \ldots, \BB^{(k)})$ but
%knew by other means that $\AA = \BB^{(1)}$ then we know $\BB^{(2)}, \ldots,
%\BB^{(k)}$ are empty. For many strategies, such as cartesian product and
%equivalence strategies, knowing that the parent is not empty is sufficient to
%tell you that all the children are not empty. These pre-checks are employed by
%\CSS{} before relying on the more expensive check on the combinatorial sets. 
Each of the child sets' assigned labels are placed into
the \Queue{} as described above. The rule is then stored as a tuple
$(a, (b_{1}, \ldots, b_{k}), S)$ in the \RuleDB{}, where $a$ and the $b_i$
are the integer labels assigned to $\AA$ and the $\BB_i$ and $S$ is the
strategy that produced the rule.

Searching for a combinatorial specification is done on the level of equivalence
classes of the equivalence relation $\eqrel$ discussed in
Subsection~\ref{subsection:equivalencestrategies}. This task is performed by the
\SpecSearcher{}, which first asks the \RuleDB{} for all of the rules found so
far. These rules on sets are converted into rules on the equivalence classes
of $\eqrel$ by consulting the \EquivDB{}. Finally
Algorithm~\ref{algorithm:specfinder} from Subsection~\ref{subsection:tree-searcher}
is used to decide if a specification exists. If so, 
Algorithm~\ref{algorithm:specgetter} chooses one. The items in this
specification are
equivalence classes of integer labels, and so the \RuleDB{}, \EquivDB{}, and
\ClassDB{} work together to convert this back into a specification on
combinatorial sets.

The advantage of following the separation of concerns design principle is that
the behavior of any one of these components can be easily altered
without interfering with the work
done by the others. For example, in Subsection~\ref{subsection:132-ASMs}
we briefly introduced the notion of a combinatorial forest, a generalization
of a combinatorial specification. The \CSS{} can be configured to search instead
for combinatorial forests by simply implementing a new \RuleDB{} object
that has the same functions, but whose internal logic is different. Another
example is found in \'Ardal's thesis~\cite{ardal:proof-numbers-thesis}, in which an alternative
\Queue{} based on the proof-number search algorithm commonly used on
game trees is used to replace our breadth-first approach.

%% ==== %% ==== %% ==== %% ==== %% ==== %% ==== %% ==== %% ==== %% ==== %% ==== %%
%% ==== %% ==== %% ==== %% ====     SECTION TEN    ==== %% ==== %% ==== %% ==== %%
%% ==== %% ==== %% ==== %% ==== %% ==== %% ==== %% ==== %% ==== %% ==== %% ==== %%

\section{Final Notes}
\label{section:final-notes}
%!TEX root = combinatorial-exploration.tex

Despite the length of this work, we have only scratched the surface
of what Combinatorial Exploration can do. While the example domains
presented here have all involved sets described by the occurrence or
avoidance of certain kinds of patterns, that is just an artifact of our
own experience and interests, and is not a requirement
for Combinatorial Exploration to apply.

We have mentioned throughout
a number of extensions and generalizations that will lead to the
automatic discovery of combinatorial specifications for even more
combinatorial sets. Some of these have already been completed, or
are in progress, and will be discussed in forthcoming work. Others
would benefit from the attention of experts in fields other than ours.
We will briefly summarize these ideas here and mention a few additional ones
as well.

\begin{enumerate}
	\item In Section~\ref{section:asm-results}, we enumerated the
			$132$-avoiding alternating sign matrices by discovering
			a \emph{combinatorial forest}, which is a set of rules that
			does not constitute a combinatorial specification because there
			may be sets on the right-hand sides of rules that appear more than
			once or not at all on the left-hand sides of rules.Nonetheless,
			they carry sufficient structural information to enumerate all
			of the combinatorial sets involved. Although examples of
			these have appeared sporadically in the literature, to
			our knowledge they have never been formally defined nor
			systematically studied. We have devised an algorithmic
			approach that allows us to efficiently search a set of
			rules for a combinatorial forest. In our experience, they tend to
			lead to clever structural descriptions that would be
			challenging for a human to discover by hand.
			
			This is already implemented in the Combinatorial Exploration code
			base~\cite{comb-spec-searcher}, and we have used it to find
			a combinatorial specification for $\Av(1342)$. This class was
			first enumerated by B\'ona~\cite{bona:1342} by constructing a
			bijection between it and a set of non-permutation objects; we
			believe ours is the first direct enumeration. Future work will
			explore the concept more, lay theoretical foundations,
			and demonstrate how the use of forests makes Combinatorial
			Exploration even more effective.
	\item In several places, we have mentioned combinatorial specifications
			that use strategies that lead to extra indices in the counting
			formulas and ``catalytic'' variables in systems of equations,
			e.g., fusion for permutation patterns, and the strategy that
			merges two identical rows for polyominoes. In order to fully
			understand these results, the formalized versions of strategies
			and specifications developed in this work must be extended to
			multivariate versions, and the notions of reliance graphs and
			productivity must be suitably adapted.
	\item Section~\ref{section:transfer-tools} discussed that fast uniform
			random sampling can often be performed when a combinatorial
			specification has been discovered. Like the other transfer tools
			discussed, the strategies involved must have certain properties
			for this to be possible.
	\item Suppose that one has combinatorial specifications $S_1$ and $S_2$
			for two different combinatorial sets $\AA$ and $\BB$ that may
			be from different domains. If $S_1$ and $S_2$ are structurally
			similar in the sense that they employ strategies with the same
			counting functions in precisely the same ways, we call $S_1$
			and $S_2$ \emph{parallel specifications}. From any two parallel
			specifications we automatically obtain a size-preserving bijection
			between $\AA$ and $\BB$. This work can be found in Eliasson's
			thesis~\cite{eliasson:sliding-thesis}.
	\item Our heavy formalization of strategies and specifications may
			make it possible to employ a theorem verification
			system such as Lean~\cite{lean} or Agda~\cite{agda} to
			formally verify combinatorial specifications and their
			enumerations.
	\item While we have given short proofs-of-concept for three
			domains---alternating sign matrices, polyominoes, and set
			partitions---true success in applying Combinatorial
			Exploration in these domains is likely to require expertise
			from researchers who have more experience with these
			objects than we do. We also believe that Combinatorial
			Exploration would be effective in other domains such as
			inversion sequences, ascent sequences, and more.
\end{enumerate}

\subsection*{Acknowledgments}
We would like to thank the many undergraduate and masters students with
whom we had invaluable discussions and who contributed to this work,
including (with references to their theses when applicable):
Annija Apine,
Ragnar P\'all \'Ardal~\cite{ardal:proof-numbers-thesis},
Arnar Bjarni Arnarsson~\cite{arnarson:bsc-thesis, arnarson:msc-thesis},
Alfur Birkir Bjarnason~\cite{arnarson:bsc-thesis},
Jon Steinn Eliasson~\cite{eliasson:sliding-thesis},
Unnar Freyr Erlendsson~\cite{arnarson:bsc-thesis, erlendsson:msc-thesis},
Kolbeinn P\'all Erlingsson~\cite{gunnarsson:bsc-thesis},
Bjarki \'Ag\'ust Gu{\dh}mundsson,
Bj\"orn Gunnarsson~\cite{gunnarsson:bsc-thesis},
Sigur{\dh}ur Helgason~\cite{helgason:bsc-thesis},
Kristmundur \'Ag\'ust J\'onsson~\cite{gunnarsson:bsc-thesis},
T\'omas Ken Magn\'usson~\cite{magnusson:msc-thesis},
James Robb~\cite{helgason:bsc-thesis},
\'O{\dh}inn Hjaltason Schi\"oth,
Murray Tannock,
and
Sigurj\'on Freyr Viktorsson~\cite{arnarson:bsc-thesis}.

We'd also like to thank Vince Vatter for providing improved code for processing our sampled data into  heatmap images. We are further grateful to the referees for their suggestions, which have improved this work.

CB's research is supported by grant 207178-052 from the Icelandic Research Fund. 

\'EN's research is supported by the Reykjavik University Research Fund.

JP's research is supported by grant 713579 from the Simons Foundation.

%\bibliographystyle{acm}
%\bibliography{/Users/jay/Dropbox/Research/refs/bib.bib,paper.bib}
%\bibliography{paper.bib}

\printbibliography

% may need this for uploading to arxiv: https://github.com/plk/biblatex/wiki/biblatex-and-the-arXiv

\end{document}
% using bibtex (not biblatex)
% to move refs from your own bib file to the paper-specific one:
% bibexport -o paper.bib .texpadtmp/combinatorial-exploration.aux